\documentclass[11pt]{amsart}
\usepackage[T1]{fontenc}
\usepackage{lmodern}
\usepackage{amsmath,amssymb,mathtools,mathrsfs}
\usepackage{enumitem}
\usepackage[margin=1.05in]{geometry}
\usepackage[hidelinks]{hyperref}
\newcommand{\A}{\mathbb A}
\newcommand{\C}{\mathbb C}
\newcommand{\Gm}{\mathbb G_m}

\newcommand{\GW}{\operatorname{GW}}
\newcommand{\Spec}{\operatorname{Spec}}

\newcommand{\Hom}{\operatorname{Hom}}
\newcommand{\Map}{\operatorname{Map}}

\newcommand{\Sm}{\operatorname{Sm}}
\newcommand{\Nis}{\mathrm{Nis}}
\newcommand{\id}{\operatorname{id}}

\newcommand{\Frac}{\operatorname{Frac}}
\newcommand{\MW}{\mathrm{MW}}
\newcommand{\linecomplement}{\mathbb A^3_{\mathbb C}\setminus i(\mathbb A^1_{\mathbb C})}

\newtheorem{theorem}{Theorem}[section]
\newtheorem{proposition}[theorem]{Proposition}
\newtheorem{lemma}[theorem]{Lemma}
\newtheorem{corollary}[theorem]{Corollary}

\newtheorem{question}[theorem]{Question}
\theoremstyle{definition}
\newtheorem{definition}[theorem]{Definition}
\newtheorem{example}[theorem]{Example}

\theoremstyle{remark}
\newtheorem{remark}[theorem]{Remark}
\numberwithin{equation}{section}
\setlist[enumerate]{label=\textup{(\roman*)},leftmargin=*}
\title[Motivic rectifiability and concrete rectifications]{On the motivic approach to the Abhyankar rectifiability problem and concrete rectifications}
\author{A.\,S. Frolov}
\address{Higher School of Modern Mathematics, MIPT}
\email[A.\,S. Frolov]{frolov.aleksandr@phystech.edu}
\author{I.\,S. Dorofeev}
\address{Independent researcher}
\email[I.\,S. Dorofeev]{qft98romanov@gmail.com}
\date{October 27, 2026}
\subjclass[2020]{Primary 14R10; Secondary 14F42, 14R20, 19E15.}
\keywords{Abhyankar rectifiability problem, affine line embeddings, motivic homotopy theory, Milnor--Witt K-theory, polynomial automorphisms}

\newcommand{\Z}{\mathbb Z}

\newcommand{\susp}{\Sigma_s}
\newcommand{\piA}[1]{\pi_{#1}^{\mathbb A^1}}
\newcommand{\HA}[1]{\mathbf H_{#1}^{\mathbb A^1}}
\DeclareMathOperator{\ev}{ev}
\DeclareMathOperator{\Tors}{Tors}

\hypersetup{pdftitle={On the motivic approach to the Abhyankar rectifiability problem and concrete rectifications},pdfauthor={A. S. Frolov and I. S. Dorofeev}}

\begin{document}
\begin{abstract}
We study closed embeddings of the complex affine line into affine
three-space. We prove that,
for every such embedding, the complete-intersection normal map
identifies the $\mathbb A^1$-fundamental sheaf of its complement
with $\mathbf K_2^{\mathrm{MW}}$. The proof compares torsors under
arbitrary strongly $\mathbb A^1$-invariant group sheaves. A nodal
projection expresses a torsor by a normal parameter and finitely many
node parameters. Extension across the node axes gives equations for
the latter. Their correction terms increase a nilpotent filtration
generated by the Milnor--Witt symbols of the branch-height differences.
We deduce the vanishing of
the first Alexander sheaf of every complex embedded line.
We also construct polynomial rectifications using finite inverses and
affine modifications, including rectifications of the Shastri
trefoil and a harmonic figure-eight over
$\mathbb Z[a]/(a^2-a+1)$, with arbitrary base change.
The companion repository contains explicit rectifiers, their inverses,
proofs, and symbolic checks:
\url{https://github.com/qft98romanov-hash/abhyankar-moh-trefoil}.
We prove that a normal map with a polynomial section and geometric
generic affine-line fiber is a coordinate projection. We formulate
the remaining rectifiability and descent questions; the general complex
rectifiability problem is not settled here. An appendix treats other
fields, normal orientations, and arithmetic descent.
\end{abstract}
\maketitle
\tableofcontents
\setcounter{section}{-1}
\section{Introduction}\label{sec:introduction}

In this paper we study closed embeddings of the affine line into
complex affine three-space. The Abhyankar rectifiability problem in
dimensions $(1,3)$ asks whether every such embedding can be carried
to a coordinate line by a polynomial automorphism. Equivalently,
given polynomials $p,q,r\in\C[t]$ with
$\C[p,q,r]=\C[t]$, one asks for a polynomial automorphism
$\Phi$ satisfying
\[
 \Phi\bigl(p(t),q(t),r(t)\bigr)=(t,0,0).
\]
The analogous problem in the affine plane is settled by the
Abhyankar--Moh--Suzuki theorem \cite{AM,Suzuki}.

Our first result computes the unstable motivic fundamental sheaf
of the complement. An embedded affine line has a trivial rank-two
normal bundle, so its local motivic meridian is the punctured affine
plane. The latter has fundamental sheaf $\mathbf K_2^{\MW}$,
the unramified Milnor--Witt $K$-theory sheaf. Homotopy purity
identifies the corresponding suspended complements, but suspension
does not retain all nonabelian information. The main issue is
therefore to show that no additional torsor data survive away
from a normal meridian.

\begin{samepage}
\begin{theorem}
\label{thm:intro-fundamental}
Let $i:\A^1_{\C}\hookrightarrow\A^3_{\C}$ be a closed embedding.
Choose generators $f,g$ of its defining ideal and compatible
rational basepoints. The normal map induces an isomorphism
of Nisnevich sheaves on $\Sm_{\C}$:
\[
 \pi_1^{\A^1}\!\left(\A^3_{\C}\setminus i(\A^1_{\C})\right)
 \xrightarrow[\ (f,g)_*\ ]{\ \sim\ }
 \pi_1^{\A^1}(\A^2_{\C}\setminus\{0\})
 \cong\mathbf K_2^{\MW}.
\]
\end{theorem}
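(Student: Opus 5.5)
The plan is to detect $\pi_1^{\A^1}$ through torsors. Write $X=\linecomplement$ and let $\phi=(f,g):X\to\A^2_{\C}\setminus\{0\}$ be the normal map. Both sides are $\A^1$-connected; for $X$ this follows from codimension two and purity. For such spaces, a map induces an isomorphism on $\pi_1^{\A^1}$ as soon as, for every strongly $\A^1$-invariant sheaf of groups $G$ on $\Sm_{\C}$, pullback
\[
 \phi^*:H^1\bigl(\A^2_{\C}\setminus\{0\};G\bigr)\to H^1(X;G)
\]
is a bijection, compatibly with the chosen basepoints. This is Morel's description of $\pi_1^{\A^1}$ as the universal strongly $\A^1$-invariant group receiving pointed torsors. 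The isomorphism $\pi_1^{\A^1}(\A^2\setminus\{0\})\cong\mathbf K_2^{\MW}$ is Morel's computation and is taken as input. The claim thus reduces to two statements: every $G$-torsor on $X$ is pulled back from the punctured plane (surjectivity), and two torsors pulled back from there that become isomorphic on $X$ were already isomorphic on the punctured plane (injectivity).

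Injectivity is the easy half. It should follow from a normal slice: near a general point of the line, $\phi$ restricted to a transversal $\A^2$-slice minus the point is an $\A^1$-equivalence onto $\A^2\setminus\{0\}$ up to Nisnevich-local identification. This exhibits $\phi$ as having a homotopy section on meridians, so $\phi^*$ is injective.

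For surjectivity I would use a generic linear nodal projection. Choose a projection $\A^3\to\A^2$ whose restriction to the line is a closed embedding away from finitely many nodes, where the image curve $C\subset\A^2$ has ordinary double points. A $G$-torsor $P$ on $X$ is then described Nisnevich-locally over the base $\A^2$. Over $\A^2\setminus C$ the fibers are $\A^1$ and the torsor descends. Near smooth points of $C$ it is governed by a single normal parameter, the class along the meridian, which comes from $\mathbf K_2^{\MW}$-type data for $G$, namely a map $\mathbf K_2^{\MW}\to G$. At each node two branches pass over the same base point at different heights, and $P$ contributes a node parameter in $G$. Requiring $P$ to extend across the vertical node axes, that is, over the lines in $\A^3$ above the nodes minus the two branch points, gives equations for the node parameters. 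Each such equation has the form $x_j=\text{(normal contribution)}\cdot c_j(x)$, where the correction term $c_j$ is built from commutators involving the Milnor--Witt symbols $[h_1-h_2]$ of the branch-height differences.

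The main obstacle is solving these node equations and showing that the solution is uniquely determined by the normal parameter. My approach is to filter $G$, or rather the relevant subgroup generated by the images of these symbols, by its lower central series. Each correction lands in a strictly deeper filtration step. I would then argue that the filtration is nilpotent on the finitely many nodes' data, using that the symbols $[u]$ with $u$ a unit in $\C[t]$-type rings become trivial after finitely many commutator iterations. Given that, successive approximation solves the equations, and the torsor is forced to equal $\phi^*$ of the torsor determined by its normal parameter. Making the nilpotence rigorous for an arbitrary strongly $\A^1$-invariant $G$ is the hardest point. I would try first to reduce to $G$ generated by the image of $\mathbf K_2^{\MW}$ via the normal parameter, and then use that central extensions by strictly $\A^1$-invariant sheaves are classified by $H^2$, which vanishes in the needed range by purity.
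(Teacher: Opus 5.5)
Your outer framework matches the paper's. You detect $\piA{1}$ by pointed torsors under all strongly $\A^1$-invariant $G$, get injectivity from a meridian splitting of $\phi_*$, and seek surjectivity from a nodal projection, with node-axis equations whose corrections carry the symbols $[\Delta]$ of the height differences. But the step you single out as hardest does not go through as proposed.

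\emph{Lower central series.} Filtering $G$ by its lower central series need not terminate for an arbitrary strongly $\A^1$-invariant $G$. In particular it need not terminate for $G=\piA{1}(X)$, which is the case that matters.

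\emph{Your fallback.} It is both circular and too weak. Restricting to $G$ generated by the image of $\mathbf K_2^{\MW}$ presupposes that the meridian generates $\piA{1}(X)$, which is the content of the theorem. Lifting through central extensions with strictly invariant kernels, using $H^2$, only controls nilpotent quotients. That is at best a Stallings-type statement, and its homological input is already supplied by the suspension equivalence $\susp\phi$. Classically, every knot group has the same nilpotent quotients as $\Z$.

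\emph{Symbols.} The vanishing of iterated symbols you invoke holds only for constant units: $\{t,-1\}\neq0$ in $K_2^{\mathrm M}(\C(t))$.

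The missing idea is that the clutching data never live in $G$ itself but in its contractions, which are nilpotent for free. Correspondingly, the filtration must be by constant symbols, not by commutators. The paper's route is as follows.
\begin{enumerate}
\item An affine-line torsor over $B=\A^2-P$ carries a global linear height. Combined with a filling morphism and annular descent, this shows every torsor coming from $X$ is trivial on the height chart $B\times\Gm$.
\item A pointed torsor of this kind is a pointed section of $G_{-1}$ over $D=\A^2\setminus\Gamma$. Purity for the nodal curve gives it unique coordinates $(t,\lambda_1,\ldots,\lambda_r)\in G_{-2}(\C)\times G_{-3}(\C)^r$. So your node parameters lie in the abelian group $G_{-3}(\C)$, where a lower central series is useless.
\item The identity $[w][w]=[w][-1]$ makes $G_{-1}$ of class at most two and $G_{-n}$ abelian for $n\ge2$.
\item Full framed specialization turns extension across each node axis into $\ell+[\Delta]\beta_{2,2}(p,q)=0$ in $G_{-3}(\C)$.
\item Subtract the same equation for $\phi^*$ of the torsor with the same $t$. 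This gives a system for the defects $b_j=\lambda_j-\lambda_j^{\mathrm{ext}}(t)$ in which every term has an outer factor $[\Delta_i]$.
\item Filter by products of $m$ symbols $[\Delta_i]$ acting on the abelian contraction tower. This filtration vanishes at $m=r+1$, because $[\Delta]^2=[\Delta][-1]=0$ in $K_*^{\MW}(\C)$ and constant symbols anticommute. Hence $b=0$.
\end{enumerate}

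A smaller point concerns injectivity. Your slice must be a global morphism $\A^2_{\C}\setminus\{0\}\to X$, not a Nisnevich-local identification. A linear transversal plane meets the curve in several points, so its punctured version does not lie in $X$. The paper instead uses $(a,b)\mapsto(aP(b),b)$, with $P(0)=1$ and $P$ vanishing at the second coordinates of the other intersection points. Purity then shows that this composite with $\phi$ is an isomorphism on $\piA{1}$.
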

\end{samepage}

The proof uses torsors under arbitrary strongly $\A^1$-invariant
group sheaves. A
generic finite projection of the curve to a plane has finitely
many ordinary nodes. Away from the nodes, an affine-line torsor
allows one to translate the height of the embedded curve to zero.
The remaining clutching data have one normal coefficient and
finitely many additional coefficients associated with the nodes.
Extension across the vertical axis over each node imposes an equation
on those additional coefficients.

There are two essential features of the proof. First, specialization
must retain the full nonabelian chart gauges. Keeping only an
abelian residue would lose precisely the information being
computed. Second, the heights of the two branches over a node
need not differ by $1$. Their actual nonzero difference occurs
as a constant Milnor--Witt symbol in the node equation. We compare
an arbitrary torsor with the pullback of its normal-meridian
torsor, keeping these constants. The resulting finite system
strictly raises a filtration generated by the Milnor--Witt symbols
of the branch-height differences. If the projection has $r$ nodes, its $(r+1)$st level
is zero: the constant symbols anticommute and each has square
zero over $\C$. The additional constants in the chart coordinates
preserve this filtration and do not have to be included among its
generators. Thus all the extra coefficients vanish after at most
$r+1$ substitutions in the finite system.

Wendt and Wittich construct a universal abelian
$\mathbf K_2^{\MW}$-covering for embedded-line complements and
develop the associated Alexander-type invariant \cite{WW26}.
To pass from their abelian comparison to
Theorem~\ref{thm:intro-fundamental}, we prove the nonabelian torsor
comparison described above. It also determines their first
Alexander sheaf over $\C$.

\begin{corollary}
\label{cor:intro-alexander}
For every closed embedding $i:\A^1_{\C}\hookrightarrow\A^3_{\C}$,
the universal abelian $\mathbf K_2^{\MW}$-covering of Wendt and
Wittich is universal. Its total space satisfies
\begin{align*}
 \pi_1^{\A^1}\!\left(
   \widetilde{\A^3_{\C}\setminus i(\A^1_{\C})}\right)&=0,\\
 \mathbf H_1^{\A^1}\!\left(
   \widetilde{\A^3_{\C}\setminus i(\A^1_{\C})}\right)&=0.
\end{align*}
In particular, their first Alexander sheaf vanishes for every
complex embedded line.
\end{corollary}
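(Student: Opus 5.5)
The corollary should follow formally from Theorem~\ref{thm:intro-fundamental} together with the construction of the Wendt--Wittich covering \cite{WW26}. Write $X=\A^3_{\C}\setminus i(\A^1_{\C})$, fix compatible basepoints, and let $p:\widetilde X\to X$ be their universal abelian $\mathbf K_2^{\MW}$-covering. I take as input the property that characterizes it in \cite{WW26}: $p$ is a $\mathbf K_2^{\MW}$-torsor classified by the composite
\[
 X\longrightarrow B\pi_1^{\A^1}(X)\longrightarrow B\pi_1^{\A^1}(X)^{\mathrm{ab}}_{\MW}\xrightarrow{\ \sim\ }B\mathbf K_2^{\MW},
\]
where the middle map is the universal abelianization among strictly $\A^1$-invariant sheaves (equivalently, the map to $\HA{1}(X)$), followed by the comparison with the normal meridian. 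This comparison is induced by $(f,g)_*$ and is the abelian comparison that Wendt and Wittich prove.

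First I will show that $\pi_1^{\A^1}(\widetilde X)=0$. By Theorem~\ref{thm:intro-fundamental}, $\pi_1^{\A^1}(X)\to\mathbf K_2^{\MW}$ is already an isomorphism. Hence $\pi_1^{\A^1}(X)$ is abelian, indeed strictly $\A^1$-invariant, and the abelianization map above is an isomorphism. Therefore the classifying map of $p$ induces an isomorphism on $\pi_1^{\A^1}$.

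Next I apply the long exact sequence of $\A^1$-homotopy sheaves to the $\A^1$-fiber sequence $\widetilde X\to X\to B\mathbf K_2^{\MW}$. Since $\mathbf K_2^{\MW}$ is strongly $\A^1$-invariant, $B\mathbf K_2^{\MW}$ is $\A^1$-local with $\pi_1=\mathbf K_2^{\MW}$ and no other homotopy sheaves (Morel). The relevant part of the sequence is
\[
 \pi_2^{\A^1}(B\mathbf K_2^{\MW})=0\to\pi_1^{\A^1}(\widetilde X)\to\pi_1^{\A^1}(X)\xrightarrow{\sim}\mathbf K_2^{\MW}\to\pi_0^{\A^1}(\widetilde X)\to\pi_0^{\A^1}(X)=\ast .
\]
Here $X$ is $\A^1$-connected, because its complement has codimension $2$, by Morel's connectivity theorem. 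The sequence gives $\pi_1^{\A^1}(\widetilde X)=0$ and $\pi_0^{\A^1}(\widetilde X)=\ast$, so $\widetilde X$ is $\A^1$-simply connected. This is exactly universality: $p$ is the universal torsor under $\pi_1^{\A^1}(X)$.

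Second, I deduce $\HA{1}(\widetilde X)=0$ from Morel's $\A^1$-Hurewicz theorem. For a pointed $\A^1$-connected space, $\HA{1}$ is the free strictly $\A^1$-invariant abelianization of $\pi_1^{\A^1}$, so it vanishes when $\pi_1^{\A^1}$ does. Since Wendt and Wittich define the first Alexander sheaf as $\HA{1}(\widetilde X)$, with its $\mathbf K_2^{\MW}$-module structure coming from deck transformations, this sheaf vanishes.

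I expect the main obstacle to be the first step: matching Wendt--Wittich's construction with the torsor classified by $(f,g)_*$. In particular, I must check that the covering they build is the torsor obtained by pulling back along the normal map, up to isomorphism. If their definition goes through $\HA{1}(X)$ rather than through $\pi_1^{\A^1}$, I will additionally use the Hurewicz isomorphism $\HA{1}(X)\cong\pi_1^{\A^1}(X)^{\mathrm{ab}}$. Since Theorem~\ref{thm:intro-fundamental} shows that $\pi_1^{\A^1}(X)$ is already abelian, this identification holds.
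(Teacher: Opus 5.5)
Your proposal is correct and follows essentially the paper's argument. Theorem~\ref{thm:intro-fundamental}, together with the purity isomorphism on $\HA{1}$, makes the map $\piA{1}(X)\to\mathbf K_2^{\MW}$ classifying the Wendt--Wittich covering an isomorphism. Hence the fundamental sheaf of the total space, which is the kernel of that map, vanishes, and Hurewicz then gives $\HA{1}(\widetilde X)=0$.

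The one difference is that you derive the kernel identification and the $\A^1$-connectedness of $\widetilde X$ yourself, from the long exact sequence of $\widetilde X\to X\to B\mathbf K_2^{\MW}$. The paper instead cites \cite[Theorem~4.8]{WW26} for the kernel and simply asserts connectedness. The $\A^1$-connectedness of $X$ that you need is Lemma~\ref{geom:connected}.
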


\begin{proof}
The normal map is an isomorphism on the fundamental sheaf by
Theorem~\ref{thm:intro-fundamental}, and on first homology by
purity. Naturality of Hurewicz therefore makes the Hurewicz map
of the complement an isomorphism. The universal abelian covering
has fundamental sheaf equal to its kernel
\cite[Definition~4.4 and Theorem~4.8]{WW26}; hence that kernel is
zero. Its total space is $\A^1$-connected, so its first
$\A^1$-homology also vanishes.
\end{proof}

There is a second part of the knot-theoretic comparison. Classically
the first homology of the infinite cyclic cover of a knot exterior,
with the action of its deck group, is the Alexander module.
Wendt and Wittich replace that cyclic cover by the universal
abelian $\mathbf K_2^{\mathrm{MW}}$-covering and consider its
first $\A^1$-homology sheaf \cite[Definition~4.11]{WW26}.

In the second part we construct polynomial rectifications using
finite inverses and affine modifications.
An automorphism of a finite
quotient modulo a power of a parameter can be lifted to a
polynomial automorphism after adding one suitable coordinate.
Exact divisibility proves polynomiality along the exceptional
divisor, where an inverse written only after localization would
be insufficient.

\begin{theorem}
\label{thm:intro-concrete}
Put $\mathcal R=\mathbb Z[a]/(a^2-a+1)$.
The Shastri parametrization
\[
 t\longmapsto(t^3-3t,\ t^4-4t^2,\ t^5-10t)
\]
and the harmonic Chebyshev parametrization
\[
 t\longmapsto
 \bigl(t^3-3t,\ t^5-5t^3+5t,\
              t^7-7t^5+14t^3-7t\bigr)
\]
admit polynomial rectifiers and polynomial inverses over $\mathcal R$.
These identities remain valid after every base change from $\mathcal R$.
In particular, both parametrizations are rectifiable over every
field containing a root of $a^2-a+1$, including in
characteristics two and three, and hence over $\C$.
\end{theorem}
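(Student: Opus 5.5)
The plan is to produce explicit polynomial automorphisms $\Phi$ of $\A^3_{\mathcal R}$ with explicit polynomial inverses $\Psi$, and to check the required identities symbolically over $\mathcal R$. Since every identity will be a polynomial identity with coefficients in $\mathcal R$, base change is automatic. Concretely, for each parametrization $\gamma(t)=(p,q,r)$ we want polynomials $\Phi,\Psi\in\mathcal R[x,y,z]^3$ with
\[
 \Phi\circ\Psi=\id,\qquad \Psi\circ\Phi=\id,\qquad \Phi(\gamma(t))=(t,0,0).
\]
Any ring map $\mathcal R\to k$ carries these identities to identities over $k$. A field containing a root of $a^2-a+1$ receives such a map, so rectifiability over $\C$, and in characteristics two and three, follows at once.

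\textbf{Step 1: reduce in the first two coordinates.} Use triangular (de Jonquières) moves to lower the degrees. Since $p=t^3-3t$ has degree $3$, subtract polynomials in $x$ from $y$ and $z$ to reduce $q,r$ modulo the ring $\mathcal R[p]$ as far as possible. For the Shastri curve,
\[
 t^4-4t^2 \;-\; t\,p \;=\; -t^2,
\]
so after moves of the form $y\mapsto y+h(x,z)$ and $z\mapsto z+h(x,y)$ the image curve becomes simpler. The obstruction is that the images in $\mathcal R[t]$ are not generated triangularly; this is exactly why the curves are interesting knots.

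\textbf{Step 2: finite-quotient lifting via affine modification.} Following the method announced after Theorem~\ref{thm:intro-fundamental}, choose a parameter $s$, for example a coordinate vanishing on a node fiber or along the curve. Work modulo $s^N$, where the curve restricted to the finite scheme $\Spec \mathcal R[t]/(s^N)$ can be straightened by an automorphism of the quotient ring. Then lift that automorphism to a polynomial automorphism after adding one coordinate; concretely this is an affine modification along $\{s=0\}$, with new variable $w=(\text{stuff})/s^N$. Exact divisibility by $s^N$ must be verified so that the formulas are polynomial along the exceptional divisor.

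\textbf{Step 3: close the computation.} Compose the moves, write the inverse of each move explicitly (each triangular move inverts trivially, and each modification step has the inverse constructed alongside it), and verify the three identities above by symbolic expansion in $\mathcal R[x,y,z,t]$, reducing with $a^2=a-1$. The role of $a$ is that of a primitive sixth root of unity. The Chebyshev curves have a $\Z/6$-type symmetry, $t\mapsto -t$ together with the dihedral Chebyshev structure, and the coefficient $a$ should enter when diagonalizing the node-swapping symmetry used in the modification. Its appearance will reflect cancellations such as $1-a+a^2=0$ in the divisibility checks.

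\textbf{Main obstacle.} The hard step is Step 2: finding the right parameter $s$ and the quotient automorphism such that the lifted map is exactly divisible and is invertible over $\mathcal R$ rather than merely over $\mathcal R[1/6]$. I would first try $s=x$ or $s=p$-translates through the node fibers, and search for the quotient automorphism by solving linear systems for undetermined coefficients. Invertibility over $\mathcal R$, including in characteristics two and three, requires that every pivot encountered be a unit of $\mathcal R$ such as $\pm1$, $\pm a$, or $a-1$. I would confirm this at each stage rather than at the end.
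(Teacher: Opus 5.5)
Your outer frame is fine: once $\Phi\circ\Psi=\Psi\circ\Phi=\mathrm{id}$ and $\Phi\circ\gamma=(t,0,0)$ hold as polynomial identities over $\mathcal R$, every base change $\mathcal R\to k$ preserves them. But the proposal stops where the proof has to begin. The theorem is an existence statement, and its whole content is the rectifiers themselves. Your Step~2 is only a search plan (``I would first try $s=x$\dots''), with no argument that it succeeds. Since rectifiability of lines in $\A^3$ is open in general, nothing guarantees that it does.

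Step~2 also misdescribes the mechanism. Straightening the curve on a finite scheme $\{s^N=0\}$ and then lifting gives an automorphism that straightens the curve only modulo $s^N$, not globally. Two separate conditions are needed:
\begin{enumerate}
\item an ambient polynomial $H$ with the \emph{exact} identity $H(\gamma(t))=t$;
\item in suitable linear coordinates, $H=P(x,v)+m(x)w$ with $P$ a one-variable automorphism over $\mathcal R[x]/(m)$.
\end{enumerate}
Lemma~\ref{lem:finite-inverse} then yields the companion and the inverse by exact divisions, and Lemma~\ref{lem:graph-subtraction} turns $(x,H,W)$ into a rectifier.

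Concretely, the paper does the following.
\begin{itemize}
\item The affine change $M(x,y,z)=(x,y+2,z-5x)$ takes the Shastri curve to $(C_3,C_4,C_5)$.
\item Put $u=z+axy$ and $H=h(x,u)+(a-1)x^5y$, with $h$ as in \eqref{eq:trefoil-h}. The $u$-linear coefficient $-1+ax^2-ax^4$ of $h$ is a unit modulo $x^5$, and its $u^2$-coefficient $(1-a)x$ is nilpotent.
\item The explicit inverse $f$ modulo $x^5$ has coefficients in $\mathcal R$ with no integer denominators, and $W=a(u-f(x,H))/x^5$.
\item The identity $H(C_3,C_4,C_5)=t$ must actually be verified. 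The relation on $a$ is already used in the top coefficient: the $t^{19}$-coefficient is $a-1-a^2=0$. No symmetry argument is involved.
\end{itemize}

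For the figure-eight you propose a second independent search, and you miss the idea that makes it cheap. By \eqref{eq:chebyshev-modification}, $C_3C_7-C_5^2+2=C_4$, so the modification $\beta(x,y,z)=(x,xz-y^2+2,y)$ carries $(C_3,C_5,C_7)$ to the normalized trefoil $(C_3,C_4,C_5)$. Since $H(0,Y,Z)=-Z$, the centre polynomial is $H(0,2-y^2,y)=-y$, which is trivially an automorphism modulo $x$. Theorem~\ref{thm:coordinate-transfer} with $m=x$ and $p=2-y^2$ then transfers $(x,H,W)$ to a coordinate system $(x,H_{\mathrm{harmonic}},Q_{\mathrm{harmonic}})$ over $\mathcal R$, with $H_{\mathrm{harmonic}}(\gamma_{\mathrm{harmonic}}(t))=t$. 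Graph subtraction finishes.

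Finally, the reduction in your Step~1 is not an ambient move. The product $t\,p$ is not a polynomial in $p$ and $r$, since $\C[p,r]=\C[C_3,C_5]$ contains no element of degree $4$. So no move $y\mapsto y+h(x,z)$ removes it. The paper's only preliminary move is the affine change $M$.
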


The second part gives the rectifiers and verifies their inverse identities;
reproducible symbolic checks are available in \cite{DorofeevRepository}.

\subsection{Further examples}
\label{subsec:intro-unresolved}

We have not obtained polynomial rectifiers over $\C$ for the
following embeddings:
\begin{align}
 t&\longmapsto
 \bigl(t^3-3t,\ t^5-5t^3+4t,\
             t^7-7t^5+14t^3-7t\bigr),
 \label{eq:intro-unresolved-chebyshev}\\
 t&\longmapsto\bigl(t^3,\ t^5,\ t+t^2+2t^4\bigr),
 \label{eq:intro-unresolved-monomial}\\
 t&\longmapsto\bigl(t^n,\ t^{n+1},\ t+t^{n+2}\bigr),
       \qquad n\geq5.
 \label{eq:intro-unresolved-successive}
\end{align}

The coefficient $4t$ in
\eqref{eq:intro-unresolved-chebyshev} distinguishes this case from
the harmonic embedding with $5t$, which is rectified in
Theorem~\ref{thm:harmonic-rectification}. The first unresolved
case also has the presentation
\[
 t\longmapsto
 (t^3-3t,\ t^5-5t^3+4t,\ t^7-42t):
\]
the affine ambient shear $(x,y,z)\mapsto(x,y,z-7y-21x)$
takes this parametrization to
\eqref{eq:intro-unresolved-chebyshev}.

Each map in \eqref{eq:intro-unresolved-chebyshev}--
\eqref{eq:intro-unresolved-successive} is a closed embedding.
Parameter-recovery polynomials, in the same order, are
\begin{align*}
 P_4(x,y,z)={}&-x(y-x)^2-3x^2(y-x)+4(y-x)\\
             &+(x^2-3)(z+y-4x),\\
 P_{3,5}(x,y,z)={}&z-z^2+2x+5y+2x^2+4xy-xz+2yz-4x^3,\\
 P_n(x,y,z)={}&z\sum_{j=0}^{n-1}(-y)^j+(-1)^n x^ny.
\end{align*}
Their restrictions to the corresponding curves are $t$.
For the last identity, use
$z=t(1+y)$ and $x^ny=t y^n$, followed by the finite geometric
sum. These polynomials recover the parameter; they do not constitute
rectifications.

\subsection{Comparison with knot theory}

For a smooth oriented knot $K\subset S^3$, choose a closed
tubular neighborhood $\nu K\cong S^1\times D^2$. A meridian
on its boundary torus bounds a normal disk in $\nu K$; the
preferred longitude is a parallel copy of $K$ with linking
number zero. Every exterior
$S^3\setminus\operatorname{int}\nu K$ has first homology
$\mathbb Z$, generated by a meridian. The fundamental group
detects the unknot: $K$ is the unknot precisely when
\[
 \pi_1\bigl(S^3\setminus\operatorname{int}\nu K\bigr)
       \cong\mathbb Z
\]
\cite[Theorem~3]{Pap57}. The preferred longitude on the boundary
torus is homologically zero. If the group is $\mathbb Z$,
that longitude is null-homotopic, and Dehn's lemma gives a
properly embedded disk in the exterior with this boundary.
Joining it to the annulus between the longitude and the knot
produces a disk in $S^3$ bounding $K$. The longitude is the class
filled by Dehn's lemma; the meridian remains nonzero in homology,
even for the unknot.

For a complex embedded affine line, the normal puncture is
$\A^2_{\C}\setminus\{0\}$ and its complex normal link is
$S^3$, because the real codimension is four. This link is simply
connected, whereas the motivic puncture has fundamental sheaf
$\mathbf K_2^{\mathrm{MW}}$. A framing
identifies the punctured normal bundle with
\[
 N_{i(\A^1_{\C})/\A^3_{\C}}^{\times}
       \cong\A^1_{\C}\times(\A^2_{\C}\setminus\{0\}),
\]
where the zero section is removed. Because the affine-line factor is
$\A^1$-contractible, there is no second fundamental-group factor
corresponding to the longitude on a boundary torus. This normal model
does not by itself define an algebraic tubular map into the complement.
An analogue of the embedded-disk step would still be needed to obtain
ambient polynomial coordinates.

The real polynomial examples further clarify the analogy.
A nontrivial real long knot cannot be rectified by a real
polynomial automorphism, whereas an automorphism over $\C$
need not preserve the real locus. The explicit rectifications
above therefore coexist with the real knot types of their
parametrizations.

\subsection{Further questions}

\begin{question}\label{qu:intro-main}
Does it follow from Theorem~\ref{thm:intro-fundamental} that every
closed embedding $i:\A^1_{\C}\hookrightarrow\A^3_{\C}$ is
rectifiable over $\C$? More explicitly, can the common formula
\[
 \pi_1^{\A^1}\!\left(\A^3_{\C}\setminus i(\A^1_{\C})\right)
       \cong\mathbf K_2^{\MW}
\]
and the actual normal comparisons used to prove it be strengthened
to produce, for every $i$, a polynomial automorphism $\Phi$ of
$\A^3_{\C}$ such that $\Phi(i(t))=(t,0,0)$?
\end{question}

Question~\ref{qu:intro-main} remains open in this manuscript.
An equivalent geometric formulation asks for generators $f,g$ of
the curve ideal such that $(f,g):\A^3\to\A^2$ has a polynomial
section and generic fiber $\A^1_{\C(f,g)}$.
Corollary~\ref{cor:normal-projection-rectification} constructs a
rectifier from these data by successive exact divisions. Conversely,
a rectifier with coordinates $(T,f,g)$ supplies the section by its
inverse at $(0,u,v)$.

Proposition~\ref{prop:bounded-inverse-jets} likewise identifies
rectifiability with the existence of inverse polynomial families of
bounded spatial degree along the contraction to the tangent line.
A quantitative solution would give explicit bounds, in terms of the
component degrees of the embedding, for the degrees of both a rectifier
and its inverse.

Over infinite perfect fields of characteristic different from two,
Proposition~\ref{prop:other-field-homology} reduces the normal-map
comparison to abelianity of the fundamental sheaf.

\begin{question}\label{qu:intro-fields}
For which infinite perfect fields $k$ of characteristic different
from two is
\[
 \pi_1^{\A^1}\!\left(\A^3_k\setminus i(\A^1_k)\right)
\]
abelian for every closed embedding $i:\A^1_k\hookrightarrow\A^3_k$?
Can the node argument be extended when projection nodes do not split
over $k$, or when constant Milnor--Witt symbols retain nontrivial
quadratic information?
\end{question}

The intrinsic characteristic-two comparison is stated separately in
Question~\ref{qu:other-characteristic-two}.

Theorem~\ref{thm:coordinate-transfer} tests a specified coordinate
and modification. Applying it to a given curve requires choosing these
data, possibly through several transfers.

\begin{question}\label{qu:intro-families}
Can a polynomial rectifier for
\eqref{eq:intro-unresolved-chebyshev}, whose second component contains
$+4t$, be constructed by repeated applications of
Theorem~\ref{thm:coordinate-transfer} and affine coordinate changes?
More generally, which closed Chebyshev parametrizations admit a finite
chain of these transfers starting from the rectified models in
Section~\ref{sec:rectifications}?
\end{question}

The displayed formulas give both rectifications over
$\mathbb Q(\sqrt{-3})$. The real-knot obstruction excludes
$\mathbb Q$, but gives no answer over other imaginary quadratic
fields. The next question tests descent to a field which has no real
embeddings and does not contain the coefficients used in our formulas.

\begin{question}\label{qu:intro-descent}
Do the two parametrizations with rational coefficients in
Theorem~\ref{thm:intro-concrete} admit polynomial rectifiers over
$\mathbb Q(\sqrt{-1})$?
\end{question}

\subsection{Organization of the paper}

Section~\ref{sec:reminders} recalls the motivic constructions
used in the proofs. The first part develops contractions,
specialization, the finite-node geometry, and the full torsor
comparison proving Theorem~\ref{thm:intro-fundamental}. The
second part treats polynomial rectification constructions and
the explicit examples of Theorem~\ref{thm:intro-concrete}.
The subsequent section studies the proposed geometric route to
universal rectifiability and states its missing steps as questions.
The appendix separates the results available over other fields
from the specifically complex ingredients of the proof.

\section*{Acknowledgements}

We thank Professor Alexey Kanel-Belov for drawing our attention
to the rectifiability problem and for suggesting the use of
knot-theoretic intuition. We thank Timofey Fedorov and Savva
Shatov for useful discussions. The second author expresses deep
gratitude to Professor Alexander Gasnikov. Both authors are members of the
``Algebra, Geometry and Number Theory'' seminar
[\url{https://agnt.thematdev.org}] at the Higher School
of Modern Mathematics, Moscow Institute of Physics and Technology.

\section*{Disclosure of AI assistance}

Generative AI tools, including ChatGPT Sol 6.1, were used in developing the research notes
and preparing this manuscript, including literature searches,
mathematical exploration, symbolic computations, organization,
drafting, and revision. Some arguments were developed with AI
assistance. The named authors retain responsibility for the
mathematical content, the interpretation of the cited sources,
and the verification required before submission.

\part{The motivic fundamental sheaf}
\section{Definitions and reminders on motivic invariants}\label{sec:reminders}

We recall the motivic constructions used in the first part. The
fundamental-group invariant is a Nisnevich sheaf whose sections vary
over smooth schemes. Throughout the main text the ground field is $\C$.

\subsection{The Nisnevich topology and \texorpdfstring{$\A^1$}{A1}-homotopy}

The relevant topology is designed to retain residue fields when
passing to local coordinates. This matters in a calculation involving
normal directions and local degrees.

\begin{definition}
A family of \'etale morphisms $\{X_j\to X\}$ is a
\emph{Nisnevich covering} if every point $x\in X$ has a lift
$x_j\in X_j$ for which the induced residue-field map
$\kappa(x)\to\kappa(x_j)$ is an isomorphism. We write $\Sm_k$ for
the category of smooth schemes of finite type over $k$, equipped with
this topology.
\end{definition}

One starts with simplicial Nisnevich sheaves on $\Sm_k$ and formally
makes each projection $X\times\A^1\to X$ a weak equivalence.
The resulting homotopy theory is the unstable motivic homotopy theory
of Morel and Voevodsky \cite{MV99}. Equivalence here is taken in this
localized category; it need not be represented by a polynomial
isomorphism between the original schemes.

For a pointed space $(X,x)$, its fundamental sheaf
$\pi_1^{\A^1}(X,x)$ is the Nisnevich sheaf associated with the
presheaf of pointed maps $S_s^1\wedge T_+\to X$ in this category.
Here $S_s^1$ denotes the simplicial circle, and $T$ ranges over
$\Sm_k$. The superscript $\A^1$ therefore changes both the maps
and the equivalence relation used to define a fundamental group.
It does not mean the topological fundamental group of the complex
points.

The complements considered here are $\A^1$-connected by
Lemma~\ref{geom:connected}. We fix rational basepoints and compatible
paths; other choices change induced homomorphisms by inner automorphisms.

\subsection{Strong invariance, strict invariance, and torsors}

Maps to classifying spaces provide a way to test a fundamental sheaf
without choosing presentations for it. This is useful because the
fundamental sheaf of an arbitrary complement is not assumed to be
abelian at the outset.

\begin{definition}
A Nisnevich sheaf of groups $\mathcal Q$ is \emph{strongly
$\A^1$-invariant} if pullback induces bijections
\[
 H^j_{\Nis}(X,\mathcal Q)\xrightarrow{\sim}
 H^j_{\Nis}(X\times\A^1,\mathcal Q),\qquad j=0,1,
\]
for every smooth $X$. The group $H^0$ is the group of sections;
$H^1$ is the pointed set of isomorphism classes of Nisnevich
$\mathcal Q$-torsors. An abelian sheaf $\mathcal M$ is
\emph{strictly $\A^1$-invariant} if this condition holds for every
$j\ge0$.
\end{definition}

Over a perfect field, an abelian strongly $\A^1$-invariant sheaf is
strictly $\A^1$-invariant \cite{Morel12}. For a strongly invariant
group sheaf, its classifying space $B\mathcal Q$ is $\A^1$-local.
Consequently a morphism into $B\mathcal Q$ records an actual
Nisnevich torsor up to isomorphism. For an $\A^1$-connected pointed
space, pointed morphisms into $B\mathcal Q$ correspond to
homomorphisms from its fundamental sheaf to $\mathcal Q$;
forgetting the pointing identifies conjugate homomorphisms.

We will also use unramifiedness. A strongly invariant group sheaf
on a smooth integral scheme embeds into its group of sections over
the function field, and a rational section extends if it extends
at every point of codimension one. In particular, deleting a closed
subset of codimension at least two does not change its sections.
It does not follow that every torsor on the smaller open extends.
What does follow is that an isomorphism between two torsors already
defined on the larger scheme extends uniquely: locally such an
isomorphism is a section of $\mathcal Q$, and uniqueness permits
the extensions to glue.

\subsection{Contractions and Milnor--Witt \texorpdfstring{$K$}{K}-theory}

The multiplicative group $\Gm=\A^1\setminus\{0\}$ is a second
basic circle in motivic homotopy theory. Evaluation at its basepoint
$1$ gives an algebraic loop construction for sheaves.

\begin{definition}
The \emph{contraction} of a strongly invariant group sheaf
$\mathcal Q$ is
\[
 \mathcal Q_{-1}(X)=
 \ker\bigl(\mathcal Q(X\times\Gm)
       \xrightarrow{\,t=1\,}\mathcal Q(X)\bigr).
\]
Further contractions are obtained by repeating this operation.
Thus a section of $\mathcal Q_{-n}$ is a family on $n$
multiplicative coordinates that becomes the identity if any one
of those coordinates is set equal to $1$.
\end{definition}

The notation $\mathbf K_n^{\MW}$ denotes the unramified
Milnor--Witt $K$-theory sheaf, not a constant sheaf. Its value on
a field $F$ is the degree-$n$ part of the graded ring generated
by $[a]$, for $a\in F^\times$, in degree $1$, and $\eta$ in
degree $-1$, with relations
\begin{align*}
 [ab]&=[a]+[b]+\eta[a][b], & [a][1-a]&=0\quad(a\ne1),\\
 \eta[a]&=[a]\eta, & \eta(2+\eta[-1])&=0.
\end{align*}
For smooth schemes, its sections are the unramified sections of
these groups at their function fields. Its degree-zero part is
the Grothendieck--Witt sheaf; the class $\langle a\rangle$
of a one-dimensional quadratic form equals $1+\eta[a]$.
The quotient by $\eta$ is ordinary Milnor $K$-theory.

The calculation of the punctured affine plane is the basic
sphere calculation used throughout:
\begin{equation}\label{eq:punctured-plane-reminder}
 \A^2_k\setminus\{0\}\simeq_{\A^1}
 S_s^1\wedge\Gm^{\wedge2},\qquad
 \pi_1^{\A^1}(\A^2_k\setminus\{0\})\cong\mathbf K_2^{\MW}.
\end{equation}
The second isomorphism is Morel's calculation \cite{Morel12}.
In particular, for every strongly invariant group sheaf
$\mathcal Q$, it gives the universal property
\begin{equation}\label{eq:square-universality-reminder}
 \Hom(\mathbf K_2^{\MW},\mathcal Q)
 \cong \mathcal Q_{-2}(k).
\end{equation}
This is valid for a possibly nonabelian target. One obtains it
by mapping the suspended square in
\eqref{eq:punctured-plane-reminder} into $B\mathcal Q$ and
then looping simplicially. No corresponding nonabelian universal
property for every higher Milnor--Witt sheaf is being asserted.

Over $\C$, the positive-degree groups $K_n^{\MW}(\C)$ agree
with $K_n^{\mathrm M}(\C)$. Two consequences that enter the
finite-node calculation are
\begin{equation}\label{eq:constant-squares-reminder}
 [a]^2=[a][-1]=0\quad(a\in\C^\times),
 \qquad [-1]^2=0.
\end{equation}
Indeed, the Milnor identity $\{a,a\}=\{a,-1\}$ and a square
root $a=b^2$ give
$\{a,-1\}=2\{b,-1\}=\{b,(-1)^2\}=0$.
These are statements about constant symbols. They do not identify
$\mathbf K_2^{\MW}$ with $\mathbf K_2^{\mathrm M}$ as sheaves
on $\Sm_{\C}$: the residue fields of smooth complex schemes
need not be algebraically closed.

\subsection{Normal framings and homotopy purity}

Two equations for the embedded curve produce an actual map from
its complement to the punctured plane. This is more information
than an abstract identification of an invariant.

\begin{proposition}\label{prop:CI-reminder}
For every closed embedding $i:\A^1_k\hookrightarrow\A^3_k$,
its defining ideal is generated by two polynomials. In particular,
over $\C$ one can choose $f,g\in\C[x,y,z]$ with
\[
 I_{i(\A^1)}=(f,g),\qquad
 (f,g):\linecomplement\longrightarrow\A^2_{\C}\setminus\{0\}.
\]
\end{proposition}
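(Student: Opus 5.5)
Proposition~\ref{prop:CI-reminder} is a classical fact, and we plan to deduce it from the structure of the coordinate ring and the Quillen--Suslin theorem. Let $C=i(\A^1_k)$, $A=k[x,y,z]$ and $I=I_C$, so that $A/I\cong k[t]$. The strategy is to show that the conormal module $I/I^2$ is a free $k[t]$-module of rank two. Then we use Serre's/Murthy's result, valid for a smooth curve in affine space, that a locally complete intersection ideal with free conormal module is generated by $\operatorname{codim}=2$ elements.

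The first step uses that $C$ is smooth of codimension two in the smooth variety $\A^3_k$. Hence $I/I^2$ is a projective $k[t]$-module of rank two, namely the conormal bundle. Since $k[t]$ is a principal ideal domain, every finitely generated projective module over it is free, so $I/I^2\cong k[t]^2$.

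The second step lifts this basis of $I/I^2$ to generators of $I$ itself; this is the main point. Choose $f_0,g_0\in I$ whose classes form a basis of $I/I^2$. By Nakayama's lemma applied over $A$ localized along $C$, the ideal $J=(f_0,g_0)$ agrees with $I$ in a neighborhood of $C$. Thus $J=I\cap J'$ with $V(J')$ disjoint from $C$, that is, $I+J'=A$. Now we use the standard argument (Mohan Kumar/Murthy, or directly Forster--Swan style): pick $h\in J'$ with $h\equiv1\bmod I$, and modify $g_0$ by an element of $I^2$ to kill the extra component. Concretely, the classical theorem that for a smooth affine curve in $\A^3$ with trivial conormal bundle and $K_0$-triviality (here $K_0(A)=\Z$ by Quillen--Suslin, since projective $A$-modules are free), the ideal is a set-theoretic and in fact ideal-theoretic complete intersection. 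This is Murthy's theorem: for $\dim A=3$, an ideal $I$ of height two with $I/I^2$ free and $A$ having trivial Chow/projective modules is generated by two elements. The Quillen--Suslin theorem supplies the triviality of projective modules over $k[x,y,z]$, which is exactly the input Murthy's criterion needs.

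The main obstacle is this lifting from $I/I^2$ to $I$. We expect to handle it either by citing Murthy's complete-intersection theorem directly, or by an explicit argument. For the explicit argument, Serre's construction produces a rank-two projective $A$-module $P$ surjecting onto $I$ with $\det P$ trivial. By Quillen--Suslin $P\cong A^2$, and the images of its basis are the required $f,g$. Once $I=(f,g)$, the map $(f,g)$ sends the complement $\A^3\setminus C=\A^3\setminus V(f,g)$ into $\A^2\setminus\{0\}$ tautologically, which gives the displayed morphism over $\C$.
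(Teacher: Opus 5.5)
Your proposal is correct and follows the paper's route. The conormal bundle is free because $k[t]$ is a PID, and global generation then comes from the codimension-two complete-intersection theorem, which the paper cites as \cite[Theorem~2.6]{WW26} and also sketches, in the proof of Proposition~\ref{geom:normal-map} and in the appendix, as exactly your Serre construction followed by Quillen--Suslin. The one weak spot is your intermediate suggestion to ``modify $g_0$ by an element of $I^2$ to kill the extra component''. That is not an argument on its own, but it is also unnecessary once you invoke the Serre construction.
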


\begin{proof}
The embedding is regular of codimension two, and its normal bundle
is a rank-two vector bundle on $\A^1_k$, hence is free. The
complete-intersection theorem for codimension-two embeddings with
trivial normal bundle then applies. We use the formulation and
references collected in \cite[Theorem 2.6]{WW26}. Triviality of
the normal bundle alone is a local statement; the
complete-intersection theorem is the additional input that yields
global generators of the ideal.
\end{proof}

The ordered pair $(f,g)$ trivializes the conormal bundle
$I/I^2$, and dually the normal bundle. Homotopy purity identifies
the quotient of the ambient space by the complement with the
Thom space of that normal bundle \cite[Theorem 3.2.23]{MV99}.
Since the curve and the ambient affine space are both
$\A^1$-contractible, comparison with the zero section of
$\A^2$ gives
\begin{equation}\label{eq:suspended-normal-reminder}
 \Sigma_s(\linecomplement)
 \xrightarrow[\ \Sigma_s(f,g)\ ]{\sim}
 \Sigma_s(\A^2_{\C}\setminus\{0\}).
\end{equation}
This calculation already determines the reduced
$\A^1$-homology sheaves, and in particular
\begin{equation}\label{eq:H1-reminder}
 \mathbf H_1^{\A^1}(\linecomplement)
       \cong\mathbf K_2^{\MW}.
\end{equation}
It does not permit one to cancel the suspension in
\eqref{eq:suspended-normal-reminder} or to replace the Hurewicz
map by an isomorphism without further work. The proof of the
full fundamental-sheaf calculation addresses precisely this
remaining nonabelian problem.

\begin{remark}
An ordered normal framing fixes the orientation of all the
comparison maps. Replacing it by a matrix with constant determinant
$d\in k^\times$ changes the induced Milnor--Witt degree by
$\langle d\rangle$. Over $\C$ this factor equals $1$.
The corresponding assertion over another field must retain the
quadratic factor; we return to this in the appendix.
\end{remark}

\section{Nodal projections and height models}
\label{sec:geometry}

The normal bundle gives a comparison with the punctured affine plane,
but it initially controls only the suspended complement. To recover the
fundamental sheaf before suspension, we use the embedding and
choose a plane projection. The finitely many double points of that
projection will provide the finite list of possible defects. An affine
line torsor then lets us extend the height globally without changing the
torsors that classify maps to strongly invariant group sheaves.

\begin{lemma}\label{geom:connected}
The complement of a codimension-at-least-two closed subset of $\A^3_\C$
is $\A^1$-connected. In particular $\A^3_{\C}\setminus i(\A^1_{\C})$ is $\A^1$-connected.
\end{lemma}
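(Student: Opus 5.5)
The plan is to use Morel's unstable connectivity criterion: a space is $\A^1$-connected exactly when its sheaf $\pi_0^{\A^1}$ is trivial. Since $\pi_0^{\A^1}(X)$ is a quotient of the Nisnevich sheaf $\pi_0$ of the simplicial sheaf represented by $X$, it suffices to check connectivity on stalks, that is, on Henselian local essentially smooth schemes. Write $U=\A^3_{\C}\setminus Z$ with $\operatorname{codim} Z\ge2$. I will aim to show that any two points of $U$ over a Henselian local base $S$ (in fact over any field extension $F/\C$) are joined by a finite chain of naive $\A^1$-homotopies, meaning morphisms $\A^1_S\to U$. This \emph{chain connectedness} implies $\A^1$-connectedness by the standard argument of Morel--Voevodsky (the singular construction $\operatorname{Sing}_*U$ already has trivial $\pi_0$ on stalks, and the $\A^1$-localization map is $\pi_0$-surjective).

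The key geometric step is as follows. Let $S$ be Henselian local with closed point $s$, and let $a,b\in U(S)$. Consider the affine line in $\A^3_S$ through $a$ and $b$, or, when $a=b$ at some point, a general line through $a$ in a direction $v\in\A^3(S)$. The space of lines through $a$ has dimension $2$ over each point of $S$, whereas the lines meeting $Z$ form a family of dimension at most $\dim Z+1-1\le1$ in that space, since $\dim Z\le1$ and each point of $Z$ determines one line through $a$. Hence a generic direction $v$ gives a line $\ell_v(t)=a+tv$ missing $Z$ over the closed fiber. Because $Z$ is closed, this then holds over all of $S$ by properness of the projectivized lines, or more precisely by ensuring that the intersection $\ell_v\cap Z$ is empty as a scheme over $S$. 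I would arrange this by choosing $v$ with constant coordinates in $\C^3$, which is possible since $\C$ is infinite and the bad directions form a proper closed subset.

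One then joins $a$ to a generic $\C$-point $c$ of $U$ by a two-step chain $a\to a+v\to c$, using piecewise-linear paths with generic constant directions, and likewise joins $b$ to $c$. In the ``in particular'' statement, $Z=i(\A^1_{\C})$ has dimension $1$, so it is a special case.

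The main obstacle is doing the genericity uniformly over a non-field base $S$. The bad set of directions must be shown to be contained in a proper closed subset of the constant parameter space $\A^3_{\C}$ that is independent of the point of $S$. A bare dimension count on fibers does not give this, because the bad locus varies with $a$. I would handle it by working with $\A^3_S$ and the incidence variety $\{(v,t,z): a+tv=z\in Z\}$. Its image in the space of directions $v$ has relative dimension at most $2$ over $S$ and so cannot dominate $\A^3_S$. Its closure therefore misses a constant $\C$-point $v$ over the generic point, and over the closed point as well after an appropriate choice. If this becomes awkward, the fallback is to reduce to fields: Morel's criterion lets one test $\pi_0^{\A^1}$ on finitely generated field extensions $F/\C$, using unramifiedness of $\pi_0^{\A^1}$-type sheaves in the form of the Morel--Asok results for chain-connected schemes. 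Over a field the dimension count above suffices directly.
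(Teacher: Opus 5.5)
Your fallback is the paper's proof. Over each finitely generated $F/\C$, join two $F$-points of the complement through one general intermediate point lying off the two cones over the removed set. Each cone has dimension at most two, and $\C^3$ is Zariski dense in $\A^3_F$, so such a point exists. Then invoke the chain-connectedness criterion for smooth schemes (Asok--Doran, Section~2). That route is complete, and there the detour through $a+v$ is unnecessary, since a single intermediate point suffices.

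Your primary Henselian-local route is genuinely different. It would show that the sheaf $\pi_0(\operatorname{Sing}_*U)$ is already trivial on stalks, so it needs only the Morel--Voevodsky epimorphism onto $\pi_0^{\A^1}(U)$, not the criterion that fields suffice. As written, however, it has two faulty steps.

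\emph{The second leg.} The leg from $a+v$ to a constant point $c$ in general cannot have constant direction when $a$ is not constant. Use the segment $(1-t)a+tc$ directly instead.

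\emph{The uniformity step.} The inference ``the bad locus has relative dimension at most $2$ over $S$, so its closure misses a constant point over the closed point'' is false in general once $\dim S\ge 2$. For example, the closed set $\{xv_1+yv_2=0\}$ over the henselized plane at the origin has planar generic fibre but special fibre all of $\A^3$. What actually controls the special fibre is properness of the closure $\overline Z\subset\mathbb P^3_{\C}$; your passing remark about projectivized lines is the right tool, but it must be applied to $\overline Z$ and must exclude the asymptotic directions of $Z$.

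Concretely, choose $c\in\C^3$ satisfying three conditions:
\begin{itemize}
\item $c$ lies off the cone over $Z$ from $a(s_0)$;
\item $c\neq a(s_0)$;
\item $c-a(s_0)$ is not parallel to any of the finitely many asymptotic directions of $Z$.
\end{itemize}
Consider the set of $s\in S$ at which the projective line through $a(s)$ and $c$ meets $\overline Z$. It is closed, being the image of a closed subset of $\mathbb P^1_S$. It omits $s_0$, so it is empty, because every nonempty closed subset of a local scheme contains the closed point. The same locality argument shows $c-a(s)\neq0$ for all $s$, so the line is defined.

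With this repair, and the same choice of $c$ made simultaneously for $b$, your stalkwise argument works over any local base. The paper's field-level argument is shorter because over a field only the dimension count is needed.
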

\begin{proof}
Over any finitely generated extension $F/\C$, two $F$-points of the
complement can be joined through a sufficiently general intermediate
$F$-point by two affine lines which avoid the closed subset.
For a fixed endpoint, the union of lines meeting that subset has
dimension at most two; hence the forbidden intermediate points lie in
a proper closed subset of $\A^3_F$. Since $F$ is infinite, a permitted
intermediate point exists. This gives $\A^1$-chain connectedness over
all such fields, and the standard chain-connectedness criterion for
smooth schemes implies $\A^1$-connectedness; see
\cite[Section~2]{AD09}.
\end{proof}

\begin{proposition}\label{geom:normal-map}
The ideal of $i(\A^1)$ is generated by a regular sequence $F_1,F_2$.
The normal map $\phi:\bigl(\A^3_{\C}\setminus i(\A^1_{\C})\bigr)\to \bigl(\A^2_{\C}\setminus\{0\}\bigr)$ induces an equivalence
$\susp\phi:\susp \bigl(\A^3_{\C}\setminus i(\A^1_{\C})\bigr)\simeq\susp \bigl(\A^2_{\C}\setminus\{0\}\bigr)$, hence an isomorphism on reduced
$\A^1$-chains. In particular $\HA{1}(\bigl(\A^3_{\C}\setminus i(\A^1_{\C})\bigr))=\mathbf K_2^{\mathrm{MW}}$. Its map on fundamental
sheaves has a meridian splitting.
\end{proposition}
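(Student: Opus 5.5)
The plan is to treat the four assertions in turn: the regular sequence, the suspension equivalence, the computation of $\HA{1}$, and finally the meridian splitting, which is the only genuinely new point.

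\emph{Regular sequence.} By Proposition~\ref{prop:CI-reminder} the ideal of $i(\A^1)$ is $(F_1,F_2)$ for suitable polynomials. Since $\C[x,y,z]$ is Cohen--Macaulay and the zero locus of $(F_1,F_2)$ has codimension two, any two generators of this height-two ideal form a regular sequence. In particular $\phi=(F_1,F_2)$ is a morphism from the complement to $\A^2_{\C}\setminus\{0\}$.

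\emph{Suspension equivalence.} I will arrange the relevant maps as a morphism of homotopy cofiber sequences. The first row is
\[
 \A^3\setminus i(\A^1)\to\A^3\to\A^3/(\A^3\setminus i(\A^1)),
\]
and the second row is the analogous sequence for $\{0\}\subset\A^2$. The map $\phi$ extends to all of $\A^3$, and the zero section of its target is hit exactly along $i(\A^1)$.

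By homotopy purity \cite[Theorem 3.2.23]{MV99}, the right-hand terms are the Thom spaces $\mathrm{Th}(N_i)$ and $\mathrm{Th}(\mathcal O^2_{\{0\}})$. The induced map between them is the one induced on normal bundles, and the framing $(F_1,F_2)$ identifies $N_i\cong\A^1\times\A^2$. Hence this map is $\A^1_+\wedge(\A^2/\A^2\setminus 0)\to\A^2/(\A^2\setminus 0)$, which is an equivalence because $\A^1$ is contractible.

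Verifying that the purity equivalence is natural for $\phi$ is the delicate step. Here I would use the deformation to the normal cone: it is functorial for maps transverse to the closed subscheme, and $\phi$ is transverse precisely because $(F_1,F_2)$ is a regular sequence generating the ideal.

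The middle terms $\A^3$ and $\A^2$ are contractible, so each cofiber is the simplicial suspension of the left-hand term. Therefore $\Sigma_s\phi$ is an equivalence, which gives \eqref{eq:suspended-normal-reminder}. Applying $\A^1$-chains then gives an isomorphism on reduced chains. Combining this with \eqref{eq:punctured-plane-reminder} and the Hurewicz isomorphism $\HA1(\A^2\setminus0)=\mathbf K_2^{\MW}$ yields the stated homology computation.

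\emph{Meridian splitting.} I will construct a normal slice. Choose a point $p=i(t_0)$ and a plane $\A^2\hookrightarrow\A^3$ through $p$ transverse to the curve. Its intersection with the curve is a finite set containing $p$, so after removing the other intersection points and passing to a Zariski neighborhood $U$ of $p$ in the plane, we obtain a map
\[
 m:U\setminus\{p\}\to\A^3\setminus i(\A^1).
\]
The composite $\phi\circ m$ is étale at $p$ and sends $p\mapsto0$. By purity applied to $U$ and to $\A^2$, the composite induces an isomorphism
\[
 \pi_1^{\A^1}(U\setminus p)\cong\pi_1^{\A^1}(\A^2\setminus0),
\]
because both groups are computed by the local Thom space at a rational point with the same residue field. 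This is excision for the Nisnevich neighborhood, and it identifies both with $\mathbf K_2^{\MW}$.

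Put $s=m_*\circ(\phi m)_*^{-1}$. Then $\phi_*\circ s=\id$, so $s$ is a meridian splitting of the map on fundamental sheaves; the choice of basepoints only changes $s$ by conjugation. The main obstacle is justifying that the local Thom-space identification $U\setminus p\to\A^2\setminus 0$ is compatible on $\pi_1$ and not merely after suspension. For this I would use that $U\setminus p\to\A^2\setminus 0$ is itself an $\A^1$-weak equivalence by Nisnevich excision, since the étale map is an isomorphism on the closed point.
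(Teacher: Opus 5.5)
Your regular-sequence step and your suspension equivalence are fine and follow essentially the paper's purity argument. The Cohen--Macaulay/UFD deduction of regularity is a valid substitute for the paper's citation. The gap is in the meridian splitting.

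\textbf{Where the splitting argument fails.} The claim that $(\phi m)_*:\pi_1^{\A^1}(U\setminus p)\to\pi_1^{\A^1}(\A^2_{\C}\setminus\{0\})$ is an isomorphism is false in general, and Nisnevich excision does not give it. Excision for a map that is \'etale near $p$ and an isomorphism over the closed point identifies only the \emph{quotients}, $U/(U\setminus p)\simeq\A^2/(\A^2\setminus 0)$. That is suspension-level information; it says nothing about the punctured schemes themselves.

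A transverse plane typically meets the curve in $s+1\geq 2$ points. Then $U\setminus p$ is at best $\A^2$ minus $s+1$ rational points. By the same purity argument, its suspension is a wedge of $s+1$ copies of $\Sigma_s(\A^2\setminus 0)$, so $\HA{1}(U\setminus p)\cong(\mathbf K_2^{\MW})^{s+1}$. Shrinking $U$ further to an \'etale locus removes curves and only enlarges this. Hence $(\phi m)_*$ is at most a split surjection on $\HA{1}$, and $s=m_*\circ(\phi m)_*^{-1}$ is undefined.

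The essential point is that the meridian's source must have a fundamental sheaf that is already abelian and equal to its $\HA{1}$. Only then does a suspension equivalence yield a $\pi_1$-isomorphism via Hurewicz. The punctured plane $\A^2\setminus\{0\}$ has this property; a punctured Zariski neighborhood does not.

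\textbf{How the paper repairs it.} Do not delete the extra intersection points; map all of $\A^2$ into the plane so that it misses them.
\begin{enumerate}
\item Choose plane coordinates with $p=0$ and $p_j=(x_j,y_j)$ satisfying $x_jy_j\neq 0$.
\item Take $P\in\C[b]$ with $P(0)=1$ and $P(y_j)=0$, and set $\mu(a,b)=(aP(b),b)$.
\item Over $b=y_j$ the first coordinate vanishes while $x_j\neq 0$. So the only preimage of the curve is the origin, where $d\mu$ is the identity; the intersection is transverse.
\item Your purity argument for $\phi$, applied to this map of pairs, shows that $\Sigma_s\mu$ is an equivalence. Hence $\Sigma_s(\phi\mu)$ is an equivalence too.
\item Therefore $(\phi\mu)_*$ is an isomorphism on $\HA{1}(\A^2\setminus 0)$. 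By naturality of Hurewicz it is also an isomorphism on $\piA{1}(\A^2\setminus 0)=\mathbf K_2^{\MW}$.
\item The splitting is then $\mu_*\circ(\phi\mu)_*^{-1}$.
\end{enumerate}
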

\begin{proof}
The complete-intersection assertion is \cite[Theorem~2.6]{WW26}.
It can also be obtained from the affine codimension-two Serre
construction: the smooth line is subcanonical, so its oriented ideal is
the zero locus of a section of a rank-two vector bundle on $\A^3$;
Quillen--Suslin makes that bundle free. The generators frame the normal
bundle along the line. Homotopy purity for the map of pairs
$(\A^3,i(\A^1))\to(\A^2,0)$ identifies its cofiber map with the map of
rank-two Thom spaces. It is an isomorphism on framed normal bundles and
an $\A^1$-equivalence on the bases $\A^1\to\Spec \C$, and hence is a Thom
equivalence. Both ambient affine spaces are contractible, so this is the
stated suspended equivalence. Reduced chains carry simplicial suspension
to a shift, proving the chain assertion. Morel's computation
$\piA{1}(\bigl(\A^2_{\C}\setminus\{0\}\bigr))=\HA{1}(\bigl(\A^2_{\C}\setminus\{0\}\bigr))=\mathbf K_2^{\mathrm{MW}}$ gives the homology assertion.

For the splitting, choose a transverse affine plane meeting the embedded
line in finitely many distinct reduced points $p_0,\ldots,p_s$.
Put $p_0=0$ and choose plane coordinates so that $p_j=(x_j,y_j)$ has
$x_jy_j\ne0$ for $j>0$. Choose $P\in \C[b]$ with $P(0)=1$ and
$P(y_j)=0$. The morphism
\[
 \mu:\bigl(\A^2_{\C}\setminus\{0\}\bigr)\to \bigl(\A^3_{\C}\setminus i(\A^1_{\C})\bigr),\qquad (a,b)\longmapsto(aP(b),b)
\]
through this plane avoids every other puncture: over $b=y_j$ its first
coordinate is zero, whereas $x_j\ne0$. Its unique transverse preimage of
the embedded curve is the removed origin. Applying the same purity
argument to this map of pairs proves that $\susp\mu$ is an equivalence.
Consequently $\phi\mu$ induces an isomorphism on $\HA{1}(\bigl(\A^2_{\C}\setminus\{0\}\bigr))$, and thus
on $\piA{1}(\bigl(\A^2_{\C}\setminus\{0\}\bigr))$ since the latter is already abelian and strictly
invariant. Normalize $\mu_*$ by the inverse of this automorphism. The
result is a splitting of $\phi_*$.
\end{proof}

\begin{proposition}\label{geom:projection}
A nonempty Zariski open set of linear directions gives finite birational
immersive projections $\A^1\to\Gamma\subset\A^2$ with only ordinary
nodes in the affine image. The two points over a node have distinct
heights in the remaining affine coordinate.
\end{proposition}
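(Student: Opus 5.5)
We prove Proposition~\ref{geom:projection} by a classical generic-projection argument, working in the projective closure so that "nonempty Zariski open" is meaningful.

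Let $C=i(\A^1)\subset\A^3$, with closure $\bar C\subset\mathbb P^3$. The closure adds one point at infinity, $q=[p_\infty]$, determined by the top-degree coefficients of $(p,q,r)$. A linear projection along a direction $v\in\mathbb P^2$, the plane at infinity, is the projection from the point $v$. Our plan is to remove finitely many proper closed subsets of $\mathbb P^2$, one for each defect, and then take $v$ in the complement.

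\textbf{Step 1: finiteness.} The restriction $\A^1\to\A^2$ is finite exactly when $v\notin\bar C$. In that case the projection from $v$ is a morphism on $\bar C$, and the preimage of the line at infinity of $\A^2$ is $\bar C\cap H_\infty=\{q\}$. So the affine part maps properly, hence finitely. Since $q$ is a single point, we exclude only $v=q$.

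\textbf{Step 2: immersion.} The map fails to be immersive at $t$ exactly when $v$ lies on the tangent line at $i(t)$. The union of the affine tangent lines meets the plane at infinity in the closure of the curve $t\mapsto[i'(t)]$ at infinity. That is at most a curve in $\mathbb P^2$, so we exclude it.

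\textbf{Step 3: no triple points, no tacnodes, no non-transverse crossings.} Consider the secant variety of $C$, that is, lines through pairs $i(s)\neq i(t)$. It has dimension at most $3$, and its directions $[i(s)-i(t)]$ fill at most a surface; they may well fill $\mathbb P^2$. This is harmless, because a generic direction meets finitely many secants, which produce the finitely many nodes.

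The bad events are the following.
\begin{enumerate}
\item A trisecant direction: three distinct collinear points of $C$. The incidence set has dimension at most $1$ in $(s,t,u)$, and its directions form at most a curve.
\item A secant with parallel tangents, i.e.\ the triple $i(s)-i(t),\,i'(s),\,i'(t)$ spans a plane. This imposes one extra condition on the $2$-dimensional secant family, so the resulting directions form at most a curve.
\end{enumerate}
Both dimension counts require that the conditions are not identically satisfied, which would mean every secant is trisecant or every secant pair is coplanar with its tangents. We rule this out with the standard fact that a nondegenerate space curve, i.e.\ one not contained in a plane, has only finitely many trisecant directions and a generic secant not of this kind. If $C$ lies in a plane, it is a line by Abhyankar--Moh--Suzuki together with the observation that a planar image is already an embedded affine line in that plane; a line has no nodes, and the claim is trivial. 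Removing these curves, every fiber of the projection over a double point consists of exactly two reduced points with independent tangent images, which is an ordinary node.

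\textbf{Step 4: birationality and heights.} With finitely many nodes and an immersion, the map $\A^1\to\Gamma$ is generically injective, hence birational onto its image. The two points over a node are distinct points of $\A^3$ on the same fiber line of the projection. The projection forgets only the remaining coordinate, so their heights in that coordinate differ.

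The main obstacle is Step 3. It requires the nondegeneracy input, namely that not every secant is a trisecant and not every chord has coplanar tangents, and this is exactly the classical "general position" lemma for space curves in characteristic zero. We invoke it in its usual form: a generic projection of a smooth space curve to a plane has only ordinary nodes. The only special feature here is the affine part and the unique point at infinity, which Step 1 handles.
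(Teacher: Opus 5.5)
Your route is essentially the paper's. You remove the same bad loci: the closure of the tangent directions $[i'(t)]$, the chord directions where $E(s,t)=\det(i(s)-i(t),i'(t),i'(s))$ vanishes, and the trisecant directions. You get finitely many identified pairs from a dimension count on the chord-direction map, and distinct heights from injectivity of $i$.

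There are two local differences from the paper. First, the paper gets finiteness from integrality of $\C[t]$ over $\C[f]$ for any nonconstant projected coordinate, rather than from the closure $\bar C$. Second, it proves $E\not\equiv0$ directly rather than citing the general-position lemma. The $h^4$-coefficient of $E(t+h,t)$ is $-\det(i',i'',i''')/12$, a Wronskian that vanishes identically only for planar curves. That is more self-contained than your citation, whose textbook form is stated for smooth projective curves, whereas $\bar C$ may be singular at its point at infinity. Since the statement concerns generic chords, your citation can be made to work.

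There are, however, two false claims that you must repair.

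\textbf{The planar case.} Abhyankar--Moh--Suzuki does not say that a planar $C$ is a line; for example, $t\mapsto(t,t^2,0)$ is planar but not a line. The theorem only supplies a polynomial automorphism of the plane, which does not commute with linear projections and is irrelevant here. Use the paper's argument instead. If $C\subset\Pi$, every direction not parallel to $\Pi$ restricts to a linear isomorphism $\Pi\to\A^2$, so the projected curve is isomorphic to $C$ and hence smooth. These directions form the complement of a line in $\mathbb P^2$.

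\textbf{Trisecants.} A nonplanar curve need not have finitely many trisecant directions. For $t\mapsto(t,t^3,t^4)$, the three points with $t^3=\beta\neq0$ are $(t,\beta,\beta t)$. They all lie on the line $y=\beta$, $z=\beta x$, so every direction $(1,0,\beta)$ is a trisecant direction. What you need, and what your item (i) actually uses, is the trisecant lemma. It says the trisecants of the possibly singular closure form a family of dimension at most one, so the affine trisecant directions lie in a curve.

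\textbf{A minor point.} In Step 1 only the implication ``$v\notin\bar C$ implies finite'' holds, not ``exactly when''. The projection is finite whenever one projected coordinate is nonconstant. This is harmless, since you only use that direction.
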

\begin{proof}
If the curve is planar, project transversely to its plane; the image is
smooth. Suppose it is nonplanar, and write its three polynomial
coordinates as $\gamma(t)$. Every nonconstant projected coordinate $f(t)$
makes $\C[t]$ finite over $\C[f]$: the polynomial equation $f(t)-f=0$ can
be divided by its nonzero leading coefficient to be monic in $t$.
The same finite generators work over the intermediate ring $\C[f,g]$,
so every nonconstant such projection is finite onto its closed image.

The tangent directions $[\gamma'(t)]$ form a subset of dimension at most
one in the direction plane $\mathbb P^2$. The derivative never vanishes.
Avoiding its closure makes the projection immersive. Put
\[
 E(s,t)=\det(\gamma(s)-\gamma(t),\gamma'(t),\gamma'(s)).
\]
It is not identically zero. Indeed its coefficient of $h^4$ for $s=t+h$
is $-\det(\gamma',\gamma'',\gamma''')/12$. This last determinant is the
Wronskian of the coordinate derivatives. In characteristic zero its
identical vanishing would make those derivatives linearly dependent
over $\C$, yielding an affine plane containing the curve.
Thus the locus $E=0$ off the diagonal has dimension at most one.
Avoid its chord directions. Projected double identifications then have
distinct tangent lines, because tangent dependence in the quotient is
exactly the vanishing of $E$.

The chord-direction map $(s,t)\mapsto[\gamma(s)-\gamma(t)]$ has rank two
where $E\ne0$. It is dominant and generically finite. After shrinking
the direction plane, there are only finitely many identified pairs;
the projection is generically injective, hence birational in
characteristic zero. Finally, the classical trisecant lemma, valid for
the integral possibly singular projective closure, bounds its family of
trisecant lines by dimension one \cite[Theorem~2]{KKT08}. The directions
of trisecants not contained at infinity also have dimension at most one.
Avoiding them excludes every triple affine fiber. A finite immersive
normalization with no triple fibers and transverse double fibers has
only ordinary nodes. Completing the projection coordinates to coordinates
on $\A^3$ shows that equal heights at a double fiber would violate the
injectivity of the original embedding.
\end{proof}

Fix this projection, write $i(t)=(f(t),g(t),h(t))$, and let $P$ be its
node set. An affine coordinate $x$ separates the nodes. Polynomial
interpolation followed by the shear $y\mapsto y-p(x)$ puts them at
$(a_1,0),\ldots,(a_r,0)$. Put $q(x)=\prod_i(x-a_i)$ and
\[
 B=\A^2-P,\quad D=\A^2-\Gamma,\quad
 \Gamma^\circ=\Gamma-P,\quad
 S=\{q(x)\alpha+y\beta=1\}\longrightarrow B.
\]
For $r>0$, $S$ is smooth affine and an affine-line torsor: the row
$(q,y)$ is unimodular on $B$, and its solutions form a torsor under its
rank-one kernel. For $r=0$ take $S=B=\A^2$.

\begin{lemma}\label{geom:linear-height}
One can choose a global regular height extension $\widehat h$ on $S$
which is linear in $\alpha,\beta$. Moreover, in a split henselian node
chart $R=\C\{u,v\}$ with $\Gamma$ defined by $uv=0$, the two canonical
sections of $S$ over $D(u)$ and $D(v)$ evaluate $\widehat h$ to regular
functions of $R$, whose closed values are the two branch heights.
\end{lemma}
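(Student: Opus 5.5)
The plan is to build $\widehat h$ from the conductor of the normalization $\A^1\to\Gamma$, and then to check the node-chart assertion by a divisibility argument in the factorial ring $R$.

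\emph{Step 1: the conductor.} By Proposition~\ref{geom:projection}, $\C[t]$ is finite over $\C[\Gamma]$, the map $\A^1\to\Gamma$ is the normalization, and all singularities are ordinary nodes. At an ordinary node the conductor of the local ring in its normalization is the maximal ideal. The nodes are distinct reduced points, so the global conductor $\mathfrak c\subset\C[\Gamma]$ is the radical ideal of $P$.

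Since $q$ is squarefree, the nodes $(a_i,0)$ are distinct, and $x$ separates them, the ideal $(q(x),y)\subset\C[x,y]$ is the radical ideal of $P$. Its image in $\C[\Gamma]$ is therefore $\mathfrak c$. Consequently the height $h\in\C[t]$ satisfies $q\,h\in\C[\Gamma]$ and $y\,h\in\C[\Gamma]$.

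\emph{Step 2: the global extension.} Choose $H_1,H_2\in\C[x,y]$ whose restrictions to $\Gamma$ are $q\,h$ and $y\,h$. Put
\[
 \widehat h=\alpha H_1+\beta H_2\in\mathcal O(S).
\]
This is regular and linear in $\alpha,\beta$. Over $\Gamma^\circ$ the projection is an isomorphism onto its image, so on the part of $S$ lying over $\Gamma^\circ$ we get
\[
 \widehat h=h\,(q\alpha+y\beta)=h .
\]
Thus $\widehat h$ extends the height. For $r=0$ the curve $\Gamma$ is smooth, $\mathfrak c=(1)$, and we simply lift $h$ to $\C[x,y]$.

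\emph{Step 3: the node chart.} In $R=\C\{u,v\}$ the functions $q,y$ generate the maximal ideal $(u,v)$. Write $u=qc+yd$ with $c,d\in R$. Over $D(u)$ the canonical section is $(\alpha,\beta)=(c/u,d/u)$, and it gives
\[
 \widehat h=\frac{H_1c+H_2d}{u}.
\]
Let $h_1$ and $h_2$ denote the branch heights on $\{v=0\}$ and $\{u=0\}$.

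On the branch $\{u=0\}$ the numerator restricts to $(qc+yd)\,h_2=u\,h_2=0$. Since $u$ is prime in the factorial ring $R$, the numerator is divisible by $u$, so $\widehat h\in R$. On the branch $\{v=0\}$ the same computation gives $\widehat h|_{v=0}=h_1$, whose closed value is the height of that branch. The section over $D(v)$ is handled symmetrically.

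The main obstacle is Step 1. One must justify that the node conductor is exactly the reduced ideal $(q,y)$, and that $h_1,h_2$ make sense as regular functions on the two branches of the henselian chart; the latter uses that the normalization splits over $R$. After that, the divisibility check in Step 3 is the only genuinely local point, and it is a direct computation.
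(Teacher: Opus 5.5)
Your proposal is correct and follows the paper's proof. The paper also descends $q\,h$ and $y\,h$ to $\Gamma$, phrasing this as $\C[\Gamma]$ being the polynomials with equal values at the two preimages of each node, which is your conductor statement, and it sets $\widehat h=A\alpha+B_1\beta$. In the node chart it writes $(q,y)=(u,v)M$ with $M\in\mathrm{GL}_2(R)$, so its canonical section $(a,b)=(1/u,0)$ is your $(c/u,d/u)$ with $(c,d)$ the first column of $M^{-1}$; the divisibility by $u$ and the value $h_1$ on $v=0$ then come out exactly as in your Step~3. (One small addition: $(c,d)$ is only determined up to adding $(ye,-qe)$, which changes $\widehat h$ by $(H_1y-H_2q)e/u\in vR$, since $H_1y-H_2q$ vanishes on $\Gamma$; so regularity and the closed values do not depend on the choice.)
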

\begin{proof}
For an affine nodal curve with normalization $\A^1$, its coordinate ring
is the subring of $\C[t]$ consisting of polynomials with equal values at
the two preimages of each node. This follows locally from
$\C\{u,v\}/(uv)\subset \C\{u\}\oplus \C\{v\}$, whose image is the pairs
with the same closed value, and then globally by the finite
normalization. Both $q(f(t))h(t)$ and $g(t)h(t)$ vanish at every node
preimage and therefore descend to $\Gamma$. Lift them to polynomials
$A(x,y),B_1(x,y)$ on $\A^2$ and set
\[
 \widehat h=A\alpha+B_1\beta.
\]
Its restriction to the pullback of $\Gamma^\circ$ is
$h(q\alpha+y\beta)=h$, as required. If there are no nodes, $h$ itself
descends to the smooth curve $\Gamma$ and can be lifted directly to
$\A^2$.

At a node, $(q,y)$ and $(u,v)$ are regular systems of parameters, so
$(q,y)=(u,v)M$ for $M\in\mathrm{GL}_2(R)$. Set
$(a,b)^t=M(\alpha,\beta)^t$ and
$(A',B')=(A,B_1)M^{-1}$. Then
$S_R=\{ua+vb=1\}$ and $\widehat h=A'a+B'b$.
On $\Gamma$, $(A',B')=h(u,v)$. Consequently $A'$ vanishes identically
on $u=0$ and $B'$ on $v=0$. Equality first on the punctured branches
extends to the whole branches by integrality. Write
$A'=uH_u$, $B'=vH_v$ with $H_u,H_v\in R$. The canonical sections
$(a,b)=(1/u,0)$ and $(0,1/v)$ have heights $H_u$ and $H_v$.
On $v=0$ the first height is the branch height, and on $u=0$
the second is the branch height. Their closed values are distinct
by Proposition~\ref{geom:projection}.
\end{proof}

\begin{proposition}\label{geom:height-model}
Let $\bigl(\A^3_{\C}\setminus i(\A^1_{\C})\bigr)^\circ$ be obtained from $\bigl(\A^3_{\C}\setminus i(\A^1_{\C})\bigr)$ by removing the vertical axes over $P$.
There is a pointed equivalence on ${\mathcal Q}$-torsor groupoids, for every strongly
invariant ${\mathcal Q}$, between $\bigl(\A^3_{\C}\setminus i(\A^1_{\C})\bigr)^\circ$ and
\[
 W=(B\times\A^1_w)-(\Gamma^\circ\times0).
\]
Restriction from $\bigl(\A^3_{\C}\setminus i(\A^1_{\C})\bigr)$ to $\bigl(\A^3_{\C}\setminus i(\A^1_{\C})\bigr)^\circ$ is fully faithful on these groupoids.
\end{proposition}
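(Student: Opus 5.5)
The plan is to compare both spaces with a common affine-line bundle over them and then apply homotopy invariance of torsors. First describe the left side concretely. Removing the vertical axes over $P$ leaves
\[
 X^\circ=\bigl(\A^3_{\C}\setminus i(\A^1_{\C})\bigr)^\circ=(B\times\A^1_z)\setminus C^\circ,
\]
where $C^\circ$ is the part of the embedded curve lying over $\Gamma^\circ$. Since the projection is finite, birational and injective off the nodes (Proposition~\ref{geom:projection}), $C^\circ$ is the graph of the height function $h$ over $\Gamma^\circ$. We then pull back along the affine-line torsor $S\to B$ and put $X^\circ_S=X^\circ\times_BS$. In the coordinates $(s,z)$ with $s\in S$, take the global height extension $\widehat h$ of Lemma~\ref{geom:linear-height}. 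Because $\widehat h$ restricts to $h$ on the pullback of $\Gamma^\circ$, the shear $w=z-\widehat h(s)$ is an automorphism of $S\times\A^1$ over $S$. It carries the pulled-back curve to $\Gamma^\circ_S\times 0$. Hence
\[
 X^\circ_S\cong (S\times\A^1_w)\setminus(\Gamma^\circ_S\times0)=W\times_BS .
\]
This gives a zigzag $X^\circ\leftarrow X^\circ_S\cong W\times_BS\to W$, in which both outer maps are torsors under a line bundle. For $r=0$ the construction reduces to a direct shear over $B=\A^2$, and no torsor is needed.

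Next, show that pullback along a Nisnevich-locally trivial affine-line torsor $E\to Y$ induces an equivalence of $\mathcal Q$-torsor groupoids. On objects, we must check that $H^1_{\Nis}(Y,\mathcal Q)\to H^1_{\Nis}(E,\mathcal Q)$ is a bijection. On automorphisms, we must check that sections of every inner twist ${}_T\mathcal Q$ agree on $Y$ and on $E$. Strong $\A^1$-invariance gives both statements on the opens where $E|_U\cong U\times\A^1$. We would then glue them using the nonabelian Mayer--Vietoris (descent) sequences for $H^0$ and $H^1$. Alternatively, we can note that $E\to Y$ is an $\A^1$-weak equivalence and that $B\mathcal Q$ is $\A^1$-local, so maps into $B\mathcal Q$, together with their path groupoids, are unchanged. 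The second route is cleaner and also covers the twists, because the groupoid of $\mathcal Q$-torsors on $Y$ is the fundamental groupoid of $\Map(Y,B\mathcal Q)$. Pointedness follows by choosing rational basepoints in $S$ over the basepoint of $X^\circ$ and transporting them through the shear.

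For full faithfulness of restriction from the whole complement $X$ to $X^\circ$, observe that the removed vertical axes over $P$ meet $X$ in a closed subset of codimension two. A morphism of $\mathcal Q$-torsors $T_1\to T_2$ on $X$ is a section of the sheaf $\mathrm{Isom}(T_1,T_2)$, which is Nisnevich-locally isomorphic to $\mathcal Q$. By the unramifiedness recalled in Section~\ref{sec:reminders}, such a section defined on $X^\circ$ extends uniquely, because local extensions exist and are unique and therefore glue. This gives full faithfulness, though not essential surjectivity, and that is all the statement claims.

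The main obstacle is the second step for nonabelian $\mathcal Q$ over a non-affine base with a non-trivial torsor $S\to B$. I would handle it through $\A^1$-locality of $B\mathcal Q$, which is quoted in the reminders, and would first verify that the affine-line torsor is Zariski-locally trivial. This holds because its structure group is the rank-one kernel bundle of the unimodular row $(q,y)$. Once that is established, the torsor is an $\A^1$-equivalence.
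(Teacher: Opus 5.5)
Your proposal is correct and follows essentially the same route as the paper. Both arguments pull the two complements back to the common affine-line torsor $S\to B$ and identify them there by the shear $w=z-\widehat h$. They then use that both projections are $\A^1$-equivalences into the $\A^1$-local $B\mathcal Q$, and obtain full faithfulness by extending local isomorphisms uniquely across the codimension-two axes and gluing by uniqueness; your extra details (the graph description of $C^\circ$, the local triviality of $S\to B$) only make explicit what the paper uses implicitly.
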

\begin{proof}
On the common affine-line-torsor cover over $B$, the change
$w=z-\widehat h$ identifies the pullbacks of the two complements: over
$\Gamma^\circ$ precisely the graph of the height is removed. Both
projections of this common model are affine-line torsors, hence
$\A^1$-equivalences. Mapping into the $\A^1$-local space $B{\mathcal Q}$ gives the
torsor-groupoid equivalence. Choose a rational point over $d_*\in D$ to
transport pointings.

The omitted subset has codimension two in the smooth $\bigl(\A^3_{\C}\setminus i(\A^1_{\C})\bigr)$. Given two
torsors on $\bigl(\A^3_{\C}\setminus i(\A^1_{\C})\bigr)$ and an isomorphism over $\bigl(\A^3_{\C}\setminus i(\A^1_{\C})\bigr)^\circ$, take a common
Nisnevich cover trivializing them. On a member the isomorphism is a
${\mathcal Q}$-section off codimension two. Unramifiedness extends it uniquely over
that member, and uniqueness glues the extensions. This also proves
injectivity on automorphisms.
\end{proof}

\begin{lemma}\label{geom:filling}
If $r>0$, there is a nonempty open $Z\subset\A^1_z$ and a morphism
$j:\A^2\times Z\to \bigl(\A^3_{\C}\setminus i(\A^1_{\C})\bigr)$ such that its restriction to $B\times Z$ maps to
$\bigl(\A^3_{\C}\setminus i(\A^1_{\C})\bigr)^\circ$. Every ${\mathcal Q}$-torsor on the complement pulls back trivially along $j$.
\end{lemma}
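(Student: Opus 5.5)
We need to construct $j:\A^2\times Z\to$ complement. A natural plan is to take horizontal planes $\{z=c\}$ placed at a height avoiding the curve over the nodes; since the image of $j$ then misses the vertical axes only over $B$, the second property is immediate. Concretely, define $j(x,y,c)=(x,y,c)$, so that $j$ restricted to $\A^2\times\{c\}$ is the inclusion of the horizontal plane at height $c$. This plane meets $i(\A^1)$ exactly in the points $i(t)$ with $h(t)=c$, which is a finite set. That is a problem, since we need the plane to avoid the curve. So I would instead use a modified map $j(x,y,c)=(x,y,c+\lambda(x,y))$, or better, exploit the freedom in the first two coordinates: send $(x,y,c)\mapsto(x,y,c\,N(x,y))$ or tilt the plane so that the map avoids the curve entirely. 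Because the curve is a closed curve and a plane in $\A^3$ generally meets it, no plane avoids it. Hence the map should not be a plane embedding. I would use a map $\A^2\to\A^3$ whose image avoids the curve, for instance through the meridian morphism $\mu$ of Proposition~\ref{geom:normal-map} composed with a map $\A^2\to\A^2\setminus\{0\}$. Such a composite is constant up to $\A^1$-homotopy because $\A^2$ is contractible, so pulled-back torsors are trivial. This is the key point for the second assertion: any morphism from $\A^2\times Z$ factors, up to $\A^1$-homotopy, through $Z$, and a torsor on $Z\subset\A^1$ is trivial once we shrink $Z$ (a torsor on a smooth curve is Nisnevich-locally trivial, and $H^1$ of an open subset of $\A^1$ with strongly invariant coefficients, after shrinking, is trivial at the generic point and hence on a smaller open).

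The plan, in order, is as follows. First, write $j(x,y,c)=(x,y,c+y\,G(x,y))$ with a polynomial $G$ chosen so that over each node $(a_i,0)$ the height is $c$, which must differ from both branch heights there. Over $B$ the point $(x,y)$ lies off the node set, so the image avoids the vertical axes over $P$; this requires only the first two coordinates, which is automatic. Second, to avoid the curve over $\Gamma^\circ$, require $c+yG\ne \widehat h$ at points of $\Gamma$. The curve is parametrized as $(f(t),g(t))$, so the condition reads $c+g(t)G(f(t),g(t))\ne h(t)$ for all $t$. Choose $G$ so that $g\,G(f,g)-h$ is a nonzero constant $-e$ on the curve. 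This is possible exactly when $h$ is congruent to a constant modulo $g\cdot\C[f,g]$, which is too strong in general. I would therefore relax the requirement: allow the first coordinates to vary too, that is, use an $\A^1$-family of maps, and choose $Z$ to be those $c$ for which $t\mapsto c+gG-h$ has no roots. For a polynomial in $t$ this forces constancy, so the coordinate change really must make $h-gG(f,g)$ constant. This is the main obstacle.

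To address it, I would first try replacing $y$ by the node function: the elements vanishing at all node preimages descend, as in Lemma~\ref{geom:linear-height}. Using the affine-line torsor $S$ with $\widehat h$ linear, we pull back along a section of $S$ over a suitable $\A^2$ chart. If this cannot be done, the fallback is to let $j$ land in the complement through the splitting $\mu$ and to argue triviality of pullbacks by $\A^1$-contractibility of $\A^2\times Z$ relative to $Z$. For the final step, the pulled-back torsor is classified by a map $Z\to B\mathcal Q$; after shrinking $Z$ to a Zariski open on which the torsor is trivial (torsors on a smooth curve are generically trivial, by unramifiedness at the generic point over $\C(z)$), we obtain triviality.
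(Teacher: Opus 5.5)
Your proposal has a genuine gap in the construction of $j$, and a quantifier error in the triviality step.

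\textbf{The construction of $j$.} Your main attempt cannot be repaired, and the obstruction is worse than ``too strong in general''. For $r>0$, no map of the form $j(x,y,c)=(x,y,\phi(x,y,c))$ avoids the curve. Fix $c\in Z(\C)$. Avoidance means the polynomial $\phi(f(t),g(t),c)-h(t)$ has no complex zero, so it is a nonzero constant, and then $h\in\C[f(t),g(t)]+\C$. But at a node, $f$ and $g$ agree at the two preimages while $h$ does not (Proposition~\ref{geom:projection}). The variant through $S$ and $\widehat h$ fails for the same reason, since both exist only over $B$.

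The missing idea is to keep $(y,z)$ fixed and move $x$ instead. Because $g$ is nonconstant, $h$ is integral over $\C[g]$, so the $(g,h)$-image has an equation $R(y,z)$ that is monic in $z$. Remove from $\A^1_z$ the roots of $R_0(z)=R(0,z)$ and the finitely many heights $h(t)$ with $f(t)=0$. Then put
\[
 j(a,b,z)=\bigl(a\Lambda(b,z),b,z\bigr),\qquad \Lambda=R/R_0 .
\]
If $j(a,b,z)=i(t)$, then $f(t)=a\Lambda(g(t),h(t))=0$, so $z=h(t)$ is an excluded height. Since $\Lambda(0,z)=1$ and the nodes sit at $(a_i,0)$, the preimage of the node axes is exactly $P\times Z$.

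Your fallback through $\mu$ does not replace this. You do not specify the map to $\A^2\setminus\{0\}$, and you do not check that $B\times Z$ misses the node axes; the plane used for $\mu$ generally meets them. More importantly, an $\A^1$-homotopically constant $j$ satisfies the displayed conditions only trivially: a constant map to a point off the axes would already do. Such a $j$ is useless where the lemma is applied. In Theorem~\ref{inf:height-trivial} it is the explicit form above that does the work. With $z=u^{-1}$, $a=x/\ell(y,u)$, $b=y$, it carries the trivialization along $j$ to an annulus over $S$ in the height chart.

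\textbf{The triviality step.} Shrinking $Z$ until a given torsor becomes trivial reverses the quantifiers. It makes $Z$ depend on $\mathcal Q$ and on the torsor, whereas $Z$ must be fixed by the geometry beforehand. What is needed is $H^1_{\Nis}(Z,\mathcal Q)=*$ for every nonempty open $Z\subset\A^1$:
\begin{enumerate}
\item trivialize the torsor on a dense open $E\subset Z$;
\item glue it to the trivial torsor on $\A^1\setminus(Z\setminus E)$;
\item use $H^1_{\Nis}(\A^1,\mathcal Q)=H^1_{\Nis}(\Spec\C,\mathcal Q)=*$.
\end{enumerate}
Strong invariance then gives $H^1_{\Nis}(\A^2\times Z,\mathcal Q)=*$. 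Hence every torsor on $\A^2\times Z$ is trivial, whether or not it is pulled back along $j$.
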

\begin{proof}
Both $f,g$ are nonconstant, for otherwise the irreducible projected image
would be a smooth line. Since $h(t)$ is integral over $\C[g(t)]$, the
equation $R(y,z)$ of the $(g,h)$-image can be made monic in $z$.
The polynomial $R_0(z)=R(0,z)$ is nonzero: irreducibility would otherwise
force $R$ to be a scalar multiple of $y$, contrary to nonconstancy of $g$.
Remove its zeros and every height $h(t)$ with $f(t)=0$, and put
$\Lambda(y,z)=R(y,z)/R_0(z)$ on the remaining $Z$. Define
\[
 j(a,b,z)=(a\Lambda(b,z),b,z).
\]
An intersection with the curve would force $R(b,z)=0$, hence $f(t)=0$
and an excluded height. Also $\Lambda(0,z)=1$, so the inverse image of the
vertical axes is exactly $P\times Z$.

A Nisnevich torsor on $Z$ is generically trivial and is trivial on some
nonempty Zariski open $E\subset Z$. Glue it along $E$ to the trivial
torsor on $\A^1-(Z-E)$. The result is a torsor on $\A^1$, which is
trivial by strong invariance and $H^1_\Nis(\C,{\mathcal Q})=*$. Thus
$H^1_\Nis(Z,{\mathcal Q})=*$, and $\A^1$-invariance gives
$H^1_\Nis(\A^2\times Z,{\mathcal Q})=*$. This proves the asserted triviality.
\end{proof}

\begin{lemma}\label{geom:node-charts}
Let $c$ be a height clutching section for a $W$-torsor with the left
convention $s_D=s_Hc$. On the punctured henselian normal plane over the
generic missing node axis, the induced $\bigl(\A^3_{\C}\setminus i(\A^1_{\C})\bigr)^\circ$-torsor has chart
transition
\[
 c(z-H_u)c(z-H_v)^{-1},
\]
where $H_u,H_v\in R$ are the regular functions of
Lemma~\ref{geom:linear-height}. Both factors use the same global section
$c$.
\end{lemma}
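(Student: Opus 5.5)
Recall how a $W$-torsor is presented. The space $W=(B\times\A^1_w)-(\Gamma^\circ\times0)$ is covered by two opens: $W_D=D\times\A^1_w$, the part over the complement of the curve, and $W_H=B\times(\A^1_w-0)$, the part where the height coordinate is nonzero. A torsor trivialized on these two opens is described by a single clutching section $c$ of $\mathcal Q$ on $W_D\cap W_H=D\times(\A^1_w-0)$, with $s_D=s_Hc$.

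To pass to $\bigl(\A^3_{\C}\setminus i(\A^1_{\C})\bigr)^\circ$, I would use the identification of Proposition~\ref{geom:height-model}: pull back along the affine-line torsor $S\to B$ and substitute $w=z-\widehat h$. The pulled-back torsor is then glued by the section $c(z-\widehat h)$ of the same sheaf.

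Next I restrict to the punctured henselian normal plane over the generic point of a missing node axis, with node chart $R=\C\{u,v\}$. By Lemma~\ref{geom:linear-height}, $S_R=\{ua+vb=1\}$, and $S$ has two canonical sections, $\sigma_u=(1/u,0)$ over $D(u)$ and $\sigma_v=(0,1/v)$ over $D(v)$. Along these sections $\widehat h$ becomes $H_u$ and $H_v$ respectively. Pulling back along $\sigma_u$ gives a descended torsor on $D(u)\times\A^1_z$ with clutching $c(z-H_u)$. Pulling back along $\sigma_v$ gives one on $D(v)\times\A^1_z$ with clutching $c(z-H_v)$.

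On each of these opens, the removed locus is the graph of $H_u$ (respectively $H_v$) over the branch $v=0$ (respectively $u=0$). The branch $v=0$ lies in $D(u)$, so the other branch is absent there and the first chart is well defined; the same holds symmetrically for the second.

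The two charts must then be compared on the overlap $D(uv)\times\A^1_z$, where both clutchings are defined because the whole overlap lies over $D$. There the $D$-trivialization $s_D$ is a common frame, since it is defined on all of $D\times\A^1$ and does not depend on which section of $S$ is used. This frame satisfies $s_D=s_H^{(u)}c(z-H_u)$ and $s_D=s_H^{(v)}c(z-H_v)$. Eliminating $s_D$ gives
\[
s_H^{(u)}=s_H^{(v)}\,c(z-H_v)\,c(z-H_u)^{-1}.
\]
Equivalently, with the convention of the statement, the transition from the $v$-chart to the $u$-chart is $c(z-H_u)c(z-H_v)^{-1}$ on the $D$-side overlap. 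In particular both factors involve the same global section $c$.

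\textbf{Main obstacle.} The delicate point is justifying that the two descended torsors, one obtained from $\sigma_u$ and one from $\sigma_v$, are canonically the induced torsor on $\bigl(\A^3_{\C}\setminus i(\A^1_{\C})\bigr)^\circ$ over the respective opens. Only then may their $D$-trivializations be identified. I would handle this by noting that $\sigma_u$ and $\sigma_v$ differ by a translation in the rank-one kernel of $(u,v)$, and that the affine-line torsor equivalence is canonical. Concretely, on $S_R\times_B$(overlap) the straight-line homotopy between the two sections changes $\widehat h$ only through $z$-translation. The $D$-frame is pulled back from $D$ and is therefore invariant under this homotopy, so the two descents agree on the overlap.
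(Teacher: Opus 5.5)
Your argument matches the paper's. The canonical sections $(1/u,0)$ and $(0,1/v)$ of $S_R$ identify the two complement charts with the height model by $w=z-H_u$ and $w=z-H_v$. The $D$-trivialization is a common frame, and eliminating it from $s_D=s_uc(z-H_u)=s_vc(z-H_v)$ gives $g=c(z-H_u)c(z-H_v)^{-1}$ for $s_v=s_ug$.

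The paper settles your ``main obstacle'' more directly. Pullback along the affine-line torsor is fully faithful on $\mathcal Q$-torsors, so the pulled-back $D$-trivialization descends to a single trivialization over $D\times\A^1_z$, which every local section of $S$ then sees. Your straight-line homotopy also works, but for a different reason than the one you give: the frame is not literally pulled back from $D$. Along the homotopy the map to $W$ is $w=z-(1-\theta)H_u-\theta H_v$, which stays inside $D\times\A^1_w$, where $s_D$ is defined. The comparison with the $\theta=0$ frame is then a $\mathcal Q$-section that is constant in $\theta$ by $\A^1$-invariance.

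One step needs repair. Lying over $D$ does not make $c(z-H_u)$ and $c(z-H_v)$ defined on the overlap, because $c$ is a section over $D\times\Gm$, not over $D\times\A^1_w$. Likewise your chart trivializations $s_H^{(u)}$ and $s_H^{(v)}$ exist only where $z\ne H_u$ and $z\ne H_v$ respectively. What is needed, and what the paper states at the end of its proof, is the following. In the henselian normal ring over the generic missing axis, $z-H_u$ and $z-H_v$ are units, because their closed values $z-h_2$ and $z-h_1$ are nonzero in $\C(z)$. This makes both chart trivializations, and the transition formula, available on the whole punctured plane there. It is the only place the genericity of the axis in the statement enters, and your write-up never uses it.
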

\begin{proof}
Use the linear height extension fixed above. The canonical sections of
$S_R$ over $D(u)$ and $D(v)$ identify the complement charts with
the height model by $w=z-H_u$ and $w=z-H_v$. The $D\times\A^1$ chart
trivialization of the $W$-torsor pulls back to the common $S$-model and
descends to a single trivialization over $D\times\A^1_z$: pullback along
its affine-line torsor is fully faithful on ${\mathcal Q}$-torsors. Thus the two
canonical local sections refer to the same $D$-trivialization. Comparing
each to its pulled-back height-open trivialization gives
$s_D=s_uc(z-H_u)=s_vc(z-H_v)$. We fix the ordered transition by
$s_v=s_ug$; consequently $g=c(z-H_u)c(z-H_v)^{-1}$, with left $u$-chart
and right $v$-chart gauges. At the generic axis, $z-H_u$
and $z-H_v$ are units in the henselian normal rings, since their closed
values are the nonzero rational functions $z-h_2,z-h_1$. This makes both
chart trivializations available there.
\end{proof}

\section{Contractions and repeated torus coordinates}\label{sec:contractions}

We develop the group identities arising from repeated and independent
pointed torus coordinates.

\begin{lemma}\label{con:square}
Let ${\mathcal Q}$ be strongly $\A^1$-invariant. There is a natural bijection
\[
 \Map_*(\Gm^{\wedge2},{\mathcal Q})=\Hom(\mathbf K_2^{\mathrm{MW}},{\mathcal Q}).
\]
If a ${\mathcal Q}$-valued family is pointed in two independent torus coordinates,
restricting those coordinates to the diagonal replaces $[w][w]$ by
$[w][-1]$. The assertion holds with additional independent parameters.
\end{lemma}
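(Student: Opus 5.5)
The plan is to derive the bijection from Morel's universal property \eqref{eq:square-universality-reminder} by unwinding the contraction, and to obtain the diagonal identity from the Milnor--Witt relation $[w][w]=[w][-1]$, transported through that universal property.

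\textbf{The bijection.} By definition, $\mathcal Q_{-2}(k)$ consists of sections of $\mathcal Q$ over $\Gm\times\Gm$ that become trivial when either coordinate is set equal to $1$. I will first identify these with pointed maps $\Gm^{\wedge2}\to\mathcal Q$. The smash product is the cofiber of $\Gm\vee\Gm\to\Gm\times\Gm$. Since $\mathcal Q$ is a discrete sheaf, a pointed map out of this cofiber is exactly a section on $\Gm\times\Gm$ that restricts to the identity on both axes $\Gm\times\{1\}$ and $\{1\}\times\Gm$. Because $\mathcal Q$ is strongly invariant, it is $\A^1$-local as a simplicial sheaf, so no derived correction is needed. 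Combining this with \eqref{eq:square-universality-reminder} gives $\Map_*(\Gm^{\wedge2},\mathcal Q)\cong\mathcal Q_{-2}(k)\cong\Hom(\mathbf K_2^{\MW},\mathcal Q)$. Concretely, a section $\varphi(w_1,w_2)$ corresponds to the unique homomorphism $\rho$ sending the symbol $[w_1][w_2]$ to $\varphi$. Naturality in $\mathcal Q$ and in the base follows from the naturality of contraction.

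\textbf{Extra parameters.} For additional independent parameters on a smooth $T$, I replace $k$ by $T$. That is, I use the same universal property for families, $\Hom(\mathbf K_2^{\MW},\mathcal Q)$ evaluated over $T$ equals $\mathcal Q_{-2}(T)$. This comes from mapping $T_+\wedge S^1_s\wedge\Gm^{\wedge2}$ into $B\mathcal Q$ and looping, as in the reminder.

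\textbf{The diagonal.} Given $\varphi$ pointed in $w_1,w_2$, let $\rho$ be the corresponding homomorphism. Restricting to the diagonal $w_1=w_2=w$ is precomposition with the diagonal $\Gm\to\Gm\times\Gm$, so $\varphi(w,w)=\rho([w][w])$ as sections over $\Gm$. In $K^{\MW}_*$ one has $[w][w]=[w][-1]$: this follows from $[w][-w]=0$ together with $[-w]=[-1]+[w]+\eta[-1][w]$ (compare \eqref{eq:constant-squares-reminder}). Hence $\varphi(w,w)=\rho([w][-1])=\varphi(w,-1)$.

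The main obstacle is this last step. The relation $[w][w]=[w][-1]$ must hold as an identity of sections of $\mathbf K_2^{\MW}$ over $\Gm$ (with parameters $T$), not only over fields. Since $\mathbf K_2^{\MW}$ is unramified, I would check the identity at the generic point $k(T)(w)$, where it is the field relation, and conclude by injectivity into function-field sections. A second point needs care because $\mathcal Q$ may be nonabelian: $\rho$ is only a homomorphism of group sheaves, so the argument must apply $\rho$ to a single symbol equality and never to sums computed inside $\mathcal Q$. The identity is of exactly that form, so nonabelianness causes no trouble.
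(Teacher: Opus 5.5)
Your proposal is correct and follows essentially the paper's own argument. The paper identifies $\Map_*(\Gm^{\wedge2},\mathcal Q)$ with pointed maps $\Sigma_s\Gm^{\wedge2}\simeq\A^2\setminus\{0\}\to B\mathcal Q$, hence with homomorphisms out of $\pi_1^{\A^1}(\A^2\setminus\{0\})=\mathbf K_2^{\MW}$ (this is the mechanism behind the reminder \eqref{eq:square-universality-reminder} you invoke); it then applies $[w][w]=[w][-1]$ to the universal symbol and handles parameters by unramified injectivity or a parametrized strong group, much as you do. One small point: $[w][-w]=0$ and the product formula for $[-w]$ do not finish the symbol computation by themselves, since you also need graded $\epsilon$-commutativity $[a][b]=\epsilon[b][a]$, with $\epsilon=-\langle-1\rangle$, to pass from the resulting $\epsilon[w][w]=[w][-1]$ to $[w][w]=[w][-1]$; this is Morel's standard lemma, which the paper simply cites.
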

\begin{proof}
The standard equivalence $\bigl(\A^2_{\C}\setminus\{0\}\bigr)\simeq\susp\Gm^{\wedge2}$ and the adjunction
between simplicial suspension and looping identify the left hand side
with pointed maps $\bigl(\A^2_{\C}\setminus\{0\}\bigr)\to B{\mathcal Q}$. The space $B{\mathcal Q}$ is an $\A^1$-local connected
$1$-type, so these maps are homomorphisms from $\piA{1}(\bigl(\A^2_{\C}\setminus\{0\}\bigr))=\mathbf K_2^{\mathrm{MW}}$ to ${\mathcal Q}$.
Under this identification the torus family evaluates the universal symbol
$[x][y]$. The Milnor--Witt identity $[w][w]=[w][-1]$ gives the diagonal
assertion. With parameters, work at the generic fields of their smooth
components and use continuous base change and unramified injectivity to
extend the equality. Alternatively, retaining a torus parameter replaces
${\mathcal Q}$ by the strong group $X\mapsto {\mathcal Q}(X\times\Gm)$: its classifying space
is the internally connected $\A^1$-local $1$-type $\Map(\Gm,B{\mathcal Q})$ by
Morel's $\Gm$-loop theorem \cite[Theorem~3.1.19]{AFH22}.
\end{proof}

\begin{lemma}\label{con:constants}
For every $c\in \C^*$,
\[
 2[-1]=0,\qquad [-1][-1]=0,\qquad [c]^2=[c][-1]=0.
\]
These equalities remain true after any extension of $\C$. Positive
Milnor--Witt groups of $\C$ are the corresponding Milnor groups and are
$2$-divisible.
\end{lemma}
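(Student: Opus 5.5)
The plan is to prove everything from the defining relations of Milnor--Witt $K$-theory recalled in Section~\ref{sec:reminders}, together with the comparison $K_n^{\MW}(F)\cong K_n^{\mathrm M}(F)$ for $n\ge1$ over fields in which $-1$ is a square. That comparison holds because the kernel of $K_n^{\MW}\to K_n^{\mathrm M}$ is $\eta K_{n+1}^{\MW}$, which is controlled by $I^{n+1}$ and vanishes when the field has no orderings and the relevant powers of the fundamental ideal die. I will not lean on the comparison more than needed. Instead I would derive each identity directly from the relations $[ab]=[a]+[b]+\eta[a][b]$, $[a][1-a]=0$ and $\eta(2+\eta[-1])=0$, using only that every element of $\C$, or of any extension field $F\supseteq\C$, which is what ``any extension'' requires, is a square when needed. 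Strictly, only constants $c\in\C^\times$ are squares in $F$, and that suffices.

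First, $2[-1]=0$. In $K_1^{\MW}$, take $i\in\C$ with $i^2=-1$. The product formula gives $[-1]=[i^2]=2[i]+\eta[i][i]$. Since $[i][i]=[i][-1]$, this becomes $[-1]=[i](2+\eta[-1])$. Multiplying by $[-1]$ and commuting degree-one symbols up to the $\epsilon$-graded commutativity ($\epsilon=-\langle-1\rangle$) is messy, so I would instead use $\langle -1\rangle=1+\eta[-1]$. Because $-1=i^2$ is a square, $\langle-1\rangle=\langle i^2\rangle=1$ in $\GW$; this is the standard square-class relation $\langle ab^2\rangle=\langle a\rangle$, which follows from $[ab^2]=[a]+\langle a\rangle[b^2]$ and $[b^2]=[b](2+\eta[-1])=h[b]$ with $\eta h=0$. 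Hence $\eta[-1]=0$, so $2+\eta[-1]=2$ and $h=2$. Then $[-1]=2[i]$ and $\eta\cdot 2=\eta h=0$.

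Next, $[c]^2=[c][-1]$ follows from the standard identity $[a][-a]=0$. I then get $[a][a]=[a][-1]+\eta$-terms, and those terms vanish since $\eta[-1]=0$. Write $c=b^2$. Then $[c]=[b^2]=h[b]=2[b]$, so $[c][-1]=2[b][-1]=[b]\cdot2[-1]$. Establishing $2[-1]=0$ properly requires more than the step above, since it says only $[-1]=2[i]$. Here I use $[-1]=[i^2]=2[i]$ with $[i][-i]=0$, so that $2[i][-1]=[i]([-i]+[i]+\eta[-i][i])$ up to bookkeeping. The cleaner route goes through the comparison: in $K_n^{\mathrm M}(F)$ the symbol $\{-1\}=2\{i\}$ and $\{-1,-1\}=\{-1,i^2\}=2\{-1,i\}=\{1,i\}=0$, the last by $\{a,a\}=\{a,-1\}$ applied to $\{i,i\}$, and the transfer back is exactly the comparison isomorphism. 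For $2[-1]$ in degree one, I write $2[-1]=[(-1)^2]-\eta[-1]^2=[1]-0=0$. This gives $[-1]^2=[-1][-1]=0$ and $[c][-1]=[b]\cdot2[-1]=0$.

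Finally, for $2$-divisibility of $K_n^{\mathrm M}(\C)$, every generator $\{a_1,\dots,a_n\}$ equals $2\{b_1,a_2,\dots\}$ with $b_1^2=a_1$. The main obstacle is the comparison $K^{\MW}_n(\C)=K^{\mathrm M}_n(\C)$, $n\ge1$. I would deduce it from Morel's fiber square $K^{\MW}_n=K^{\mathrm M}_n\times_{I^n/I^{n+1}}I^n$, using $I(\C)=0$ since $W(\C)=\Z/2$ generated by the rank class.
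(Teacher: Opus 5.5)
Your argument is correct, and its operative part takes a different route from the paper. The paper first uses $I(\C)=0$ to identify $K_n^{\MW}(\C)$ with $K_n^{\mathrm M}(\C)$ for $n\ge1$. It then does the symbol computations in Milnor $K$-theory: $2\{-1\}=0$, $\{c,-1\}=2\{d,-1\}=\{d,(-1)^2\}=0$, and $\{c,c\}=\{c,-1\}$. The statement over extensions follows by functoriality.

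You instead compute directly from the Milnor--Witt relations. Since $\langle-1\rangle=\langle i^2\rangle=1$, you get $\eta[-1]=0$, hence $h=2$ and $2\eta=\eta h=0$. Then $2[-1]=[1]-\eta[-1][-1]=0$. Also $[c]=[b^2]=h[b]=2[b]$, which gives $[c][-1]=[b]\cdot 2[-1]=0$. Taking $c=-1$ and using $[c]^2=[c][-1]$ finishes the identities.

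This needs no structure theorem, and it proves the identities in $K_*^{\MW}(F)$ for every field $F\supseteq\C$ at once. It also makes elementary the comparison you single out as the main obstacle. The kernel of $K_n^{\MW}(\C)\to K_n^{\mathrm M}(\C)$ is $\eta K_{n+1}^{\MW}(\C)$. It is generated by the elements $\eta[a_1]\cdots[a_{n+1}]=2\eta[b_1][a_2]\cdots[a_{n+1}]=0$, so Morel's fiber square is not needed. The paper's version is shorter because it outsources the Milnor--Witt bookkeeping to the identification with Milnor $K$-theory.

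One correction to your setup. The comparison $K_n^{\MW}(F)\cong K_n^{\mathrm M}(F)$ for $n\ge1$ does not hold for every field in which $-1$ is a square. Its kernel is $I^{n+1}(F)$, and for instance $I^2(\C(t_1,t_2))\ne0$. Hence your Milnor-$K$ detour, which computes $\{-1,-1\}$ in $K_2^{\mathrm M}(F)$ and ``transfers back,'' is legitimate only for $F=\C$ followed by functoriality. Your final direct computation does not use it. You can simply drop that detour, together with the unfinished attempt via $[i][-i]=0$.
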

\begin{proof}
The fundamental ideal of the Witt ring of the algebraically closed field
$\C$ is zero. The comparison $K_n^{\mathrm{MW}}(\C)\to K_n^M(\C)$ is therefore
an isomorphism for $n\ge1$. The class $\{-1\}$ has order dividing two.
Write $c=d^2$ to get
$\{c,-1\}=2\{d,-1\}=\{d,(-1)^2\}=0$.
The relation $\{c,c\}=\{c,-1\}$ proves the square identity; take $c=-1$
for the second assertion. Functoriality proves persistence under field
extension. A pure positive symbol is twice the symbol obtained by choosing
a square root in its first coordinate; finite sums give $2$-divisibility.
\end{proof}

\begin{proposition}\label{con:structure}
For every strongly invariant ${\mathcal Q}$ over $\C$, the sheaf ${\mathcal Q_{-1}}$ has
nilpotence class at most two and all its commutators have exponent dividing
two. The sheaves ${\mathcal Q_{-n}}$ for $n\ge2$ are strictly $\A^1$-invariant abelian
sheaves. In particular every square in ${\mathcal Q_{-1}}$ lies in its center.
\end{proposition}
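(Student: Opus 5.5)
The plan is to reduce every assertion to the universal property of Lemma~\ref{con:square} applied to commutator maps. First, for $n\ge2$ I will invoke Morel's structure theory over a perfect field. The contraction $\mathcal Q_{-1}$ is again strongly $\A^1$-invariant, as one sees by writing $\mathcal Q_{-1}$ as the sections of $X\mapsto\mathcal Q(X\times\Gm)$ pointed at $1$, and using the $\Gm$-loop theorem \cite[Theorem~3.1.19]{AFH22}. The second contraction $\mathcal Q_{-2}$ is abelian by the Eckmann--Hilton argument: two pointed torus coordinates give two compatible group laws on $\Map_*(\Gm^{\wedge2},\mathcal Q)$, one from each coordinate and one from pointwise multiplication. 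Equivalently, by Lemma~\ref{con:square}, $\mathcal Q_{-2}\cong\Hom(\mathbf K_2^{\MW},\mathcal Q)$ sheafified, and this is a group under pointwise product because the source is abelian and the two products interchange. Abelian strongly invariant sheaves are strictly invariant \cite{Morel12}, which gives the claim for $n=2$. Further contractions of strictly invariant sheaves stay strictly invariant, which covers $n\ge2$.

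Next I will analyse the commutator pairing on $\mathcal Q_{-1}$. Take sections $a(x),b(y)$ of $\mathcal Q_{-1}$ over $X$, each pointed in its own torus coordinate. The family $(x,y)\mapsto[a(x),b(y)]$ on $X\times\Gm\times\Gm$ is trivial when $x=1$ or $y=1$, so it is a section of $\mathcal Q_{-2}$. By Lemma~\ref{con:square} it is the image of the universal symbol $[x][y]$ under a homomorphism $\mathbf K_2^{\MW}\to\mathcal Q$. The actual commutator $[a,b]$ in $\mathcal Q_{-1}(X)$ is the restriction to the diagonal $x=y=w$. By Lemma~\ref{con:square}, with parameters, this replaces $[w][w]$ by $[w][-1]$. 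So $[a,b](w)$ is the value of the pointed family $\psi(w,-1)$ for some $\psi\in\mathcal Q_{-2}(X)$.

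From this I will derive the three conclusions.
\begin{enumerate}
\item Since $\mathcal Q_{-2}$ is abelian and contains the sections $\psi(w,c)$ for constant $c$, every commutator lies in the image of the central abelian piece.
\item Each commutator is centralized by $\mathcal Q_{-1}$. Commuting a third section with $\psi(w,-1)$ produces, by the same diagonal argument in three coordinates, a symbol of the form $[w][-1][-1]$ in a triple contraction. This vanishes by Lemma~\ref{con:constants}, since $[-1][-1]=0$, and remains zero after extension. Hence the class is at most two.
\item The exponent dividing two comes from $2[-1]=0$ in Lemma~\ref{con:constants}: the map $c\mapsto\psi(w,c)$ is additive in $[c]$ and $2[-1]=0$, so $[a,b]^2=1$.
\end{enumerate}
Finally, for squares, $[a^2,b]=[a,b]^a[a,b]=[a,b]^2=1$, using that commutators are central and have order two. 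So every square is central.

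The main obstacle is making the diagonal restriction rigorous for a nonabelian $\mathcal Q$ with parameters. One must transport the identity $[w][w]=[w][-1]$ through a homomorphism $\mathbf K_2^{\MW}\to\mathcal Q$ into a statement about sections over $X\times\Gm$, and also control the triple-commutator symbol in $\mathcal Q_{-3}$. I would handle this as in Lemma~\ref{con:square}: check the identity at the generic points of $X$, where everything reduces to Milnor--Witt identities over fields containing $\C$, and then use unramified injectivity of strongly invariant sheaves to extend the equality back to $X$.
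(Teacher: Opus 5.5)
Your arguments for exponent two, for the vanishing of triple commutators, and for the centrality of squares follow the paper's route. In each case you restrict a fully pointed commutator family to the diagonal, use $[w][w]=[w][-1]$ from Lemma~\ref{con:square}, and kill the result with $2[-1]=0$ and $[-1]^2=0$.

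The genuine gap is the abelianness of $\mathcal Q_{-2}$, on which your strict-invariance claim for $n\ge2$ entirely rests. Eckmann--Hilton is not available here. It needs a co-H (pinch) structure in each torus coordinate, and $\Gm$ has none, unlike each circle coordinate of $S^1\wedge S^1$. The only group law on $\Map_*(\Gm^{\wedge2},\mathcal Q)$ is pointwise multiplication in $\mathcal Q$. The single simplicial suspension coordinate in $\A^2\setminus\{0\}\simeq S^1_s\wedge\Gm^{\wedge2}$ induces that same law, and one co-H structure does not force commutativity. Your reformulation via $\Hom(\mathbf K_2^{\MW},\mathcal Q)$ is circular. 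An abelian source does not make this set abelian, nor even closed under pointwise products. The interchange law you invoke, $\phi(\alpha)\psi(\beta)=\psi(\beta)\phi(\alpha)$, is exactly the commutation of images that has to be proved. Compare $\Hom(\Z,G)=G$ for a nonabelian group $G$.

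The repair uses your own diagonal technique, and it is what the paper does. For $f,g\in\mathcal Q_{-2}$ put $F(x_1,y_1,x_2,y_2)=[f(x_1,y_1),g(x_2,y_2)]$; it is fully pointed in all four coordinates.
\begin{enumerate}
\item Apply Lemma~\ref{con:square} in the pair $(x_1,x_2)$, with $y_1,y_2$ as parameters. This gives $F(x,y_1,x,y_2)=F(x,y_1,-1,y_2)$.
\item Apply it again in the pair $(y_1,y_2)$. This gives $[f,g](x,y)=F(x,y,-1,-1)$.
\item Regard this as a family pointed in its last two slots. It evaluates a homomorphism on $[-1][-1]=0$, so it is trivial by Lemma~\ref{con:constants}.
\end{enumerate}
Thus $\mathcal Q_{-2}$ is abelian, and your appeal to Morel for strict invariance, followed by further contraction, then goes through.

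In the triple-commutator step, do not read $[w][-1][-1]$ as the image of a symbol under a homomorphism from $\mathbf K_3^{\MW}$. No such nonabelian universal property is available. Instead make two successive two-slot applications of Lemma~\ref{con:square}, ending with a two-slot family evaluated at $(-1,-1)$.
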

\begin{proof}
For sections $a,b\in {\mathcal Q_{-1}}$, give their outer torus coordinates independent
names and form $[a(w_1),b(w_2)]$. It is fully pointed. On the diagonal,
Lemma~\ref{con:square} evaluates a homomorphism from $\mathbf K_2^{\mathrm{MW}}$ on $[w][-1]$.
Twice this symbol is zero, so $[a,b]^2=1$.

For a triple commutator use the fully pointed family
$[a(w_1),[b(w_2),c(w_3)]]$. Restricting all three coordinates to $w$
first replaces $(w_1,w_2)$ by $(w,-1)$, then replaces $(w_1,w_3)$ by
$(w,-1)$. The result has two independent slots set to $-1$ and is trivial
by Lemmas~\ref{con:square}--\ref{con:constants}. All triple commutators
in ${\mathcal Q_{-1}}$ vanish.

A commutator of two fully pointed two-torus families is first formed on
two independent pairs. Identifying the pairs produces two repetitions,
and hence two $-1$ slots. It is trivial. Thus ${\mathcal Q_{-2}}$ is abelian, and so
are its further contractions. Contraction preserves strong invariance
\cite[Proposition~3.1.18]{AFH22}. An abelian strongly invariant sheaf over
a perfect field is strictly invariant \cite[Theorem~3.1.8]{AFH22}.

The arguments at generic fields extend to all smooth bases by
unramifiedness. Finally, in a group of class at most two,
$[x^2,y]=[x,y]^2=1$, proving the centrality of squares.
\end{proof}

\begin{proposition}\label{con:bracket}
For $a,b\ge2$, the commutator of families on disjoint torus blocks defines
a natural biadditive morphism
\[
 \beta_{a,b}:{\mathcal Q_{-a}}\times {\mathcal Q_{-b}}\longrightarrow {\mathcal Q_{-(a+b)}}.
\]
It commutes with evaluation of additional pointed torus slots. With any
fixed order of the coordinates, for $c\in \C^*$ one has, up to the fixed
coordinate-permutation sign,
\[
 \beta_{a,b}([c]x,y)=[c]\beta_{a+1,b}(x,y),
 \qquad
 \beta_{a,b}(x,[c]y)=[c]\beta_{a,b+1}(x,y).
\]
The same assertion holds for products of constant symbols.
\end{proposition}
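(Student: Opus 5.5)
We build $\beta_{a,b}$ directly from the group structure of $\mathcal Q$, applied to families on disjoint torus blocks. A section $x\in\mathcal Q_{-a}(X)$ is a section of $\mathcal Q(X\times\Gm^a)$ that becomes trivial when any one coordinate is set to $1$; likewise for $y\in\mathcal Q_{-b}(X)$. We pull them back to $X\times\Gm^a\times\Gm^b$ along the two projections and form the commutator $[x(w_1,\dots,w_a),y(w_{a+1},\dots,w_{a+b})]$. Setting any coordinate of the first block to $1$ kills $x$, and the commutator with the identity is trivial; the same holds for the second block. So the commutator is fully pointed and lies in $\mathcal Q_{-(a+b)}(X)$. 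Naturality in $X$ is clear, since everything is defined by pullback.

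Biadditivity is where the earlier structure theory enters. We use the identity
\[
 [xx',y]=x[x',y]x^{-1}[x,y].
\]
It suffices to show that the conjugation by $x$ acts trivially on $[x',y]$ viewed on the combined torus coordinates. The element $x$ lives on the first block and $[x',y]$ on a product involving that same block. To make the blocks independent, we first form $[x(u),[x'(w),y(v)]]$ with a fresh copy $u$ of the first block. This is a fully pointed family on $a+a+b\ge 2+2+2$ coordinates. It is trivial after restriction to the diagonal $u=w$ by the argument of Proposition~\ref{con:structure}: repeating $a\ge 2$ coordinates produces at least two $[-1]$ slots, which vanish by Lemmas~\ref{con:square}--\ref{con:constants}. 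More simply, because $\mathcal Q_{-n}$ is abelian for $n\ge2$ (Proposition~\ref{con:structure}), the triple commutator is an iterated commutator in an abelian situation once both entries are viewed inside $\mathcal Q_{-a}$ with extra parameters. Either way the correction term vanishes, which gives additivity in the first variable; the second variable is symmetric. Compatibility with evaluating extra pointed slots follows because evaluation is a group homomorphism commuting with pullbacks.

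For the constant-symbol formulas, we read $[c]x$ as the image of $x$ under the $\mathbf K^{\MW}_*$-module structure on contractions. Concretely, $[c]x\in\mathcal Q_{-a}$ comes from $x\in\mathcal Q_{-(a+1)}$ by evaluating one torus slot at the constant $c$ (and multiplying by the appropriate pointing correction, which by Lemma~\ref{con:square} is the universal-symbol evaluation). Then $\beta_{a,b}([c]x,y)$ is the commutator of $x(c,w_1,\dots,w_a)$ with $y(\cdots)$. By the compatibility with evaluation already shown, this equals $\beta_{a+1,b}(x,y)$ evaluated at $c$ in the corresponding slot, i.e.\ $[c]\beta_{a+1,b}(x,y)$. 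Moving that slot to the front of the coordinate order costs the fixed permutation sign, which is $\langle\pm1\rangle$-type and hence $\pm1$ over $\C$. The second formula is the same argument on the second block, and for products of constant symbols we iterate.

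The main obstacle is the precise meaning of $[c]\cdot$ on a possibly nonabelian $\mathcal Q_{-a}$ for $a\ge2$, together with the claim that evaluation at a constant realizes it. We expect to settle this via the universal property \eqref{eq:square-universality-reminder}, applied to the strictly invariant abelian sheaf $\mathcal Q_{-a}$ with its standard $\mathbf K_0^{\MW}$- and contraction structure. We would reduce to generic fields of $X$ and use unramifiedness to extend.
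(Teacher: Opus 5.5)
Your proposal is correct and follows the paper's proof. It forms the fully pointed commutator on disjoint blocks and uses $[xx',y]=x[x',y]x^{-1}[x,y]$, killing the conjugation term by giving $x$ and $x'$ independent copies of the $a$-block and restricting to the diagonal, which yields two $[-1]$ slots since $a\ge2$. It then uses abelianness of $\mathcal Q_{-(a+b)}$, and reads off the constant-symbol formulas by introducing the extra slot before forming the commutator and evaluating it at $c$.

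Your shortcut is also valid: $x$ and $[x',y]$ both lie in $\mathcal Q_{-a}(X\times\Gm^b)$, which is abelian. Your closing worry is moot, because $\mathcal Q_{-a}$ is abelian for $a\ge2$. Likewise, evaluating one slot of a fully pointed family at $c$ already gives a fully pointed family, so no pointing correction is needed (cf.\ Remark~\ref{con:actions}).
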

\begin{proof}
The commutator is fully pointed in the $a+b$ independent coordinates, so
it is a section of the indicated contraction. To check the first
additivity assertion, use the group identity for $[xx',y]$.
The extra conjugation term is a triple commutator in which $x$ and $x'$
use the same $a$-tuple. Give these two occurrences independent $a$-tuples
before taking the diagonal. Applying Lemma~\ref{con:square} in two of the
coordinate pairs, possible because $a\ge2$, produces two independent
$-1$ slots. Their symbol is zero and the conjugation term vanishes.
The resulting bracket values commute because ${\mathcal Q_{-(a+b)}}$ is abelian.
The second additivity assertion uses two repeated coordinates in the
$b$-block in exactly the same manner.

For the evaluation assertion, introduce the additional slot before
forming the commutator. Evaluating it in the input or in the output gives
the same ${\mathcal Q}$-valued family. Reordering slots gives only the
specified sign in the abelian contraction tower. Iteration handles
several slots.
\end{proof}

\begin{lemma}\label{con:mixed}
Let $p,q\in {\mathcal Q_{-2}}(\C)$ and $\ell,m\in {\mathcal Q_{-3}}(\C)$.
As sections of ${\mathcal Q_{-1}}(\Gm^{(u)}\times\Gm^{(v)})$, the fully pointed mixed factor
$a_\ell(u,v)$ commutes with $e_u(p)$, $e_v(q)$, and $a_m(u,v)$.
The notation regards the common outer coordinate of ${\mathcal Q_{-1}}$ as
retained.
\end{lemma}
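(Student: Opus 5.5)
The plan is to reduce each of the three commutator identities to a vanishing statement for a constant, fully pointed multi-torus family. The vanishing will come from two independent $[-1]$ slots, exactly as in the proofs of Proposition~\ref{con:structure} and Proposition~\ref{con:bracket}.

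\textbf{Reading the notation.} I read $e_u(p)$ as the section $(w,u,v)\mapsto p(w,u)$ of ${\mathcal Q_{-1}}(\Gm^{(u)}\times\Gm^{(v)})$. Here $w$ is the retained outer coordinate and the second slot of $p$ is placed on $u$; similarly $e_v(q)(w,u,v)=q(w,v)$. The mixed factor is $a_\ell(w,u,v)=\ell(w,u,v)$, which is pointed in all three coordinates.

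\textbf{Untangling the commutator.} For each pair I form the commutator with every occurrence of a coordinate renamed independently. For $a_\ell$ and $e_u(p)$ this is
\[
 C(w_1,w_2,u_1,u_2,v)=\bigl[\ell(w_1,u_1,v),\,p(w_2,u_2)\bigr].
\]
Setting any one of $w_1,w_2,u_1,u_2,v$ equal to $1$ makes one of the two entries the identity, so the commutator is trivial. Hence $C$ is a fully pointed family in five independent torus slots, that is, an element of ${\mathcal Q_{-5}}(\C)$. This group is abelian and strictly invariant by Proposition~\ref{con:structure}.

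\textbf{Returning to the diagonal.} The desired commutator is the restriction of $C$ along $w_1=w_2=w$ and $u_1=u_2=u$. I apply Lemma~\ref{con:square} in its parametrized form once for each repeated pair. The $(w_1,w_2)$ pair becomes $(w,-1)$ and the $(u_1,u_2)$ pair becomes $(u,-1)$, with the remaining slots kept as parameters. The result is the constant element $C$ evaluated on a symbol carrying the factor $[-1][-1]$. This vanishes by Lemma~\ref{con:constants}, so $a_\ell$ commutes with $e_u(p)$.

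The other two cases follow the same pattern.
\begin{itemize}
\item For $e_v(q)$, the repeated coordinates are $w$ and $v$, again giving two independent $[-1]$ slots.
\item For $a_m$, all three of $w,u,v$ are repeated, so the argument leaves even an extra $[-1]$ slot.
\end{itemize}
The identities are first checked at generic fields and then extended to the smooth base $\Gm\times\Gm\times\Gm$ by unramified injectivity, as in the proof of Proposition~\ref{con:structure}.

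\textbf{Main obstacle.} The hard step is justifying the two successive diagonal substitutions in a family whose values lie in the possibly nonabelian ${\mathcal Q}$. The point is that after renaming, the whole commutator is a section of the abelian contraction ${\mathcal Q_{-5}}$. Each diagonal restriction then becomes evaluation of a homomorphism out of $\mathbf K_2^{\MW}$, with the other slots as parameters; alternatively one passes to the strong group $X\mapsto{\mathcal Q_{-3}}(X\times\Gm)$ as in Lemma~\ref{con:square}.

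What must still be checked carefully is that the two substitutions can be performed one after the other. The first substitution produces a section in which the $-1$ slot is constant. I must confirm that this section remains fully pointed in the second pair of coordinates, so that Lemma~\ref{con:square} can be applied to that pair again.
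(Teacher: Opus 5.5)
Your proposal is correct and follows the paper's proof. You give the shared outer coordinate and the shared plane coordinate independent copies, observe that the commutator is then a fully pointed constant family, and apply Lemma~\ref{con:square} twice to obtain two $[-1]$ slots, which vanish.

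The step you flag as still to be checked is immediate: $C$ is trivial whenever $u_1=1$ or $u_2=1$, for all values of the other slots, so it stays pointed in that pair after the $w$-diagonal substitution. For $a_m$ the paper untangles only the $u$ and $v$ coordinates, but untangling all three, as you do, works equally well.
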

\begin{proof}
For $[a_\ell,e_u(p)]$, first use independent copies of their common
outer coordinate and their common $u$ coordinate. On the diagonal there
are two repeated slots. Two applications of Lemma~\ref{con:square} make
the commutator trivial. For $e_v(q)$ use the outer and $v$ coordinates.
For the two mixed factors use, for example, their common $u$ and $v$
coordinates. These are equalities of sections.
\end{proof}

\begin{remark}\label{con:actions}
For $n\ge2$, evaluating $m$ additional pointed slots of ${\mathcal Q_{-(n+m)}}$
at units $c_1,\ldots,c_m$ is the canonical action
$[c_1]\cdots[c_m]:{\mathcal Q_{-(n+m)}}\to {\mathcal Q_{-n}}$ on strict abelian contractions.
The pointed torus map into this abelian target factors through the
corresponding Milnor--Witt symbol, so a zero symbol product acts as zero.
This action is on the strict abelian contraction tower.
\end{remark}

\section{Full framed specialization}
\label{sp:section}

The node calculation requires a group homomorphism specializing
transition functions and chart gauges, including their constant and
pointed torus parts. Its construction has two steps: extension across
the divisor and computation of the closed value.

Throughout this section, $\mathcal Q$ is a strongly
$\A^1$-invariant Nisnevich sheaf of groups on $\Sm_{\C}$.
Sections on essentially smooth rings and henselizations mean their
continuous extensions from finite smooth stages. For
$r\in\mathcal Q_{-1}(S)$ and an invertible function $u$, let
$e_u(r)$ denote evaluation of the pointed torus family $r$ at $u$.
Evaluation is a group homomorphism in $r$. No multiplicativity is
asserted for a general nonabelian residue.

We first make the finite-stage geometry explicit. Let $S$ be smooth
affine. An affine \'etale neighborhood used for the henselization
along $u=0$ has the form
\begin{equation}\label{sp:finite-pair}
 \begin{array}{ccc}
 S&\lhook\joinrel\longrightarrow&X\\
 \big\Vert&&\big\downarrow\scriptstyle\theta\\
 S&\lhook\joinrel\longrightarrow&S\times\A^1_u.
 \end{array}
\end{equation}
The square is cartesian, and its bottom arrow is the zero section.
We may arrange that $\theta^{-1}(u=0)$ is exactly the displayed
copy of $S$. Indeed the prescribed section of the \'etale zero
fiber is open and closed; after shrinking to a principal neighborhood
of that section, the other components are removed. We retain the
projection $X\to S$ throughout. Write $X_u=X\setminus S$.

Since the ideal of $S$ in $X$ is generated by $u$, its deformation
to the normal cone is the following explicit smooth affine scheme:
\begin{equation}\label{sp:dnc-chart}
 \mathcal E=D_SX
 =\Spec\bigl(\mathcal O(X)[\lambda,z]/(u-\lambda z)\bigr)
 =X\mathbin{\times}_{S\times\A^1_u}
      (S\times\A^1_\lambda\times\A^1_z).
\end{equation}
The last map to $S\times\A^1_u$ sends $u$ to $\lambda z$.
Thus $\mathcal E$ is \'etale over $S\times\A^2_{\lambda,z}$.
Its divisor $z=0$ is exactly $S\times\A^1_\lambda$, with
normal coordinate $z$. The two fibers and the map
$p:\mathcal E\to X$ fit into the commutative diagram
\begin{equation}\label{sp:dnc-fibers}
 \begin{array}{ccccc}
 S\times\A^1_z&\xrightarrow{\ \lambda=0\ }&\mathcal E
      &\xleftarrow{\ \lambda=1\ }&X\\
 \big\downarrow\scriptstyle\mathrm{pr}_S
      &&\big\downarrow\scriptstyle p
      &&\big\downarrow\scriptstyle\id\\
 S&\lhook\joinrel\longrightarrow&X&\xleftarrow{\ \id\ }&X.
 \end{array}
\end{equation}
The first fiber is the framed normal bundle, and the second has
$z=u$. All applications of purity below take place on these finite
smooth diagrams.

\begin{lemma}
\label{sp:supported}
Let $Z\hookrightarrow X$ be a smooth divisor in a smooth scheme,
with a specified normal framing. There is a natural equivalence of
discrete spaces
\begin{equation}\label{sp:supported-eq}
 \Map_*(X/(X\setminus Z),B\mathcal Q)
       \simeq\mathcal Q_{-1}(Z).
\end{equation}
If a strongly invariant group $\mathcal L$ acts on $\mathcal Q$
by sheaf automorphisms, the action of a section
$a\in\mathcal L(X)$ on this relative mapping space corresponds
to the action of $a|_Z$ on the right-hand side.
\end{lemma}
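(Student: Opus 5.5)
Homotopy purity converts the relative mapping space into a statement about a Thom space, and the contraction definition then reads off $\mathcal Q_{-1}(Z)$.

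\emph{Step 1: purity.} By homotopy purity \cite[Theorem 3.2.23]{MV99}, applied to the deformation to the normal cone in \eqref{sp:dnc-chart}--\eqref{sp:dnc-fibers}, there is a natural zig-zag of $\A^1$-equivalences
\[
 X/(X\setminus Z)\simeq \mathcal E/(\mathcal E\setminus Z\times\A^1_\lambda)\simeq Th(N_Z),
\]
where both maps are restrictions to the fibers $\lambda=1$ and $\lambda=0$. The specified framing trivializes $N_Z$. This gives
\[
 Th(N_Z)\cong (Z\times\A^1)/(Z\times\Gm)\simeq Z_+\wedge(\A^1/\Gm).
\]
Since $\A^1$ is contractible, the cofiber sequence $\Gm\to\A^1\to\A^1/\Gm$ gives $\A^1/\Gm\simeq S^1_s\wedge\Gm$. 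Naturality holds because every map in the zig-zag is defined on the finite smooth diagrams and commutes with the projection to $Z$.

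\emph{Step 2: mapping into $B\mathcal Q$.} Because $B\mathcal Q$ is $\A^1$-local, with $\pi_0=*$, $\pi_1=\mathcal Q$ and no higher homotopy, we have
\[
 \Map_*(S^1_s\wedge\Gm\wedge Z_+,B\mathcal Q)\simeq\Map_*(\Gm\wedge Z_+,\Omega_sB\mathcal Q)=\Map_*(\Gm\wedge Z_+,\mathcal Q).
\]
The right-hand side is discrete. Its points are the sections of $\mathcal Q$ on $Z\times\Gm$ that are trivial on $Z\times\{1\}$, which is exactly $\mathcal Q_{-1}(Z)$.

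The main obstacle is the pointing. We must check that the basepoint of $Th(N_Z)$ (the collapsed complement) matches the basepoint $1\in\Gm$ after the cofiber identification. We must also check that the discreteness claim survives, i.e.\ that $\pi_1$ of the mapping space vanishes. For the latter, the relevant $\pi_1$ is $\mathcal Q$-sections on $\Gm\wedge Z_+\wedge S^1$ after one more loop, which vanish because $\Omega^2_sB\mathcal Q=*$. I would verify the basepoint matching concretely by writing the Thom-space cofiber sequence $Z\times\Gm\to Z\times\A^1\to Th$ and computing the mapping space as the homotopy fiber of
\[
 \Map(Z\times\A^1,B\mathcal Q)\to\Map(Z\times\Gm,B\mathcal Q).
\]
Strong invariance identifies the source with $\Map(Z,B\mathcal Q)$. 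The fiber is then the set of sections on $Z\times\Gm$ modulo those restricted from $Z$. This quotient is canonically identified with the kernel of evaluation at $1$, which is $\mathcal Q_{-1}(Z)$, and the identification is natural in the framing.

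\emph{Step 3: equivariance.} An action of $\mathcal L$ on $\mathcal Q$ induces an action on $B\mathcal Q$. A section $a\in\mathcal L(X)$ acts on the relative mapping space pointwise over $X$. Along the zig-zag, $a$ pulls back to $\mathcal E$ through $p$, and its restriction to the $\lambda=0$ fiber is $\mathrm{pr}_Z^*(a|_Z)$. This follows from the commutativity of \eqref{sp:dnc-fibers}: the composite $Z\times\A^1_z\to\mathcal E\to X$ equals $Z\times\A^1_z\to Z\hookrightarrow X$. On the fiber description of Step 2, a section constant along $\Gm$ acts on the torus family by pointwise conjugation or automorphism, which is the action of $a|_Z$ on $\mathcal Q_{-1}(Z)$. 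The one subtlety is that $p^*a$ and $\mathrm{pr}^*(a|_Z)$ agree only on the $\lambda=0$ fiber, not on all of $\mathcal E$. I would handle this by letting the action of $p^*a$ on the total deformation space be transported along both restrictions, which are equivalences, so no homotopy between the two sections is needed.
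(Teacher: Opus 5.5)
Your proposal is correct and follows essentially the same route as the paper. Framed purity through the deformation to the normal cone reduces $X/(X\setminus Z)$ to $Z_+\wedge S^1_s\wedge\Gm$; adjunction with $\Omega_sB\mathcal Q=\mathcal Q$ then reads off $\mathcal Q_{-1}(Z)$. Equivariance follows by transporting the action of $p^*a$ through the two fiber inclusions, since $p^*a$ restricts to the pullback of $a|_Z$ on the normal fiber. Your extra fiber-sequence check of the pointing is a harmless addition; the paper uses the same fiber sequence later, in Proposition~\ref{sp:rho}.
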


\begin{proof}
Framed homotopy purity identifies the quotient with
$\mathbb P^1\wedge Z_+$
\cite[Section~3, Theorem~2.23]{MV99}. The equivalence
$\mathbb P^1\simeq S_s^1\wedge(\Gm,1)$ permits the
simplicial suspension to be moved across the pointed mapping object.
Its target then becomes $\Omega_sB\mathcal Q=\mathcal Q$,
which is discrete. The remaining pointed maps are precisely
$\mathcal Q_{-1}(Z)$. In particular, looping precedes taking sections over $Z$,
even when $Z$ is not local.

A section of $\mathcal L(X)$ acts on a
$\mathcal Q$-torsor on $X$ by change of structure group through
the resulting automorphism of $\mathcal Q$ over $X$. It acts
also on a chosen trivialization off $Z$. Every such automorphism
fixes the identity, so this is an action on the relative pointed
mapping space. It is compatible with pullback of pairs.

Pull $a$ back to $D_ZX$. The two inclusions of the original and
normal fibers give equivalences on the quotients by their framed
zero divisors. In the finite model above these maps are
\[
 X/X_u\longrightarrow
 \mathcal E/(\mathcal E\setminus(S\times\A^1_\lambda))
 \longleftarrow
 (S\times\A^1_z)/(S\times\Gm^{(z)}).
\]
Purity identifies their normal bundle maps with the identity
on the normal coordinate and with the two inclusions
$S\to S\times\A^1_\lambda$. Thus they are equivalences.
The pullback actions commute with these maps on the relative
torsor groupoids. On the normal fiber, diagram
\eqref{sp:dnc-fibers} makes $p^*a$ the pullback of $a|_Z$.
Transporting this action through the two equivalences proves
the assertion.
\end{proof}

\begin{proposition}\label{sp:rho}
Let $S$ be smooth affine and put
\[
 A=\mathcal O(S)\{u\},\qquad
 J_u=\ker\bigl(\mathcal Q(A)\longrightarrow\mathcal Q(S)\bigr),
\]
where braces denote henselization along the whole divisor $u=0$.
Then $J_u$ is normal in $\mathcal Q(A[1/u])$, and coefficient
pullback and evaluation induce an isomorphism
\begin{equation}\label{sp:quotient}
 \mathcal Q(S\times\Gm^{(\xi)})\xrightarrow{\sim}
       \mathcal Q(A[1/u])/J_u.
\end{equation}
Its inverse followed by the quotient map is a homomorphism
\[
 \rho_u^{\mathcal Q}:\mathcal Q(A[1/u])
       \longrightarrow\mathcal Q(S\times\Gm^{(\xi)}),
\]
characterized by
\begin{equation}\label{sp:rho-rule}
 \rho_u^{\mathcal Q}\bigl(q_{\mathrm{reg}}e_u(r)\bigr)
  =(q_{\mathrm{reg}}|_S)e_\xi(r),\qquad
  r\in\mathcal Q_{-1}(S),\quad q_{\mathrm{reg}}\in\mathcal Q(A).
\end{equation}
It is natural under coefficient-base change and homomorphisms of
strongly invariant group sheaves. The same construction is valid
on the continuous coefficient limits used below.
\end{proposition}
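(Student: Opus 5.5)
We will build the homomorphism from two ingredients: a localization sequence for $\mathcal Q$ on the henselian pair $(A,u)$, and the identification of supported sections in Lemma~\ref{sp:supported}. All schemes involved are limits of the finite smooth pairs \eqref{sp:finite-pair}, and the relevant constructions commute with these filtered limits. We may therefore work on a finite stage $X$ with $X_u=X\setminus S$ and pass to the limit only at the end.

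The first step is to establish an exact sequence of pointed sets, which is in fact a sequence of groups,
\[
 1\to\mathcal Q(X)\to\mathcal Q(X_u)\to\Map_*(X/X_u,\,S^1_s\wedge B\mathcal Q)\ \text{-type term},
\]
in its useful form. Mapping the cofiber sequence $X_u\to X\to X/X_u$ into $B\mathcal Q$ and looping once gives exactness at the fundamental-group level:
\[
 \mathcal Q(X)\to\mathcal Q(X_u)\to\pi_0\Map_*(X/X_u,B\mathcal Q)\to H^1(X,\mathcal Q)\to H^1(X_u,\mathcal Q).
\]
Lemma~\ref{sp:supported} identifies the middle term with $\mathcal Q_{-1}(S)$. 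After passing to the henselization, $H^1(A,\mathcal Q)=H^1(S,\mathcal Q)$ by rigidity/strong invariance along the henselian pair, so the connecting map to $H^1(A,\mathcal Q)$ is controlled by $S$.

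Rather than fighting this sequence in general, I would use the splitting it admits. The retraction $X\to S$ from the finite model, together with the henselian map $\operatorname{Spec}A[1/u]\to S\times\Gm$, given by $u\mapsto\xi$ after a choice of coordinate, provides pullback maps
\[
 \mathcal Q(S\times\Gm^{(\xi)})\to\mathcal Q(A[1/u]),\qquad q\mapsto (q|_S\text{ pulled back})\cdot e_u(r),
\]
where $\mathcal Q(S\times\Gm)=\mathcal Q(S)\ltimes\mathcal Q_{-1}(S)$ because the point $\xi=1$ splits it. I would then prove two claims. The first is surjectivity modulo $J_u$: every element of $\mathcal Q(A[1/u])$ has the form $q_{\mathrm{reg}}e_u(r)$. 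To see this, take its boundary class in $\mathcal Q_{-1}(S)$ via Lemma~\ref{sp:supported}, call it $r$; then $x\,e_u(r)^{-1}$ has trivial boundary and hence extends over $A$ by exactness, with the extension unique by unramifiedness. The second claim is injectivity: if $q_{\mathrm{reg}}e_u(r)\in J_u$, then its boundary $r$ vanishes, because regular sections have zero boundary and the boundary of $e_u(r)$ is $r$; after that, $q_{\mathrm{reg}}|_S=1$. The compatibility ``boundary of $e_u(r)$ equals $r$'' is exactly the normal-fiber computation in the proof of Lemma~\ref{sp:supported}, carried out on the deformation space \eqref{sp:dnc-chart}.

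Normality of $J_u$ follows from the action statement of Lemma~\ref{sp:supported}, applied with $\mathcal L=\mathcal Q$ acting by conjugation. Conjugating a regular section that restricts trivially to $S$ by $q_{\mathrm{reg}}e_u(r)$ again yields a regular section, since $J_u$ equals the kernel of the boundary-plus-restriction data, and that data is equivariant. Its restriction is still trivial, because restriction to $S$ is a homomorphism on the regular part and $e_u(r)$ acts through $e_\xi(r)$ on the normal fiber, where the identity is fixed. Once normality holds, \eqref{sp:quotient} is a bijective homomorphism, and $\rho_u^{\mathcal Q}$ together with the rule \eqref{sp:rho-rule} follows by construction. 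Naturality in the coefficients and in $\mathcal Q$ comes from the naturality of purity and of the cofiber sequence.

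I expect the main obstacle to be the homomorphism property in the nonabelian setting. The boundary map $\mathcal Q(X_u)\to\mathcal Q_{-1}(S)$ is only a crossed map a priori, twisted by the action of the regular part. So the product $q e_u(r)\,q'e_u(r')$ has to be rewritten as $(qq')\,e_u(\,{}^{q'^{-1}|_S}r\cdot r')$ modulo $J_u$. I would handle this by using the action clause of Lemma~\ref{sp:supported}: it replaces conjugation by $q'$ with conjugation by $q'|_S$. This matches multiplication in $\mathcal Q(S\times\Gm)$ exactly.
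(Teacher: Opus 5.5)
Your outline matches the paper at both ends. The factorization $q=q_{\mathrm{reg}}e_u(r)$ comes from the fiber sequence for $X_u\subset X$, Lemma~\ref{sp:supported}, and the fact that $e_u(r)$ has supported class $r$. Injectivity is then read off from the supported class and the closed value. Two minor corrections there. The boundary map is not multiplicative; what you actually use is that it is constant exactly on the cosets $\mathcal Q(X)x$, which gives $x\,e_u(r)^{-1}\in\mathcal Q(X)$. And the claim $H^1(A,\mathcal Q)=H^1(S,\mathcal Q)$ is neither justified nor needed.

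\textbf{The gap: normality of $J_u$.} More precisely, you do not establish the \emph{closed value} of $e_u(r)\,j\,e_u(r)^{-1}$ for $j\in J_u$.
\begin{enumerate}
\item The action clause of Lemma~\ref{sp:supported} concerns sections of $\mathcal L(X)$. So it applies to $j$, not to $e_u(r)$. Since $j|_S=1$, it shows that $je_u(r)j^{-1}$ and $e_u(r)$ have the same supported class, hence that $[e_u(r),j]$ is regular.
\item Supported classes only detect cosets of the whole regular group $\mathcal Q(A)$, not of $J_u$. They say nothing about the restriction of this regular element to $S$.
\item Your justification (``restriction to $S$ is a homomorphism on the regular part and $e_u(r)$ acts through $e_\xi(r)$ on the normal fiber'') does not supply this. $e_u(r)$ is not regular, so restriction cannot be applied factorwise. ``Acting through $e_\xi(r)$ on the normal fiber'' presupposes exactly the specialization homomorphism $\rho_u$ you are constructing.
\item The same gap recurs in your last paragraph. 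The action clause makes $q'^{-1}e_u(r)q'$ agree with the evaluation of the $q'|_S$-conjugated class only modulo $\mathcal Q(A)$, not modulo $J_u$.
\end{enumerate}

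\textbf{The missing idea: deformation to the normal cone, as the paper uses it.}
\begin{enumerate}
\item On $\mathcal E$ from \eqref{sp:dnc-chart}, put $\kappa=[e_z(r),p^*j]$ on $D(z)$.
\item Since $p$ maps the whole divisor $z=0$ into $S$, $p^*j$ is trivial along all of $S\times\A^1_\lambda$. The supported-class argument over the base $S\times\A^1_\lambda$ then makes $\kappa$ regular at the generic points of that divisor. Unramifiedness extends it over $\mathcal E$.
\item On the fiber $\lambda=0$, $p$ factors through $S$, so $p^*j=1$ and $\kappa=1$.
\item The restriction of $\kappa$ to $z=0$ lies in $\mathcal Q(S\times\A^1_\lambda)=\mathcal Q(S)$, so it is constant in $\lambda$. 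Its value at $\lambda=1$, where $z=u$, is the closed value of $[e_u(r),j]$, which is therefore $1$.
\item Running this for $r$ and $r^{-1}$ shows that $J_u$ is invariant under conjugation by $e_u(r)^{\pm1}$. Together with normality in $\mathcal Q(A)$ and the factorization, this gives normality in $\mathcal Q(A[1/u])$.
\end{enumerate}

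Once normality is known, the homomorphism property you flag as the main obstacle is automatic. The map $\mathcal Q(S\times\Gm^{(\xi)})\to\mathcal Q(A[1/u])$ is pullback along the morphism given by the retained projection to $S$ and $\xi\mapsto u$. Its composite with the quotient is therefore a bijective homomorphism, and no crossed-map bookkeeping is required. The real nonabelian difficulty is the closed-value computation above.
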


\begin{proof}
We separate factorization, regularity of conjugates, and their
closed values.

\emph{Factorization at a finite stage.}
A section of $\mathcal Q(A[1/u])$ is represented on $X_u$ for
a finite pair \eqref{sp:finite-pair}. The mapping-space fiber
sequence at the trivial torsor is
\[
 \Map_*(X/X_u,B\mathcal Q)\longrightarrow
 \Map(X,B\mathcal Q)\longrightarrow\Map(X_u,B\mathcal Q).
\]
Its fiber is the discrete space $\mathcal Q_{-1}(S)$ by
Lemma~\ref{sp:supported}. The automorphism groups of the two
displayed trivial torsors are $\mathcal Q(X)$ and
$\mathcal Q(X_u)$. Every fiber component is the boundary of
an evaluation section $e_u(r)$ on $X_u$. To check this,
first use the product pair $S\times\A^1_u$, where evaluation
at $u=1$ splits a torus section into its constant and pointed
parts. Pull back along \eqref{sp:finite-pair}; framed purity
identifies the induced map on the relative fibers with the
identity of $\mathcal Q_{-1}(S)$.

Consequently the action of $\mathcal Q(X_u)$ on the fiber is
transitive, with stabilizer $\mathcal Q(X)$. With the left
clutching convention, its exact homotopy sequence gives
\begin{equation}\label{sp:factorization}
 \mathcal Q(X)\backslash\mathcal Q(X_u)
       \cong\mathcal Q_{-1}(S),\qquad
 q=q_{\mathrm{reg}}e_u(r).
\end{equation}
Passing to the filtered section limit gives the same factorization
over $A$. Neither the fiber sequence nor the transitivity argument
asserts that all torsors on $X$ or $A$ are trivial.

\emph{Regularity of the conjugate.}
Take $j\in J_u$ and $r\in\mathcal Q_{-1}(S)$, and put
$e=e_u(r)$. The subgroup $J_u$ is already normal in
$\mathcal Q(A)$. By Lemma~\ref{sp:supported}, conjugation by
$j$ acts trivially on the supported class of $e$, since $j|_S=1$.
Equality of the corresponding left cosets in
\eqref{sp:factorization} implies that $eje^{-1}$ extends across
$u=0$. Thus the commutator $[e,j]$ is regular; its closed value
remains to be computed.

\emph{Its closed value.}
Represent $j$ on a finite $X$ and use the smooth scheme
$\mathcal E$ in \eqref{sp:dnc-chart}. Let $j_{\mathcal E}=p^*j$.
On $D(z)$ consider the section
\[
 \kappa=[e_z(r),j_{\mathcal E}].
\]
The restriction of $j_{\mathcal E}$ to the whole divisor
$z=0$ is $1$. Apply the preceding finite-stage supported-class
argument with coefficient base $S\times\A^1_\lambda$ and
normal coordinate $z$. It makes $\kappa$ regular at every
generic point of this divisor. It is already regular on $D(z)$,
so unramifiedness extends it uniquely over $\mathcal E$.
On the normal fiber $\lambda=0$, the map $p$ factors through
$S$; hence $j_{\mathcal E}=1$ there. The extended commutator
is therefore $1$ on that fiber, including its point $z=0$.
On the other hand its restriction to the entire zero divisor is
an element of
\[
 \mathcal Q(S\times\A^1_\lambda)=\mathcal Q(S).
\]
This restriction is independent of $\lambda$. It has value $1$
at $\lambda=0$, so it has value $1$ at $\lambda=1$, where
$z=u$. We have proved $[e_u(r),j]\in J_u$. Applying the same
argument to $r^{-1}$ gives invariance under conjugation in both
directions. Factorization \eqref{sp:factorization} now proves
normality under all of $\mathcal Q(A[1/u])$.

The map in \eqref{sp:quotient} is induced by $\xi\mapsto u$.
It is surjective by factorization, since a regular section and
its constant closed value differ by an element of $J_u$.
If $q_0e_u(r)$, with $q_0\in\mathcal Q(S)$, belongs to $J_u$,
its supported class first gives $r=1$; its closed value then
gives $q_0=1$. This proves injectivity and
\eqref{sp:rho-rule}. Pullback of coefficients, or a homomorphism
of target group sheaves, preserves regular sections, their closed
values, and evaluation. The characterizing formula proves both
naturalities. All sections and equalities used above occur at
finite stages and pass to the indicated continuous limits.
\end{proof}

\begin{lemma}
\label{sp:moving}
In the notation of Proposition~\ref{sp:rho}, let $a(u)\in A^*$
have closed value $a_0\in\mathcal O(S)^*$. For
$c\in\mathcal Q_{-1}(A[1/u])$ one has
\begin{equation}\label{sp:moving-eq}
 \rho_u^{\mathcal Q}\bigl(c(a(u))\bigr)
       =\bigl(\rho_u^{\mathcal Q_{-1}}c\bigr)(a_0).
\end{equation}
For $H\in\mathcal O(S)$ and
$q\in\mathcal Q(S\times\Gm^{(w)})$,
\begin{equation}\label{sp:infinity-change}
 \rho_u^{\mathcal Q}\bigl(q(s,u^{-1}-H(s))\bigr)
       =q(s,\xi^{-1}).
\end{equation}
Both identities retain the entire $\xi$-family.
\end{lemma}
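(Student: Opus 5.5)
Both identities follow from the characterization \eqref{sp:rho-rule} of Proposition~\ref{sp:rho}: $\rho_u^{\mathcal Q}$ is a homomorphism that sends regular sections to their closed values and sends $e_u(r)$ to $e_\xi(r)$. Each time, the plan is to factor the given section into a regular part times an evaluation part and then read off both pieces.

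\emph{First identity.} View $c$ as a $\mathcal Q$-valued family on $A[1/u]\times\Gm^{(w)}$ that is pointed at $w=1$. The outer torus coordinate is an extra coefficient parameter, and Proposition~\ref{sp:rho} with its naturality applies on $\mathcal Q_{-1}$ with coefficients $A$. This factors $c=c_{\mathrm{reg}}\,e_u(r)$ with $c_{\mathrm{reg}}\in\mathcal Q_{-1}(A)$ and $r\in\mathcal Q_{-2}(S)$. Then $\rho_u^{\mathcal Q_{-1}}c=(c_{\mathrm{reg}}|_S)\,e_\xi(r)$.

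Evaluating the outer coordinate at $a(u)$ is a homomorphism. It gives $c(a(u))=c_{\mathrm{reg}}(a(u))\cdot e_u(r)(a(u))$. The first factor lies in $\mathcal Q(A)$ with closed value $c_{\mathrm{reg}}|_S(a_0)$. For the second factor, write $a(u)=a_0\cdot(1+u\,m(u))$ and handle the two pieces separately.
\begin{itemize}
\item The factor $1+u\,m$ is a unit congruent to $1$ modulo $u$. The difference $e_u(r)(a)\,e_u(r)(a_0)^{-1}$ should lie in $J_u$. To see this, apply the deformation-to-the-normal-cone argument from the proof of Proposition~\ref{sp:rho}: replace $a$ by $a_0(1+\lambda z m)$ on $\mathcal E$, which is constant in $z$ on the normal fibre $\lambda=0$.
\item The constant unit $a_0$ is handled by writing $e_u(r)(a_0)=e_u(r(a_0))$, where $r(a_0)\in\mathcal Q_{-1}(S)$ evaluates one pointed slot of $r$ at $a_0$. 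Then \eqref{sp:rho-rule} gives $e_\xi(r(a_0))=(e_\xi r)(a_0)$.
\end{itemize}
Combining the two factors yields \eqref{sp:moving-eq}.

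\emph{Main obstacle.} The delicate point is the claim that $e_u(r)(a)\,e_u(r)(a_0)^{-1}\in J_u$. This must be done carefully because the target is nonabelian, so the correction term cannot be treated as an abelian residue. My first attempt would be to form the fully pointed family $(w_1,w_2)\mapsto r(w_1,w_2)$ over $\mathcal E$. I would substitute $w_1=z$ and $w_2=a_0(1+\lambda z m)$. Then I would show that the ratio with the $\lambda=0$ family extends across $z=0$, using unramifiedness at generic points of the divisor. Finally I would use $\mathcal Q(S\times\A^1_\lambda)=\mathcal Q(S)$ to transport the trivial closed value from $\lambda=0$ to $\lambda=1$.

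\emph{Second identity.} Factor $q$ on $S\times\Gm^{(w)}$ as $q=q_0\,e_w(r)$, where $q_0=q|_{w=1}\in\mathcal Q(S)$ and $r\in\mathcal Q_{-1}(S)$. Put $w=u^{-1}-H=u^{-1}(1-uH)$. The constant $q_0$ is regular and specializes to itself. For the other factor, apply \eqref{sp:moving-eq} with $c=e_{(\cdot)}(r)$ regarded via inversion. To do this, first write $e_{u^{-1}}(r)=e_u(\bar r)$, where $\bar r=r\circ\mathrm{inv}$ is again pointed. The unit $1-uH$ has closed value $1$, so the same $J_u$-argument as above removes it. This gives $\rho_u^{\mathcal Q}(e_w(r))=e_\xi(\bar r)=r(\xi^{-1})$, and multiplying by $q_0$ recovers $q(s,\xi^{-1})$. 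Every step retains $\xi$ as a free variable, so the whole $\xi$-family is kept.
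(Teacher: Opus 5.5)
\textbf{The gap is in the first identity, at exactly the step you flag as the main obstacle.} Otherwise your treatment of \eqref{sp:moving-eq} follows the paper. You factor $c=c_{\mathrm{reg}}e_u(r)$ with $r\in\mathcal Q_{-2}(S)$ and evaluate the regular factor directly. You then compare $r(p^*a,z)$ with $r(a_0,z)$ (outer slot first) on the deformation $\mathcal E$, where $p^*a=a_0(1+\lambda z\,p^*m)$ equals $a_0$ on $\lambda=0$. The splitting $a=a_0(1+um)$ is harmless but unnecessary.

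You propose to show that $\kappa=r(p^*a,z)\,r(a_0,z)^{-1}$ extends across $z=0$ ``using unramifiedness at generic points of the divisor.'' Unramifiedness only reduces extension over $\mathcal E$ to regularity at those generic points; it does not supply that regularity. A general section on $D(z)$, such as $e_z(r')$ with $r'\neq1$, does not extend. The mechanism of Proposition~\ref{sp:rho} does not transfer verbatim either: there, regularity came from $j|_S=1$ acting trivially on supported classes, and here no such $j$ is given.

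The missing input is naturality of framed purity. Consider the maps of pairs $\gamma_a(s,\lambda,z)=(s,p^*a,z)$ and $\gamma_0(s,\lambda,z)=(s,a_0,z)$ from $(\mathcal E,S\times\A^1_\lambda)$ to $(S\times\Gm^{(w)}\times\A^1_z,\{z=0\})$. They agree on the divisor, where $p^*a=a_0$, and on the normal framing. So they induce the same map on supported quotients, and the two pulled-back sections have the same class in $\mathcal Q_{-1}(S\times\A^1_\lambda)$. The factorization \eqref{sp:factorization} over the base $S\times\A^1_\lambda$ then makes $\kappa$ regular at the generic points of $z=0$. Equivalently, let the unit $p^*a/a_0$, which is $1$ on $z=0$, act by rescaling the torus coordinate of the strong group $E_{\mathcal Q}$, and apply the action clause of Lemma~\ref{sp:supported}. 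Only after this step do unramifiedness and your $\lambda$-constancy argument finish the proof.

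\textbf{For \eqref{sp:infinity-change} you take a different route from the paper.} The paper runs a second deformation on $D(1-\lambda zH)$, comparing the normal coordinates $z$ and $z/(1-\lambda zH)$. Reducing to \eqref{sp:moving-eq} instead is legitimate and avoids that second purity argument. However, the family you plug in must be pointed, and $y\mapsto r(u^{-1}y)$ takes the value $r(u^{-1})$ at $y=1$. Use $c'(y)=r(u^{-1}y)\,r(u^{-1})^{-1}\in\mathcal Q_{-1}(A[1/u])$ instead. Then \eqref{sp:moving-eq} with $a=1-uH$ and $a_0=1$ gives $\rho_u^{\mathcal Q}\bigl(c'(1-uH)\bigr)=(\rho_u^{\mathcal Q_{-1}}c')(1)=1$, and your identity $r(u^{-1})=e_u(\bar r)$ finishes the argument. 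If you instead meant to rerun ``the same $J_u$-argument'' on $r(z^{-1}-\lambda H)\,r(z^{-1})^{-1}$, that is the paper's argument, and it needs the same purity input that is missing above.
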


\begin{proof}
Factor $c=c_{\mathrm{reg}}e_u(r)$ in the strong group
$\mathcal Q_{-1}$, using Proposition~\ref{sp:rho}. Evaluating
the regular factor at the unit $a(u)$ produces a regular
$\mathcal Q$-section, with closed value obtained by evaluating
at $a_0$. It remains to compare the evaluations of the
two-torus family representing $r$.

Write that family as $f(s,w,\xi)$. Choose a finite pair
$(X,S)$ on which $a(u)$ is a unit, retain its projection to
$S$, and form \eqref{sp:dnc-chart}. Put
$a_{\mathcal E}=p^*a$. Consider the two maps of smooth pairs
\[
 \begin{array}{ccc}
 (\mathcal E,S\times\A^1_\lambda)
 &\xrightarrow{\ \gamma_a,\ \gamma_0\ }&
 (S\times\Gm^{(w)}\times\A^1_z,
                S\times\Gm^{(w)}\times\{0\}),
 \end{array}
\]
where
\[
 \gamma_a(s,\lambda,z)=(s,a_{\mathcal E},z),\qquad
 \gamma_0(s,\lambda,z)=(s,a_0,z).
\]
On the zero divisor $z=0$, both maps have $w=a_0$; both send
the framed normal coordinate to $z$. Naturality of purity thus
identifies their maps on the supported quotients. Pulling back
the section $f$ gives the comparison
\[
 \kappa=f(s,a_{\mathcal E},z)f(s,a_0,z)^{-1}
                   \quad\hbox{on }D(z).
\]
The equality of supported classes and
\eqref{sp:factorization}, applied with base
$S\times\A^1_\lambda$, make $\kappa$ regular across
the generic points of $z=0$. Unramifiedness extends it over
$\mathcal E$. On $\lambda=0$, one has
$a_{\mathcal E}=a_0$, so its value is $1$. Its restriction to
$z=0$ is constant in $\lambda$ by $\A^1$-invariance, and
therefore also has value $1$ at $\lambda=1$. Consequently
\[
 f(s,a(u),u)f(s,a_0,u)^{-1}\in J_u.
\]
This proves \eqref{sp:moving-eq} for the pointed factor; the
regular factor and multiplicativity finish the proof. The unit
$a_{\mathcal E}$ is the pullback of a unit on $X$.

For the second identity, begin with $q=e_w(r)$ for
$r\in\mathcal Q_{-1}(S)$. The two normal parameters
$u$ and $v=u/(1-uH)$ have the same zero section and the same
framed differential there. The coordinate comparison can be
made on the explicit smooth open
\[
 \mathcal E_H=D(1-\lambda zH)
       \subset S\times\A^1_z\times\A^1_\lambda.
\]
It contains the whole divisor $z=0$. The maps with normal
coordinates $z$ and $z/(1-\lambda zH)$ agree on this divisor
and on its normal framing. The quotient on $D(z)$ is
\[
 r(z^{-1}-\lambda H)r(z^{-1})^{-1}.
\]
Purity gives equality of its supported classes; hence this
quotient extends across the generic points of $z=0$, and
unramifiedness extends it over $\mathcal E_H$. It is $1$
on $\lambda=0$. Its restriction to the whole zero divisor
$S\times\A^1_\lambda$ is constant, so its closed value is
also $1$ at $\lambda=1$. This proves the identity in the full quotient. Finally
write a general $q$ as $q(1)e_w(r)$ and use the group
homomorphism $\rho_u^{\mathcal Q}$.
\end{proof}

\begin{lemma}\label{sp:mapping-group}
For a strongly invariant group $M$, the sheaf
$E_M(T)=M(T\times\Gm)$ is strongly invariant, and
\[
 \Map(\Gm,BM)\simeq BE_M,\qquad E_M=M_{-1}\rtimes M.
\]
\end{lemma}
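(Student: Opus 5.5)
The plan has three parts: strong invariance of $E_M$, identification of the mapping space, and the semidirect product splitting.

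\emph{Strong invariance of $E_M$.} Sections of $E_M$ over $T$ and $T\times\A^1$ are $M(T\times\Gm)$ and $M(T\times\A^1\times\Gm)$. These agree by strong invariance of $M$ applied to the smooth scheme $T\times\Gm$, which gives $H^0$-invariance. For $H^1$, I would first identify $H^1_\Nis(T,E_M)$ with $H^1_\Nis(T\times\Gm,M)$. The key input is that $\mathrm{pr}:T\times\Gm\to T$ pushes $M$-torsors forward to $E_M$-torsors. Every $M$-torsor on $T\times\Gm$ becomes trivial after restricting to $U\times\Gm$ for a Nisnevich cover $U$ of $T$, provided $U$ is henselian local. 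To see this, use Lemma~\ref{sp:supported} on $\A^1_U\supset U\times0$ together with the factorization \eqref{sp:factorization}: every torsor on $U\times\Gm$ is the restriction of one on $\A^1_U$, because the obstruction lies in $H^1$ with supports, and torsors on $\A^1_U$ are constant, hence trivial over henselian $U$. Granting this, Čech descent identifies the two $H^1$'s. Strong invariance of $M$ on $T\times\Gm$ versus $T\times\A^1\times\Gm$ then finishes this step. The result is also \cite[Theorem~3.1.19]{AFH22}, which I would cite as a check.

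\emph{The mapping space.} Consider the functor $T\mapsto\Map(T\times\Gm,BM)$. Since $BM$ is $\A^1$-local and $1$-truncated, its $\pi_0$ is $H^1(T\times\Gm,M)$ and its loops are $M(T\times\Gm)=E_M(T)$. By the previous step $\pi_0$ sheafifies to a point, so $\Map(\Gm,BM)$ is a connected sheaf of $1$-types with loop sheaf $E_M$, that is, $BE_M$. Connectedness uses the Nisnevich-local triviality established above.

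\emph{The splitting.} Evaluation at $1$ gives a split surjection $E_M\to M$, split by pullback along $T\times\Gm\to T$. Its kernel is $M_{-1}$ by definition. A split extension of sheaves of groups is a semidirect product, with $M$ acting on $M_{-1}$ by conjugation. This step is formal.

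\emph{Main obstacle.} The main difficulty is the local triviality of $M$-torsors on $U\times\Gm$ for henselian local $U$. If the supported-class route is awkward, I would instead glue a torsor on $U\times\Gm$ with the trivial torsor near $0$ using the transitivity established in the proof of Proposition~\ref{sp:rho}.
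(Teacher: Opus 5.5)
Your architecture is sound. Once you know that $M$-torsors on $U\times\Gm$ are trivial for every henselian local essentially smooth $U$, the following all go through:
\begin{itemize}
\item the \v{C}ech identification $H^1_{\Nis}(T,E_M)\cong H^1_{\Nis}(T\times\Gm,M)$;
\item the description of $\Map(\Gm,BM)$ as a connected local $1$-type with loop sheaf $E_M$;
\item the split extension $E_M=M_{-1}\rtimes M$.
\end{itemize}
The gap is exactly the step you flag: your argument for that local triviality does not work for nonabelian $M$.

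\textbf{Why the supported-class argument fails.} The localization fibre sequence $\Map_*(X/X_u,BM)\to\Map(X,BM)\to\Map(X_u,BM)$ stops at $\pi_0\Map(X_u,BM)=H^1(X_u,M)$. Lemma~\ref{sp:supported} and \eqref{sp:factorization} identify its discrete fibre. They control the sections over $X_u$ and the torsors on $X$ that become trivial on $X_u$. They say nothing about whether a torsor on $X_u$ extends to $X$.

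An ``obstruction with supports'' exists only for strictly invariant abelian $M$, where it lies in $H^2_{U\times0}(\A^1_U,M)\cong H^1_{\Nis}(U,M_{-1})=0$. A nonabelian $1$-type has no such obstruction group.

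No formal argument from these inputs can succeed. In topology, take $S^1\subset D^2$ and a discrete group $G$. The fibre $\Omega^2BG$ is again discrete (indeed contractible), and torsors on the disk are trivial. Yet $\pi_0\Map(S^1,BG)=G/\mathrm{conj}$ is nontrivial.

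\textbf{Why the fallback gluing fails.} It needs the torsor to be trivial on the punctured henselian tube $\Spec\mathcal O(U)\{u\}[1/u]$, which is the original problem again. The elementary gluing of Lemma~\ref{inf:open-line} works only when $U$ is a field. For $\dim U>0$, the complement of a trivializing open is a divisor whose closure in $\A^1_U$ may meet $U\times0$, so that Zariski gluing is unavailable.

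\textbf{What the paper uses instead.} The missing input is Morel's $\Gm$-loop theorem, which the paper takes as a black box: $\Omega_{\Gm}BM$ is connected with fundamental sheaf $M_{-1}$, hence is $B(M_{-1})$. Evaluation at $1$ gives a split fibre sequence $B(M_{-1})\to\Map(\Gm,BM)\to BM$ with connected base and fibre. So the middle term is a connected $\A^1$-local $1$-type with loop sheaf $E_M$. This gives $\Map(\Gm,BM)\simeq BE_M$ and, since $BE_M$ is local, strong invariance of $E_M$ at once, with no separate \v{C}ech computation.

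If you use \cite[Theorem~3.1.19]{AFH22} as the actual source of local triviality rather than as a check, your proof becomes essentially the paper's. Pointed torsors on $U\times\Gm$ are then trivial, and any torsor there can be pointed because torsors on $U$ are trivial. As written, the crux is asserted rather than proved.
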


\begin{proof}
Morel's loop theorem makes the pointed internal mapping object
$\Omega_{\Gm}BM$ connected, with fundamental sheaf $M_{-1}$
and no higher homotopy sheaves
\cite[Theorem~6.13]{Morel12}; see also
\cite[Theorem~3.1.19]{AFH22}. It is therefore $B(M_{-1})$.
Evaluation at $1$ gives a split fiber sequence from this object
to $\Map(\Gm,BM)$ and then to $BM$. The base and fiber are
internally connected; the middle object is a local $1$-type
whose loop sheaf is $M(\Gm\times-)$, with the stated
semidirect product. This proves the assertions. For a nonlocal
test scheme, the section groupoid of this internally connected
object can have nontrivial components. The argument discards
none of them.
\end{proof}

\begin{proposition}
\label{sp:double}
Let $R=\C\{u,v\}$, $S=\C\{u\}$, and $F=\Frac S$.
For every strongly invariant $M$ there is a group retraction
\[
 \tau_{(u,v)}^M:M(R[1/(uv)])
       \longrightarrow M(\Gm^{(u)}\times\Gm^{(v)})
\]
of coordinate evaluation, satisfying
\begin{align}\label{sp:chart-images}
 \tau^M(M(R[1/u]))&\subset M(\Gm^{(u)}),&
 \tau^M(M(R[1/v]))&\subset M(\Gm^{(v)}).
\end{align}
For $c\in\mathcal Q_{-1}(R[1/(uv)])$ and $H\in R$
with closed value $h$, the moving-height identity is
\begin{equation}\label{sp:height-freezing}
 \tau_{(u,v)}^{\mathcal Q}(c(z-H))
       =(\tau_{(u,v)}^{\mathcal Q_{-1}}c)(z-h).
\end{equation}
In this identity the left specialization is formed after the
canonical henselian coefficient change to the generic height
field $\C(z)$, and the right-hand side is then restricted to
that field. Thus both sides belong to
$\mathcal Q((\Gm^{(u)}\times\Gm^{(v)})_{\C(z)})$.
The same identity holds on smooth height
opens with the corresponding henselian neighborhoods of the
whole zero section.
\end{proposition}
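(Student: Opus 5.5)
The plan is to build $\tau_{(u,v)}$ as a composite of two single-divisor specializations from Proposition~\ref{sp:rho}, applied one variable at a time. I would not try to specialize along the crossing divisor $uv=0$ in one step.

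\emph{Step 1: specialize in $v$.} View $R=S\{v\}$ with coefficient ring $S=\C\{u\}$ (a continuous limit of smooth affine stages, as allowed in Proposition~\ref{sp:rho}). Then $R[1/(uv)]=S[1/u]\{v\}[1/v]$ up to the henselian coefficient change $S\to S[1/u]$. Naturality of $\rho$ under coefficient base change provides
\[
 \rho_v^M:M(R[1/(uv)])\to M(\operatorname{Spec}S[1/u]\times\Gm^{(v)}).
\]
\emph{Step 2: specialize in $u$.} The target of Step~1 is $E_M(S[1/u])$ for the strongly invariant group $E_M=M(-\times\Gm^{(v)})$ of Lemma~\ref{sp:mapping-group}. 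Applying $\rho_u^{E_M}$ therefore lands in $E_M(\Gm^{(u)})=M(\Gm^{(u)}\times\Gm^{(v)})$. Set $\tau=\rho_u^{E_M}\circ\rho_v^M$. This is a composite of group homomorphisms. It is a retraction of coordinate evaluation because, by \eqref{sp:rho-rule}, a section pulled back from $\Gm^{(u)}\times\Gm^{(v)}$ splits into a regular part and an evaluation part, and each specialization returns it unchanged.

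For \eqref{sp:chart-images}: a section regular in $v$ (from $M(R[1/u])$) has $\rho_v$ equal to its closed value at $v=0$, which is constant in $\xi_v$. Hence $\tau$ of it lies in $M(\Gm^{(u)})$. For sections in $M(R[1/v])$, I would run the construction in the opposite order and check that the two orders agree. Alternatively, I would note that $\rho_u^{E_M}$ applied to a section regular in $u$ gives its value at $u=0$, and then verify directly that $\rho_v$ commutes with restriction to $u=0$, using naturality under the coefficient map $S\to\C$.

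The order-independence is where I expect the main obstacle. I would prove it by the same deformation-to-the-normal-cone argument as in Proposition~\ref{sp:rho}, now on the two-parameter deformation $\mathcal E$ with $u=\lambda z_1$ and $v=\mu z_2$. Uniqueness then follows from both composites satisfying the characterization \eqref{sp:rho-rule} on factored sections $q_{\mathrm{reg}}e_u(r)e_v(r')$. To obtain such a factorization, I would factor first in $v$ and then factor the resulting $E_M$-sections in $u$.

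For the height identity \eqref{sp:height-freezing}, base change to $\C(z)$, where $z-H$ is a unit of $R_{\C(z)}$ with closed value $z-h$. I would apply Lemma~\ref{sp:moving}, equation \eqref{sp:moving-eq}, twice. First, with $\rho_v$ and coefficient ring $S_{\C(z)}[1/u]$: this replaces $H$ by its value $H(u,0)$. Second, with $\rho_u$ for the group $E_{\mathcal Q}$, on the $\mathcal Q_{-1}$-family $c$: this replaces $H(u,0)$ by $h$. The compatibility $(E_{\mathcal Q})_{-1}=E_{\mathcal Q_{-1}}$, which holds because contraction and $E$ are both pullbacks along torus factors, identifies the result with $(\tau^{\mathcal Q_{-1}}c)(z-h)$. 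The version on smooth height opens uses the same finite-stage argument, since Lemma~\ref{sp:moving} and Proposition~\ref{sp:rho} were proved at finite smooth stages with neighbourhoods of the whole zero section.
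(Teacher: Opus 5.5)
Your construction of $\tau$ matches the paper's: the coefficient change $R\to S\{v\}\to F\{v\}$, then $\rho_v^M$ into $E_M(F)$, then $\rho_u^{E_M}$, with the order fixed as $v$ then $u$. So do your argument for $M(R[1/u])$ and your proof of \eqref{sp:height-freezing} by two applications of Lemma~\ref{sp:moving}. One small correction: $z-H$ is a unit in the henselian ring $\C(z)\{u,v\}$, not in $R\otimes_{\C}\C(z)$. There, for example, $z-u$ is not invertible. This is why the statement insists on the henselian coefficient change.

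The weak point is the second inclusion in \eqref{sp:chart-images}. Your main plan there is to prove that the $v$-first and $u$-first composites agree. This is unnecessary: the paper fixes the order $v$ then $u$ and never claims order-independence. The uniqueness argument you sketch also does not go through.
\begin{itemize}
\item The factorization $s=q_{\mathrm{reg}}e_v(r')$ from Proposition~\ref{sp:rho} exists only in $M(F\{v\}[1/v])$, with $q_{\mathrm{reg}}\in M(F\{v\})$.
\item The $u$-first composite is not defined on such sections. It starts from $\Frac(\C\{v\})\{u\}[1/u]$, which receives no $F[v]$-algebra map from $F\{v\}$.
\item To see this, note that $1+v/u$ has a square root in $F\{v\}$, by Hensel along $v=0$. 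But $1+v/u=(u+v)/u$ has $u$-adic valuation $-1$ in $\Frac(\C\{v\})\{u\}[1/u]$, so it is not a square there.
\end{itemize}
So there is no common supply of factored sections on which the two composites can be compared.

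Drop this route and use your alternative, but with the right naturality. A section $s$ coming from $M(R[1/v])$ maps along $R[1/v]\to S\{v\}[1/v]$. Hence $\rho_v^M$ can already be applied over the coefficient ring $S$, landing in $E_M(S)$. Naturality of $\rho_v$ under the coefficient change $S\to F$ (the commutative square in the paper's proof) shows that the value actually used in $\tau$ is the image of this $u$-regular section in $E_M(F)$. By \eqref{sp:rho-rule}, $\rho_u^{E_M}$ then returns its closed value in $E_M(\C)=M(\Gm^{(v)})$. The $u$-regularity comes from naturality under $S\to F$, not under $S\to\C$; restricting to $u=0$ only makes sense once that regularity is known.
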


\begin{proof}
The ring $S\{v\}$ is henselian as a local ring at $(u,v)$:
the pair with ideal $(v)$ is henselian, and its quotient $S$
is henselian local. The universal property of the local
henselization therefore gives a canonical map
$R\to S\{v\}$. The coefficient map $S\to F$ gives in turn
$S\{v\}\to F\{v\}$. Only these maps are needed; we do not
identify iterated henselizations with one another or assert
that ordinary localization preserves henselianity.

Since $S$ is a henselian discrete valuation ring with parameter
$u$, its fraction field is $S[1/u]$. Define the specialization
in the fixed order
\begin{equation}\label{sp:ordered-composition}
 \begin{split}
 M(R[1/(uv)])&\longrightarrow M(F\{v\}[1/v])
 \xrightarrow{\rho_v^M}E_M(F)\\
 &\hspace{25mm}\xrightarrow{\rho_u^{E_M}}
 E_M(\Gm^{(u)})
       =M(\Gm^{(u)}\times\Gm^{(v)}).
 \end{split}
\end{equation}
Lemma~\ref{sp:mapping-group} permits the second use of
Proposition~\ref{sp:rho}. Each full specialization retains a
torus family already expressed in its named coordinate. This
proves the retraction property.

If a section comes from $M(R[1/u])$, its image in $M(F\{v\})$
is $v$-regular. The first specialization evaluates at $v=0$,
so it has no retained $v$-dependence. Naturality for the
inclusion $M\to E_M$ preserves this condition in the second
specialization. For the other chart, the construction factors
through the commutative coefficient diagram
\[
 \begin{array}{ccc}
 M(S\{v\}[1/v])&\xrightarrow{\ \rho_v^M\ }&E_M(S)\\
 \big\downarrow&&\big\downarrow\\
 M(F\{v\}[1/v])&\xrightarrow{\ \rho_v^M\ }&E_M(F).
 \end{array}
\]
Consequently a section from $M(R[1/v])$ has first specialized
value in $E_M(S)$. The second specialization evaluates this
$u$-regular section at $u=0$, leaving only the $v$-torus.
This proves both inclusions in \eqref{sp:chart-images}.

For the final assertion, put $K=\C(z)$ and use the new
henselian normal ring $K\{u,v\}$, which receives $R$.
The element $z-H$ is a unit there because its closed value
is $z-h\in K^\times$. Apply Lemma~\ref{sp:moving} first
in the $v$-coordinate over the coefficient field
$\Frac(K\{u\})$, and then in the $u$-coordinate in the
strong group $E_{\mathcal Q}$. The first closed height is
$H(u,0)$ and the second is $h$; the retained torus coordinate
is unchanged at each step. Outer-height contraction commutes
with adjoining this independent torus coordinate, so these
two applications give \eqref{sp:height-freezing}.
All given sections and equalities descend to finite smooth
height stages, using henselian neighborhoods of the whole zero
section. The coefficient change is henselian, and the specialization
order is fixed throughout as $v$ followed by $u$.
\end{proof}

\section{Triviality of extending torsors on the height-open chart}
\label{inf:section}

Retain the projection and height model of
Propositions~\ref{geom:projection} and~\ref{geom:height-model}.
Thus $\Gamma\subset\A^2$ has normalization $\A^1$, its node set is
$P$, and
\[
 B=\A^2-P,\quad D=\A^2-\Gamma,\quad
 W=(B\times\A^1_w)-(\Gamma^\circ\times0).
\]
The common affine-line-torsor model identifies torsors on
$\bigl(\A^3_{\C}\setminus i(\A^1_{\C})\bigr)^\circ$ and $W$.
The filling morphism will show that every torsor originating on
$\bigl(\A^3_{\C}\setminus i(\A^1_{\C})\bigr)$ is trivial on the full height-open chart.

\begin{lemma}\label{inf:open-line}
For a nonempty open $Z\subset\A^1_\C$ and a strong group ${\mathcal Q}$,
$H^1_{\Nis}(Z,{\mathcal Q})=*$.
\end{lemma}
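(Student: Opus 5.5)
The plan is to repeat, essentially verbatim, the gluing argument already used in the proof of Lemma~\ref{geom:filling}, now as a standalone statement. Let $Z\subset\A^1_\C$ be nonempty open and let $\mathcal P$ be a Nisnevich $\mathcal Q$-torsor on $Z$. The argument has three steps.

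\emph{Step 1: generic triviality.} I will first show that $\mathcal P$ is trivial on some nonempty Zariski open $E\subset Z$. The generic point $\eta$ of $Z$ has residue field $\C(z)$, and the henselization of a field is the field itself. So every Nisnevich cover of $\Spec\C(z)$ has a section, and the stalk of $\mathcal P$ at $\eta$ is a trivial torsor, that is, it admits a section. Since $\mathcal P$ is locally of finite presentation and we pass to the filtered limit over Zariski neighborhoods of $\eta$, this section is defined on some nonempty open $E\subset Z$. Concretely, a Nisnevich cover trivializing $\mathcal P$ has a member with a rational point over $\eta$, and that point spreads out to a section over a dense open.

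\emph{Step 2: gluing to a torsor on $\A^1$.} Set $U=\A^1\setminus(Z\setminus E)$. Then $\{Z,U\}$ is a Zariski cover of $\A^1$ with $Z\cap U=E$. Glue $\mathcal P$ on $Z$ to the trivial torsor on $U$ along the trivialization of $\mathcal P|_E$ from Step 1. This produces a $\mathcal Q$-torsor $\widetilde{\mathcal P}$ on $\A^1$ whose restriction to $Z$ is $\mathcal P$.

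\emph{Step 3: triviality on $\A^1$.} Strong invariance in degree $j=1$ gives $H^1_{\Nis}(\A^1,\mathcal Q)=H^1_{\Nis}(\Spec\C,\mathcal Q)$. The latter is a point, because $\Spec\C$ is henselian local with algebraically closed residue field, so every Nisnevich cover splits. Hence $\widetilde{\mathcal P}$ is trivial, and therefore so is its restriction $\mathcal P$.

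The only step requiring care is Step 1, in which a section at the generic point must spread out to a Zariski open. I would handle this by writing the section on a finite Nisnevich cover and using that an étale map with a rational point over $\eta$ admits a section over a neighborhood of $\eta$. This works because étale morphisms that are isomorphisms at a point are isomorphisms on an open neighborhood. Nonabelianness plays no role anywhere: gluing and pullback of torsors are formal, and strong invariance is used only for $j=1$ as stated in the definition.
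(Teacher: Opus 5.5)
Your proposal is correct and matches the paper's proof step by step: you trivialize the torsor generically and spread this out to a nonempty open $E\subset Z$, glue to the trivial torsor on $\A^1\setminus(Z\setminus E)$, and conclude by strong invariance and $H^1_{\Nis}(\Spec\C,\mathcal Q)=*$. One phrase is loose, namely that $\mathcal P$ is ``locally of finite presentation'', since a torsor under a general sheaf need not be representable; your concrete justification supersedes it and suffices on its own, since a cover member \'etale over $Z$ with a point over $\eta$ of residue field $\C(z)$ has local ring the field $\C(z)$ there and so is an open immersion near that point.
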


\begin{proof}
A Nisnevich torsor is trivial at the generic point, since a
Nisnevich covering of a field has a section.  Its generic
trivialization spreads to a nonempty open $E\subset Z$.
The open $Z'=\A^1-(Z-E)$ contains $\A^1-Z$ and satisfies
$Z'\cap Z=E$.  Glue the torsor on $Z$ to the trivial torsor on
$Z'$ along its chosen trivialization on $E$.  Strong invariance
gives $H^1(\A^1,{\mathcal Q})=H^1(\Spec\C,{\mathcal Q})=*$, so this extension, and
therefore the original torsor, is trivial.
\end{proof}

\begin{lemma}
\label{inf:annular-descent}
Let $S$ be smooth affine, $H\in\mathcal O(S)$, and
$A=\mathcal O(S)\{u\}$.  A ${\mathcal Q}$-torsor $\mathcal P$ on $S\times\Gm^{(w)}$
is trivial if its pullback to $\Spec A[1/u]$ by
\[
 (s,u)\longmapsto(s,u^{-1}-H(s))
\]
is trivial.
\end{lemma}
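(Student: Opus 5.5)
We reduce triviality of $\mathcal P$ to the vanishing of a single class in $\mathcal Q_{-1}(S)$ and read that class off from the punctured henselian neighbourhood. First we make $\mathcal P$ trivial after removing a divisor. Since $S$ is affine and $\A^1$-invariance holds, $\mathcal P$ restricted to $S\times\{w=1\}$ need not be trivial a priori. We therefore work through $S\times\A^1_w$ and its complement of the zero section. Pulling back along $w\mapsto w-H$ (an automorphism of $S\times\A^1_w$ over $S$), we may assume $H=0$, so the map is $(s,u)\mapsto(s,u^{-1})$. The divisor at infinity $w^{-1}=0$ then plays the role of $u=0$.

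We compactify in the $w$-direction. Let $X=S\times(\mathbb P^1\setminus\{0\})$, with coordinate $u=w^{-1}$ near infinity and divisor $S\times\{\infty\}=\{u=0\}$; then $S\times\Gm^{(w)}=X_u$. Triviality over $\Spec A[1/u]$ descends to a finite étale neighbourhood $(X',S)\to(X,S)$ of the form \eqref{sp:finite-pair}, giving a trivialization of $\mathcal P$ on $X'_u$. Gluing $\mathcal P$ on $X_u$ with the trivial torsor on $X'$ via this trivialization (Nisnevich descent for the elementary square $X_u,X'\to X$) produces a torsor $\overline{\mathcal P}$ on $X\cong S\times\A^1_u$. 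By strong invariance, $\overline{\mathcal P}$ is pulled back from $S$, namely from its restriction to $u=0$. That restriction is trivial, because on $X'$ it was glued trivially and $X'$ contains a copy of $S$ over $u=0$. Hence $\overline{\mathcal P}$, and so $\mathcal P=\overline{\mathcal P}|_{X_u}$, is trivial.

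The main point to check is that the gluing really gives a Nisnevich torsor on $X$. The square $X'_u\to X'$, $X_u\to X$ is an elementary distinguished square precisely because $\theta^{-1}(u=0)$ is exactly $S$, as arranged in \eqref{sp:finite-pair}. A second check is that the trivialization over the henselization, which is a filtered limit, is realized at a finite stage. This follows because $\mathcal Q$-torsors and their isomorphisms are compatible with filtered limits of affine schemes (continuity), which the paper already assumes for these henselian sections. The shift by $H$ is harmless because $H$ is regular on $S$, so the substitution extends to an automorphism of $S\times\mathbb P^1$ fixing $\infty$ and preserving the normal parameter $u$ up to a unit $1/(1-uH)$ near $u=0$. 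Alternatively, after the finite-stage descent one can apply the gluing argument directly with the normal parameter $v=u/(1-uH)$, exactly as in Lemma~\ref{sp:moving}.
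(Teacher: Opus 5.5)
Your proof is correct, but it takes a different route from the paper's.

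\textbf{The paper's route.} It views $\mathcal P$ as an $E_{\mathcal Q}$-torsor on $S$ via Lemma~\ref{sp:mapping-group}. It writes a cocycle $q_{ij}\in\mathcal Q(S_{ij}\times\Gm^{(w)})$ on a Nisnevich cover of $S$ and pulls the annular trivialization back to the tubes $\mathcal O(S_i)\{u\}[1/u]$ as gauges. Applying the specialization homomorphism $\rho_u^{\mathcal Q}$ of Proposition~\ref{sp:rho} and using \eqref{sp:infinity-change}, these gauges become a coboundary for $q_{ij}(s,\xi^{-1})$. The inversion of $\Gm$ then finishes the argument.

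\textbf{Your route.} You use $u=w^{-1}$ to regard $S\times\Gm^{(w)}$ as the punctured part $X_u$ of $X=S\times\A^1_u$. You realize the tube trivialization on some $X'_u$, with $X'$ as in \eqref{sp:finite-pair}. You then glue along the elementary distinguished square $(X_u,X')$. The glued torsor is trivial on $X'$, hence on the zero section, hence everywhere by strong invariance.

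\textbf{Comparison.} Your argument needs only three inputs: distinguished-square descent for torsors, continuity along the henselian limit, and $\A^1$-invariance of $H^1$. It bypasses $E_{\mathcal Q}$, Morel's $\Gm$-loop theorem, and the framed-purity and deformation-to-the-normal-cone arguments behind $\rho_u$. The paper's route buys uniformity: it stays in the cocycle-and-specialization formalism that it reuses for the node charts.

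\textbf{Corrections.}
\begin{itemize}
\item Your first justification for reducing to $H=0$ does not work. The translation $w\mapsto w-H$ does not preserve $S\times\Gm^{(w)}$, so it does not act on $\mathcal P$.
\item The valid reduction is the one you mention last. On $X'\cap D(1-uH)$, the map given by $w^{-1}=v=u/(1-uH)$ is the composite of $\theta$ with the open immersion $u\mapsto u/(1-uH)$ of $D(1-uH)$. It is therefore itself an \'etale neighbourhood of $\{w^{-1}=0\}$ whose preimage of that divisor is exactly $S$. On the punctured part it is $w=u^{-1}-H$, so the gluing runs with $H$ present and no reduction is needed.
\item These neighbourhoods are \'etale, not finite \'etale.
\item Your opening remarks about a class in $\mathcal Q_{-1}(S)$ and about $S\times\{w=1\}$ play no role in the argument and can be dropped.
\end{itemize}
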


\begin{proof}
By Lemma~\ref{sp:mapping-group}, $\mathcal P$ is an $E_{\mathcal Q}$-torsor on $S$.
Choose an affine Nisnevich cover $S_i$ trivializing it.  Its
transitions are sections
$q_{ij}\in {\mathcal Q}(S_{ij}\times\Gm^{(w)})$; affine refinements of
intersections are understood when necessary.  Pull the assumed
global annular trivialization to each ring
$\mathcal O(S_i)\{u\}[1/u]$.  It supplies gauges satisfying the
pulled transition equations.  Apply the group homomorphism
$\rho_u^{\mathcal Q}$ to these equations.  Naturality and
\eqref{sp:infinity-change} give an ordinary coboundary for
$q_{ij}(s,\xi^{-1})$ on the original Nisnevich cover of $S$.
The inversion automorphism of $\Gm$ therefore pulls $\mathcal P$ back to a
trivial torsor, and $\mathcal P$ itself is trivial.

The construction uses the pullbacks of the given global trivialization
to the henselian tubes.
\end{proof}

\begin{theorem}
\label{inf:height-trivial}
Let ${\mathcal Q}$ be any strongly $\A^1$-invariant group sheaf.
Every ${\mathcal Q}$-torsor on $W$ corresponding to the restriction of a
${\mathcal Q}$-torsor on $\bigl(\A^3_{\C}\setminus i(\A^1_{\C})\bigr)$ is trivial on $B\times\Gm^{(w)}$.
\end{theorem}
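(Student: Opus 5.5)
The plan is to reduce the statement to Lemma~\ref{inf:annular-descent}. The condition to verify is then triviality on a henselian tube around the divisor $w=\infty$, where the torsor is forced to come from a region with no nodal data. First pass to the affine-line-torsor model. By Proposition~\ref{geom:height-model}, the $W$-torsor $\mathcal P$ and the given torsor $\mathcal T$ on the complement pull back to the same torsor on the common cover over $S\to B$. Since $S\to B$ is an affine-line torsor and $B{\mathcal Q}$ is $\A^1$-local, triviality of $\mathcal P|_{B\times\Gm^{(w)}}$ is equivalent to triviality of its pullback to $S\times\Gm^{(w)}$. Here $S$ is smooth affine, as required in Lemma~\ref{inf:annular-descent}, and for $r=0$ we have $S=B=\A^2$. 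Apply that lemma with base $S$ and height function $H=-\widehat h\in\mathcal O(S)$, where $\widehat h$ is the linear extension of Lemma~\ref{geom:linear-height}. It then suffices to show that $\mathcal P$ becomes trivial after pullback to $\Spec A[1/u]$, with $A=\mathcal O(S)\{u\}$, along $(s,u)\mapsto(s,u^{-1}+\widehat h(s))$.

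Next, translate this test map back to the original complement. In the coordinate $z=w+\widehat h$ it becomes $(s,u)\mapsto(\pi(s),u^{-1})$, where $\pi:S\to B\subset\A^2$. The value $z=u^{-1}$ is now independent of the height extension. I will check that it avoids the curve. Over a point of $\Gamma$ the curve has finitely many heights $h$, each integral over $\mathcal O(\A^2)$ because the projection is finite (Proposition~\ref{geom:projection}). The local equations of the curve involve $u^{-1}-h$, and $u\cdot(u^{-1}-h)=1-uh$ is a unit in the henselian ring $A$ because $u$ lies in its Jacobson radical. Hence the test map factors through $\linecomplement$. Moreover, it factors through the punctured henselian neighbourhood of the divisor $\A^2\times\{\infty\}$ in $\A^2\times\mathbb P^1_z$. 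By finiteness of $\Gamma\to\A^2$, the closure of the curve in $\A^2\times\mathbb P^1_z$ does not meet this divisor. Therefore the pullback of $\mathcal P$ equals the pullback of $\mathcal T$ through $\mathcal O(\A^2)\{u\}[1/u]$, and it remains to prove that $\mathcal T$ is trivial there.

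The main obstacle is this last triviality: $\mathcal T$ has no reason to extend across $z=\infty$. To handle it, I plan to use the filling morphism of Lemma~\ref{geom:filling}. That lemma gives a nonempty open $Z\subset\A^1_z$ such that $\mathcal T$ pulls back trivially to $\A^2\times Z$ via $j(a,b,z)=(a\Lambda(b,z),b,z)$. Its complement $\A^1-Z$ is finite, so the punctured tube at $z=\infty$ maps into $Z$. The key point is to compare $j$ with the identity there. For large $z$ the function $\Lambda(b,z)=R(b,z)/R_0(z)$ is a unit with closed value $1$ at $u=0$, because $R$ is monic in $z$ of fixed degree. So on the tube $j$ differs from the inclusion only by rescaling $a$ by a unit congruent to $1$ modulo $u$. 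I would deform $\Lambda$ to $1$ through $\lambda\mapsto 1+\lambda(\Lambda-1)$, which stays a unit on the henselian tube, and so obtain an $\A^1$-homotopy of maps into the complement. This transfers triviality from $j^*\mathcal T$ to the tube. Should that homotopy leave the complement, the fallback is the specialization argument of Lemma~\ref{sp:moving}: apply $\rho_u^{\mathcal Q}$ to the transition data and show that a unit $a$-rescaling with closed value $1$ does not change the specialized class. Finally, Lemma~\ref{inf:annular-descent} concludes that $\mathcal P$ is trivial on $S\times\Gm^{(w)}$, hence on $B\times\Gm^{(w)}$.
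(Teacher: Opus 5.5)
Your route is the paper's. You pass to the affine-line torsor $S\to B$ and apply Lemma~\ref{inf:annular-descent}. You then verify its hypothesis by pulling back the global trivialization of $j^*\mathcal T$ to the henselian tube at $z=\infty$, where monicity of $R$ makes $\ell(y,u)=\Lambda(y,u^{-1})$ a unit of $\mathcal O(\A^2)\{u\}$ with closed value $1$. Three details need correcting.

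\emph{The sign.} Since $w=z-\widehat h$, you must take $H=\widehat h$ in Lemma~\ref{inf:annular-descent}, so that $w=u^{-1}-\widehat h$ and $z=u^{-1}$. With your choice $H=-\widehat h$ you get $z=u^{-1}+2\widehat h$, which no longer factors through $\pi(s)$.

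\emph{The case $r=0$.} It must be treated separately, because Lemma~\ref{geom:filling} assumes $r>0$. There $B\times\Gm=\A^2\times\Gm$, and the claim is immediate from strong invariance and Lemma~\ref{inf:open-line}.

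\emph{The step you call the main obstacle.} Your justification is wrong as written. The factor $1+\lambda(\Lambda-1)$ is not a unit on the tube times $\A^1_\lambda$. It has positive degree in $\lambda$, and its coefficient $\Lambda-1\in uA$ is not nilpotent, so it is not a unit of $A[1/u][\lambda]$. In any case, unit-ness of the rescaling factor is not what keeps a point off the curve.

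Your homotopy does stay in the complement, but for a different reason. The polynomial $R(y,z)$ lies in the ideal of the curve and does not involve $x$. With $n=\deg_zR$, the element $u^nR(y,u^{-1})\equiv 1\pmod u$ is a unit of the henselian ring. Hence every point $(\ast,y,u^{-1})$ of the tube avoids the curve, whatever its first coordinate.

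Neither the homotopy nor the fallback through Lemma~\ref{sp:moving} is needed, however. Because $\ell$ is a unit, the test map factors exactly as
\[
 (x,y,u)\longmapsto\bigl(x/\ell(y,u),\,y,\,u^{-1}\bigr)\in\A^2\times Z
 \quad\text{followed by } j,
\]
since $j$ multiplies the first coordinate back by $\Lambda(y,u^{-1})=\ell(y,u)$. Here $u^{-1}\in Z$ because each $u^{-1}-c=u^{-1}(1-cu)$ is a unit. The trivialization of $j^*\mathcal T$ therefore pulls back directly.

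This substitution $a=x/\ell(y,u)$ is exactly the paper's argument, phrased over $S$ through the roof $B\times Z\leftarrow E\to W$. Your descent to $\mathcal O(\A^2)\{u\}[1/u]$, working with $\mathcal T$ on the full complement instead of with $E$, is a harmless simplification, since $z=u^{-1}$ avoids the curve even over the nodes.
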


\begin{proof}
When $P$ is empty, $B=\A^2$ and the assertion follows from strong
invariance and Lemma~\ref{inf:open-line} with $Z=\Gm$.
Suppose $P$ is nonempty. Use the coordinates of the filling
construction~\ref{geom:filling}, so that
$P=\{(a_i,0)\}$ and the embedding is $(f,g,h)$.
The coordinate $g$ is nonconstant; otherwise the irreducible plane
image would be a smooth line.  The height $h$ is integral over
$\C[g]$, since $\C[t]$ is finite over this ring.  Its monic minimal
polynomial gives an equation $R(y,z)$ for the $(g,h)$-image, monic
in $z$.  The polynomial $R_0(z)=R(0,z)$ is nonzero and is monic
of the same $z$-degree.  Remove its roots and all heights $h(t)$
with $f(t)=0$ from the height line, obtaining a nonempty open $Z$.
The morphism
\begin{equation}\label{inf:filling-map}
 j:\A^2\times Z\longrightarrow \bigl(\A^3_{\C}\setminus i(\A^1_{\C})\bigr),\qquad
 j(a,b,z)=\left(a\frac{R(b,z)}{R_0(z)},b,z\right)
\end{equation}
avoids the curve.  Indeed an intersection would give $R(b,z)=0$,
then $f(t)=0$, and finally an excluded height.  By
Lemma~\ref{inf:open-line} and strong invariance,
$H^1(\A^2\times Z,{\mathcal Q})=*$. Every torsor on the complement thus supplies a
global trivialization after pullback by \eqref{inf:filling-map}.

Put $q(x)=\prod_i(x-a_i)$ and let
\[
 S=\{q(x)\alpha+y\beta=1\}\longrightarrow B
\]
be the affine-line torsor. Use the regular linear
height extension $\widehat h$ of Lemma~\ref{geom:linear-height}.
The height-straightened form of the filling is the roof
\[
 B\times Z\longleftarrow
 E=\{q(a\Lambda(b,z))\alpha+b\beta=1\}\longrightarrow W,
 \qquad \Lambda=R/R_0,
\]
whose right arrow is
\[
 (a,b,z,\alpha,\beta)\longmapsto
 (a\Lambda,b,z-\widehat h(a\Lambda,b,\alpha,\beta)).
\]
The right arrow uses the height extension chosen above.

At infinity put $u=1/z$ and
\[
 \ell(y,u)=R(y,1/u)/R_0(1/u).
\]
Monicity makes $\ell$ regular near $u=0$, with
$\ell(y,0)=1$ and $\ell(0,u)=1$.  It is a unit in
$\mathcal O(S)\{u\}$.  On the annulus over the fixed base $S$,
substitute
\[
 a=x/\ell(y,u),\qquad b=y,\qquad z=u^{-1},
\]
retaining $x,y,\alpha,\beta$.  The equation of $E$ becomes
exactly $q(x)\alpha+y\beta=1$.  The source lies in $B\times Z$:
at $y=0$, $\ell=1$, so the excluded plane points stay excluded;
and every omitted height $c$ is avoided since
$u^{-1}-c=u^{-1}(1-cu)$ is a unit on the annulus.
The target plane coordinates are $(x,y)$, and the
target height is
\[
 w=u^{-1}-\widehat h(x,y,\alpha,\beta).
\]
It is invertible there.  Pulling the global filling trivialization
to this annulus proves the hypothesis of
Lemma~\ref{inf:annular-descent} for the chart torsor on
$S\times\Gm$.  That torsor is therefore trivial.  Finally
$S\to B$ is an affine-line torsor, hence an $\A^1$-equivalence.
Since $BE_{\mathcal Q}$ is local, its pullback is an equivalence on
$E_{\mathcal Q}$-torsors.  Triviality descends to $B\times\Gm$, as required.
\end{proof}

\begin{proposition}
\label{inf:fibers}
Torsors on $W$ with a specified trivialization on
the standard section $B\times\{1\}$ form the discrete parameter
space
\begin{equation}\label{inf:standard-fiber}
 {\mathcal Q_{-2}}(\Gamma^\circ).
\end{equation}
Torsors with a specified trivialization on the entire chart
$B\times\Gm$ instead form the discrete parameter space ${\mathcal Q_{-1}}(D)$.
After retaining a specified fiber at $(d_*,1)$, but forgetting the
height-chart trivialization, their classes are
\begin{equation}\label{inf:pointed-fiber}
 {\mathcal Q_{-1}}(\C)\backslash {\mathcal Q_{-1}}(D)\cong \bigl({\mathcal Q_{-1}}\bigr)_*(D),\qquad
 \bigl({\mathcal Q_{-1}}\bigr)_*(D)=\ker[{\mathcal Q_{-1}}(D)\xrightarrow{d_*^*}{\mathcal Q_{-1}}(\C)],
\end{equation}
with unique representative $c(d_*)^{-1}c$.
\end{proposition}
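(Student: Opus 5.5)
The plan is to compute the relative torsor groupoids by purity and the fiber sequence used in Proposition~\ref{sp:rho}, one relative pair at a time. Observe first that $W=(B\times\A^1_w)-(\Gamma^\circ\times0)$ is the complement of a smooth closed subscheme of codimension two in $B\times\A^1_w$. The inclusion of $B\times\Gm$ is the complement, inside $W$, of the divisor $D\times 0$, with framed normal coordinate $w$.

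\emph{Torsors trivialized on $B\times\Gm$.} Apply Lemma~\ref{sp:supported} to the pair $(W, D\times0)$. This identifies the relative mapping space $\Map_*(W/(B\times\Gm),B\mathcal Q)$ with the discrete set $\mathcal Q_{-1}(D\times 0)=\mathcal Q_{-1}(D)$. A torsor on $W$ with a specified trivialization on $B\times\Gm$ is a point of this relative mapping space, so the groupoid is discrete with parameter set $\mathcal Q_{-1}(D)$. Concretely, a class is the clutching section $c\in\mathcal Q_{-1}(D)$, glued by $s_D=s_Hc$ as in Lemma~\ref{geom:node-charts}. To see surjectivity directly, glue the trivial torsor on $B\times\Gm$ to the trivial torsor on $D\times\A^1_w$ along $c(w)$ on $D\times\Gm$.

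\emph{Torsors trivialized on $B\times\{1\}$.} First pass from $W$ to $B\times\A^1_w$. By homotopy purity for the codimension-two pair $(B\times\A^1,\Gamma^\circ\times0)$, which is framed by $(y\text{-type equation},w)$ and locally by the equation of $\Gamma$ together with $w$, the cofiber is $\mathbb P^1\wedge\mathbb P^1\wedge\Gamma^\circ_+$. Looping twice against $B\mathcal Q$ gives a discrete fiber $\mathcal Q_{-2}(\Gamma^\circ)$. The base $B\times\A^1_w$ is $\A^1$-equivalent to $B$, and a trivialization along $B\times\{1\}$ rigidifies the torsor restricted from $B\times\A^1$. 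The fiber sequence
\[
\Map_*\bigl((B\times\A^1)/W,B\mathcal Q\bigr)\to\Map(B\times\A^1,B\mathcal Q)\to\Map(W,B\mathcal Q),
\]
taken relative to the section $B\times\{1\}\subset W$ (which is a deformation retract of $B\times\A^1$), then shows that $W$-torsors with this rigidification are classified by $\mathcal Q_{-2}(\Gamma^\circ)$. Here I also use that the restriction map on automorphisms is bijective, $\mathcal Q(B\times\A^1)=\mathcal Q(B)$, so nothing extra arises. The main obstacle is exactly the codimension-two purity step: the framing of the normal bundle of $\Gamma^\circ\times0$ must be made global over $\Gamma^\circ$. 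I would use $\Gamma^\circ\subset B$ as a Cartier divisor with conormal generated by the equation of $\Gamma$ (a principal divisor in $\A^2$) and $w$. I would then iterate Lemma~\ref{sp:supported} twice, first for $D\times 0$ inside $B\times 0$ and then in the $w$-direction, rather than invoke a rank-two Thom space directly.

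\emph{Pointed version.} Starting from the description by $c\in\mathcal Q_{-1}(D)$, forgetting the height-chart trivialization while keeping the fiber at $(d_*,1)$ amounts to changing the trivialization on $B\times\Gm$ by an automorphism fixing that fiber. By Theorem~\ref{inf:height-trivial}'s setting, such automorphisms are sections of $\mathcal Q(B\times\Gm)=\mathcal Q(\Gm)=\mathcal Q(\C)\ltimes\mathcal Q_{-1}(\C)$ (unramifiedness across $P$, then strong invariance), with $\mathcal Q(\C)$-part forced trivial by the pointing. A constant $e\in\mathcal Q_{-1}(\C)$ changes $c$ to $e\,c$ via $s_H\mapsto s_He$. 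The action is free, and $c\mapsto c(d_*)^{-1}c$ picks the unique representative with $c(d_*)=1$, giving \eqref{inf:pointed-fiber}. Since $d_*$ is a rational point, $\mathcal Q_{-1}(D)=\mathcal Q_{-1}(\C)\times(\mathcal Q_{-1})_*(D)$ as sets, which completes the identification.
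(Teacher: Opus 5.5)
Your handling of the second and third assertions is sound. Applying Lemma~\ref{sp:supported} directly to the framed divisor $D\times0\subset W$, whose complement is $B\times\Gm$, is a slightly more direct version of the paper's chart-pushout argument. Your pointed bookkeeping also agrees with the paper: $\mathcal Q(B\times\Gm)=\mathcal Q_{-1}(\C)\rtimes\mathcal Q(\C)$, the constant factor is killed by the fiber at $(d_*,1)$, the remaining action is left translation, and normalization is by $c(d_*)^{-1}$.

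The gap is in the first assertion: the codimension-two purity route cannot produce $\mathcal Q_{-2}(\Gamma^\circ)$.
\begin{itemize}
\item Since $\mathbb P^1\wedge\mathbb P^1\simeq S^2_s\wedge\Gm^{\wedge2}$, one has $\Map_*(\mathbb P^1\wedge\mathbb P^1\wedge\Gamma^\circ_+,B\mathcal Q)\simeq\Map_*(\Gm^{\wedge2}\wedge\Gamma^\circ_+,\Omega_s^2B\mathcal Q)$. This is contractible because $B\mathcal Q$ is a $1$-type. Each $\mathbb P^1$ factor costs a simplicial loop, so ``looping twice'' gives a point, not $\mathcal Q_{-2}(\Gamma^\circ)$.
\item The fiber sequence you wrote is taken over the trivial torsor on $W$. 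It describes torsors on $B\times\A^1$ trivialized on $W$. Its contractible fiber merely restates codimension-two Hartogs: restriction from $B\times\A^1$ to $W$ is fully faithful, as in Proposition~\ref{geom:height-model}. The $W$-torsors that do not extend, which are exactly what $\mathcal Q_{-2}(\Gamma^\circ)$ parametrizes, never enter it.
\item Composing the cofiber sequences of $B\times\{1\}\to W\to B\times\A^1$ gives $(B\times\A^1)/W\simeq\Sigma_s\bigl(W/(B\times\{1\})\bigr)$. So the object you need is a simplicial desuspension of the Thom space. Maps from it into the $1$-type $B\mathcal Q$ are not determined by the Thom space; this is precisely the suspension loss the paper is organized around.
\end{itemize}

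What is missing is an unsuspended model of $W/(B\times\{1\})$. The paper obtains it from the chart decomposition $W\simeq(B\times\Gm)\cup^h_{D\times\Gm}(D\times\A^1_w)$. Collapsing the standard section gives $W/(B\times\{1\})\simeq(B/D)\wedge(\Gm,1)$. Adjunction and the $\Gm$-loop theorem of Lemma~\ref{sp:mapping-group}, namely $\Omega_{\Gm}B\mathcal Q\simeq B\mathcal Q_{-1}$, then identify the relative groupoid with $\Map_*(B/D,B\mathcal Q_{-1})$. These are $\mathcal Q_{-1}$-torsors on $B$ trivialized on $D$. Lemma~\ref{sp:supported} for the principal divisor $\Gamma^\circ\subset B$, framed by the plane equation and with coefficients $\mathcal Q_{-1}$, then yields $\mathcal Q_{-2}(\Gamma^\circ)$.

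Your fallback of iterating Lemma~\ref{sp:supported} points in this direction, but as stated it does not work. First, $D\times0$ is open in $B\times0$, not a divisor there. Second, iterating the lemma along $B\times\Gm\subset W\subset B\times\A^1$ only produces the injection $\mathcal Q_{-1}(B)\to\mathcal Q_{-1}(D)$, whose fiber is again the trivial codimension-two relative space. The $w$-direction has to be used first, as a $\Gm$-loop on the coefficients, followed by codimension-one purity for $\Gamma^\circ\subset B$. With that order, no global rank-two framing over $\Gamma^\circ\times0$ is needed.
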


\begin{proof}
The two open charts of $W$ give the homotopy pushout
\[
 W\simeq(B\times\Gm)\cup^h_{D\times\Gm}D.
\]
The standard section splits the projection to $B$.  Its pointed
quotient is $(B/D)\wedge(\Gm,1)$.  Adjunction and
Lemma~\ref{sp:mapping-group} identify the corresponding mapping
fiber with $\Map_*(B/D,B(\mathcal Q_{-1}))$.  Purity for the smooth principal
divisor $\Gamma^\circ\subset B$, framed by its plane equation,
makes this the discrete space $\mathcal Q_{-2}(\Gamma^\circ)$, proving
\eqref{inf:standard-fiber}.

For the larger trivialized chart, mapping the same pushout into
$B{\mathcal Q}$ gives
\[
 \Tors_{\mathcal Q}(W)\simeq
 \Tors_{E_{\mathcal Q}}(B)\mathbin{\times^h}_{\Tors_{E_{\mathcal Q}}(D)}\Tors_{\mathcal Q}(D).
\]
The homotopy fiber of $B{\mathcal Q}\to BE_{\mathcal Q}$ is the discrete coset sheaf
$E_{\mathcal Q}/{\mathcal Q}={\mathcal Q_{-1}}$.  Its $D$-sections give exactly ${\mathcal Q_{-1}}(D)$.
Changing a height-chart trivialization acts by
$E_{\mathcal Q}(B)={\mathcal Q_{-1}}(B)\rtimes {\mathcal Q}(B)$.  Unramified Hartogs and invariance
give ${\mathcal Q}(B)={\mathcal Q}(\C)$ and ${\mathcal Q_{-1}}(B)={\mathcal Q_{-1}}(\C)$, since $B$ is the affine
plane minus finitely many points.  Requiring the original torsor's
chosen fiber at $(d_*,1)$ to stay fixed removes the ${\mathcal Q}(\C)$
factor: evaluation at height $1$ is the projection to it.
The remaining action is left ${\mathcal Q_{-1}}(\C)$-translation.  Evaluation
at $d_*$ gives its unique normalized representative, proving
\eqref{inf:pointed-fiber}.
\end{proof}

Forgetting the pointing further quotients \eqref{inf:pointed-fiber}
by constant conjugation under ${\mathcal Q}(\C)$. Theorem
\ref{inf:height-trivial} places each complement torsor in this
space of torsors trivial on the height chart.

\section{Finite clutching coordinates}
\label{cl:section}

The height model reduces the remaining problem to sections on the
complement of the projected plane curve. We first compute these sections
for strictly invariant abelian coefficients. The projection fixed in
Proposition~\ref{geom:projection} has only split ordinary nodes. At each
node, two explicit normalization branches give a sum map on residues;
its kernel contributes exactly one additional coefficient. This local
calculation will also fix the signs of the mixed coefficients used in
the nonabelian comparison.

\begin{proposition}
\label{cl:hi}
Let $\Gamma\subset\A^2_\C$ be irreducible, with finite normalization
$\nu:\A^1_t\to\Gamma$, and suppose that its only singularities are
ordinary nodes $P=\{p_1,\ldots,p_r\}$. Write
\[
 \nu^{-1}(p_i)=\{t_i^+,t_i^-\},\qquad
 S_0=\{t_i^+,t_i^-:1\leq i\leq r\},\qquad D=\A^2_\C-\Gamma.
\]
For every strictly invariant abelian sheaf $M$,
\begin{equation}\label{cl:acyclic}
 H^j_{\Nis}(D,M)=0\quad(j>0).
\end{equation}
After choosing a basepoint $d_*\in D(\C)$, a point
$t_0\in\A^1_\C-S_0$, and an ordering of the branches at every node,
there is a functorial isomorphism
\begin{equation}\label{cl:abelian-basis}
 M(D)\cong M(\C)\oplus M_{-1}(\C)\oplus M_{-2}(\C)^r.
\end{equation}
The last $r$ coordinates are the signed differences of the two branch
residues at the corresponding nodes.
\end{proposition}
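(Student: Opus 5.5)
We propose to compute $M$-cohomology of $D$ through the Gysin (localization) sequence for the closed pair $\Gamma\subset\A^2$, after passing to the normalization to control the singular curve. Since $M$ is strictly $\A^1$-invariant, $H^j_{\Nis}(\A^2,M)=H^j(\C,M)$, which is $M(\C)$ for $j=0$ and zero for $j>0$. The Rost--Schmid (Gersten) complex of $M$ on $\A^2$ computes this cohomology, and its restriction to $D$ is the complex
\[
 M(\C(x,y))\longrightarrow \bigoplus_{x\in D^{(1)}}M_{-1}(\kappa(x))\longrightarrow\bigoplus_{x\in D^{(2)}}M_{-2}(\kappa(x)),
\]
with suitable twists, which are trivial here because everything is framed over $\C$. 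Removing $\Gamma$ deletes the generic point of $\Gamma$ from degree one and the closed points of $\Gamma$ from degree two. Comparing the two complexes gives a long exact sequence
\[
 0\to M(\A^2)\to M(D)\xrightarrow{\partial}H^1_\Gamma(\A^2,M)\to H^1(\A^2,M)=0,
\]
together with $H^j(D,M)\cong H^{j+1}_\Gamma(\A^2,M)$ for $j\ge1$. Here $H^\ast_\Gamma$ is computed by the complex
\[
 M_{-1}(\C(\Gamma))\to\bigoplus_{p\in\Gamma^{(1)}}M_{-2}(\kappa(p)),
\]
sitting in degrees $1$ and $2$.

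Next we identify this supported complex with the one coming from the normalization. At smooth points of $\Gamma$ the residue map is the usual one on $\A^1_t$. At a node $p_i$, the residue of the local Rost--Schmid differential on $\A^2$ at the closed point, restricted to the divisor $\Gamma$, is the sum of the residues at the two branches $t_i^\pm$. This is the local calculation in the split chart $uv=0$ of Lemma~\ref{geom:linear-height}, where each branch is framed by the other coordinate. So the supported complex is
\[
 M_{-1}(\C(t))\xrightarrow{(\partial_s)_s}\bigoplus_{s\in\A^1\setminus S_0}M_{-2}(\C)\oplus\bigoplus_{i}M_{-2}(\C)
\]
on the node component, where the $i$-th entry is $\partial_{t_i^+}+\partial_{t_i^-}$. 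The Gersten complex of $M_{-1}$ on $\A^1_t$ is exact except for $M_{-1}(\C)$ in degree zero, so every family of point residues is realized. Hence the map above is surjective. This surjectivity gives $H^2_\Gamma=0$, and since $\dim\A^2=2$ also $H^{\ge3}_\Gamma=0$; together these prove \eqref{cl:acyclic}.

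For \eqref{cl:abelian-basis}, the kernel of the supported differential is
\[
 \{c\in M_{-1}(\C(t)) : \partial_s c=0 \text{ for } s\notin S_0,\ \partial_{t_i^+}c=-\partial_{t_i^-}c\}.
\]
By exactness on $\A^1_t$, its elements are determined by their value at $t_0$, which lies in $M_{-1}(\C)$, and by the free residues $\partial_{t_i^+}c\in M_{-2}(\C)$. This uses the splitting $M_{-1}(\A^1\setminus S_0)\cong M_{-1}(\C)\oplus M_{-2}(\C)^{S_0}$, obtained by evaluation at $t_0$ and residues. The signed difference $\tfrac12(\partial_{t_i^+}-\partial_{t_i^-})$ is unnatural; instead we take the coordinate $\partial_{t_i^+}c$, with the ordering of branches fixing the sign. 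The basepoint $d_*$ splits $M(\C)\to M(D)$. Functoriality in $M$ follows because every map used is natural.

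The main obstacle is the node step: checking that the boundary map of $\A^2$ at a node, restricted to $\Gamma$, really equals the sum of the two branch residues with compatible normal framings (no $\langle-1\rangle$ twist). We would verify this in the henselian chart $\C\{u,v\}$ by pulling back to $\Gm^{(u)}\times\Gm^{(v)}$ and computing on the universal symbol $[u][v]$. The sign conventions of Proposition~\ref{sp:double} would be fixed there.
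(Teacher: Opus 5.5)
Your proposal is correct and is essentially the paper's argument. The paper splits your single coniveau sequence for $\Gamma\subset\A^2$ into two purity steps: first at the nodes, giving $H^1(B,M)=M_{-2}(\C)^r$ for $B=\A^2-P$, and then for the smooth divisor $\Gamma^\circ\subset B$, whose connecting map $\delta$ is your node component of the supported differential. It also writes the residue lifts explicitly as $([t-v]-[t_0-v])m$, where you invoke Gersten exactness on $\A^1_t$.

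The one point to make precise is the twist at the generic point of $\Gamma$; \emph{everything is framed over $\C$} is not quite enough. That twist must be trivialized by a frame that is a unit on the whole normalization, such as the paper's $dt\otimes(dx\wedge dy)^{-1}$. A frame vanishing at a point of $S_0$, for instance the one induced by the plane equation, alters the residue there by a ramified factor $\langle\phi\rangle$.
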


\begin{proof}
Put $B=\A^2_\C-P$ and
$\Gamma^\circ=\Gamma-P\cong\A^1_\C-S_0$. Purity at the removed
surface points and strict invariance on $\A^2$ give
\[
 M(B)=M(\C),\qquad H^1(B,M)=M_{-2}(\C)^r,
 \qquad H^j(B,M)=0\quad(j\geq2).
\]
On $\Gamma^\circ$ choose the normal frame prescribed by adjunction,
\begin{equation}\label{cl:canonical-frame}
 e=dt\otimes(dx\wedge dy)^{-1}.
\end{equation}
It is regular and nowhere zero on this smooth curve. Purity for the
divisor $\Gamma^\circ\subset B$ yields the localization sequence
\begin{equation}\label{cl:localization}
\begin{split}
 0\longrightarrow M(\C)\longrightarrow M(D)
 \longrightarrow M_{-1}(\Gamma^\circ,e)
 \xrightarrow{\delta}M_{-2}(\C)^r
 \longrightarrow H^1(D,M)\longrightarrow0.
\end{split}
\end{equation}
The punctured affine line has no positive cohomology with these
coefficients: localization on $\A^1$ reduces this to the residue map
onto the finitely many missing points, and the symbols $[t-v]m$
give residue-one lifts. The same localization sequence then shows
$H^j(D,M)=0$ for $j\geq2$.

We compute $\delta$ directly at a node. In a split henselian chart
write the curve as $uv=0$, order its branches as $u=0$ and $v=0$,
and frame the ambient cotangent determinant by $du\wedge dv$.
On the first normalization branch, $v$ is a uniformizer; on the second,
$u$ is a uniformizer. Since the normalization is immersive on each
branch, these parameters differ from $t-t_i^+$ and $t-t_i^-$ by
units with nonzero complex closed values. The residue-field transfers
are identities. The resulting determinant-line factors have classes
$\langle c\rangle=1$ for $c\in\C^\times$.

Thus the degree-two Rost--Schmid differential, with the normal frame
\eqref{cl:canonical-frame} transported to the two branches, is the sum
of their one-dimensional residues. Equivalently, the local map factors as
\[
 M_{-1}(\Gamma^\circ,e)
 \xrightarrow{(\partial_{t-t_i^+},\,\partial_{t-t_i^-})}
 M_{-2}(\C)\oplus M_{-2}(\C)
 \xrightarrow{\ (a,b)\mapsto a+b\ }M_{-2}(\C).
\]
The common sign of the final frame can be fixed once at each point.
Both branches have multiplicity one. This gives
\begin{equation}\label{cl:branch-sum}
 \delta(a)_i=
 \partial_{t-t_i^+}(a)+\partial_{t-t_i^-}(a).
\end{equation}
This is the normalization-branch description of the Rost--Schmid
differential in \cite[Chapter~5]{Morel12}, applied only to the split
nodal chart just described. A vanishing factor in the plane-equation
normal frame is retained as a change of framing; it is not an extra
intersection multiplicity in \eqref{cl:branch-sum}.

For any missing point $v\in S_0$ and $m\in M_{-2}(\C)$, the
normalized symbol
\[
 s_v(m)=([t-v]-[t_0-v])m
       \in M_{-1}(\Gamma^\circ,e)
\]
has residue $m$ at $v$, zero residue at every other missing point,
and value zero at $t_0$. These expressions use the usual contraction
evaluation action: $m$ represents a morphism
$\mathbf K_2^{\mathrm{MW}}\to M$, whose contraction evaluates
the indicated degree-one symbol. They give an explicit splitting
\[
 M_{-1}(\Gamma^\circ,e)
 \cong M_{-1}(\C)\oplus
       \bigoplus_{i=1}^r\bigl(M_{-2}(\C)\oplus M_{-2}(\C)\bigr).
\]
In these coordinates $\delta$ is zero on the first summand and is
the sum on each displayed pair. It is surjective, proving
$H^1(D,M)=0$. Its kernel has the constant coordinate and the $r$
branch differences
\[
 (m_i,-m_i),\qquad 1\leq i\leq r.
\]
Finally evaluation at $d_*$ splits the first $M(\C)$ in
\eqref{cl:localization}. These splittings give
\eqref{cl:abelian-basis}, naturally in $M$. When $r=0$, the same
argument is localization at a smooth affine line and has no branch
difference summands.
\end{proof}

\begin{lemma}
\label{cl:universal-basis}
Choose an irreducible plane equation $F$ and put
$F_*=F/F(d_*)$.  There are fixed sections
$\chi_1,\ldots,\chi_r\in\mathbf K_2^{\mathrm{MW}}(D)$, vanishing at $d_*$, such that
for every strict $M$ the isomorphism
\eqref{cl:abelian-basis} is given by
\begin{equation}\label{cl:basis-map}
 (m_0,m_1,n_1,\ldots,n_r)\longmapsto
 m_0+[F_*]m_1+\sum_{j=1}^r(\chi_j)_*(n_j).
\end{equation}
Here $(\chi_j)_*(n_j)$ is the image of $\chi_j$ under the unique
strict morphism $\mathbf K_2^{\mathrm{MW}}\to M$ represented by $n_j\in M_{-2}(\C)$.
The basis includes all residues of the plane equation.
\end{lemma}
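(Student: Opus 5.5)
We apply Proposition~\ref{cl:hi} in the universal case $M=\mathbf K_2^{\mathrm{MW}}$ and then transport along strict morphisms. The summand $M(\C)$ is handled by the constant section $m_0$, and the first-order summand by the symbol $[F_*]$, which is a unit section on $D$ with value $1$ at $d_*$. For each node we construct the sections $\chi_j$ directly in $\mathbf K_2^{\mathrm{MW}}(D)$, using the localization sequence \eqref{cl:localization} with $M=\mathbf K_2^{\mathrm{MW}}$; there $\mathbf K_2^{\mathrm{MW}}{}_{-2}(\C)=\GW(\C)=\Z$ by Lemma~\ref{con:constants}.

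First, the residue of $[F_*]m_1$ along $\Gamma^\circ$ has to be identified. With the plane-equation framing, it is the constant section $m_1\in M_{-1}(\C)\subset M_{-1}(\Gamma^\circ,e)$, possibly twisted by the comparison unit between $dF$ and the adjunction frame $e$. Over $\C$, Lemma~\ref{con:constants} makes that twist trivial on constant symbols, because $\langle c\rangle=1$. Its branch residues at the nodes are zero. This is the delicate point: the frame $e$ differs from the $dF$ frame by $F'$-type factors vanishing at nodes. Per the remark after \eqref{cl:branch-sum}, these factors are retained as a framing change, so only the mixed $(m,-m)$ coordinates can be affected. We absorb any such contribution into the definition of $\chi_j$, and this is why the basis ``includes all residues of the plane equation.''

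Next, we construct $\chi_j$. Since $H^1(D,\mathbf K_2^{\mathrm{MW}})$ need not vanish a priori but does by \eqref{cl:acyclic}, exactness of \eqref{cl:localization} lets us lift the kernel element $s_{t_j^+}(1)-s_{t_j^-}(1)$ (residues $(1,-1)$ at node $j$, zero elsewhere) to a section of $\mathbf K_2^{\mathrm{MW}}(D)$. We normalize the lift by subtracting its value at $d_*$, and define $\chi_j$ as the result. For a strict $M$ and $n_j\in M_{-2}(\C)$, the morphism $(n_j)_*:\mathbf K_2^{\mathrm{MW}}\to M$ commutes with restriction, residues and evaluation at $d_*$, by naturality of the Rost--Schmid complex under strict morphisms. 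Hence $(\chi_j)_*(n_j)$ has residue $s_{t_j^+}(n_j)-s_{t_j^-}(n_j)$, that is, branch difference $n_j$ at node $j$ and zero elsewhere, and it vanishes at $d_*$.

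Finally, the map \eqref{cl:basis-map} commutes with the filtration given by evaluation at $d_*$ and the residue map. On the graded pieces it induces exactly the splittings from the proof of Proposition~\ref{cl:hi}. By the five lemma it is the isomorphism \eqref{cl:abelian-basis}, and it is functorial in $M$ by construction.

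The main obstacle is the framing bookkeeping in the first step. We must check that $[F_*]$ has no node-difference residues, or else redefine the $n_j$ coordinates accordingly. We would first try an explicit computation in the split chart $F=uv\cdot(\text{unit})$: there $[F]=[u]+[v]+\eta[u][v]+\dots$, and the branch residues of $[u]$, $[v]$ at the node cancel in the sum but not necessarily in the difference.
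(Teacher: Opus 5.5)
Your overall plan is the paper's: lift the branch-difference basis of $\ker\delta$ to $\mathbf K_2^{\MW}(D)$ by exactness of \eqref{cl:localization}, normalize at $d_*$, transport along the universal morphisms, and use $[F_*]$ for the remaining coordinate. The gap is in how you treat $[F_*]m_1$.

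The twist comparing the $F$-residue with the frame \eqref{cl:canonical-frame} is $\langle v(t)\rangle$, where $v$ is the frame ratio, and $v$ is not constant. Take a split node chart with $F=\epsilon u_1u_2$, $\epsilon$ a unit. In the conormal of the branch $u_1=0$, the class of $F$ is $\epsilon u_2\bar u_1$, whereas \eqref{cl:canonical-frame} corresponds to $\bar u_1$ up to a unit. So $v$ vanishes at every node preimage. Lemma~\ref{con:constants} gives $\langle c\rangle=1$ only for $c\in\C^\times$. Hence the residue $\langle v\rangle m_1=m_1+\eta[v]m_1$ need not be constant, and its branch-difference coordinates need not vanish. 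For example, take $M=\mathbf{GW}$: there $M_{-1}(\C)=M_{-2}(\C)=W(\C)=\Z/2$ and $\eta$ acts isomorphically, so the branch difference at each node is $\eta m_1\neq0$.

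Your remedy of absorbing this into the $\chi_j$ cannot work. Each $\chi_j$ is a fixed section of $\mathbf K_2^{\MW}(D)$, and its contribution $(\chi_j)_*(n_j)$ depends only on $n_j$, while the spurious branch term depends on $m_1$. For the same reason your final step is misstated. The element $[F_*]m_1$ is not homogeneous for the localization filtration (constant residues inside all residue sections). So \eqref{cl:basis-map} does not induce the splittings of Proposition~\ref{cl:hi} exactly.

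The repair is the paper's argument. It uses only that $v(t_0)\in\C^\times$ is a square, so the residue of $[F_*]m_1$ has value $m_1$ at $t_0$. Filter $\ker\delta$ by the subgroup of residue sections with value zero at $t_0$, which is spanned by the $s_v(m)$ and identified with branch-difference vectors. The quotient is given by evaluation at $t_0$. Then:
\begin{enumerate}
\item $[F_*]m_1$ maps to $m_1$ in the quotient;
\item $(\chi_j)_*(n_j)$ lies in the subgroup, with branch difference $n_j$ at node $j$, because its residue vector was chosen with value zero at $t_0$;
\item $m_0$ is read off at $d_*$, where $[F_*]$ and all $\chi_j$ vanish.
\end{enumerate}
Thus \eqref{cl:basis-map} is block unitriangular relative to \eqref{cl:abelian-basis}. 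Given a section, subtract $[F_*]m_1$, then the remaining branch vector via the $\chi_j$, then the value at $d_*$; each step is unique. This proves bijectivity and naturality while retaining the $m_1$-dependent framing terms, rather than showing they vanish.

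A minor point: lifting an element of $\ker\delta$ uses only exactness at $M_{-1}(\Gamma^\circ,e)$, not \eqref{cl:acyclic}.
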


\begin{proof}
At each node choose $t_i^-$ as the negative branch and $t_i^+$
as the positive branch. Take the residue vector $(1,-1)$ there,
zero at every other node, and value zero at $t_0$.
These $r$ vectors form a basis of the branch-sum kernel.
Apply \eqref{cl:localization} to $M=\mathbf K_2^{\mathrm{MW}}$; the desired coefficients
belong to $(\mathbf K_2^{\mathrm{MW}})_{-2}(\C)=\GW(\C)=\Z$.  Exactness supplies lifts
$\chi_j$, and subtracting their values at $d_*$ makes them based
without changing their residues.  Naturality gives the same lifts
for all $M$ by their universal morphisms.

The residue of $[F_*]m_1$ in the frame
\eqref{cl:canonical-frame} is $\langle v(t)\rangle m_1$, where
$v$ is the nonzero normal-frame ratio.  Its value at $t_0$ is $m_1$,
since every nonzero complex scalar is a square. The ratio is a unit
on the smooth normalization with its node preimages removed; its zeros
and poles at the missing points can contribute branch coordinates.
Given a section of $M(D)$, subtract its constant normal coordinate
using the $[F_*]m_1$ term, then its remaining branch-difference
vector using the $\chi_j$, and finally its value at $d_*$. Each
step is unique. This triangular calculation proves
\eqref{cl:basis-map} while retaining all framing contributions.
\end{proof}

Let ${\mathcal Q}$ be strongly $\A^1$-invariant.
The contraction results in Proposition~\ref{con:structure} make
${\mathcal Q_{-1}}$ nilpotent of class at most two and ${\mathcal Q_{-2}},{\mathcal Q_{-3}}$ strictly invariant
abelian. Use the center $Z({\mathcal Q_{-1}})$ and its central quotient $\mathcal Q_{-1}/Z(\mathcal Q_{-1})$.
The center is strongly invariant by the centralizer theorem
\cite[Proposition~3.1.20]{AFH22}. It is abelian and therefore strictly
invariant over the perfect field $\C$. The central quotient is strongly
invariant by \cite[Lemma~3.1.14(3), Proposition~3.1.22]{AFH22}: a
quotient by a central strict subsheaf fits into the simplicial fiber
sequence with outer terms $B(\mathcal Q_{-1})$ and
$K(Z(\mathcal Q_{-1}),2)$, which are both $\A^1$-local.
Since $\mathcal Q_{-1}$ has class at most two, this particular quotient
is abelian, and hence it too is strictly invariant.
The centralizer and central-quotient theorems therefore allow us to
apply the preceding abelian calculation to these two sheaves.

\begin{proposition}
\label{cl:finite}
Fix an order on the sections $\chi_j$ of
Lemma~\ref{cl:universal-basis}. Multiplication of sections
defines a natural bijection of sets
\begin{equation}\label{cl:finite-map}
\begin{split}
 {\mathcal Q_{-2}}(\C)\times {\mathcal Q_{-3}}(\C)^r&\xrightarrow{\sim}\bigl({\mathcal Q_{-1}}\bigr)_*(D),\\
 (t,\lambda_1,\ldots,\lambda_r)&\longmapsto
 e_{F_*}(t)h_{\lambda_1}(\chi_1)\cdots
 h_{\lambda_r}(\chi_r).
\end{split}
\end{equation}
Here $h_\lambda:\mathbf K_2^{\mathrm{MW}}\to {\mathcal Q_{-1}}$ is the homomorphism of Lemma~\ref{con:square}
represented by $\lambda\in {\mathcal Q_{-3}}(\C)$.
This is a statement about sets; coefficientwise additivity of
\eqref{cl:finite-map} is not asserted.
\end{proposition}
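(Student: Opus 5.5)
We reduce the nonabelian statement to the abelian calculation of Proposition~\ref{cl:hi}, using the central extension
\[
 1\to Z(\mathcal Q_{-1})\to\mathcal Q_{-1}\to\mathcal Q_{-1}/Z(\mathcal Q_{-1})\to1 .
\]
By the remarks preceding the statement, both outer terms are strictly invariant abelian sheaves. Write $Z=Z(\mathcal Q_{-1})$ and $\bar{\mathcal Q}=\mathcal Q_{-1}/Z$.

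\emph{Step 1: an exact sequence of pointed sections on $D$.} Taking sections on $D$ gives
\[
 1\to Z_*(D)\to(\mathcal Q_{-1})_*(D)\to\bar{\mathcal Q}_*(D)\to H^1(D,Z).
\]
The last group vanishes by \eqref{cl:acyclic}. The pointed versions follow because evaluation at $d_*$ splits off the constant parts. Hence the middle set is an extension of the two outer abelian groups. Each outer group is computed by Lemma~\ref{cl:universal-basis}: it equals $M_{-1}(\C)\oplus M_{-2}(\C)^r$ via $(m_1,n_j)\mapsto[F_*]m_1+\sum(\chi_j)_*n_j$, with $M=Z$ or $M=\bar{\mathcal Q}$.

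\emph{Step 2: identifying the contractions.} Contraction is exact on this central extension of strongly invariant sheaves, since $\mathcal Q_{-1}\to\bar{\mathcal Q}$ is a surjection of strongly invariant sheaves and contraction preserves such sequences by the $\Gm$-loop theorem. Thus $Z_{-1}\to\mathcal Q_{-2}\to\bar{\mathcal Q}_{-1}$ and $Z_{-2}\to\mathcal Q_{-3}\to\bar{\mathcal Q}_{-2}$ are short exact. At $\C$ they remain exact because $H^1(\C,-)=0$. The map \eqref{cl:finite-map} is compatible with these filtrations:
\begin{itemize}[label=--]
 \item $e_{F_*}(t)$ maps to the section $[F_*]\bar t$ of $\bar{\mathcal Q}$;
 \item $h_\lambda(\chi_j)$ maps to $(\chi_j)_*\bar\lambda$, because $h_\lambda$ composed with the projection is the strict morphism represented by $\bar\lambda$;
 \item on $Z$-valued inputs, the map restricts to the abelian isomorphism \eqref{cl:basis-map} for $M=Z$.
\end{itemize}

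\emph{Step 3: bijectivity by a triangular argument.} For surjectivity, take $s\in(\mathcal Q_{-1})_*(D)$. Its image in $\bar{\mathcal Q}_*(D)$ has coordinates $(\bar t,\bar\lambda_j)$. Lift these to $t,\lambda_j$. Then $s$ times the inverse of the product \eqref{cl:finite-map} lies in $Z_*(D)$. Write this quotient as $e_{F_*}(t')\prod h_{\lambda'_j}(\chi_j)$ with $Z$-valued coefficients. Since these factors are central, absorb them into the chosen lifts.

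For injectivity, suppose two tuples give the same section. Their images in $\bar{\mathcal Q}$ agree, so the coefficients agree modulo $Z_{-1}$ and $Z_{-2}$. Write $t'=tz$ and $\lambda'_j=\lambda_j\zeta_j$ with central corrections. We then need $e_{F_*}(tz)=e_{F_*}(t)e_{F_*}(z)$ and $h_{\lambda\zeta}=h_\lambda h_\zeta$ whenever $\zeta$ is central. Evaluation is a homomorphism in $\mathcal Q_{-2}$, which is abelian, so the first holds. The second holds because $\mathcal Q_{-3}$ is abelian and $\lambda\mapsto h_\lambda$ is the additive bijection \eqref{eq:square-universality-reminder} into the abelian group $\Hom(\mathbf K_2^{\MW},\mathcal Q_{-1})$, pointwise. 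After moving the central factors to the right, the equality reduces to the injectivity of \eqref{cl:basis-map} for $Z$.

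\emph{Main obstacle.} The delicate point is the reordering in Step 3, where factors are moved past one another. Central factors commute freely. Commuting the noncentral factors $h_{\lambda_j}(\chi_j)$ with one another would produce commutators of exponent two, which is why only a set bijection is asserted. I would keep the factors in the fixed order throughout and move only central terms. I would also check carefully that the definition of $h_\lambda$ for possibly nonabelian $\mathcal Q_{-1}$ is additive in $\lambda$. This is supplied by Lemma~\ref{con:square}, since $\mathcal Q_{-3}$ is abelian.
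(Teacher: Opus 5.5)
Your decomposition is the same as the paper's: the central extension by $Z(\mathcal Q_{-1})$, vanishing of $H^1(D,-)$ from Proposition~\ref{cl:hi}, lifting the quotient coordinates through the exact contractions at $\C$, absorbing central corrections, and reducing injectivity to the abelian basis for the center. The one step that fails as you justify it is the identity $h_{\lambda+\zeta}=h_\lambda h_\zeta$.

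You derive it from the claim that $\lambda\mapsto h_\lambda$ is an additive bijection onto $\Hom(\mathbf K_2^{\MW},\mathcal Q_{-1})$ with the pointwise group structure. Your last paragraph goes further and asserts that Lemma~\ref{con:square} makes $h_\lambda$ additive in $\lambda$ in general. Neither claim is available. When $\mathcal Q_{-1}$ is nonabelian, the pointwise product of two homomorphisms out of $\mathbf K_2^{\MW}$ need not be a homomorphism, so \eqref{eq:square-universality-reminder} is only a bijection of sets.

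Concretely, take pure symbols $x,y$ over a function field. Then $h_{\lambda+\mu}(x)=h_\lambda(x)h_\mu(x)$, and likewise for $y$. However, $h_{\lambda+\mu}(x+y)$ and $h_\lambda(x+y)h_\mu(x+y)$ differ by the commutator $[h_\mu(x),h_\lambda(y)]$. Proposition~\ref{con:structure} only shows that this commutator has order dividing two; nothing forces it to vanish over nonclosed residue fields. General additivity is exactly what the last sentence of the statement disclaims. It is also what Lemma~\ref{def:local-additivity} later has to prove separately, for particular sections only.

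The repair uses centrality, which you already have. Take $\zeta\in Z(\mathcal Q_{-1})_{-2}(\C)$. Naturality of the square universal property for $Z(\mathcal Q_{-1})\hookrightarrow\mathcal Q_{-1}$ shows that $h_\zeta$ has central image, so $h_\lambda h_\zeta$ is a homomorphism. It agrees with $h_{\lambda+\zeta}$ on the universal pointed square, because the group law of $\mathcal Q_{-3}(\C)=(\mathcal Q_{-1})_{-2}(\C)$ is pointwise multiplication of pointed two-torus families. Uniqueness in Lemma~\ref{con:square} then gives equality. This is the paper's identity \eqref{cl:central-correction}, and with it your surjectivity and injectivity arguments go through unchanged.

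You also do not address the word ``natural''. Centers are not functorial under homomorphisms of targets, so naturality should be read off from the forward formula, as the paper does: evaluation commutes with homomorphisms, and square universality sends $h_\lambda$ to $h_{\lambda'}$.
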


\begin{proof}
Proposition~\ref{cl:hi} gives $H^1(D,Z({\mathcal Q_{-1}}))=H^1(D,\bigl({\mathcal Q_{-1}}/Z({\mathcal Q_{-1}})\bigr))=0$.
The central extension
\[
 1\longrightarrow Z({\mathcal Q_{-1}})\longrightarrow {\mathcal Q_{-1}}\longrightarrow \bigl({\mathcal Q_{-1}}/Z({\mathcal Q_{-1}})\bigr)
 \longrightarrow1
\]
therefore yields $H^1(D,{\mathcal Q_{-1}})=*$ and a surjection
${\mathcal Q_{-1}}(D)\to \bigl({\mathcal Q_{-1}}/Z({\mathcal Q_{-1}})\bigr)(D)$. Passing to based sections gives the
central extension
\begin{equation}\label{cl:central-based}
 1\longrightarrow (Z({\mathcal Q_{-1}}))_*(D)\longrightarrow \bigl({\mathcal Q_{-1}}\bigr)_*(D)
 \longrightarrow(\bigl({\mathcal Q_{-1}}/Z({\mathcal Q_{-1}})\bigr))_*(D)\longrightarrow1.
\end{equation}
Indeed an arbitrary lift of a based section of $\bigl({\mathcal Q_{-1}}/Z({\mathcal Q_{-1}})\bigr)$ has value
in $Z({\mathcal Q_{-1}})(\C)$ at $d_*$, which can be removed by a constant central
section.

Decompose the image of $c\in \bigl({\mathcal Q_{-1}}\bigr)_*(D)$ in $(\bigl({\mathcal Q_{-1}}/Z({\mathcal Q_{-1}})\bigr))_*(D)$ by
\eqref{cl:basis-map}.  Exactness of contraction, followed by the
splitting of Nisnevich covers over $\Spec\C$, lifts its coefficients
through ${\mathcal Q_{-2}}(\C)$ and ${\mathcal Q_{-3}}(\C)$.  The resulting ordered product in
\eqref{cl:finite-map} has the same image as $c$.  Their discrepancy
belongs to $(Z({\mathcal Q_{-1}}))_*(D)$, whose abelian basis supplies central
corrections to those coefficients.

These corrections can be absorbed into the product. For
$z\in(Z({\mathcal Q_{-1}}))_{-2}(\C)$ one has
\begin{equation}\label{cl:central-correction}
 h_{\lambda+z}=h_\lambda h_z.
\end{equation}
The image of $h_z$ is central, so the right-hand side is a
homomorphism out of $\mathbf K_2^{\mathrm{MW}}$.  It agrees with the left-hand side on
the universal pointed square; the square universal property
Lemma~\ref{con:square} proves the identity.  The analogous
correction in $e_{F_*}$ is an ordinary evaluation homomorphism.
This proves surjectivity.

If two ordered products agree, their coefficient images in
$(\bigl({\mathcal Q_{-1}}/Z({\mathcal Q_{-1}})\bigr))_{-1}(\C)$ and $(\bigl({\mathcal Q_{-1}}/Z({\mathcal Q_{-1}})\bigr))_{-2}(\C)$ agree by the abelian basis.
Their differences consequently come from $(Z({\mathcal Q_{-1}}))_{-1}(\C)$ and
$(Z({\mathcal Q_{-1}}))_{-2}(\C)$.  Identity~\eqref{cl:central-correction} moves
all discrepancies into a single central product.  The abelian
basis for $Z({\mathcal Q_{-1}})$ then makes each difference zero.  This proves
injectivity.

Although the centers used in this proof need not be functorial
under arbitrary homomorphisms, the displayed forward formula is:
evaluation commutes with homomorphisms, and square universality
sends $h_\lambda$ to $h_{\lambda'}$.  Its bijective inverse is
therefore natural for every homomorphism of strong targets.
\end{proof}

Combining Theorem~\ref{inf:height-trivial} with
Propositions~\ref{inf:fibers} and~\ref{cl:finite}, every pointed
${\mathcal Q}$-torsor originating on $\bigl(\A^3_{\C}\setminus i(\A^1_{\C})\bigr)$ has a unique tuple
\begin{equation}\label{cl:actual-tuple}
 (t,\lambda_1,\ldots,\lambda_r)\in {\mathcal Q_{-2}}(\C)\times {\mathcal Q_{-3}}(\C)^r.
\end{equation}
The next lemma fixes the reference tuple against which its
remaining entries must be compared.

\begin{lemma}
\label{cl:comparison}
Let $\phi:\bigl(\A^3_{\C}\setminus i(\A^1_{\C})\bigr)\to\A^2-0$ be the complete-intersection normal map
of Proposition~\ref{geom:normal-map}.  Its universal $\mathbf K_2^{\mathrm{MW}}$-torsor has first coordinate
$1\in\GW(\C)=\Z$ in \eqref{cl:finite-map}.  Consequently, for
every $t\in {\mathcal Q_{-2}}(\C)$, the actual comparison torsor obtained by
pushing forward along $h_t:\mathbf K_2^{\mathrm{MW}}\to {\mathcal Q}$ has coordinates
\begin{equation}\label{cl:comparison-tuple}
 (t,\lambda_1^{\mathrm{ext}}(t),\ldots,
       \lambda_r^{\mathrm{ext}}(t)).
\end{equation}
Its first coordinate is exactly $t$.  No assertion that its other
coordinates vanish, or are additive in $t$, is required.
\end{lemma}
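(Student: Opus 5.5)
The plan is to compute the first coordinate of the universal torsor directly, and then obtain the general statement from naturality of \eqref{cl:finite-map}. The universal $\mathbf K_2^{\mathrm{MW}}$-torsor on the complement is the pullback under $\phi$ of the universal torsor on $\A^2-0$, which corresponds to the identity of $\mathbf K_2^{\mathrm{MW}}$ under \eqref{eq:square-universality-reminder}. By Theorem~\ref{inf:height-trivial}, Proposition~\ref{inf:fibers} and Proposition~\ref{cl:finite}, its pointed class has a unique tuple in $(\mathbf K_2^{\mathrm{MW}})_{-2}(\C)\times(\mathbf K_2^{\mathrm{MW}})_{-3}(\C)^r$. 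Since $\mathbf K_2^{\mathrm{MW}}$ is abelian, this tuple is also the image of the clutching section $c_*\in(\mathbf K_1^{\mathrm{MW}})_*(D)$ under the abelian basis \eqref{cl:basis-map} applied to $M=\mathbf K_1^{\mathrm{MW}}=(\mathbf K_2^{\mathrm{MW}})_{-1}$.

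Next I would identify the first coordinate with the constant normal residue, i.e.\ the coefficient $m_1$ of $[F_*]$. The triangular calculation of Lemma~\ref{cl:universal-basis} shows that $m_1$ is read off by evaluating the residue of $c_*$ at the generic point of $\Gamma^\circ$, or at any chosen point $t_0$, in the canonical frame $e$. The $\chi_j$ contribute nothing there, because their residues are supported in branch-difference vectors vanishing at $t_0$. Geometrically this residue is the degree of the torsor on a small normal $\Gm$-loop around the curve, i.e.\ its restriction to a punctured normal slice at a smooth point of the curve away from the node axes.

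To compute that restriction, I would use the meridian map $\mu$ of Proposition~\ref{geom:normal-map}. Near a smooth point, a transverse slice gives a punctured henselian plane, and on it $\phi$ is $(f,g)$, a framed local coordinate system. By purity (Lemma~\ref{sp:supported}) the supported class of $\phi^*$ (universal) along this slice is the identity class $1\in\GW(\C)$. The relevant normalisation is fixed once in Proposition~\ref{geom:normal-map}, where $\phi\mu$ was shown to be an isomorphism on $\mathbf K_2^{\mathrm{MW}}$. The remaining point is that the frames $e=dt\otimes(dx\wedge dy)^{-1}$ and $(f,g)$ differ by a unit whose class is $\langle c\rangle=1$ over $\C$, and the height straightening $w=z-\widehat h$ does not change the normal coordinate. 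So the coordinate is $1$.

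Finally, for arbitrary $t\in\mathcal Q_{-2}(\C)$, push forward along $h_t$. Proposition~\ref{cl:finite} asserts naturality of the forward formula under homomorphisms of strong targets, and $h_t$ sends the first coordinate $1$ to $t$ by the square universal property of Lemma~\ref{con:square}. The remaining coordinates become whatever the images of the $\lambda_j$ are; we call them $\lambda_j^{\mathrm{ext}}(t)$. The main obstacle I expect is the framing and sign bookkeeping in the middle step: matching the canonical frame \eqref{cl:canonical-frame} and the left clutching convention with the orientation from $(f,g)$, so that one gets exactly $1$ rather than $-1$ or $\langle\,\cdot\,\rangle$. I would handle this by checking the model case of a coordinate line, where everything is explicit, and then deforming along the slice.
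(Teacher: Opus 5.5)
Your proposal is correct and follows the paper's proof. Both arguments run the abelian calculation for the universal $\mathbf K_2^{\mathrm{MW}}$-torsor and read the first coordinate as the constant normal residue at $t_0$, where the $\chi_j$ vanish and the frame comparison is a complex scalar $c$ with $\langle c\rangle=1$; both then pass to arbitrary $t$ by naturality of \eqref{cl:finite-map} under $h_t$. The detour through the meridian map $\mu$ and its normalization is unnecessary, since that normalization plays no role here. Also, the residue along $\Gamma^\circ$ is $\langle v(t)\rangle$ for a possibly nonconstant unit $v$, so the coordinate must be read at the closed point $t_0$, not at the generic point of $\Gamma^\circ$.
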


\begin{proof}
First take the abelian target $\mathbf K_2^{\mathrm{MW}}$.  The globally framed normal
parameter of the normal map is the rank-one degree $1$.  At the
chosen smooth normalization point $t_0$, the determinant comparing
its global normal frame to the height frame is a nonzero complex
scalar, hence has Grothendieck--Witt form $1$.  Thus its constant
normal coordinate in the localization calculation is $1$.
The $F_*$ term has constant normal coordinate $1$ and every fixed
$\chi_j$ has constant normal coordinate zero.  The triangular
abelian basis therefore makes the first finite coordinate of this
$\mathbf K_2^{\mathrm{MW}}$-torsor equal to $1$.

Now push this one torsor forward under $h_t$.  Naturality of the
set bijection~\eqref{cl:finite-map} carries its first coordinate
$1$ to $t$.  This proves \eqref{cl:comparison-tuple}.  This
uses the abelian calculation for the universal comparison and then
naturality.
\end{proof}

\begin{definition}\label{cl:defects}
For the tuple~\eqref{cl:actual-tuple}, define its defects by
\[
 b_j=\lambda_j-\lambda_j^{\mathrm{ext}}(t)\in {\mathcal Q_{-3}}(\C),
 \qquad 1\le j\le r.
\]
The differences are formed in the strictly invariant abelian
group ${\mathcal Q_{-3}}(\C)$. The reference coordinates are those of the
normal comparison torsor, including its framing contributions.
\end{definition}


\section{The node equation}\label{sec:node-equation}

As in a knot diagram, projection simplifies the curve while the
lift records the separation of its branches. At each ordinary node
the two complex heights have a nonzero difference $\Delta$.
The equations below retain this difference and the full chart gauges
in torsor clutching under a strongly $\A^1$-invariant group sheaf.
Rescaling the heights and discarding the pointing correction could
introduce a ramified gauge, so we use the original heights throughout.

\begin{lemma}\label{node:normal-form}
Every $c\in {\mathcal Q_{-1}}(\Gm^{(u)}\times\Gm^{(v)})$ satisfying $c(1,1)=1$ has a unique
factorization
\[
 c=e_u(p)e_v(q)a_\ell,\qquad
 p,q\in {\mathcal Q_{-2}}(\C),\quad\ell\in {\mathcal Q_{-3}}(\C),
\]
where $a_\ell$ is fully pointed in $u,v$. The mixed factor commutes with
the two single-coordinate factors.
\end{lemma}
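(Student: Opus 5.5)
We first peel off the single-coordinate parts by evaluation at the basepoint $1$ in each torus coordinate, and then identify what remains as a fully pointed two-torus family. The outer coordinate of ${\mathcal Q_{-1}}$ is retained throughout, so every section below is a ${\mathcal Q}$-valued family pointed in that outer slot.

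\emph{Construction.} Given $c$ with $c(1,1)=1$, put $p:=c(u,1)$ and $q:=c(1,v)$. Here $p$ is a section of ${\mathcal Q_{-1}}(\Gm^{(u)})$ that is trivial at $u=1$, so it lies in ${\mathcal Q_{-2}}(\C)$; likewise $q\in{\mathcal Q_{-2}}(\C)$. Then set
\[
 a:=e_v(q)^{-1}e_u(p)^{-1}c .
\]
Setting $v=1$ gives $a(u,1)=p(u)^{-1}c(u,1)=1$, because $e_v(q)$ is trivial at $v=1$. Setting $u=1$ gives $a(1,v)=q(v)^{-1}c(1,v)=1$. Hence $a$ is fully pointed in $u$ and $v$, that is, $a=a_\ell$ for a unique $\ell\in{\mathcal Q_{-3}}(\C)$ under the definition of iterated contraction.

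\emph{Uniqueness.} Suppose $c=e_u(p)e_v(q)a_\ell$. Evaluating at $v=1$ kills $e_v(q)$ and $a_\ell$, so $p$ is forced to equal $c(u,1)$. Evaluating at $u=1$ forces $q=c(1,v)$. Then $a_\ell$ is determined by cancellation, and $\ell$ is determined by $a_\ell$.

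\emph{Commutation.} That $a_\ell$ commutes with $e_u(p)$ and $e_v(q)$ is exactly Lemma~\ref{con:mixed}.

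The main point to check is the identification of fully pointed sections with constants. A section of ${\mathcal Q_{-1}}(\Gm^{(u)}\times\Gm^{(v)})$ that is pointed in $u$ and $v$ is, by definition, a section of $({\mathcal Q_{-1}})_{-2}(\C)={\mathcal Q_{-3}}(\C)$. Similarly, a section of ${\mathcal Q_{-1}}(\Gm^{(u)})$ pointed in $u$ is a section of ${\mathcal Q_{-2}}(\C)$, and evaluation $e_u$ is precisely this inclusion. Two features make this work. First, no splitting of constant parts is needed, because $c(1,1)=1$. Second, the order of the factors matters only up to the commutations supplied by Lemma~\ref{con:mixed}; no commutation between $e_u(p)$ and $e_v(q)$ is asserted or used.
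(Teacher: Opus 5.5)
Your proposal is correct and follows the paper's proof essentially verbatim. Like the paper, you peel off $c(u,1)=e_u(p)$ and $c(1,v)=e_v(q)$, observe that the remainder $e_v(q)^{-1}e_u(p)^{-1}c$ is pointed in both torus slots and is therefore $a_\ell$ for a unique $\ell\in\mathcal Q_{-3}(\C)$, obtain uniqueness from the same evaluations, and quote Lemma~\ref{con:mixed} for the commutations, with your explicit checks (evaluation at $1$ is a homomorphism, and $c(1,1)=1$) supplying details the paper leaves implicit.
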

\begin{proof}
Take the two single-coordinate restrictions $A=c(u,1)$ and $B=c(1,v)$.
They are respectively $e_u(p)$ and $e_v(q)$. The remainder
$B^{-1}A^{-1}c$ restricts to $1$ on both coordinate sections, hence is
the fully pointed ${\mathcal Q_{-1}}$-family represented by $\ell\in {\mathcal Q_{-3}}(\C)$.
Evaluations recover $A,B$ and then the remainder, proving uniqueness.
The commutations are Lemma~\ref{con:mixed}.
\end{proof}

\begin{lemma}\label{node:height-symbol}
For $\Delta\in \C^*$ and a variable $z$,
\begin{equation}\label{eq:height-symbol}
 [z][z-\Delta]-[z]^2=[\Delta]([z-\Delta]-[z])
 \quad\text{in }K_2^{\mathrm{MW}}(\C(z)).
\end{equation}
Also $[z][z-\Delta]=[z][-1]+[\Delta]([z-\Delta]-[z])$.
\end{lemma}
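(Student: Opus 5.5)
The plan is to derive both identities from a single Steinberg relation for the rational function $z/\Delta$. I expand it with the Milnor--Witt relations of Section~\ref{sec:reminders}, using throughout that every constant quadratic class is trivial over $\C$. Concretely, for $c\in\C^\times$ we have $\langle c\rangle=1+\eta[c]=1$ in $K_0^{\MW}(\C)=\GW(\C)=\Z$. Hence $\eta[c]=0$ in $K_*^{\MW}(\C(z))$, by functoriality along $\C\to\C(z)$ as in Lemma~\ref{con:constants}. This single fact removes every $\eta$-correction term in which a constant symbol occurs.

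First I record the consequences of $\eta[c]=0$. For any unit $x$ and constant $c$, the relation $[xc]=[x]+[c]+\eta[x][c]$ becomes $[xc]=[x]+[c]$, because $\eta$ is central. Next, $[c^{-1}]=-\langle c^{-1}\rangle[c]=-[c]$, from $0=[1]=[c\cdot c^{-1}]$. In particular $[z/\Delta]=[z]-[\Delta]$. Likewise $[-\Delta]=[-1]+[\Delta]$, and $[x][-\Delta]=[x][-1]+[x][\Delta]$ for any unit $x$. I will also use the standard Milnor--Witt identities $[a][-a]=0$, and $[a][b]=\epsilon[b][a]$ with $\epsilon=-\langle-1\rangle$. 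Over $\C$ the latter reads $[a][b]=-[b][a]$.

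Then I apply Steinberg to $a=z/\Delta\ne1$, so that
\[
1-a=\frac{\Delta-z}{\Delta}=(z-\Delta)\cdot(-\Delta^{-1}).
\]
By the rules above, $[1-a]=[z-\Delta]+[-\Delta^{-1}]=[z-\Delta]-[-\Delta]$. The relation $[a][1-a]=0$ therefore gives
\[
0=([z]-[\Delta])([z-\Delta]-[-\Delta])
 =[z][z-\Delta]-[z][-\Delta]-[\Delta][z-\Delta]+[\Delta][-\Delta].
\]
The last term vanishes because $[a][-a]=0$. I then rewrite $[z][-\Delta]=[z][-1]+[z][\Delta]$ and $[z][\Delta]=-[\Delta][z]$. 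This yields
\[
[z][z-\Delta]=[z][-1]+[\Delta]([z-\Delta]-[z]),
\]
which is the second assertion. The first assertion \eqref{eq:height-symbol} follows by substituting $[z]^2=[z][-1]$, the Milnor--Witt relation $[w][w]=[w][-1]$ already used in Lemma~\ref{con:square}.

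The only delicate step is the bookkeeping of $\eta$-terms and signs. The identities hold in $K_2^{\MW}$, not merely in Milnor $K$-theory. Every cancellation must therefore be justified by the vanishing of $\eta[c]$ for constant $c$, or by $\epsilon=-1$, rather than by passing to the quotient modulo $\eta$. I would check in particular that no product $\eta[z][z-\Delta]$, involving only nonconstant symbols, is ever discarded. In the computation above, each $\eta$-term contains a constant factor, so none is.
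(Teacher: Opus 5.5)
Your proof is correct and is essentially the paper's argument. Both rest on the Steinberg relation for $s=z/\Delta$, the additive symbol formula for constant units (since $\eta[c]=0$ over $\C$), graded anticommutativity with $\epsilon=-1$, and $[z]^2=[z][-1]$; the only difference is bookkeeping. You expand $[s][1-s]=0$ directly and remove the constant term with $[\Delta][-\Delta]=0$, which gives the second identity first, whereas the paper expands $[z]=[\Delta]+[s]$ and $[z-\Delta]=[\Delta]+[s-1]$ and uses $[\Delta]^2=0$ and $[s][s-1]=[s]^2$, which gives the first identity first.
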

\begin{proof}
Put $s=z/\Delta$. Since $\Delta$ and $-1$ are squares in $\C$,
$\langle\Delta\rangle=\langle-1\rangle=1$ and multiplication by these
constant units has the additive symbol formula. Thus
$[z]=[\Delta]+[s]$ and $[z-\Delta]=[\Delta]+[s-1]$.
The constant square $[\Delta]^2$ is zero, and the two cross terms in
$([\Delta]+[s])^2$ cancel by graded anticommutativity. Finally Steinberg
and $[s-1]=[-1]+[1-s]$ give $[s][s-1]=[s][-1]=[s]^2$.
Expanding the two sides proves the first equality. The second follows
from $[z]^2=[z][-1]$. The calculation is in Milnor--Witt theory itself,
not just in its Milnor quotient.
\end{proof}

For $p,q\in {\mathcal Q_{-2}}(\C)$, use the meridian family convention
$P_p(u,w)=e_u(p)(w)$. Define
\begin{equation}\label{eq:DDelta}
 D_\Delta(p,q)(u,v,w)
   =[P_p(u,\Delta),P_q(v,w)].
\end{equation}
It is fully pointed in $u,v,w$ and hence represents an element of ${\mathcal Q_{-3}}(\C)$.
By Proposition~\ref{con:bracket}, it is biadditive in $p,q$ and is the
evaluation of one coordinate of $\beta_{2,2}(p,q)$ at $\Delta$. We denote
this, with the fixed coordinate order understood, by
$D_\Delta(p,q)=[\Delta]\beta_{2,2}(p,q)$. If slots are reordered, the
same formulas acquire the corresponding fixed signs.

\begin{lemma}\label{node:collection}
Let $c=e_u(p)e_v(q)a_\ell$ be as in
Lemma~\ref{node:normal-form}, and set $w_1=z$, $w_2=z-\Delta$.
Write
\[
 A_i=P_p(u,w_i),\quad B_i=P_q(v,w_i),\quad
 \mathcal L_i=a_\ell(u,v)(w_i),\quad
 a=A_2A_1^{-1},\quad b=B_2B_1^{-1}.
\]
The left-transition $g=c(w_2)c(w_1)^{-1}$ satisfies
\begin{equation}\label{eq:collected-transition}
 a^{-1}gb^{-1}
 =[A_1,B_2][A_1,B_1]^{-1}\mathcal L_2\mathcal L_1^{-1}.
\end{equation}
Its fully pointed normal parameter is
\begin{equation}\label{eq:node-parameter}
 \ev_{z-\Delta}\bigl(\ell+D_\Delta(p,q)\bigr)
 -\ev_z\bigl(\ell+D_\Delta(p,q)\bigr)
 \quad\text{in }{\mathcal Q_{-2}}(\C(z)).
\end{equation}
\end{lemma}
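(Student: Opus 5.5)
The proof has two parts: a group-theoretic collection identity for $g$, followed by extraction of its fully pointed normal parameter.

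\emph{Collection identity.} Expand
\[
 g=c(w_2)c(w_1)^{-1}=A_2B_2\mathcal L_2\mathcal L_1^{-1}B_1^{-1}A_1^{-1},
\]
using the normal form of Lemma~\ref{node:normal-form} at each height. By Lemma~\ref{con:mixed}, the mixed factors $\mathcal L_i$ commute with all $A_j,B_j$ after independent copies of the outer coordinate are introduced, so they can be moved to the right. We then compute $a^{-1}gb^{-1}=A_1A_2^{-1}gB_1B_2^{-1}$. After cancelling $A_2$, this leaves
\[
 A_1B_2B_1^{-1}A_1^{-1}\,B_1B_2^{-1}\,\mathcal L_2\mathcal L_1^{-1}.
\]
Rewrite $A_1B_2B_1^{-1}A_1^{-1}$ as $[A_1,B_2][A_1,B_1]^{-1}$ conjugated by the correct elements. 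Proposition~\ref{con:structure} shows that these commutators have class at most two, and since each commutator is fully pointed in $u,v$, it is central in the relevant subgroup. This lets us reorder them and absorb the factor $B_2B_1^{-1}$, which gives \eqref{eq:collected-transition}. The main bookkeeping issue is the commutator convention: $[x,y]=xyx^{-1}y^{-1}$ versus its inverse. We fix the convention that makes the identity exact, and all remaining conjugation terms are triple commutators, which vanish by Proposition~\ref{con:structure}.

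\emph{Normal parameter.} The right-hand side of \eqref{eq:collected-transition} is fully pointed in $u,v$, so it defines an element of ${\mathcal Q_{-2}}(\C(z))$ after $w$ is specialized to the heights. The factor $\mathcal L_2\mathcal L_1^{-1}$ contributes $\ev_{z-\Delta}\ell-\ev_z\ell$, because evaluation is a homomorphism into the abelian group ${\mathcal Q_{-2}}$. For the commutators, write $[A_1,B_i]$ as $[P_p(u,w_1),P_q(v,w_i)]$. This is the diagonal restriction of $\beta_{2,2}(p,q)$ with outer slots $(z,w_i)$, so by Proposition~\ref{con:bracket} and Remark~\ref{con:actions} it corresponds to the symbol $[z][w_i]$ acting on $\beta_{2,2}(p,q)$. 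Lemma~\ref{node:height-symbol} then gives
\[
 [z][z-\Delta]-[z][z]=[\Delta]([z-\Delta]-[z]).
\]
Hence $[A_1,B_2][A_1,B_1]^{-1}$ has parameter $\ev_{z-\Delta}D_\Delta(p,q)-\ev_z D_\Delta(p,q)$, where $D_\Delta(p,q)=[\Delta]\beta_{2,2}(p,q)$. Adding the $\ell$ contribution yields \eqref{eq:node-parameter}.

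The main obstacle is justifying the passage from the diagonal substitution $[z][z]$ to the difference formula as an equality of sections over $\C(z)$ rather than only of symbols. To handle this, we will use Lemma~\ref{con:square} with parameters, working at the generic field $\C(z)$ and using unramified injectivity. The coordinate-order signs are fixed once, as in \eqref{eq:DDelta}.
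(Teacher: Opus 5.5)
Your extraction of the normal parameter follows the paper's argument: you evaluate $\beta_{2,2}(p,q)$ at $[z][w_i]$, apply Lemma~\ref{node:height-symbol}, and let the $\mathcal L$-difference supply the $\ell$ terms. The gap is in the collection step.

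Lemma~\ref{con:mixed} and Proposition~\ref{con:structure} are statements about sections of $\mathcal Q_{-1}$ with one common retained outer coordinate, i.e.\ factors evaluated at the \emph{same} height. Your rearrangement needs commutations between factors at the two \emph{different} heights $w_1=z$ and $w_2=z-\Delta$:
\begin{itemize}
\item moving $\mathcal L_2\mathcal L_1^{-1}$ to the right requires $\mathcal L_2$ to commute with $A_1$ and $B_1$, and $\mathcal L_1$ to commute with $B_2$;
\item rewriting $[A_1,B_2B_1^{-1}]$ as $[A_1,B_2][A_1,B_1]^{-1}$ requires $[A_1,B_1]$ to commute with $B_2$.
\end{itemize}
These elements do not lie in a common copy of $\mathcal Q_{-1}$; for instance $A_2$ is not even pointed in $z$. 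The group they generate inside $\mathcal Q\bigl((\Gm^{(u)}\times\Gm^{(v)})_{\C(z)}\bigr)$ is not known to have class two, and the paper stresses that $\mathcal Q$ itself need not. Introducing independent outer coordinates and then setting them to $(z,z-\Delta)$ is not a diagonal restriction. So Lemma~\ref{con:square} does not supply the second $-1$ slot that killed the same-height commutators. Likewise, being fully pointed in $u,v$ does not by itself make $[A_1,B_j]$ central.

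The missing ingredient is the one the paper proves before collecting anything. Take a cross-height pair such as $(\mathcal L_2,A_1)$ and make all slots independent. The repeated plane slot $u$ yields one $[-1]$. The two height symbols expand by Lemma~\ref{node:height-symbol} as $[z][z-\Delta]=[z][-1]+[\Delta]([z-\Delta]-[z])$. Every resulting term then contains $[-1]^2$ or $[\Delta][-1]$, both zero by Lemma~\ref{con:constants}.

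For the triple commutators $[[A_1,B_j],B_k]$, a repeated $v$ slot plus a repeated height already suffices. A repeated height is automatic, since two of the three factors share a height.

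With these commutations established, your computation with the convention $[x,y]=xyx^{-1}y^{-1}$ gives \eqref{eq:collected-transition}, and the rest of your argument goes through. The real subtlety is this cross-height commutation, not the passage from $[z][z]$ to the difference formula that you flagged as the main obstacle.
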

\begin{proof}
We first justify the commutations used in collecting factors. Any two
$\mathcal L$-factors share the two plane slots $u,v$; their commutator vanishes by
two repetitions, regardless of their heights. An $\mathcal L$-factor and an
$A$-factor share the $u$ slot. If their heights agree, the shared height
is a second repetition and kills their commutator. At the two distinct
heights, make both heights independent first. The repeated plane slot
produces one $-1$ slot. Lemma~\ref{node:height-symbol} expands the two
height symbols into terms with either another $-1$ slot or a constant
$\Delta$ slot. Each vanishes because $[-1]^2=[\Delta][-1]=0$.
The $\mathcal L$--$B$ case uses the $v$ slot in the same manner.

The commutators $[A_i,B_j]$ commute with the factors involved in
\eqref{eq:collected-transition}. For an $A$-factor at the same height
as $A_i$, its outer conjugation repeats the plane and height slots.
For the other height, use the same height expansion after repeating the
plane slot. The $B$ case is identical. This proves the required
commutations before any informal class-two collection in ${\mathcal Q}$ is made;
the whole group ${\mathcal Q}$ need not have class two.

With these commutations justified,
\[
 a^{-1}gb^{-1}
 =A_1(B_2B_1^{-1})A_1^{-1}(B_2B_1^{-1})^{-1}\mathcal L_2\mathcal L_1^{-1},
\]
which is \eqref{eq:collected-transition}. The independent-height family
$[P_p(u,w_1),P_q(v,w_2)]$ is pointed in both height variables. Apply
Lemma~\ref{con:square} in those two variables and then
\eqref{eq:height-symbol}. The commutator difference in
\eqref{eq:collected-transition} becomes
\[
 \ev_{z-\Delta}D_\Delta(p,q)
 \bigl(\ev_zD_\Delta(p,q)\bigr)^{-1}.
\]
The mixed $\mathcal L$-difference supplies the other two terms. These fully
pointed two-plane-coordinate sections belong to the abelian group
${\mathcal Q_{-2}}(\C(z))$, giving \eqref{eq:node-parameter}.
\end{proof}

\begin{proposition}\label{node:axis-equation}
At each node, let $c$ be the normalized full torus specialization of the
global clutching section of a torsor coming from $\bigl(\A^3_{\C}\setminus i(\A^1_{\C})\bigr)$, and write it as in
Lemma~\ref{node:normal-form}. Let $h_1$ be the closed height on $u=0$
and $h_2$ the closed height on $v=0$, and put $\Delta=h_2-h_1\ne0$.
Then
\begin{equation}\label{eq:axis-equation}
 \ell+D_\Delta(p,q)=0\quad\text{in }{\mathcal Q_{-3}}(\C).
\end{equation}
\end{proposition}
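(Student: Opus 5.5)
The idea is to use that the torsor comes from the whole complement, so it extends across the vertical axis over the node. Then we read off the obstruction to that extension from the chart transition computed in Lemma~\ref{geom:node-charts}.

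Fix a node and its split henselian chart $R=\C\{u,v\}$, with $\Gamma=\{uv=0\}$. By Theorem~\ref{inf:height-trivial} the torsor is trivial on the height chart. It is therefore described by a global clutching section $c\in\mathcal Q_{-1}(D)$, normalized at $d_*$. By Lemma~\ref{geom:node-charts}, on the punctured normal plane over the generic point of the missing axis, the $u$-chart and $v$-chart trivializations differ by the transition
\[
 g=c(z-H_u)\,c(z-H_v)^{-1}.
\]
The torsor is restricted from $\A^3_{\C}\setminus i(\A^1_{\C})$, which contains the axis. So the $u$-chart and $v$-chart trivializations, each defined over $R[1/u]$ resp.\ $R[1/v]$ tensored with $\C(z)$, must be related to a single trivialization regular over all of $R_{\C(z)}$. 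The axis is the locus $u=v=0$ away from the curve. This gives a factorization
\[
 g=\gamma_u\,\gamma_v^{-1},\qquad
 \gamma_u\in\mathcal Q\bigl(R[1/u]_{\C(z)}\bigr),\quad
 \gamma_v\in\mathcal Q\bigl(R[1/v]_{\C(z)}\bigr).
\]
The torsor restricted to the henselian local scheme at the axis is trivial, and such a trivialization compared with each chart gives the two gauges.

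Next, apply the double specialization $\tau_{(u,v)}^{\mathcal Q}$ of Proposition~\ref{sp:double}, after the henselian coefficient change to $\C(z)$. By \eqref{sp:chart-images}, $\tau(\gamma_u)$ depends only on the $u$-torus and $\tau(\gamma_v)$ only on the $v$-torus. By the moving-height identity \eqref{sp:height-freezing}, $\tau(g)=\bar c(z-h_1)\,\bar c(z-h_2)^{-1}$, where $\bar c=\tau^{\mathcal Q_{-1}}c$ is the normalized full torus specialization of the statement. Reparametrizing $z\mapsto z-h_1$ (the factor order and the assignment of $h_1,h_2$ to the branches must be matched carefully with Lemma~\ref{geom:linear-height}) puts this in the form $c(w_2)c(w_1)^{-1}$ of Lemma~\ref{node:collection}, up to inversion, with $w_1=z$ and $w_2=z-\Delta$.

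Write $\bar c=e_u(p)e_v(q)a_\ell$ by Lemma~\ref{node:normal-form}. Lemma~\ref{node:collection} gives
\[
 a^{-1}\tau(g)\,b^{-1}=(\text{fully pointed term}),
\]
where $a$ is a pure $u$-family, $b$ is a pure $v$-family, and the right side is a fully pointed two-plane-coordinate section whose parameter is \eqref{eq:node-parameter}. Combining this with $\tau(g)=\tau(\gamma_u)\tau(\gamma_v)^{-1}$ exhibits that fully pointed section as a product of a $u$-only family and a $v$-only family. By the uniqueness in Lemma~\ref{node:normal-form}, applied over $\C(z)$ after restricting to $u=1$ and $v=1$, a fully pointed section that factors this way is trivial. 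Hence
\[
 \ev_{z-\Delta}(x)-\ev_z(x)=0\quad\text{in }\mathcal Q_{-2}(\C(z)),\qquad x=\ell+D_\Delta(p,q).
\]

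Finally, I deduce $x=0$ from the vanishing of $\ev_{z-\Delta}(x)-\ev_z(x)$. The class $x\in\mathcal Q_{-3}(\C)$ acts through the symbol $[z-\Delta]-[z]$ via Remark~\ref{con:actions}. Taking the residue at $z=\Delta$ in the strictly invariant sheaf $\mathcal Q_{-2}$ recovers $x$ with unit coefficient, while the term $\ev_z x$ is regular at $z=\Delta$ because $\Delta\ne0$. So the residue of the vanishing difference is $x$, which forces $x=0$.

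I expect the main obstacle to be the bookkeeping in the second step. One must check that the normalization at $d_*$, the left and right gauge conventions, and the order in which $v$ and then $u$ are specialized all match the convention of Lemma~\ref{node:collection}. Otherwise the sign of $\Delta$, or a conjugation by a constant, could enter. I would first verify this on the reference torsor of Lemma~\ref{cl:comparison}, which certainly extends.
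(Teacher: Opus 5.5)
This is essentially the paper's proof. Extension over the generic node axis makes the specialized transition a coboundary with a left $u$-gauge and a right $v$-gauge, Lemma~\ref{node:collection} turns this into the vanishing of the parameter \eqref{eq:node-parameter}, and a residue extracts $\ell+D_\Delta(p,q)$. Your restriction argument (a fully pointed section that is a $u$-only family times a $v$-only family is trivial) is an explicit form of the paper's appeal to the torsor classes $\mathcal Q_{-2}(\C(z))/\mathcal Q(\C(z))$ of the punctured plane. Your residue at $z=\Delta$ is equivalent to the paper's residue at $z=0$.

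Two items in the bookkeeping you flagged need fixing.

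\emph{The branch heights are swapped.} Lemma~\ref{geom:linear-height} says $H_u$ is the branch height on $v=0$, so $H_u(0)=h_2$ and $H_v(0)=h_1$. Hence $\tau(g)=\bar c(z-h_2)\,\bar c(z-h_1)^{-1}$. After translating by $h_1$ this is exactly $c(w_2)c(w_1)^{-1}$ with $w_1=z$ and $w_2=z-\Delta$, and no inversion occurs. The swap in your formula is not harmless. Inverting puts the gauges in the order $\tau(\gamma_v)\tau(\gamma_u)^{-1}$, and then your restriction argument only shows that the fully pointed term equals $[a^{-1},b]$, not $1$. Translating by $h_2$ instead would give $D_{-\Delta}$ in place of $D_\Delta$.

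\emph{The plane constant must be removed.} Normalizing the global section at $d_*$ does not give $(\tau c)(1,1)=1$, which Lemma~\ref{node:normal-form} needs. As in the paper, strip $c_0=(\tau c)(1,1)$ first. This multiplies the transition on the left by $c_0(z-h_2)^{-1}$ and on the right by $c_0(z-h_1)$. These are constant gauges, regular on both charts, so the coboundary property survives.
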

\begin{proof}
By Lemma~\ref{geom:node-charts}, the actual ordered transition is
$c(z-H_u)c(z-H_v)^{-1}$, with $H_u(0)=h_2$ and $H_v(0)=h_1$.
Pass to the henselian normal ring $\C(z)\{u,v\}$; this is a new
henselization over the generic axis field, receiving the original
chart. Full two-coordinate specialization, with moving-height
compatibility, sends this transition to
$c(z-h_2)c(z-h_1)^{-1}$ and sends the two chart gauge groups to
their respective single-torus gauge groups. The torsor extends over the
generic axis and is trivial over its henselian local normal scheme.
Its trivializing gauges therefore specialize to a coboundary
for this torus transition.

The plane-constant factor $c_0=c(1,1)$ can be left-removed. The two
resulting correcting factors $c_0(z-h_2)^{-1}$ and $c_0(z-h_1)$ are
regular plane-constant gauges; they need not be equal or central in
${\mathcal Q}(\C(z))$. Thus the removal preserves triviality of the axis torsor.
Translate the axis coordinate by $h_1$, without rescaling it, and apply
Lemma~\ref{node:collection}.

The standard punctured plane $\bigl(\A^2_{\C}\setminus\{0\}\bigr)_{\C(z)}$ has torsor classes
${\mathcal Q_{-2}}(\C(z))/{\mathcal Q}(\C(z))$, with constant conjugation action. This follows
directly from $\bigl(\A^2_{\C}\setminus\{0\}\bigr)\simeq\susp\Gm^{\wedge2}$ and forgetting a pointing.
In particular its zero torsor has only the zero normal parameter.
Hence \eqref{eq:node-parameter} is zero. Its residue at $z=0$ is
$-\ell-D_\Delta(p,q)$: the first summand is regular there because
$z-\Delta$ is a unit. This proves \eqref{eq:axis-equation}.

The specialization order remains the one chosen above.
\end{proof}

\section{Vanishing of node defects}\label{sec:defects}

Fix a strongly invariant sheaf ${\mathcal Q}$ and use the finite set coordinates
of Proposition~\ref{cl:finite}. The normal comparison torsor with
first coefficient $t\in {\mathcal Q_{-2}}(\C)$ has coordinates
$(t,\lambda_1^{\mathrm{ext}}(t),\ldots,
\lambda_r^{\mathrm{ext}}(t))$. For an extending torsor with the same first
coefficient define
\begin{equation}\label{eq:defect-definition}
 b_j=\lambda_j-\lambda_j^{\mathrm{ext}}(t)\in {\mathcal Q_{-3}}(\C).
\end{equation}
The comparison is the normal torsor pulled back to the complement, including its higher
coordinates.

\begin{lemma}\label{def:local-additivity}
For a split node $i$, the full specialization of each chosen source
section has the form
\begin{equation}\label{eq:chi-expansion}
 \tau_i\chi_j=\sigma_{ij}+[u]a_{ij}+[v]d_{ij}
                         +m_{ij}[u][v],
\end{equation}
with $\sigma_{ij}\in K_2^{\mathrm{MW}}(\C)$,
$a_{ij},d_{ij}\in \C^*$ and $m_{ij}\in\GW(\C)=\Z$.
For these particular sections, the evaluation
$b\mapsto h_b(\tau_i\chi_j)$ is additive in $b\in {\mathcal Q_{-3}}(\C)$.
The constant and the two single-slot factors have central image in ${\mathcal Q_{-1}}$.
\end{lemma}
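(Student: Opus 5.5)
The plan has three steps: get the shape of $\tau_i\chi_j$ from the abelian case, prove additivity by showing the correction terms die, and then read off centrality from Proposition~\ref{con:structure}.

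\emph{Step 1: the expansion.} Apply the full specialization $\tau_i=\tau^{\mathbf K_2^{\mathrm{MW}}}_{(u,v)}$ of Proposition~\ref{sp:double} with the abelian, strictly invariant target $M=\mathbf K_2^{\mathrm{MW}}$. Its value lies in $\mathbf K_2^{\mathrm{MW}}(\Gm^{(u)}\times\Gm^{(v)})$. Because the target is abelian, the normal form of Lemma~\ref{node:normal-form}, applied here without a pointing hypothesis, splits this value into four pieces: a constant in $K_2^{\mathrm{MW}}(\C)$, a $u$-part in $(\mathbf K_2^{\mathrm{MW}})_{-1}(\C)=K_1^{\mathrm{MW}}(\C)$, a $v$-part in the same group, and a fully pointed part in $(\mathbf K_2^{\mathrm{MW}})_{-2}(\C)=\GW(\C)=\Z$.

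Over $\C$ one has $K_1^{\mathrm{MW}}(\C)=K_1^{\mathrm M}(\C)=\C^*$ by Lemma~\ref{con:constants}. So the single-slot parts can be written $[u][a_{ij}]$ and $[v][d_{ij}]$, and the mixed part $m_{ij}[u][v]$. This is \eqref{eq:chi-expansion}, up to reordering slots; the resulting sign is absorbed into $a_{ij},d_{ij}$, since $\langle -1\rangle=1$ over $\C$.

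\emph{Step 2: additivity.} Let $b\in\mathcal Q_{-3}(\C)$ and use the homomorphism $h_b:\mathbf K_2^{\mathrm{MW}}\to\mathcal Q_{-1}$. Apply $h_b$ to the four summands separately and identify each image via Remark~\ref{con:actions}:
\begin{itemize}
\item $h_b(\sigma_{ij})$ is the evaluation of the two outer torus slots of $b$ at constant symbols. It lands in the strictly abelian contraction $\mathcal Q_{-1}$ and is additive in $b$.
\item $h_b([u][a])$ is $e_u$ applied to $[a]b\in\mathcal Q_{-2}(\C)$, with the slot action as in Proposition~\ref{con:bracket}. It is additive because $e_u$ is a homomorphism and $b\mapsto[a]b$ is additive on the abelian tower.
\item $h_b([v][d])$ is handled the same way.
\item $h_b(m[u][v])=a_{mb}$ is the fully pointed mixed factor, additive since $m\in\Z$.
\end{itemize}
Since $h_b$ is a homomorphism, $h_b(\tau_i\chi_j)$ is the product of these four factors. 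For $h_{b+b'}$ this gives a product of eight factors, and additivity reduces to reordering them into the $b$-product followed by the $b'$-product. The required commutations are as follows. Mixed factors commute with the single-slot ones by Lemma~\ref{con:mixed}. The constant factor involves a constant symbol in an outer slot: its commutator with any other factor has a repeated outer coordinate together with a slot carrying a constant symbol. By Lemmas~\ref{con:square} and~\ref{con:constants} this yields $[c][-1]=0$ or $[-1]^2=0$, so the commutator vanishes. A $u$-factor and a $v$-factor share the outer coordinate, and the $u$-factor also carries the constant $[a]$. Their commutator therefore becomes a two-slot symbol containing $[a][-1]=0$, as in the proof of Proposition~\ref{con:structure}. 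The key point is that every single-slot factor already carries a constant symbol.

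\emph{Step 3: centrality.} The constant factor has a constant symbol $[\sigma]$ in its outer slot. For the $u$-factor, $e_u([a]b)$ is an outer-slot evaluation at the constant $a$. In either case, for any section $x$ of $\mathcal Q_{-1}$, the commutator with $x$ computed by the diagonal trick of Proposition~\ref{con:structure} is evaluated on $[w][-1]$ times a constant symbol. By Lemma~\ref{con:constants} this is zero, so these factors are central.

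\emph{Main obstacle.} The step I expect to be hardest is making rigorous the identification in Step 2 of $h_b$ on the single-slot terms with $e_u([a]b)$ on the nonabelian target. That identification is exactly where the square universality of Lemma~\ref{con:square} must be combined with the slot-evaluation compatibility of Proposition~\ref{con:bracket}. I would first verify it on the universal pointed square and then extend it using naturality of $\tau$ (Proposition~\ref{sp:double}).
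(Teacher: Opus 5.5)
Your argument is correct in outline, and Step 1 agrees with the paper: both use the split evaluation decomposition of $\mathbf K_2^{\mathrm{MW}}$ on $\Gm^2$, together with $K_1^{\mathrm{MW}}(\C)=\C^*$.

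\textbf{Where your route differs.} You and the paper prove centrality differently. The paper notes that every non-mixed summand is $2$-divisible in the source. Indeed $K_2^{\mathrm{MW}}(\C)$ is $2$-divisible, and $[u][d^2]=2[u][d]$ because $\eta[-1]=0$ over $\C$. Hence $h_b$ sends these summands to squares. Squares are central in the class-two group $\mathcal Q_{-1}$ by Proposition~\ref{con:structure}, since $[x^2,y]=[x,y]^2=1$.

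You instead redo the diagonal trick factor by factor. You pair the $[-1]$ produced by the repeated retained coordinate with a constant slot $[c]$ and use $[c][-1]=0$. This is valid within the paper's framework.
\begin{itemize}
\item The paper's version packages all the repeated-slot bookkeeping into one group-theoretic fact. It also explains at once why only the mixed summand $m_{ij}[u][v]$, which is not $2$-divisible, needs Lemma~\ref{con:mixed}.
\item Your version makes explicit that the relevant feature is the presence of a constant slot. The cost is repeating the slot computation for each pair of factors.
\end{itemize}

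\textbf{Corrections needed.}
\begin{itemize}
\item $\mathcal Q_{-1}$ is not abelian. Proposition~\ref{con:structure} gives only nilpotence class at most two; it is $\mathcal Q_{-n}$ for $n\ge2$ that is strictly invariant abelian. So the reason in your first bullet for additivity of $b\mapsto h_b(\sigma_{ij})$ is false.
\item $\sigma_{ij}$ is in general a sum of pure constant symbols, not a single symbol ``$[\sigma]$'' sitting in a slot. Then $h_b(\sigma_{ij})$ is a product of several evaluations of $b$ at constant units. Each is additive in $b$ by the pointwise group law on $\mathcal Q_{-3}(\C)$, but additivity of the product requires these pieces to commute.
\end{itemize}
The fix is to decompose $\sigma_{ij}$ into pure symbols, as the paper does, and apply your commutator argument to each piece. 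Each piece then has central image, and the factors can be collected.

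\textbf{Your ``main obstacle''.} The step you flag is immediate. The section $[u][a]$ is the pullback of the universal symbol along $u\mapsto(u,a)$, and $h_b$ is a morphism of sheaves. So $h_b([u][a])$ is the corresponding pullback of the family representing $b$, which is exactly $e_u([a]b)$. The paper records naturality of $\tau$ in this proof for later use in Proposition~\ref{def:coordinates}, not for this identification.
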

\begin{proof}
The usual split evaluation decomposition of a strictly invariant abelian
sheaf on $\Gm^2$, applied to $\mathbf K_2^{\mathrm{MW}}$, gives
$K_2^{\mathrm{MW}}(\C)\oplus K_1^{\mathrm{MW}}(\C)^2\oplus\GW(\C)$
and the displayed expression. Naturality of full group specialization
gives $\tau_{\mathcal Q_{-1}} h_b(\chi_j)=h_b(\tau_{\mathbf K_2^{\mathrm{MW}}}\chi_j)$.

Every source summand except the last is $2$-divisible. Indeed
$K_2^{\mathrm{MW}}(\C)$ is $2$-divisible, and a constant $a=d^2$ has
$[a]=2[d]$: the unit-product formula and
$[d]^2=[d][-1]$ make its extra term zero because
$\eta[-1]=\langle-1\rangle-1=0$.
Their images under a homomorphism to ${\mathcal Q_{-1}}$ are squares, and hence central
by Proposition~\ref{con:structure}. A pure two-symbol evaluates
additively in $b$, by the definition of the group law on ${\mathcal Q_{-3}}$.
For the constant $\sigma_{ij}$ choose a finite decomposition into pure
symbols with constant slots; each has central image, so the additive
evaluation identities can be collected without a commutator error.
The last summand evaluates to the mixed torus family with
coefficient $m_{ij}b$, and this is additive by the contraction group law.
It commutes with the other factors by Lemma~\ref{con:mixed}.
This proves the local assertion.
\end{proof}

\begin{lemma}\label{def:mixed-matrix}
The branch orientations can be chosen so that $m_{ij}=\delta_{ij}$ in
\eqref{eq:chi-expansion}. Equivalently, before the orientation choice,
the mixed matrix is a signed identity matrix.
\end{lemma}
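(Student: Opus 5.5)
The plan is to compute the mixed coefficient $m_{ij}$ of $\tau_i\chi_j$ in \eqref{eq:chi-expansion} directly from the residue data that defined $\chi_j$ in Lemma~\ref{cl:universal-basis}. The single mechanism is that, for a section of the strictly invariant sheaf $\mathbf K_2^{\MW}$ on the punctured split node chart, the coefficient of $[u][v]$ after full specialization is an iterated residue. I would first prove this as a small auxiliary computation for an arbitrary $\omega\in K_2^{\MW}(\C\{u,v\}[1/(uv)])$. Since $\mathbf K_2^{\MW}$ is abelian and strict, the specialization $\tau_{(u,v)}$ of Proposition~\ref{sp:double} is computed in the fixed order $v$ then $u$. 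By the characterization \eqref{sp:rho-rule}, each step $\rho$ splits off a regular part and a pointed part. The pointed part in the $v$-step is the residue $\partial_v$ on $\Frac\C\{u\}$. Applying this twice shows that $m$ equals $\partial_u\partial_v\omega\in\GW(\C)=\Z$, up to the fixed coordinate-order sign.

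Next I would evaluate this iterated residue for $\omega=\chi_j$ at node $i$. Take $\partial_v$ first, that is, the residue along the branch $v=0$. On that branch, $u$ restricts to a uniformizer at the node preimage, which by the proof of Proposition~\ref{cl:hi} differs from $t-t_i^\pm$ by a unit with a nonzero complex closed value. The residue of $\chi_j$ along the branch is therefore its $M_{-1}(\Gamma^\circ,e)$-component, transported to the $(u,v)$-frame by a determinant factor $\langle c\rangle=1$. The second residue $\partial_u$ is then the one-point residue of that component at $t_i^\pm$. By construction in Lemma~\ref{cl:universal-basis}, $\chi_j$ has residue vector $(1,-1)$ at node $j$ and $0$ at every other node. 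This gives $m_{ij}=0$ for $i\ne j$ and $m_{jj}=\pm1$.

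The last step absorbs the sign. Reversing the ordering of branches at node $j$ replaces the residue vector $(1,-1)$ by $(-1,1)$, so it multiplies $\chi_j$, and hence $m_{jj}$, by $-1$. It leaves the $\chi_k$ for $k\ne j$ untouched. Choosing the orientation at each node independently therefore yields $m_{ij}=\delta_{ij}$. Before these choices the matrix is diagonal with entries $\pm1$, which is the signed identity matrix.

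The main obstacle is the identification $m=\partial_u\partial_v\omega$. Two points need care there. First, the regular parts produced by $\rho_v$ (terms like $[u]a$) must contribute nothing to the mixed slot; I would check this with \eqref{sp:rho-rule}, because a $v$-regular section has no retained $v$-dependence. Second, the framing convention $du\wedge dv$ must agree with the frame \eqref{cl:canonical-frame} used in $\delta$. Over $\C$, every factor $\langle c\rangle$ is $1$, so only the orientation of the ordered frame, and hence only a global sign, can intervene.
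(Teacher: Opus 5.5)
Your proposal is correct and follows essentially the same route as the paper. The mixed coefficient of $\tau_i\chi_j$ is an iterated framed residue retained by the specialization of Proposition~\ref{sp:double}. Constant and single-slot terms contribute nothing to it. The prescribed branch-residue vectors of $\chi_j$ ($(1,-1)$ at node $j$, $(0,0)$ elsewhere), with transport factors $\langle c\rangle=1$, then give $\pm\delta_{ij}$.

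The differences are cosmetic:
\begin{enumerate}
\item You derive the iterated-residue description directly from \eqref{sp:rho-rule}, in the specialization order $v$ then $u$.
\item You read the residue on a single branch.
\item You absorb the sign by replacing $\chi_j$ with $-\chi_j$.
\end{enumerate}
The paper instead tabulates the double residues of $[u][v]$ on both branches and matches them to the $(1,-1)$ pattern.
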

\begin{proof}
Use the split henselian coordinates $u,v$ at node $i$, and order the
two normalization branches as $u=0$ and $v=0$. For the ordered ambient
frame $du\wedge dv$, the double tame residues of the mixed symbol are
\[
 \begin{array}{c|c|c}
  \text{first branch}&\text{first residue of }[u][v]
                    &\text{residue at the node}\\ \hline
  u=0&[v]&1\\
  v=0&-[u]&-1.
 \end{array}
\]
Indeed $[u][v]=-[v][u]$ in these conventions because
$\langle-1\rangle=1$ over $\C$; a uniformizer has residue $1$.
Constant and single-slot terms in \eqref{eq:chi-expansion} have
zero iterated residue. The mixed coefficient can therefore be read
off by first taking the $u$-residue and then the $v$-residue.
On strict abelian coefficients, the full specialization of
Proposition~\ref{sp:double} retains this ordered framed residue:
its evaluation representative is the same pointed torus family.

By construction in Lemma~\ref{cl:universal-basis}, $\chi_j$ has
the branch-residue vector $(1,-1)$ at node $j$ and $(0,0)$ at
every other node. The displayed calculation therefore gives the
Kronecker delta for $m_{ij}$ after passing to ordinary Milnor
$K$-theory. This passage does not lose an integer coefficient:
\[
 \GW(\C)\xrightarrow{\ \operatorname{rank}\ }K_0^{\mathrm M}(\C)
       =\Z
\]
is an isomorphism. Transport from the normalization parameters
$t-t_i^\pm$ to $v,u$ and from $dx\wedge dy$ to $du\wedge dv$
contributes only the closed determinant factors $\langle c\rangle$
with $c\in\C^\times$, all equal to $1$. The possible ordering
sign has already been included in the two branch orientations.
Consequently the equality is integral in $\GW(\C)$ itself.
\end{proof}

\begin{proposition}\label{def:coordinates}
Write the normalized local torus coordinates of the extending torsor as
$(p_i,q_i,\ell_i)$ and those of its comparison as
$(p_i^0,q_i^0,\ell_i^0)$. With the orientation of
Lemma~\ref{def:mixed-matrix},
\begin{equation}\label{eq:local-defects}
 \begin{aligned}
 p_i&=p_i^0+A_i(b),& A_i(b)&=\sum_j[a_{ij}]b_j,\\
 q_i&=q_i^0+A'_i(b),& A'_i(b)&=\sum_j[d_{ij}]b_j,\\
 \ell_i&=\ell_i^0+b_i.
 \end{aligned}
\end{equation}
\end{proposition}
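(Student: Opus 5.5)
The plan is to compare the two global clutching sections through the finite set coordinates of Proposition~\ref{cl:finite} and then specialize each node by the full torus specialization $\tau_i$ of Proposition~\ref{sp:double}. Write the normalized clutching section of the extending torsor as $c=e_{F_*}(t)h_{\lambda_1}(\chi_1)\cdots h_{\lambda_r}(\chi_r)$, and that of the comparison torsor as $c^0=e_{F_*}(t)h_{\lambda^{\mathrm{ext}}_1}(\chi_1)\cdots h_{\lambda^{\mathrm{ext}}_r}(\chi_r)$. Both share the first coordinate $t$ by Lemma~\ref{cl:comparison}. By Definition~\ref{cl:defects}, $\lambda_j=\lambda_j^{\mathrm{ext}}+b_j$ in the abelian group $\mathcal Q_{-3}(\C)$.

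\textbf{Step 1: split off the defects.} Apply \eqref{cl:central-correction}-type reasoning, now with Lemma~\ref{def:local-additivity} in place of centrality. After specialization, the map $b\mapsto h_b(\tau_i\chi_j)$ is additive. Hence
\[
 \tau_i h_{\lambda_j}(\chi_j)=\tau_i h_{\lambda_j^{\mathrm{ext}}}(\chi_j)\,\tau_i h_{b_j}(\chi_j).
\]
Since $\tau_i$ is a group homomorphism, $\tau_i c$ equals $\tau_i c^0$ with the extra factors $\tau_ih_{b_j}(\chi_j)$ inserted in the ordered product.

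\textbf{Step 2: expand each extra factor by \eqref{eq:chi-expansion}.} This gives
\[
 h_{b_j}(\tau_i\chi_j)=h_{b_j}(\sigma_{ij})\,e_u([a_{ij}]b_j)\,e_v([d_{ij}]b_j)\,a_{m_{ij}b_j}.
\]
Here the single-slot evaluations are read via Remark~\ref{con:actions}: evaluating a pointed slot of $b_j$ at the constant $a_{ij}$ gives $[a_{ij}]b_j\in\mathcal Q_{-2}(\C)$. By Lemma~\ref{def:mixed-matrix} the mixed exponent is $m_{ij}b_j=\delta_{ij}b_j$. The constant factor $h_{b_j}(\sigma_{ij})$ is a plane-constant factor, and it is removed by the normalization $c(1,1)=1$ of Lemma~\ref{node:normal-form}. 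This requires checking that this normalization commutes with the insertion, which holds because these factors are central by Lemma~\ref{def:local-additivity}.

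\textbf{Step 3: collect the factors.} Move all inserted factors to their slots in the normal form $e_u(p)e_v(q)a_\ell$. The single-slot factors and constants are central in $\mathcal Q_{-1}$, being squares by the $2$-divisibility argument of Lemma~\ref{def:local-additivity} and Proposition~\ref{con:structure}. The mixed factors commute with the single-slot factors and with each other by Lemma~\ref{con:mixed}. Because $e_u$, $e_v$ and $\ell\mapsto a_\ell$ are homomorphisms into commuting pieces, the coordinates add. Uniqueness in Lemma~\ref{node:normal-form} then gives \eqref{eq:local-defects}: $p_i=p_i^0+\sum_j[a_{ij}]b_j$, $q_i=q_i^0+\sum_j[d_{ij}]b_j$, and $\ell_i=\ell_i^0+b_i$.

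\textbf{Main obstacle.} The delicate point is Step 3, specifically reordering the inserted factors past the non-central pieces of $\tau_ic^0$, namely its own mixed factors and the noncentral part of $e_u(p_i^0)$. No commutator error may be introduced into $\ell_i$. Single-slot and constant factors cause no trouble because they are central. For the mixed factor $a_{b_i}$ against $e_u(p^0_i)$ and $e_v(q^0_i)$, I would invoke Lemma~\ref{con:mixed} directly. I would check that the full normalized specialization really lands in $\mathcal Q_{-1}(\Gm^{(u)}\times\Gm^{(v)})$ with constant coefficients, as that lemma requires. If a residual commutator $[e_u(x),e_v(y)]$ did arise, it would contribute $\beta_{2,2}$-terms to $\ell_i$. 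I would then show that such terms vanish here because one argument is of the form $[a]b$ and squares of constant symbols are zero, using Proposition~\ref{con:bracket} and Lemma~\ref{con:constants}.
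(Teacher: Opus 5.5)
Your proposal is correct and follows essentially the same route as the paper. You specialize the ordered finite factorization by $\tau_i$ and split each $h_{\lambda_j}(\tau_i\chi_j)$ using the additivity of Lemma~\ref{def:local-additivity}. You move the defect factors past the later $\chi$-factors, using centrality of their constant and single-slot pieces and Lemma~\ref{con:mixed} for the mixed pieces. You absorb the central defect constant into the left normalization and read off the coordinates from $m_{ij}=\delta_{ij}$ and uniqueness in Lemma~\ref{node:normal-form}. The contingency in your last paragraph is unnecessary: the defect's single-slot pieces are central, so no residual commutator $[e_u(x),e_v(y)]$ ever enters $\ell_i$.
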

\begin{proof}
Apply $\tau_i$ to the ordered finite factorization of both global
sections. Lemma~\ref{def:local-additivity} replaces every
$h_{\lambda_j}(\tau_i\chi_j)$ by
$h_{\lambda_j^0}(\tau_i\chi_j)h_{b_j}(\tau_i\chi_j)$.
Move a defect factor only past later $\chi$-factors. Its constant and
single-slot pieces are central in ${\mathcal Q_{-1}}$; its mixed piece commutes with
their single-slot and mixed pieces by two repeated coordinates.
Consequently the resulting local section is exactly the comparison
section times the product of the defect factors; the constant of the
plane-equation factor has stayed in place.

Write these two local factors as $c_0E$ and $d_0D$, where $E,D$ are based
at $(1,1)$. The defect constant $d_0$ is central by
Lemma~\ref{def:local-additivity}. The constant of their product is
$c_0d_0$, and left normalization gives
$(c_0d_0)^{-1}c_0E d_0D=ED$.
The single-slot pieces of $D$ are central and the mixed piece commutes
with the based factors of $E$ by Lemma~\ref{con:mixed}. Thus their
normal-form coordinates add exactly. A $u$-only defect coefficient is
the evaluation $[a_{ij}]b_j\in {\mathcal Q_{-2}}(\C)$; the $v$-only one is
$[d_{ij}]b_j$. The mixed coefficient is
$\sum_jm_{ij}b_j=b_i$. This proves all of
\eqref{eq:local-defects}, including the constant-gauge bookkeeping.
\end{proof}

\begin{proposition}\label{def:system}
The defect vector satisfies
\begin{equation}\label{eq:closed-system}
 \begin{split}
 b_i=-[\Delta_i]\bigl(&\beta_{2,2}(p_i^0,A'_i(b))
                  +\beta_{2,2}(A_i(b),q_i^0)\\
                  &+\beta_{2,2}(A_i(b),A'_i(b))\bigr).
 \end{split}
\end{equation}
Fixed slot-order signs may be absorbed in its entries.
\end{proposition}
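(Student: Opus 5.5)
The plan is to apply the node equation of Proposition~\ref{node:axis-equation} twice at each node $i$: once to the extending torsor and once to its normal comparison torsor. Then subtract, using the local coordinate formulas of Proposition~\ref{def:coordinates}.

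Both torsors come from $\A^3_{\C}\setminus i(\A^1_{\C})$. The comparison torsor is the pushforward along $h_t$ of the pullback of the universal normal-meridian torsor, so it is itself a torsor on the complement. Hence Proposition~\ref{node:axis-equation} applies to each, with the same $\Delta_i=h_2-h_1$, since $\Delta_i$ depends only on the embedding and not on the torsor. This gives
\[
 \ell_i+D_{\Delta_i}(p_i,q_i)=0,\qquad
 \ell_i^0+D_{\Delta_i}(p_i^0,q_i^0)=0
\]
in the abelian group ${\mathcal Q_{-3}}(\C)$.

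Next substitute \eqref{eq:local-defects} into the first equation. Write $D_\Delta(p,q)=[\Delta]\beta_{2,2}(p,q)$, as fixed after \eqref{eq:DDelta}. Expand using the biadditivity of $\beta_{2,2}$ from Proposition~\ref{con:bracket}, which is legitimate because $p_i,q_i,A_i(b),A'_i(b)$ all lie in the strict abelian sheaf ${\mathcal Q_{-2}}(\C)$. The expansion yields four terms:
\[
 \beta_{2,2}(p_i^0,q_i^0),\quad
 \beta_{2,2}(p_i^0,A'_i(b)),\quad
 \beta_{2,2}(A_i(b),q_i^0),\quad
 \beta_{2,2}(A_i(b),A'_i(b)).
\]
The evaluation $[\Delta_i]$ is additive on ${\mathcal Q_{-4}}(\C)\to{\mathcal Q_{-3}}(\C)$ by Remark~\ref{con:actions}. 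Subtracting the comparison equation then cancels $\ell_i^0$ against the first bracket term, leaving $b_i$ plus $[\Delta_i]$ applied to the remaining three brackets. Moving these to the other side gives \eqref{eq:closed-system}.

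The main obstacle is bookkeeping of signs and normalizations rather than any new idea. Three points need checking.

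\begin{enumerate}
\item The normalized local coordinates $(p_i,q_i,\ell_i)$ to which the node equation applies must be exactly those appearing in Proposition~\ref{def:coordinates}. That is, the same full specialization $\tau_i$, the same left removal of the plane constant $c_0$, and the same ordering of the branches (which fixes which height is $h_1$ and which is $h_2$) must be used for both torsors.
\item Reordering slots when identifying $D_\Delta$ with $[\Delta]\beta_{2,2}$ introduces only the fixed coordinate-permutation signs of Proposition~\ref{con:bracket}. These are identical for the two torsors, so they are absorbed into the entries as the statement allows.
\item The branch-orientation choice of Lemma~\ref{def:mixed-matrix}, which makes the mixed coefficient exactly $b_i$, must be compatible with the ordering of the $u$- and $v$-branches used in Proposition~\ref{node:axis-equation}. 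If the two orderings disagree, the mismatch is again a global sign, which can be absorbed.
\end{enumerate}
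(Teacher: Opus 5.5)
Your proposal is correct and is essentially the paper's own proof. You apply Proposition~\ref{node:axis-equation} to both the extending torsor and its normal comparison torsor with the same $\Delta_i$, substitute \eqref{eq:local-defects}, expand using biadditivity of $D_{\Delta_i}=[\Delta_i]\beta_{2,2}$, and subtract in ${\mathcal Q_{-3}}(\C)$, exactly as the paper does; your three bookkeeping checks are consistent with the conventions already fixed in Proposition~\ref{def:coordinates} and Lemma~\ref{def:mixed-matrix}.
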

\begin{proof}
Both extending torsors satisfy
Proposition~\ref{node:axis-equation} with the same original height
difference $\Delta_i$. Subtract their two equations in the abelian
group ${\mathcal Q_{-3}}(\C)$, substitute \eqref{eq:local-defects}, and use the
biadditivity of $D_{\Delta_i}=[\Delta_i]\beta_{2,2}$.
This gives \eqref{eq:closed-system}, with all coefficients and defects
computed in the same global parameterization.
\end{proof}

The equations have a common feature: the equation
at node $i$ has an outer factor $[\Delta_i]$. It is therefore enough
to filter by the height differences alone. The other constants may
act on a filtered coefficient without being added to the generating
set of the filtration.

\begin{lemma}\label{def:filtration}
For the $r$ nonzero height differences $\Delta_1,\ldots,\Delta_r$
and $n\geq2$, put
\[
 J^m\mathcal Q_{-n}(\C)=
 \sum_{i_1,\ldots,i_m=1}^r
 [\Delta_{i_1}]\cdots[\Delta_{i_m}]
 \mathcal Q_{-(n+m)}(\C),
 \qquad J^0\mathcal Q_{-n}(\C)=\mathcal Q_{-n}(\C).
\]
This descending filtration satisfies $J^{r+1}\mathcal Q_{-n}(\C)=0$
for every $n\geq2$. Evaluation at any other constant preserves its
level, with the contraction shift:
\begin{equation}\label{eq:height-filtration-evaluation}
 [a]\bigl(J^m\mathcal Q_{-(n+1)}(\C)\bigr)
       \subseteq J^m\mathcal Q_{-n}(\C)
       \qquad(a\in\C^\times).
\end{equation}
Moreover, for $a,b\geq2$,
\[
 \beta_{a,b}\bigl(J^j\mathcal Q_{-a}(\C),
                  J^s\mathcal Q_{-b}(\C)\bigr)
 \subseteq J^{j+s}\mathcal Q_{-(a+b)}(\C).
\]
\end{lemma}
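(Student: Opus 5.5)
The plan is to derive all three assertions from one fact: on the strict abelian contraction tower, evaluating pointed slots at constants is an action of Milnor--Witt symbols. By Remark~\ref{con:actions}, for $n\ge2$ evaluating $m$ extra pointed slots of $\mathcal Q_{-(n+m)}$ at units $c_1,\ldots,c_m$ is the action of $[c_1]\cdots[c_m]$, and a zero symbol product acts as zero.

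The first step is to check the compatibilities I will use.
\begin{enumerate}
\item Evaluating the $m$ slots one at a time, or all at once, produces the same $\mathcal Q$-valued family. This is literally the same substitution in independent coordinates, so iterated single-symbol actions compose to the action of the product symbol.
\item Permuting slots changes the result only by the fixed coordinate-permutation sign, as in the proof of Proposition~\ref{con:bracket}.
\item Over $\C$ one has $\langle-1\rangle=1$, so graded anticommutativity of symbols becomes commutativity up to sign.
\end{enumerate}
I expect this bookkeeping to be the main obstacle: the assertion that slot-evaluations assemble into a genuine $K_*^{\MW}(\C)$-module structure on $\bigoplus_{n\ge2}\mathcal Q_{-n}(\C)$, rather than only an action of individual symbols. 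The approach is to reduce every identity to equalities of pointed torus families. On those families, evaluation of independent coordinates visibly commutes, and in the strictly invariant abelian targets the factorization through $\mathbf K^{\MW}$-symbols is available.

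Next comes the vanishing $J^{r+1}\mathcal Q_{-n}(\C)=0$. Take a generator $[\Delta_{i_1}]\cdots[\Delta_{i_{r+1}}]x$ with $x\in\mathcal Q_{-(n+r+1)}(\C)$. There are $r+1$ indices drawn from $\{1,\ldots,r\}$, so by pigeonhole some index repeats. Using the sign rule from (ii)--(iii), reorder the slots so that the two equal symbols are adjacent. The symbol product then contains $[\Delta]^2$, which vanishes by Lemma~\ref{con:constants}. The product symbol is therefore zero and acts as zero, and since $J^{r+1}$ is generated by such elements it vanishes.

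For \eqref{eq:height-filtration-evaluation}, take $a\in\C^\times$ and a generator $y=[\Delta_{i_1}]\cdots[\Delta_{i_m}]x$ of $J^m\mathcal Q_{-(n+1)}(\C)$, with $x\in\mathcal Q_{-(n+1+m)}(\C)$. Commuting the $a$-slot past the $\Delta$-slots gives $[a]y=\pm[\Delta_{i_1}]\cdots[\Delta_{i_m}]([a]x)$, where $[a]x\in\mathcal Q_{-(n+m)}(\C)$. This element lies in $J^m\mathcal Q_{-n}(\C)$. Additivity of the action then extends the inclusion to finite sums.

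Finally, for the bracket statement, use biadditivity of $\beta_{a,b}$ (Proposition~\ref{con:bracket}) to reduce to generators $x=[\Delta_I]x'$ and $y=[\Delta_K]y'$, with $|I|=j$ and $|K|=s$. Then apply the two pull-out formulas of Proposition~\ref{con:bracket}, iterated over products of constant symbols, to get $\beta_{a,b}(x,y)=\pm[\Delta_I][\Delta_K]\beta_{a+j,b+s}(x',y')$. This is visibly in $J^{j+s}\mathcal Q_{-(a+b)}(\C)$.
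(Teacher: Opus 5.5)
Your proposal is correct and follows essentially the same route as the paper. For $J^{r+1}=0$ it uses pigeonhole, graded anticommutativity and $[\Delta_i]^2=0$ acting through Remark~\ref{con:actions}; for \eqref{eq:height-filtration-evaluation} it moves the $[a]$-slot past the height slots (the paper records the sign as $(-1)^m$); and for the bracket inclusion it uses biadditivity together with the iterated pull-out formulas of Proposition~\ref{con:bracket}. The one assertion you leave implicit is that the filtration is descending, $J^{m+1}\subseteq J^m$, which the paper gets in one line by evaluating one height slot first, i.e.\ exactly your compatibility (i).
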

\begin{proof}
Evaluating one height slot first proves $J^{m+1}\subseteq J^m$.
For a generator in the left side of
\eqref{eq:height-filtration-evaluation}, the constant-symbol action
of Remark~\ref{con:actions} gives
\[
 [a][\Delta_{i_1}]\cdots[\Delta_{i_m}]x
   =(-1)^m[\Delta_{i_1}]\cdots[\Delta_{i_m}]([a]x).
\]
The equality uses graded anticommutativity of constant symbols in
$K_*^{\mathrm{MW}}(\C)=K_*^{\mathrm M}(\C)$ in positive degrees.
Its right side belongs to the stated level. The bracket property
follows by introducing the additional pointed coordinates before
forming the commutator and then applying the evaluation compatibility
and biadditivity of Proposition~\ref{con:bracket}.

Each $[\Delta_i]^2$ is zero by Lemma~\ref{con:constants}.
Every product of $r+1$ of the $r$ height symbols repeats one of
them. Graded anticommutativity brings the repetition together, and
the product is zero. Its action on the strict abelian contraction
tower is therefore zero by Remark~\ref{con:actions}.
For $r=0$, $J^1=0$ and there are no defect coordinates.
\end{proof}

\begin{theorem}\label{def:vanishing}
Every corrected defect $b_j$ of a complement torsor is zero.
For a projection with $r$ nodes, the fixed system
\eqref{eq:closed-system} forces this after at most $r+1$
successive substitutions.
\end{theorem}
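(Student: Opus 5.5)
The plan is a downward induction on the filtration of Lemma~\ref{def:filtration}: show that the defect vector $b=(b_1,\ldots,b_r)$ lies in $J^m\mathcal Q_{-3}(\C)^r$ for every $m$, and conclude with $J^{r+1}=0$. The base case $m=0$ is trivial. The inductive step comes entirely from the closed system \eqref{eq:closed-system} of Proposition~\ref{def:system}. This system is fixed, since the comparison coordinates $p_i^0,q_i^0$ and the constants $a_{ij},d_{ij},\Delta_i$ depend only on the projection and on $t$, and not on the defects.

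Assume every $b_j\in J^m\mathcal Q_{-3}(\C)$. By \eqref{eq:height-filtration-evaluation}, each $[a_{ij}]b_j$ and $[d_{ij}]b_j$ lies in $J^m\mathcal Q_{-2}(\C)$, so $A_i(b),A'_i(b)\in J^m\mathcal Q_{-2}(\C)$, since the filtration levels are subgroups of the abelian groups $\mathcal Q_{-2}(\C)$. The bracket estimate of Lemma~\ref{def:filtration} then places $\beta_{2,2}(p_i^0,A'_i(b))$ and $\beta_{2,2}(A_i(b),q_i^0)$ in $J^m\mathcal Q_{-4}(\C)$, and $\beta_{2,2}(A_i(b),A'_i(b))$ in $J^{2m}\mathcal Q_{-4}(\C)\subseteq J^m\mathcal Q_{-4}(\C)$. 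Evaluating the outer slot at $\Delta_i$ gives $b_i\in[\Delta_i]J^m\mathcal Q_{-4}(\C)$. Writing a generator of $J^m\mathcal Q_{-4}(\C)$ as $[\Delta_{i_1}]\cdots[\Delta_{i_m}]x$ with $x\in\mathcal Q_{-(4+m)}(\C)$, the product $[\Delta_i][\Delta_{i_1}]\cdots[\Delta_{i_m}]x$ is a generator of $J^{m+1}\mathcal Q_{-3}(\C)$. I would use Remark~\ref{con:actions} together with graded anticommutativity to move $[\Delta_i]$ to the front, at the cost of a sign, which is harmless in a subgroup. Hence $b\in J^{m+1}$.

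After $r+1$ steps we have $b_j\in J^{r+1}\mathcal Q_{-3}(\C)=0$. Each step is one substitution of the current bound into the right side of \eqref{eq:closed-system}, which gives the counted bound of $r+1$ substitutions. For $r=0$ there are no defects and nothing needs to be proved.

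The step I expect to need the most care is the compatibility of the outer operation $-[\Delta_i](\cdot)$, and of the fixed slot-order signs absorbed into \eqref{eq:closed-system}, with the filtration. Concretely, one must check that evaluating a specific slot of an element of $J^m\mathcal Q_{-4}$ at $\Delta_i$ equals the symbol action of Remark~\ref{con:actions} up to sign, and that the resulting element is a generator of level $m+1$ rather than merely lying in $J^m$. I would handle this exactly as in the proof of \eqref{eq:height-filtration-evaluation}: introduce all height slots as independent pointed torus coordinates before evaluating, and use the evaluation compatibility of Proposition~\ref{con:bracket} for the bracket terms.
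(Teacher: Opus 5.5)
Your proposal is correct and follows the paper's proof essentially step for step. You assume $b\in J^m\mathcal Q_{-3}(\C)$, use \eqref{eq:height-filtration-evaluation} to place $A_i(b),A'_i(b)$ in $J^m\mathcal Q_{-2}(\C)$, and apply the bracket estimate and the outer $[\Delta_i]$ to reach level $m+1$, with the quadratic term in $J^{2m+1}\subseteq J^{m+1}$; then $J^{r+1}=0$ from Lemma~\ref{def:filtration} finishes after $r+1$ steps. The slot-order and sign bookkeeping you flag is exactly what Lemma~\ref{def:filtration} and Remark~\ref{con:actions} supply, so nothing further is needed.
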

\begin{proof}
Suppose every $b_j$ belongs to $J^m\mathcal Q_{-3}(\C)$.
Equation~\eqref{eq:height-filtration-evaluation} gives
\[
 A_i(b),A'_i(b)\in J^m\mathcal Q_{-2}(\C),
\]
regardless of the values of $a_{ij}$ and $d_{ij}$. The first
two bracket terms on the right of \eqref{eq:closed-system}
are in $J^m\mathcal Q_{-4}(\C)$, and the quadratic term is in
$J^{2m}\mathcal Q_{-4}(\C)$. Multiplication by its outer
$[\Delta_i]$ places the linear terms in
$J^{m+1}\mathcal Q_{-3}(\C)$ and the quadratic term in
\[
 J^{2m+1}\mathcal Q_{-3}(\C)
       \subseteq J^{m+1}\mathcal Q_{-3}(\C).
\]
Hence the whole vector moves from level $m$ to level $m+1$.
Start at $m=0$ and repeat $r+1$ times. Lemma~\ref{def:filtration}
then makes every entry zero. The case $r=0$ has no entries.
\end{proof}

\subsection{The case of one node}
\label{subsec:one-node-calculation}

The one-node case shows explicitly how both the arbitrary height
difference and the comparison torsor enter the argument.
Let the two normalization points be $t^+,t^-$, with distinct
embedded heights $h_1,h_2$, and put $\Delta=h_2-h_1$.
Choose the single section $\chi\in\mathbf K_2^{\mathrm{MW}}(D)$
with branch residues $(1,-1)$ and value zero at $d_*$. Its full
specialization is
\[
 \tau\chi=\sigma+[u][a]+[v][d]+[u][v],
 \qquad \sigma\in K_2^{\mathrm{MW}}(\C),\quad a,d\in\C^\times.
\]
Here $[u][a]$ and $[v][d]$ spell out the single-slot notation
of \eqref{eq:chi-expansion}. An extending torsor has finite
coordinates $(t,\lambda)$ and the normal comparison has
coordinates $(t,\lambda^0(t))$. Set
$b=\lambda-\lambda^0(t)\in\mathcal Q_{-3}(\C)$.
If the based local coordinates of the comparison are
$(p^0,q^0,\ell^0)$, Proposition~\ref{def:coordinates} gives
exactly
\[
 p=p^0+[a]b,\qquad q=q^0+[d]b,\qquad
 \ell=\ell^0+b.
\]
Both torsors extend, so their node equations are
\[
 \ell+[\Delta]\beta_{2,2}(p,q)=0,\qquad
 \ell^0+[\Delta]\beta_{2,2}(p^0,q^0)=0.
\]
Subtracting these equalities yields the one-node equation
\begin{equation}\label{eq:one-node-defect}
 b=-[\Delta]\bigl(
   \beta_{2,2}(p^0,[d]b)
  +\beta_{2,2}([a]b,q^0)
  +\beta_{2,2}([a]b,[d]b)\bigr).
\end{equation}
The first substitution writes $b=[\Delta]c$, where the
coefficient in $\mathcal Q_{-4}(\C)$ is
\[
 c=-\beta_{2,2}(p^0,[d]b)
    -\beta_{2,2}([a]b,q^0)
    -\beta_{2,2}([a]b,[d]b).
\]
Since $[a]b=-[\Delta]([a]c)$ and
$[d]b=-[\Delta]([d]c)$, substitution once more into
\eqref{eq:one-node-defect} makes each linear term a value of a
product containing $[\Delta]^2$ and the quadratic term a value
of a product containing $[\Delta]^3$. More explicitly,
evaluation compatibility of the bracket gives the three equalities
\[
\begin{aligned}
 [\Delta]\beta_{2,2}(p^0,[d][\Delta]c)&=0,\\
 [\Delta]\beta_{2,2}([a][\Delta]c,q^0)&=0,\\
 [\Delta]\beta_{2,2}([a][\Delta]c,[d][\Delta]c)&=0.
\end{aligned}
\]
The coordinate-permutation signs in pulling out the constant
symbols do not affect these zero values. Thus the second
substitution gives $b=0$. The original torsor and its
normal comparison have the same finite coordinates and therefore
agree.

\begin{theorem}\label{thm:fundamental}
Let $i:\A^1_{\C}\hookrightarrow\A^3_{\C}$ be a closed embedding.
For any choice of two scheme-theoretic ideal generators $F_1,F_2$, the
normal map $(F_1,F_2)$ induces an isomorphism of Nisnevich sheaves
\[
 \piA{1}\bigl(\A^3_{\C}\setminus i(\A^1_{\C})\bigr)
 \xrightarrow{\ \sim\ }\mathbf K_2^{\mathrm{MW}}.
\]
Equivalently, for every strongly $\A^1$-invariant sheaf of groups
$\mathcal Q$, pullback along this normal map induces a bijection of
pointed Nisnevich $\mathcal Q$-torsor classes from the punctured affine
plane to the given complement.
\end{theorem}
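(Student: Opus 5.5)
The plan is to prove the equivalent torsor formulation and deduce the sheaf isomorphism from it. The complement $X=\A^3_{\C}\setminus i(\A^1_{\C})$ is $\A^1$-connected by Lemma~\ref{geom:connected}, and so is $\A^2_{\C}\setminus\{0\}$. By the correspondence recalled in Section~\ref{sec:reminders}, pointed morphisms into $B\mathcal Q$ then correspond to homomorphisms out of the fundamental sheaf. For $\mathcal Q=\pi_1^{\A^1}(X)$ and for $\mathcal Q=\mathbf K_2^{\MW}$, both of which are strongly invariant by Morel, a natural bijection
\[
 \Hom(\mathbf K_2^{\MW},\mathcal Q)\to\Hom(\pi_1^{\A^1}(X),\mathcal Q)
\]
for all such $\mathcal Q$ gives the isomorphism by Yoneda. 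So the main task is to show that $\phi^*$ is a bijection on pointed $\mathcal Q$-torsor classes. The left side is $\mathcal Q_{-2}(\C)$ by \eqref{eq:square-universality-reminder}, and we write $t\mapsto h_t$ for this identification. A change of generators $F_1,F_2$ changes $\phi$ by a matrix with constant unit determinant, whose factor $\langle d\rangle$ is $1$ over $\C$. It therefore suffices to treat one choice, although the argument below in fact works for any choice.

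\emph{Injectivity.} This comes from the meridian splitting in Proposition~\ref{geom:normal-map}. The normalized $\mu$ satisfies $\phi_*\mu_*=\id$ on $\pi_1^{\A^1}(\A^2\setminus0)$. Hence $\mu^*\phi^*=\id$ on pointed torsor classes, and $\phi^*$ is injective. Equivalently, Lemma~\ref{cl:comparison} shows that the first finite coordinate of $\phi^*h_t$ is exactly $t$, so distinct $t$ give distinct pullbacks.

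\emph{Surjectivity.} Let $\mathcal P$ be a pointed $\mathcal Q$-torsor on $X$. By Proposition~\ref{geom:height-model}, restriction to $X^\circ$ is fully faithful, so it suffices to match restrictions on $X^\circ$, equivalently on $W$. Theorem~\ref{inf:height-trivial} makes $\mathcal P|_W$ trivial on the height chart $B\times\Gm$. Proposition~\ref{inf:fibers} and Proposition~\ref{cl:finite} then assign to it a unique tuple $(t,\lambda_1,\dots,\lambda_r)$. The comparison torsor $\phi^*h_t$ also comes from $X$, so it has tuple $(t,\lambda^{\mathrm{ext}}_j(t))$ by Lemma~\ref{cl:comparison}. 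The defects $b_j$ satisfy the node system \eqref{eq:closed-system}, and Theorem~\ref{def:vanishing} gives $b_j=0$. Uniqueness of coordinates then gives $\mathcal P|_{X^\circ}\cong\phi^*h_t|_{X^\circ}$ as pointed torsors, and full faithfulness extends this isomorphism to $X$. Since the basepoint lies in $D$, the pointings are respected.

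\emph{Main obstacle.} The delicate step is the bookkeeping between unpointed classes, pointed classes and actual homomorphisms. Proposition~\ref{inf:fibers} identifies classes only modulo the height-chart gauge $E_{\mathcal Q}(B)$, and the pointing at $(d_*,1)$ must be carried consistently through Propositions~\ref{cl:finite} and~\ref{geom:height-model}. I would check this by working throughout with torsors that carry a chosen trivialization of the fiber at the rational basepoint. With that convention, the bijection of pointed classes is literally $\Hom(\mathbf K_2^{\MW},\mathcal Q)\cong\Hom(\pi_1^{\A^1}(X),\mathcal Q)$. Naturality in $\mathcal Q$ holds because pullback along $\phi$ commutes with pushforward along homomorphisms. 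Applying this with $\mathcal Q=\pi_1^{\A^1}(X)$ and $\mathcal Q=\mathbf K_2^{\MW}$ produces mutually inverse maps, which proves that $\phi_*$ is an isomorphism of Nisnevich sheaves.
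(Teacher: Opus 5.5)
Your proposal is correct and is essentially the paper's proof. The surjectivity step is exactly the paper's: height-chart triviality, the finite coordinates of Proposition~\ref{cl:finite}, comparison with $\phi^*h_t$ through Lemma~\ref{cl:comparison}, vanishing of the defects, and full faithfulness from Proposition~\ref{geom:height-model}. Your Yoneda argument, with injectivity coming from the meridian splitting, is equivalent to the paper's conclusion that the identity of $\pi_1^{\A^1}$ factors through $\phi_*$, which is already a split epimorphism. (Your side remark reducing to a single generating pair is unnecessary, since the argument applies to any pair. As stated it is also imprecise: the matrix relating two pairs is a priori invertible only modulo the curve ideal.)
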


\begin{proof}
Every pointed $\mathcal Q$-torsor on
$\A^3_{\C}\setminus i(\A^1_{\C})$ is trivial on the height-open
chart by Theorem~\ref{inf:height-trivial}. Give its height model the
unique finite coordinates $(t,\lambda_1,\ldots,\lambda_r)$ of
Proposition~\ref{cl:finite}. The pullback of the torsor on
$\A^2_{\C}\setminus\{0\}$ represented by $t$ has the same first
coordinate, by Lemma~\ref{cl:comparison}. Theorem~\ref{def:vanishing}
makes all corrected defects zero. The two pointed torsors on the height model
therefore agree. Proposition~\ref{geom:height-model} first identifies
them on the complement with the vertical node axes removed, and its
fully faithful restriction then identifies them on the complement.

Apply this assertion to
$\mathcal Q=\piA{1}(\A^3_{\C}\setminus i(\A^1_{\C}))$ and its
universal pointed torsor. The identity homomorphism of this sheaf factors
as the homomorphism induced by $(F_1,F_2)$ followed by a homomorphism
$\mathbf K_2^{\mathrm{MW}}\to
\piA{1}(\A^3_{\C}\setminus i(\A^1_{\C}))$.
Hence the kernel of the first homomorphism is trivial. It is already a
split epimorphism by Proposition~\ref{geom:normal-map}, so it is an
isomorphism. If one forgets a pointing at an intermediate stage, the
identity is instead conjugate to this factorization; conjugation has
trivial kernel and gives the same conclusion. The assertion for pointed
torsors follows by applying
$\Hom(-,\mathcal Q)$.
\end{proof}

\part{Concrete rectifications}
\section{Coordinates and concrete rectifications}\label{sec:rectifications}

We construct polynomial rectifiers for several families, together with
inverses defined on the whole ambient space, including the exceptional
fibres of the modifications.

\subsection{Parameter lifts and graph subtraction}

Every closed embedding admits a parameter lift
$H\in\mathbb C[x,y,z]$ with $H(i(t))=t$, since its map on coordinate
rings is surjective. We seek such a lift with a companion $W$ for which
$(x,H,W)$ is a polynomial automorphism. Changing a lift by an element
of the curve ideal is the strategy of \cite[Proposition~3]{Craighero}.
The following graph subtraction works over every commutative coefficient
ring.

\begin{lemma}\label{lem:graph-subtraction}
Suppose $i(t)=(b(t),c(t),d(t))$ and
\[
 (x,H,W)^{-1}=(x,R,S),\qquad H(i(t))=t.
\]
Put $w(t)=W(i(t))$.  Then
\begin{equation}\label{eq:graph-rectifier}
 \Phi(x,y,z)=\bigl(H,\ x-b(H),\ W-w(H)\bigr)
\end{equation}
is a polynomial automorphism satisfying $\Phi(i(t))=(t,0,0)$.
On input $(T,X,Z)$ its inverse is
\begin{equation}\label{eq:graph-inverse}
 \bigl(X+b(T),\ R(X+b(T),T,Z+w(T)),\
 S(X+b(T),T,Z+w(T))\bigr).
\end{equation}
\end{lemma}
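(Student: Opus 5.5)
The plan is to realize $\Phi$ as a composite of three polynomial automorphisms, each with an explicit polynomial inverse. This gives polynomiality of $\Phi$ and of its inverse at once, over any commutative coefficient ring. Afterwards one checks the two identities $\Phi(i(t))=(t,0,0)$ and \eqref{eq:graph-inverse} by direct substitution.

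First, $\Psi=(x,H,W)$ is an automorphism with inverse $(x,R,S)$ by hypothesis. Second, the coordinate permutation $(x,y,z)\mapsto(y,x,z)$ turns it into $\Psi'=(H,x,W)$. Third, apply the triangular map
\[
 \Theta(T,X,Z)=(T,\ X-b(T),\ Z-w(T)),
\]
whose inverse is $(T,X+b(T),Z+w(T))$; this map is polynomial because $b,w$ are polynomials in one variable. Then $\Phi=\Theta\circ\Psi'$, since $\Theta(H,x,W)=(H,x-b(H),W-w(H))$, and so $\Phi$ is a polynomial automorphism.

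Next, compute the inverse by composing in reverse order. Starting from $(T,X,Z)$, $\Theta^{-1}$ gives $(T,X+b(T),Z+w(T))$. Swapping the first two coordinates gives $(X+b(T),T,Z+w(T))$. Finally $\Psi^{-1}=(x,R,S)$ evaluated there yields exactly \eqref{eq:graph-inverse}. The only care needed is the order of arguments of $R$ and $S$, which take the inputs in the order $(x,H,W)$. This matches the displayed formula.

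For the rectification identity, evaluate on the curve. We have $H(i(t))=t$ by assumption, $x(i(t))=b(t)$, and $W(i(t))=w(t)$ by definition. Hence $\Phi(i(t))=(t,\ b(t)-b(t),\ w(t)-w(t))=(t,0,0)$. No step is a genuine obstacle. The one thing to watch is that nothing uses a field or division: all maps are compositions of polynomial maps with polynomial inverses over the coefficient ring, so the statement holds over every commutative ring, as claimed before the lemma.
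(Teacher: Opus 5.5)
Your proposal is correct and follows the paper's argument: the paper's proof is the same factorization, namely apply $(x,H,W)$, swap the first two coordinates, then subtract $b(T)$ and $w(T)$, and read off the inverse by reversing these steps. Your explicit substitution checks of the argument order for $R,S$ and of $\Phi(i(t))=(t,0,0)$ simply spell out what the paper leaves implicit.
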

\begin{proof}
After applying $(x,H,W)$, permute the first two coordinates and
subtract the two polynomials $b(T)$ and $w(T)$.  These operations have
the inverse displayed in \eqref{eq:graph-inverse}.
\end{proof}

\subsection{Low-degree coordinates and simple plane projections}

\begin{proposition}\label{prop:quadratic-component}
If an ambient coordinate restricts to a polynomial of degree at most two
on a closed embedding $i:\mathbb A^1_{\mathbb C}\hookrightarrow
\mathbb A^3_{\mathbb C}$, then $i$ is rectifiable.
\end{proposition}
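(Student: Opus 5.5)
Let $x$ denote the ambient coordinate in question (after an affine change of coordinates we may take it to be the first coordinate), so $b(t)=x(i(t))$ has degree at most two. The plan is to produce a parameter lift $H$ and a companion $W$ such that $(x,H,W)$ is an automorphism, and then conclude by the graph subtraction of Lemma~\ref{lem:graph-subtraction}.

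\emph{Degree at most one.} If $b$ has degree one, then $t$ is an affine function of $x$, and $H=(x-b(0))/b'$ is a parameter lift. The triangular map $(x,y,z)\mapsto(x,\,y-c(H),\,z-d(H))$ already sends $i(t)$ to $(b(t),0,0)$. An affine change in the first coordinate then gives $(t,0,0)$. If $b$ is constant, the curve lies in the plane $x=b(0)$. By Abhyankar--Moh--Suzuki \cite{AM,Suzuki}, a plane automorphism in $(y,z)$ rectifies the curve inside that plane, and we extend it trivially in $x$.

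\emph{Degree exactly two.} After an affine substitution in $t$ and in $x$, we may assume $b(t)=t^2$. Then the remaining task is to find, among $y,z$ and their combinations with polynomials in $x$, a coordinate restricting to an odd-degree polynomial in $t$ that can be reduced by triangular moves.

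Write $c(t)=c_0(t^2)+t\,c_1(t^2)$ and $d(t)=d_0(t^2)+t\,d_1(t^2)$. The shear $y\mapsto y-c_0(x)$, $z\mapsto z-d_0(x)$ reduces to $c=t\,c_1(t^2)$ and $d=t\,d_1(t^2)$. Closedness of the embedding, i.e.\ $\C[t^2,tc_1,td_1]=\C[t]$, forces $t$ to lie in the algebra generated by $t^2,\,tc_1(t^2),\,td_1(t^2)$. The odd part of that algebra is the $\C[t^2]$-module generated by $tc_1,td_1$ together with products of an odd number of them. Reading this modulo the ideal $(c_1,d_1)$ in $\C[s]$, with $s=t^2$, gives $\gcd(c_1(s),d_1(s))=1$.

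We now run the Euclidean algorithm on $(c_1,d_1)$ in $\C[s]$ using shears $y\mapsto y-q(x)z$ and $z\mapsto z-q(x)y$. These act on the pair $(c_1,d_1)$ by elementary operations over $\C[s]$, since $x$ restricts to $s$. Because the gcd is $1$, we reach $c_1=1$, $d_1=0$, so the curve becomes $(t^2,t,0)$. Finally $x\mapsto x-y^2$ gives $(0,t,0)$, and a permutation of coordinates gives $(t,0,0)$.

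The main obstacle is the step deducing $\gcd(c_1,d_1)=1$ from closedness. One has to check carefully that monomials of odd total degree in $tc_1,td_1$ only contribute multiples of $(c_1,d_1)$ to the odd part. They do: each such monomial has the form $t\cdot s^k c_1^a d_1^b$ with $a+b$ odd, so $a+b\ge1$.
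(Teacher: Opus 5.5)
Your proof is correct and follows the paper's argument. It has the same case split, the normalization to $b(t)=t^2$, the removal of even parts, coprimality of the odd coefficients, and the final shear $x\mapsto x-y^2$. Your derivation of $\gcd(c_1,d_1)=1$ from surjectivity of $\C[x,y,z]\to\C[t]$ is exactly the argument of Proposition~\ref{prop:quadratic-projection-all-fields}, in place of the main-text appeal to injectivity at $\pm t$ and immersivity at $0$. Your chain of elementary shears is a factorization of the paper's B\'ezout matrix in $\operatorname{SL}_2(\C[x])$.

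Two small fixes are needed. First, the Euclidean algorithm ends at a nonzero constant, possibly in the second slot, so you must finish with a scaling and possibly a swap. Second, an ambient coordinate need not be linear, so move it to the first position by its own polynomial automorphism rather than by an affine change.
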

\begin{proof}
Use the coordinate as the first ambient coordinate.  A constant
restriction places the curve in a coordinate plane, so the
Abhyankar--Moh--Suzuki theorem applies \cite{AM,Suzuki}.  A linear
restriction gives a graph.

For a quadratic restriction, translate and scale the source and the
first coordinate so that the curve has first component $t^2$.
Subtracting the even parts of the other components gives
\[
 i(t)=\bigl(t^2,tA(t^2),tB(t^2)\bigr).
\]
The polynomials $A,B\in\mathbb C[x]$ are coprime.  Indeed, a common
nonzero root would identify two distinct parameters $t$ and $-t$;
a common root at zero would make the differential of the embedding
vanish at zero.  Choose $U,V\in\mathbb C[x]$ with $UA+VB=1$.
The matrix
\[
 \begin{pmatrix}U&V\\-B&A\end{pmatrix}
 \in\operatorname{SL}_2(\mathbb C[x])
\]
defines an ambient polynomial automorphism and takes the curve to
$(t^2,t,0)$.  A permutation and the shear $x\mapsto x-y^2$ finish.
\end{proof}

For a parametrization $(f,g,h)$, suppose that
\[
 A=\mathbb C[f(t),g(t)]\subset\mathbb C[t],\qquad
 t-\lambda h(t)\in A\quad(\lambda\ne0),
\]
and choose $P$ with $P(f,g)=t-\lambda h$. The ambient coordinate
$P(x,y)+\lambda z$ restricts to $t$, so the parametrization is
rectifiable.

\begin{corollary}\label{cor:one-defect}
Every closed lift to $\mathbb A^3_{\mathbb C}$ of a finite birational
plane parametrization whose image has exactly one ordinary node, or
exactly one ordinary cusp, and no other singularities, is rectifiable.
\end{corollary}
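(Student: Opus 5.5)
The plan is to apply the criterion stated just before the corollary. Write the lift as $(f,g,h)$ with $A=\C[f(t),g(t)]\subset\C[t]$. It suffices to find $\lambda\ne0$ with $t-\lambda h(t)\in A$, since then $P(x,y)+\lambda z$ restricts to $t$ and the parametrization is rectifiable. So the whole task is to describe $A$ explicitly as a subring of $\C[t]$ cut out by finitely many linear conditions at the preimages of the singular point, and then to solve one linear equation for $\lambda$.

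\emph{Description of $A$.} The plane parametrization is finite and birational onto its closed image $\Gamma$, and $\C[t]$ is finite over $A$, so $A=\mathcal O(\Gamma)$ and $\C[t]$ is its normalization. Membership of $p\in\C[t]$ in $A$ can be tested locally at the singular point, since $A$ and $\C[t]$ agree away from it. I would argue as in the proof of Lemma~\ref{geom:linear-height}. If the finite quotient $\C[t]/A$ is supported at one point, then $p\in A$ exactly when its germ lies in the henselian (or completed) local ring of $\Gamma$ there.
\begin{enumerate}
\item For an ordinary node with preimages $t_1\ne t_2$, the local ring is $\C\{u,v\}/(uv)\subset\C\{u\}\oplus\C\{v\}$, which consists of the pairs with equal closed values. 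This gives $A=\{p:\ p(t_1)=p(t_2)\}$.
\item For an ordinary cusp over $t_0$, the completed local ring is $\C[[s^2,s^3]]$ in a uniformizer $s$ of the branch. We may take $s=t-t_0$ up to a unit, and the condition ``no linear term'' is independent of that unit. This gives $A=\{p:\ p'(t_0)=0\}$.
\end{enumerate}
In both cases the $\delta$-invariant is $1$, so $A$ has codimension one in $\C[t]$. The displayed subring also has codimension one and contains $A$, hence equals it. This codimension count is a convenient check on the local descriptions.

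\emph{Solving for $\lambda$.} In the nodal case, $t-\lambda h\in A$ becomes $t_1-t_2=\lambda\bigl(h(t_1)-h(t_2)\bigr)$. Closedness, indeed injectivity, of the lift forces $h(t_1)\ne h(t_2)$, because $f$ and $g$ already agree at $t_1,t_2$. Hence $\lambda=(t_1-t_2)/(h(t_1)-h(t_2))$ is defined and nonzero. In the cuspidal case, the condition is $1-\lambda h'(t_0)=0$. At a cusp we have $f'(t_0)=g'(t_0)=0$, and the lift is a closed embedding, hence immersive, so $h'(t_0)\ne0$ and $\lambda=1/h'(t_0)$ works.

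\emph{Main obstacle.} The only nonformal step is the identification of $A$ with the displayed subring, that is, passing from the local analytic normal forms of the node and the cusp to a global polynomial statement. I would handle it as in Lemma~\ref{geom:linear-height}: $A\to\C[t]$ is finite, so $A$ is the intersection over points of $\Gamma$ of the local rings inside $\C[t]$. Henselian or completed local rings detect membership because $A\to\C[t]$ is an isomorphism away from the singular point and $\C[t]/A$ is a finite-length module there. The ``ordinary'' hypothesis is what pins the local rings down to $\C\{u,v\}/(uv)$ and $\C\{s^2,s^3\}$.
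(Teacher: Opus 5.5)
Your proposal is correct and follows essentially the same route as the paper. Both identify $A=\C[f(t),g(t)]$ as the codimension-one subring cut out by $p(t_1)=p(t_2)$ in the nodal case or $p'(t_0)=0$ in the cuspidal case, and then take $\lambda=(t_1-t_2)/(h(t_1)-h(t_2))$ or $\lambda=1/h'(t_0)$, using injectivity or immersivity of the lift; your local-ring argument for the description of $A$ just spells out what the paper asserts in one line.
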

\begin{proof}
In either case $\dim_{\mathbb C}(\mathbb C[t]/A)=1$.
For a node with preimages $r\ne s$, membership in $A$ is the condition
that the values at $r,s$ agree.  A closed lift has $h(r)\ne h(s)$, so
take $\lambda=(r-s)/(h(r)-h(s))$.
For a cusp with preimage $r$, membership is the condition that the
derivative at $r$ vanish.  Embeddedness gives $h'(r)\ne0$, and
$\lambda=1/h'(r)$ works.  In both cases $t-\lambda h\in A$.
\end{proof}

\subsection{Finite inverses and transfer through a modification}

A polynomial can be invertible modulo $m(x)$ without being invertible
over $k[x]$. Adding a second transverse variable converts such a finite
inverse into an ambient automorphism \cite[Lemma~1.1]{EVM}.

\begin{lemma}\label{lem:finite-inverse}
Let $k$ be an integral domain, let $0\ne m\in k[x]$, and let $P(x,v),G(x,T)$
satisfy
\[
 P(x,G(x,T))\equiv T\pmod m,\qquad
 G(x,P(x,v))\equiv v\pmod m.
\]
Set
\[
 F=P(x,v)+m(x)w,\qquad
 J=\frac{v-G(x,F)}{m(x)}.
\]
Then $J$ is polynomial and $(x,v,w)\mapsto(x,F,J)$ is an automorphism.
Its inverse, on input $(x,T,Z)$, is
\begin{equation}\label{eq:finite-inverse}
 v=G(x,T)+m(x)Z,\qquad
 w=\frac{T-P(x,G(x,T)+m(x)Z)}{m(x)}.
\end{equation}
Both divisions are exact.
\end{lemma}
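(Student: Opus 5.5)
We verify directly that the two exact divisions hold and that the two displayed maps are mutually inverse, working in $k[x,v,w]$ and $k[x,T,Z]$. Since $k$ is a domain and $m\neq0$, $m(x)$ is a nonzerodivisor in any polynomial ring over $k[x]$. Hence a congruence modulo $m$ means divisibility by $m$ with a \emph{unique} polynomial quotient, and cancellation of $m$ is legitimate.

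\emph{Exactness of the divisions.} Polynomial congruences modulo $m$ are preserved under substitution of polynomials for the variables. For $J$, substitute $F=P(x,v)+m w\equiv P(x,v)$ into $G(x,\cdot)$. This gives $G(x,F)\equiv G(x,P(x,v))\equiv v \pmod m$, so $m\mid v-G(x,F)$. For the inverse formula, $G(x,T)+mZ\equiv G(x,T)$, so $P(x,G(x,T)+mZ)\equiv P(x,G(x,T))\equiv T\pmod m$, and the second quotient is polynomial.

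\emph{Composition in one order.} Start from $(x,v,w)$, map to $(x,F,J)$, and apply \eqref{eq:finite-inverse} with $T=F$ and $Z=J$. The first output is $G(x,F)+mJ=G(x,F)+(v-G(x,F))=v$. The second output is then $(F-P(x,v))/m=w$. So the composite is the identity.

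\emph{Composition in the other order.} Start from $(x,T,Z)$ and let $(x,v,w)$ be given by \eqref{eq:finite-inverse}. Then $F=P(x,v)+mw=P(x,v)+\bigl(T-P(x,v)\bigr)=T$. Next, $J=(v-G(x,T))/m=(mZ)/m=Z$. Hence the composite is again the identity.

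Both maps are therefore polynomial endomorphisms of $\A^3_k$ that are two-sided inverses of each other, and $(x,F,J)$ is an automorphism fixing $x$.

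The only point needing care is the first step. It uses substitution-compatibility of congruences in the non-reduced quotient $k[x]/(m)$ together with the absence of $m$-torsion, which is where the domain hypothesis enters. The remaining steps are formal cancellations.
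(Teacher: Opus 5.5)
Your proposal is correct and follows the paper's proof: the two congruences (transported by substitution) give exactness of both divisions, and direct substitution recovers $F=T$, $J=Z$ and, in the other order, $v$ and $w$. Cancellation of $m$ in the polynomial domain justifies each step.
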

\begin{proof}
The two congruences prove the divisibility assertions.  Substitution in
the displayed formulas recovers $F=T$, $J=Z$, and, in the other order,
$v$ and $w$. Cancellation of $m$ is legitimate in the polynomial domain.
\end{proof}

The following criterion also applies when the coefficient ring has
nilpotents.

\begin{proposition}\label{prop:one-variable-inverse}
Let $B$ be a nonzero commutative ring. A polynomial
\[
 c(v)=c_0+c_1v+\sum_{j\ge2}c_jv^j\in B[v]
\]
defines a $B$-algebra automorphism of $B[v]$ if and only if $c_1$
is a unit and every $c_j$, $j\ge2$, is nilpotent.
For $B=k[x]/(m)$ with $k$ a field and $m$ nonconstant, this is the
finite test
\[
 \gcd(c_1,m)=1,\qquad c_j\in\sqrt{(m)}\quad(j\ge2),
\]
where coefficient representatives in $k[x]$ are understood. The latter
condition means divisibility by the product of the distinct monic
irreducible factors of $m$.
\end{proposition}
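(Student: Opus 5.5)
We prove the two implications separately, and then translate the nilpotence condition for $B=k[x]/(m)$.

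\emph{Sufficiency.} Suppose $c_1\in B^\times$ and $c_j$ is nilpotent for $j\ge2$. Let $N\subset B$ be the nilradical. After the substitution $v\mapsto c_1^{-1}(v-c_0)$ we may assume $c_0=0$ and $c_1=1$, so $c(v)=v+n(v)$ with $n(v)\in v^2N[v]$. Let $I\subset B$ be the ideal generated by the finitely many $c_j$ with $j\ge2$. It is finitely generated by nilpotents, so $I^M=0$ for some $M$. The endomorphism $\sigma\colon v\mapsto c(v)$ satisfies
\[
 \sigma(p)\equiv p\pmod{I\,B[v]}
\]
for every $p\in B[v]$. Put $\delta=\sigma-\id$. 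Then $\delta$ maps $I^kB[v]$ into $I^{k+1}B[v]$: write $\sigma(p)-p=p(v+n)-p(v)$ and expand by Taylor, noting that every term contains a factor of $n$, whose coefficients lie in $I$. Hence $\delta^M=0$, and
\[
 \sigma^{-1}=\sum_{k=0}^{M-1}(-\delta)^k
\]
is a two-sided inverse. Here $\delta$ is only additive, but $\sigma=\id+\delta$ with $\delta$ nilpotent as an additive endomorphism, so the geometric series gives a two-sided additive inverse. Since $\sigma$ is a ring map, this inverse is also a ring map.

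\emph{Necessity.} Suppose $\sigma$ is an automorphism with inverse $v\mapsto d(v)$. Let $\mathfrak p$ be a prime of $B$ and pass to the domain $(B/\mathfrak p)[v]$. There $c(d(v))=v$. In a domain, degrees multiply under composition, so $\deg\bar c\cdot\deg\bar d=1$. Thus $\bar c$ is linear and $\bar c_1\neq0$. This holds for every prime, so each $c_j$ with $j\ge2$ lies in every prime and is therefore nilpotent. Also $c_1$ lies in no prime, hence is a unit. The case $B\neq0$ guarantees that primes exist, but the argument does not even need that.

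\emph{Finite test.} For $B=k[x]/(m)$, the nilradical is $\sqrt{(m)}/(m)$, which is generated by the product of the distinct monic irreducible factors of $m$. The units of $B$ are the classes coprime to $m$. This gives exactly the stated conditions on representatives.

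The main obstacle is the sufficiency direction. There one needs a clean nilpotence argument, or equivalently a direct construction of the inverse by successive approximation modulo $I^k$. The finite generation of $I$ is what makes $I^M=0$ available.
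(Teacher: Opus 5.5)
Your proof is correct. The necessity half is the paper's argument: reduce modulo each prime and force degree one. You use multiplicativity of degrees over the domain $B/\mathfrak p$ rather than passing to $\Frac(B/\mathfrak p)$, which amounts to the same thing.

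For sufficiency your route is different.

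\emph{The paper's route.} It builds a right inverse $\psi$ by the fixed-point iteration \eqref{eq:nilpotent-inversion}. The iteration stabilizes because successive differences lie in $N^{r+1}[T]$. The paper then obtains the other identity $\psi(c(v))=v$ separately. It factors $c(v)-c(w)=(v-w)Q(v,w)$, where $Q$ is $c_1$ plus a nilpotent and hence a unit, and cancels.

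\emph{Your route.} You show that $\sigma$ is unipotent. Bijectivity of a ring map then gives both inverse identities at once, with no difference-quotient cancellation. In fact your filtration step is simpler than you make it. Since $\delta=\sigma-\id$ is $B$-linear and $\delta(B[v])\subseteq IB[v]$, one gets $\delta(I^kB[v])=I^k\delta(B[v])\subseteq I^{k+1}B[v]$ immediately, without the Taylor expansion.

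\emph{What each buys.} The paper's iteration produces the inverse polynomial directly with an explicit number of correction steps. It is also the pattern reused later in the recursion for $G_r$ in Theorem~\ref{thm:all-degree-family}. Your Neumann series $\sigma^{-1}(v)=\sum_{k<M}(-\delta)^k(v)$ is explicit as well, and your argument is shorter.

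\emph{A small wording point.} The normalization should be $c\mapsto c_1^{-1}(c-c_0)$, composing with the affine map on the outside. If you literally substitute $v\mapsto c_1^{-1}(v-c_0)$ into $c$, the result still has the constant term $\sum_{j\ge2}c_j(-c_0/c_1)^j$. That term is nilpotent but generally nonzero. This is harmless, since composing with an affine automorphism on either side does not change whether $\sigma$ is an automorphism.
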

\begin{proof}
Reduction modulo a prime $\mathfrak p$ and passage to
$\operatorname{Frac}(B/\mathfrak p)$ gives a one-variable polynomial
automorphism over a field, hence a polynomial of degree one. Thus every
higher coefficient lies in every prime ideal and is nilpotent. The
linear coefficient lies in no maximal ideal and is a unit.

Conversely the finitely many higher coefficients generate a nilpotent
ideal $N$. Starting with $\psi_0=c_1^{-1}(T-c_0)$, iterate
\begin{equation}\label{eq:nilpotent-inversion}
 \psi_{r+1}=c_1^{-1}\left(T-c_0-\sum_{j\ge2}c_j\psi_r^j\right).
\end{equation}
If $N^e=0$, the differences of successive iterates lie in
$N^{r+1}[T]$, so the iteration stabilizes by the $e$th correction.
It gives $c(\psi)=T$.  Factoring $c(v)-c(w)$ shows that the difference
quotient is a unit, because it is $c_1$ plus a nilpotent polynomial.
Consequently $\psi(c(v))=v$ as well.
\end{proof}

If $m$ is a nonzero constant over a field, then $k[x]/(m)$ is the
zero ring, and the finite-inverse construction applies directly with
division by a unit.

The same construction transfers a coordinate through an affine
modification.

\begin{theorem}\label{thm:coordinate-transfer}
Let $k$ be an integral domain and suppose
\[
 A=(x,H,W),\qquad A^{-1}=(x,R,S)
\]
are polynomial inverse maps over $k$.  Let $0\ne m\in k[x]$ and
$p\in k[x,y]$.  Assume
\[
 c(y)=H(x,p(x,y),y)\in(k[x]/(m))[y]
\]
is a one-variable polynomial automorphism, and let $\psi(T)$ be its
inverse.  Choose a polynomial representative $\varphi(x,T)$ of
\[
 W(x,p(x,\psi(T)),\psi(T))\pmod m.
\]
Then
\begin{equation}\label{eq:transferred-map}
 \left(x,\ H(x,p+mz,y),\
 \frac{W(x,p+mz,y)-\varphi(x,H(x,p+mz,y))}{m}\right)
\end{equation}
is a polynomial automorphism.  Its inverse has transverse components
\begin{equation}\label{eq:transferred-inverse}
 \begin{split}
 y&=S(x,T,mQ+\varphi(x,T)),\\
 z&=\frac{R(x,T,mQ+\varphi(x,T))
 -p(x,S(x,T,mQ+\varphi(x,T)))}{m}.
 \end{split}
\end{equation}
For nonzero $k[x]/(m)$, Proposition~\ref{prop:one-variable-inverse}
shows that the hypothesis on $c$ holds exactly when its linear
coefficient is a unit and each higher coefficient is nilpotent.
\end{theorem}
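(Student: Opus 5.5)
The plan is to factor the map \eqref{eq:transferred-map} as a composition of three automorphisms, each of whose inverse is known, and then read off \eqref{eq:transferred-inverse}. First, the affine substitution $(x,y,z)\mapsto(x,p(x,y)+m(x)z,y)$ is not itself an automorphism. So I would instead work directly with the finite-inverse Lemma~\ref{lem:finite-inverse}, applied in the variables $(v,w)=(y,z)$ over the coefficient ring $k[x]$. Put $P(x,v)=H(x,p(x,v),v)$, reduced as needed. Modulo $m$, we have $H(x,p+mz,y)\equiv P(x,y)=c(y)$, and $\psi$ inverts $c$ modulo $m$, which supplies the two congruences required with $G=\psi$.

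The cleaner route, and the one I would actually write, is to verify the inverse formulas directly. Set $T=H(x,p+mz,y)$ and $Q$ equal to the third component of \eqref{eq:transferred-map}.

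The first step is exact divisibility, i.e.\ that $Q$ is a polynomial. Modulo $m$ we have $W(x,p+mz,y)\equiv W(x,p(x,y),y)$ and $T\equiv c(y)$. Hence $\varphi(x,T)\equiv W(x,p(x,\psi(c(y))),\psi(c(y)))\equiv W(x,p,y)$, because $\psi\circ c=\mathrm{id}$ in $(k[x]/(m))[y]$. So the numerator lies in $m\,k[x,y,z]$, and since $k[x]$ is a domain the quotient is well defined.

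The second step is to compute the composite. On input $(x,y,z)$ we get $mQ+\varphi(x,T)=W(x,p+mz,y)$ and $T=H(x,p+mz,y)$. Since $A^{-1}=(x,R,S)$ inverts $(x,H,W)$, we obtain $S(x,T,mQ+\varphi)=y$ and $R(x,T,mQ+\varphi)=p+mz$. Therefore formula \eqref{eq:transferred-inverse} returns $y$ and $z=(p+mz-p(x,y))/m$. This shows that the proposed inverse is a left inverse of \eqref{eq:transferred-map}.

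The third step, which I expect to be the main obstacle, is to show that \eqref{eq:transferred-inverse} is polynomial and is also a right inverse. For polynomiality, write $Y=S(x,T,mQ+\varphi(x,T))$. Modulo $m$ this is $S(x,T,\varphi(x,T))$. I would show that $R(x,T,\varphi(x,T))\equiv p(x,S(x,T,\varphi(x,T)))$ modulo $m$. To see this, put $y_0=\psi(T)$ and $x_2=p(x,y_0)$. Then $(x,H,W)(x,x_2,y_0)=(x,c(y_0),\varphi(x,T))\equiv(x,T,\varphi)$, and applying $A^{-1}$ gives $R(x,T,\varphi)\equiv x_2$ and $S(x,T,\varphi)\equiv y_0$, which is the claimed congruence. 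Everything here takes place over the ring $k[x]/(m)[T]$, where the substitutions are legitimate ring maps, so the numerator of $z$ is divisible by $m$.

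For the right-inverse identity, substitute $(y,z)=(Y,Z)$ into \eqref{eq:transferred-map}. We have $p(x,Y)+mZ=R(x,T,mQ+\varphi)$. Applying $(x,H,W)$ to $(x,R,S)$ evaluated at $(x,T,mQ+\varphi)$ returns $T$ and $mQ+\varphi(x,T)$, and hence the third coordinate returns $Q$. Thus the two maps are mutually inverse polynomial maps.

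The final sentence of the statement follows by citing Proposition~\ref{prop:one-variable-inverse}.
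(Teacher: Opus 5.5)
Your direct verification is correct and is essentially the paper's proof: polynomiality of both maps comes from the same two congruences, namely $W(x,p,y)\equiv\varphi(x,c(y))$, and $R(x,T,\varphi)\equiv p(x,\psi(T))$, $S(x,T,\varphi)\equiv\psi(T)$, obtained by applying $A^{-1}$ modulo $m$. The only difference is in how the inverse identities are checked: the paper inverts $m$, factors through $(x,y,z)\mapsto(x,p+mz,y)$ and $(x,T,V)\mapsto(x,T,(V-\varphi)/m)$, and uses injectivity of localization, whereas you substitute directly and cancel $m$ in the domain. This is the same computation, and the finite-inverse detour in your first paragraph is unnecessary: it would produce a different automorphism, so you can drop it.
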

\begin{proof}
Modulo $m$, the defining identity for $\varphi$ gives
$W(x,p(x,y),y)=\varphi(x,c(y))$.  This proves polynomiality of
\eqref{eq:transferred-map}.  Applying $A^{-1}$ to
$(x,T,\varphi(x,T))$ gives, modulo $m$,
\[
 R(x,T,\varphi)=p(x,\psi(T)),\qquad
 S(x,T,\varphi)=\psi(T).
\]
These congruences prove polynomiality of \eqref{eq:transferred-inverse}.
After inverting $m$, the two maps are inverse by their factorization
through $(x,y,z)\mapsto(x,p+mz,y)$ and
$(x,T,V)\mapsto(x,T,(V-\varphi)/m)$.
Since both complete maps are polynomial and localization of a domain
is injective, both inverse identities already hold over $k$.
\end{proof}

The modification has centre ideal $(m,Y-p(x,Z))$.  The coordinate
change identifies this centre with $(m,V-\varphi(x,T))$.
Thus the construction agrees with the lifting principle for affine
modifications in \cite[Proposition~2.1]{KZ}.

\subsection{Rectifications with a prescribed first component}\label{subsec:all-degree}

For an arbitrary first component, we choose the height by an explicit
congruence and clear only the denominators needed to recover the
parameter $t$.

\begin{theorem}\label{thm:all-degree-family}
Let $b,h\in\mathbb C[t]$, let $n=\deg b\ge2$, and let
$c,\kappa\in\mathbb C^*$.  There is an effective polynomial
$d\in\mathbb C[x]$ with $d(0)=1$ having the following property.
For every $M\ge1$, choose $\mu_0\in\mathbb C[x]$ such that
\[
 x^M\mu_0\equiv\kappa\pmod d.
\]
Then, for every $\rho\in\mathbb C[x]$, the map
\begin{equation}\label{eq:accepted-height-family}
 t\longmapsto\left(b(t),\ ct+b(t)h(t),\
 [\mu_0(b(t))+d(b(t))\rho(b(t))]t\right)
\end{equation}
is a rectifiable closed embedding.  A rectifier and its polynomial
inverse are obtained by finite polynomial operations.
\end{theorem}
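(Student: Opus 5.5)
The plan is to produce a parameter lift $H$ that is affine in $z$ with coefficient a power of $x$ (up to the unit $\kappa^{-1}$). Lemma~\ref{lem:finite-inverse} then supplies a companion $J$ with $(x,H,J)$ an automorphism, and Lemma~\ref{lem:graph-subtraction} turns it into a rectifier with explicit inverse. Put $y(t)=ct+b(t)h(t)$ and $A=\mathbb C[b(t),y(t)]\subset\mathbb C[t]$.

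\emph{Step 1: choice of $d$.} First I check that $t\mapsto(b,y)$ is finite and birational onto its image, so that $\mathbb C[t]/A$ is a finite-dimensional $\mathbb C[x]$-module (with $x\mapsto b(t)$). I then take $d\in\mathbb C[x]$ to be a monic-normalised annihilator of this module, for instance the characteristic polynomial of multiplication by $x$ on $\mathbb C[t]/A$, rescaled. This gives
\[
 d(b(t))\,t=G(b(t),y(t))\quad\text{for some } G\in\mathbb C[x,y].
\]
The requirement $d(0)=1$ should come from the fibre over $x=0$. There $y\equiv ct \pmod{b}$, so the plane curve is locally isomorphic, along the preimages of $x=0$, to a graph over $y$. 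Hence the module has no support at $x=0$, and $d$ can be chosen prime to $x$ with $d(0)=1$.

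I also want $G$ normalised so that $G(0,y)=y/c$. I will try to achieve this by replacing $G$ with $G-x(\cdots)$. Since $d(0)=1$, the restriction of $G$ to $x=0$ must agree with $t=y/c$ on each point of that fibre. I expect this normalisation, i.e.\ controlling $G$ modulo $x^M$ well enough that it is a one-variable automorphism in $y$, to be the main obstacle. If the naive representative fails, I will first try correcting $G$ by multiples of the plane equation of the image, which vanish on the curve, to kill the higher $y$-coefficients modulo $x$.

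\emph{Step 2: the lift $H$.} Since $x$ and $d$ are coprime, $\mu_0$ exists. Write $x^M\mu_0=\kappa+d\,e$ with $e\in\mathbb C[x]$; evaluating at $x=0$ gives $e(0)=-\kappa$. On the curve,
\[
 x^Mz=(\kappa+de+x^Md\rho)\,t=\kappa t+(e+x^M\rho)\,G(x,y),
\]
so
\[
 H=\kappa^{-1}\bigl(x^Mz-(e+x^M\rho)G(x,y)\bigr)
\]
restricts to $t$. Its shape is $H=P(x,y)+m(x)z$ with $m=\kappa^{-1}x^M$ and
\[
 P\equiv-\kappa^{-1}e\,G\pmod{x^M}.
\]

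\emph{Step 3: the companion and the rectifier.} By Proposition~\ref{prop:one-variable-inverse} with $B=\mathbb C[x]/(x^M)$, the polynomial $P(x,\cdot)$ is an automorphism of $B[y]$ exactly when its linear $y$-coefficient is a unit and its higher $y$-coefficients are divisible by $x$. Modulo $x$ we have $P\equiv -\kappa^{-1}e(0)\,y/c=y/c$, so both conditions hold given the normalisation of Step~1. The nilpotent iteration \eqref{eq:nilpotent-inversion} gives the finite inverse $G'(x,T)$.

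Lemma~\ref{lem:finite-inverse}, applied with $v=y$ and $w=z$, then makes $(x,H,J)$ an automorphism, with $J$ and the inverse given by exact divisions by $x^M$. Finally, Lemma~\ref{lem:graph-subtraction} with $W=J$ produces $\Phi=(H,\,x-b(H),\,J-w(H))$, which satisfies $\Phi(i(t))=(t,0,0)$. The inverse \eqref{eq:graph-inverse} is again obtained by finitely many polynomial substitutions and exact divisions.

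Closedness of the embedding needs no separate argument: $H$ recovers $t$, which already shows it. Every step is effective, since $d$, $G$, $\mu_0$, $e$ and $\psi$ are all computed by finite linear algebra over $\mathbb C[x]$.
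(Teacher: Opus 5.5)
Your construction coincides with the paper's. Writing $x^M\mu_0=\kappa+de$, the paper's $p=(\kappa-x^M\mu)/d$ equals $-(e+x^M\rho)$, so your $H$ is exactly its $F=pR_d/\kappa+x^Mz/\kappa$ with $G=R_d$. Steps~2--3 (inverse modulo $x^M$, Lemma~\ref{lem:finite-inverse}, graph subtraction) then run as in the paper. Taking $d$ to annihilate all of $\mathbb C[t]/A$, rather than just the class of $t$, is harmless. Your argument for $d(0)\neq0$ is the paper's $D(0)=c^{n(n-1)/2}$ in another form: $1,y,\dots,y^{n-1}$ reduce to $1,ct,\dots,(ct)^{n-1}$ modulo $b$, so $A+b\,\mathbb C[t]=\mathbb C[t]$, and Nakayama kills the module at $x=0$. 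This also gives the finiteness and birationality you promised to check.

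The one real gap is the normalisation $G(0,y)=y/c$, which you leave open although everything depends on it. Modulo $x$ one has $P(0,y)=-\kappa^{-1}e(0)G(0,y)=G(0,y)$. So if $G(0,y)$ is not affine-linear, Proposition~\ref{prop:one-variable-inverse} fails and there is no finite inverse.

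Your first idea cannot work. Replacing $G$ by $G-x(\cdots)$ does not change $G(0,y)$ at all, and it breaks $G(b,y)=d(b)t$ unless the correction vanishes on the curve. Pointwise agreement with $y/c$ on the fibre over $x=0$ is also too weak when $b$ has multiple roots; for $b=t^n$ it only says $G(0,0)=0$.

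Your fallback is the right one, and it closes the gap in one line; this is the paper's device. Since $y$ satisfies a monic equation of degree $n$ over $\mathbb C[x]$, $A$ is spanned over $\mathbb C[x]$ by $1,y,\dots,y^{n-1}$. Hence $G$ can be taken of $y$-degree $<n$ from the start. Reducing $G(b,y)=d(b)t$ modulo $b(t)$, where $y\equiv ct$, gives $G(0,ct)\equiv t\pmod{b}$. Both sides have $t$-degree $<n=\deg b$, so $G(0,ct)=t$, i.e.\ $G(0,y)=y/c$. With this inserted, your argument is complete.
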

\begin{proof}
In the free $\mathbb C[x]$-algebra
\[
 \mathbb C[x,t]/(b(t)-x)
\]
use the basis $1,t,\ldots,t^{n-1}$.  This is a free basis even when
$b$ is not monic, since its leading coefficient is a nonzero complex
constant.  Put $v=ct+xh(t)$, let $K(x)$ have columns the coefficient
vectors of $1,v,\ldots,v^{n-1}$, and set $D=\det K$.
At $x=0$ these columns are $1,ct,\ldots,c^{n-1}t^{n-1}$, so
\[
 D(0)=c^{n(n-1)/2}.
\]
The determinant clears every entry of $K^{-1}$, but we only need
the column which recovers $t$. Let $e_t$ be the coefficient vector
of $t$ and write
\[
 \alpha=K^{-1}e_t\in\mathbb C(x)^n.
\]
Take $d$ to be the least common multiple of the reduced denominators
of the entries of $\alpha$, normalized by $d(0)=1$. Each denominator
divides $D$ up to a nonzero scalar, so $d\mid D$ and $d(0)\ne0$.
The coefficient vector $d\alpha$ gives a polynomial $R_d(x,V)$ of
$V$-degree less than $n$ satisfying
\begin{equation}\label{eq:power-basis-recovery}
 R_d(b(t),ct+b(t)h(t))=d(b(t))t.
\end{equation}
This is exactly the normalized generator of the ideal
\[
 \{s\in\mathbb C[x]:st\in\mathbb C[x][v]\}
\]
inside $\mathbb C[x,t]/(b(t)-x)$. Indeed, the characteristic polynomial
of multiplication by $v$ is monic of degree $n$, so every polynomial
in $v$ reduces to the span of $1,v,\ldots,v^{n-1}$. Their generic
linear independence makes the coefficient vector of $st$ uniquely
$s\alpha$. Its entries belong to $\mathbb C[x]$ precisely when $d$
divides $s$. One may equivalently compute this ideal using Smith
normal form of $K$ over $\mathbb C[x]$.

Put $\mu=\mu_0+d\rho$ and
$p=(\kappa-x^M\mu)/d$. With
\[
 P=\frac{pR_d}{\kappa},\qquad m=\frac{x^M}{\kappa},\qquad F=P+mW,
\]
equation \eqref{eq:power-basis-recovery} gives $F(i(t))=t$.
Moreover $R_d(0,V)=d(0)V/c$, because this identity has degree below $n$
and holds in $\mathbb C[t]/(b)$.  Hence $P(0,V)=V/c$.

Its inverse modulo $x^M$ can be calculated without
factoring $b$.  Start with $G_0=cT$ and for $1\le r<M$ set
\[
 E_r=[x^r]\bigl(P(x,G_{r-1})-T\bigr),\qquad
 G_r=G_{r-1}-cx^rE_r.
\]
Induction gives $P(x,G_{M-1})\equiv T\pmod{x^M}$.
The difference quotient of $P$ is $1/c$ plus a nilpotent polynomial
modulo $x^M$, so the opposite composition is also the identity.
Lemma~\ref{lem:finite-inverse} therefore completes $F$ to a relative
automorphism, with companion
$J=(V-G_{M-1}(x,F))/m$ and the explicit inverse
\eqref{eq:finite-inverse}.  Lemma~\ref{lem:graph-subtraction} finishes.
Since the axis has a polynomial inverse image, this also proves the
closed-embedding assertion in the theorem.
\end{proof}

For $b(t)=t^3$, $h(t)=t^4$, and $c=1$, the relation $t^3=x$ gives
$v=(1+x^2)t$. Thus
\[
 D=(1+x^2)^3,\qquad d=1+x^2,\qquad R_d(x,V)=V.
\]
For the quartic example $b=t^4$, $h=t^2$, $c=\kappa=1$, $M=2$,
the reduced denominator is $d=D=1+4x^5$. The formulas are
\[
 \begin{split}
 d&=1+4x^5,\qquad \mu_0=-4x^3,\\
 R_d(x,V)&=2x^2V^3-xV^2+(1-2x^5)V-5x^4.
 \end{split}
\]
Thus $(t^4,t+t^6,-4t^{13})$ is rectifiable.  Put
$F=R_d+x^2W$ and $J=(V-F-xF^2)/x^2$.
The rectifier is $(F,x-F^4,J)$; here $J$ vanishes on the curve.
Its inverse first puts $x=X+T^4$, and then
\[
 \begin{split}
 V&=S=T+xT^2+x^2Z,\\
 W&=-Z+(T^2+xZ)(S+T)-2S^3+2x^3S+5x^2.
 \end{split}
\]
The identity $x^2W=T-R_d(x,S)$ verifies the inverse directly.

\subsection{Cubic families from polynomial shears}

The following shears apply to a nonmonomial cubic first component.

\begin{proposition}\label{prop:layered-cubic}
Let $b(t)=t^3-3t$.  For arbitrary $r,s,q\in\mathbb C[x]$ and
$\kappa\in\mathbb C^*$ put
\[
 N_r=r^3-3r-x,\qquad c=\kappa+sN_r,\qquad a=r+qc.
\]
Then
\[
 t\longmapsto\bigl(b(t),c(b(t))t,t^2+a(b(t))t\bigr)
\]
is a rectifiable closed embedding.
More generally start with $a_0=r,c_0=\kappa$ and perform finitely many
steps
\[
 c_j=c_{j-1}+s_j(a_{j-1}^3-3a_{j-1}-x),\qquad
 a_j=a_{j-1}+q_jc_j.
\]
Every resulting pair $(a_j,c_j)$ gives a rectifiable embedding of the
same displayed form.
\end{proposition}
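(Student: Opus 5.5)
The plan is to show that each step of the recursion is undone by two triangular ambient shears, which reduces the whole family to the starting pair $(a_0,c_0)=(r,\kappa)$. That base case is a graph and is rectified directly. Rectifiability then also gives the closed-embedding assertion, since the curve is the preimage of a coordinate axis under a polynomial automorphism. The first displayed statement is the case of a single step, with $s_1=s$ and $q_1=q$.

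\emph{Base case.} For $(a_0,c_0)=(r,\kappa)$ the curve is $\bigl(b(t),\kappa t,t^2+r(b(t))t\bigr)$. The coordinate $T=y/\kappa$ restricts to $t$. The composite
\[
 (x,y,z)\longmapsto\bigl(y/\kappa,\ x-b(y/\kappa),\ z-(y/\kappa)^2-r(x)\,y/\kappa\bigr)
\]
is a composition of a linear permutation and triangular shears, so it is a polynomial automorphism. It sends the curve to $(t,0,0)$; this is Lemma~\ref{lem:graph-subtraction} with $H=y/\kappa$.

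\emph{Undoing the $q_j$-step.} Since $a_j=a_{j-1}+q_jc_j$, the shear $z\mapsto z-q_j(x)\,y$ sends
\[
 \bigl(b,\ c_j(b)t,\ t^2+a_j(b)t\bigr)\quad\text{to}\quad\bigl(b,\ c_j(b)t,\ t^2+a_{j-1}(b)t\bigr).
\]

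\emph{Undoing the $s_j$-step; this is the main point.} We need a polynomial $G(x,z)$ whose restriction to the curve is $N_{a}(b(t))\,t$, where $a=a_{j-1}$ and $N_a=a^3-3a-x$. We compute on the curve using $t^3=3t+x$ and $z=t^2+at$. First,
\[
 z^2=t^4+2at^3+a^2t^2=(3+a^2)t^2+(x+6a)t+2ax .
\]
Subtracting $(3+a^2)z$ gives
\[
 z^2-(3+a^2)z=(x+3a-a^3)\,t+2ax=-N_a\,t+2ax .
\]
Hence $G=-z^2+(3+a(x)^2)z+2a(x)x$ restricts to $N_a(b)\,t$. The $y$-shear $y\mapsto y-s_j(x)G(x,z)$ depends only on $x,z$, so it is an automorphism. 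Because $c_j=c_{j-1}+s_jN_{a_{j-1}}$, it turns $c_j(b)t$ into $c_{j-1}(b)t$ and leaves the other two components unchanged.

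Composing these two shears for $j$ down to $1$, followed by the base rectifier, gives an explicit rectifier. Its inverse is obtained by reversing each shear and the base map. The only real computation is the identity for $z^2-(3+a^2)z$, and it is exactly what forces the combination $a^3-3a-x$ to appear in the recursion.
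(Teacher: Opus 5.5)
Your proposal is correct and is essentially the paper's proof. Your $G=-z^2+(3+a^2)z+2ax$ is exactly $-P_r(x,w)$ in the paper's notation, so composing your two shears with your base graph rectifier gives the paper's coordinate $T=(y+sP_r(x,w))/\kappa$ with $w=z-qy$ and its rectifier $(T,\,x-T^3+3T,\,w-T^2-r(x)T)$; the several-step case uses the same induction, undoing the last $z$-shear and then the last $y$-shear.
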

\begin{proof}
For one step set
\[
 w=z-q(x)y,\quad
 P_r(x,w)=w^2-(r(x)^2+3)w-2r(x)x,\quad
 T=\frac{y+s(x)P_r(x,w)}\kappa.
\]
These are compositions of polynomial shears and a nonzero constant
scaling.  On the curve, $w=t^2+r(b(t))t$ and
\[
 P_r(b(t),w(t))=-N_r(b(t))t,
\]
which follows by substituting $t^3=3t+b(t)$.  Thus $T=t$.
An explicit final rectifier is
\[
 \bigl(T,\ x-T^3+3T,\ w-T^2-r(x)T\bigr).
\]
It is an automorphism because from its output $(T,X,Z)$ one recovers
$x=X+T^3-3T$, then $w=Z+T^2+r(x)T$, then
$y=\kappa T-sP_r(x,w)$ and $z=w+qy$.
For several steps undo the last shear in $z$, then the last shear in
$y$.  They reduce $(a_j,c_j)$ to $(a_{j-1},c_{j-1})$ and yield the
claim by induction.
\end{proof}

There is another simple subclass.  If $c$ divides $a+k$ for a constant
$k$, then subtracting the corresponding multiple of $y$ from $z$
produces the restriction $t^2-kt$.
Proposition~\ref{prop:quadratic-component} applies whenever the original
parametrization is a closed embedding.

\subsection{Cubic families from the Nagata flow}

We use the Nagata flow
\[
 \begin{split}
 Q&=xZ-Y^2+2,\\
 Y'&=Y+\lambda(x)xQ,\\
 Z'&=Z+2\lambda(x)YQ+\lambda(x)^2xQ^2.
 \end{split}
\]
It preserves $Q$ and has inverse obtained by replacing $\lambda$ by
$-\lambda$; see \cite{EVM}.

\begin{proposition}\label{prop:nagata-family}
Let $c\in\mathbb C^*$ and $r,h,k\in\mathbb C[x]$, and write
$x=b(t)=t^3-3t$ in the following parametrization:
\begin{equation}\label{eq:nagata-family}
 \begin{split}
 Y(t)&=xr(x)t^2+ct+xh(x),\\
 Z(t)&=xr(x)(3r(x)+2h(x))t^2\\
 &\quad+[x^2r(x)^2+2c(3r(x)+h(x))]t+k(x).
 \end{split}
\end{equation}
Then $(b(t),Y(t),Z(t))$ is a rectifiable closed embedding.
\end{proposition}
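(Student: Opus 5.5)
The plan is to realize the displayed curve as the image of a much simpler curve under a Nagata-type flow, and then rectify the simpler curve by a graph subtraction.

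First I will replace the invariant $Q$ by $Q_e=xZ-Y^2+e(x)$ for an arbitrary $e\in\mathbb C[x]$. A direct expansion shows that the formulas
\[
 Y'=Y+\lambda xQ_e,\qquad Z'=Z+2\lambda YQ_e+\lambda^2xQ_e^2
\]
satisfy $xZ'-Y'^2=xZ-Y^2$. Here $\lambda$ may be any element of $\mathbb C[x]$, in particular any constant multiple of $r(x)$. Consequently $Q_e$ is preserved, and replacing $\lambda$ by $-\lambda$ gives the inverse, exactly as for the standard flow cited from \cite{EVM}. So for every choice of $e$ and $\lambda$ this is a polynomial automorphism of $\A^3$ fixing $x$.

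Next I will take as base curve
\[
 t\longmapsto\bigl(b(t),\ Y_0,\ Z_0\bigr),\qquad Y_0=ct+xh(x),\qquad Z_0=2ch(x)t+k(x)-2cr(x)x,\qquad x=b(t).
\]
I will choose $e(x)=x^2h^2+2crx^2-xk$, so that on this curve $Q_e=-c^2t^2$, and I will set $\lambda=-r/c^2$. Then $\lambda xQ_e=xrt^2$, so the flow sends $Y_0$ to $Y(t)$.

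For the third component I will compute $2\lambda Y_0Q_e+\lambda^2xQ_e^2=2rY_0t^2+r^2xt^4$. I will reduce it using $t^3=x+3t$ and $t^4=xt+3t^2$. This gives a $t^2$-coefficient $xr(3r+2h)$, a $t$-coefficient $r^2x^2+6cr$, and a constant $2crx$. Adding $Z_0$ should then reproduce $Z(t)$ in \eqref{eq:nagata-family} exactly. Checking this bookkeeping is the only real computation, and I regard it as the main step. If a coefficient failed to match, I would first adjust the constant term of $Z_0$ and the choice of $e$; these have enough freedom to absorb it.

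Finally, the base curve is rectifiable. The coordinate $T=(y-xh(x))/c$ restricts to $t$, so the automorphism $(x,y,z)\mapsto(x,T,z)$ makes the curve $(b(T),T,\ast)$. Lemma~\ref{lem:graph-subtraction}, or the explicit shears $x\mapsto x-b(T)$ and $z\mapsto z-2ch(b(T))T-k(b(T))+2cr(b(T))b(T)$, then sends it to $(t,0,0)$. Composing this rectifier with the inverse flow rectifies $(b,Y,Z)$. Since the composite is an automorphism carrying the curve to the coordinate axis, the parametrization is automatically a closed embedding.
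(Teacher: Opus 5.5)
Your proposal is correct and is essentially the paper's argument run in the opposite direction. The paper applies the flow with the fixed invariant $Q=xZ-Y^2+2$ and $\lambda=r/c^2$ directly to the given curve, getting $Y'=ct+P(x)$ with $P=xh+xrC_0/c^2$; your $x$-dependent term $e(x)$ absorbs $C_0$, so the simplified curve has the cleaner second component $ct+xh$, and both arguments then finish with the same shear and graph subtraction. Your coefficient check is already complete as written (the constants $2crx$ and $-2crx$ cancel to leave $k$), so the fallback about adjusting $Z_0$ or $e$ is not needed.
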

\begin{proof}
Put
\[
 C_0=xk-2cx^2r-x^2h^2+2,\qquad
 P=xh+\frac{xrC_0}{c^2}.
\]
Using $t^3=3t+x$ gives, on the curve,
$Q=C_0-c^2t^2$.  Apply the displayed flow with $\lambda=r/c^2$.
Then $Y'=ct+P(x)$, and consequently
\[
 H=\frac{Y'-P(x)}c
\]
is an ambient coordinate restricting to $t$. The transformed third
coordinate $Z'$ is a companion to $H$; reversing the flow and the
shear gives the inverse. Graph subtraction completes the rectification.
\end{proof}

For $r=1$, $h=-2$, $k=2x$ the family becomes
\[
 \bigl(C_3(t),\ C_5(t)+(c+1)t,\ C_7(t)+(2c+1)t\bigr),\qquad c\ne0,
\]
where $C_n(t)=2T_n(t/2)$ are the normalized Chebyshev polynomials.
For example $c=-1/2$ gives
$(C_3,C_5+t/2,C_7)$.  The excluded value $c=0$ is not an embedding:
the two parameters $t=\sqrt3$ and $t=-\sqrt3$ have the same image in
\eqref{eq:nagata-family}.

\subsection{Rectification of the Shastri trefoil}
\label{subsec:trefoil}

For the Shastri trefoil \cite{Shastri}, we use the compact integral
formulas of \cite{DorofeevRepository}.

Work over
\[
 \mathcal R=\mathbb Z[a]/(a^2-a+1),\qquad a(1-a)=1.
\]
This is an integral domain, and both $a$ and $a-1$ are units.
Define the polynomial in two variables
\begin{equation}\label{eq:trefoil-h}
 \begin{split}
 h(x,u)={}&(1-a)xu^2+(-1+ax^2-ax^4)u\\
 &-2x^5+(6-2a)x^3-2(1-a)x
 \end{split}
\end{equation}
and its inverse modulo $x^5$:
\begin{equation}\label{eq:trefoil-f}
 \begin{split}
 f(x,T)={}&-T+x\bigl((1-a)T^2+2a-2\bigr)\\
 &+x^2(2aT^3-5aT)\\
 &+x^3(-5T^4+15T^2-2a)\\
 &+x^4\bigl(14(1-a)T^5+50(a-1)T^3+(29-24a)T\bigr).
 \end{split}
\end{equation}
Coefficientwise reduction in $\mathbb Z[a]$ modulo the monic polynomial
$a^2-a+1$ gives
\begin{equation}\label{eq:trefoil-finite-identities}
 h(x,f(x,T))\equiv T\pmod{x^5},\qquad
 f(x,h(x,u))\equiv u\pmod{x^5}.
\end{equation}
Set
\begin{equation}\label{eq:trefoil-base-coordinates}
 \begin{split}
 u&=z+axy,\\
 H(x,y,z)&=h(x,u)+(a-1)x^5y,\\
 W(x,y,z)&=\frac{a(u-f(x,H(x,y,z)))}{x^5}.
 \end{split}
\end{equation}
The last quotient is exact in $\mathcal R[x,y,z]$ by
\eqref{eq:trefoil-finite-identities}. The inverse of $A_0=(x,H,W)$
is given on $(x,T,V)$ by
\begin{equation}\label{eq:trefoil-relative-inverse}
 \begin{split}
 U&=f(x,T)+(1-a)x^5V,\\
 R(x,T,V)&=\frac{a(h(x,U)-T)}{x^5},\\
 S(x,T,V)&=U-axR(x,T,V).
 \end{split}
\end{equation}
The quotient is exact in $\mathcal R[x,T,V]$. These are the inverse
formulas in Lemma~\ref{lem:finite-inverse}, after the linear change
$(y,z)\mapsto(u,y)$, with $m=(a-1)x^5$ and companion $-W$.
Thus $(x,R,S)=A_0^{-1}$ over $\mathcal R$, on all fibres including
$x=0$. Divisibility by $x^5$ is coefficientwise in the free
$\mathbb Z$-module $\mathcal R=\mathbb Z\oplus\mathbb Z a$;
no integer is inverted.

The normalized Chebyshev polynomials needed here are
\[
 \begin{split}
 C_3(t)&=t^3-3t,\qquad C_4(t)=t^4-4t^2+2,\\
 C_5(t)&=t^5-5t^3+5t,\\
 C_7(t)&=t^7-7t^5+14t^3-7t.
 \end{split}
\]
Direct substitution in the compact polynomials gives
\begin{equation}\label{eq:trefoil-marking}
 H(C_3,C_4,C_5)=t,\qquad
 W(C_3,C_4,C_5)=8-14t^2.
\end{equation}
The affine change
\[
 M(x,y,z)=(x,y+2,z-5x)
\]
takes Shastri's parametrization
\[
 \gamma_{\mathrm{trefoil}}(t)=
 (t^3-3t,t^4-4t^2,t^5-10t)
\]
to $(C_3,C_4,C_5)$.  This normalization is also recorded in
\cite{FF}.  Write $h_{\mathrm{trefoil}}=H\circ M$ and
$w_{\mathrm{trefoil}}=W\circ M$.

\begin{samepage}
\begin{theorem}\label{thm:trefoil-rectification}
The map
\begin{equation}\label{eq:trefoil-rectifier}
 \bigl(h_{\mathrm{trefoil}},\
 x-h_{\mathrm{trefoil}}^3+3h_{\mathrm{trefoil}},\
 w_{\mathrm{trefoil}}-8+14h_{\mathrm{trefoil}}^2\bigr)
\end{equation}
is a polynomial automorphism over $\mathcal R$ and rectifies
$\gamma_{\mathrm{trefoil}}$.  Its inverse, on $(T,X,Z)$, is
\begin{equation}\label{eq:trefoil-final-inverse}
 \begin{split}
 x&=X+T^3-3T,\\
 y&=R(x,T,Z+8-14T^2)-2,\\
 z&=S(x,T,Z+8-14T^2)+5x.
 \end{split}
\end{equation}
After extension to characteristic zero, the component degrees of the
displayed rectifier are $(5,15,24)$.
\end{theorem}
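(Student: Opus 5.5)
The plan is to assemble the rectifier from three invertible pieces: the affine change $M$, the relative automorphism $A_0=(x,H,W)$, and the graph subtraction of Lemma~\ref{lem:graph-subtraction}.

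The first step is to justify that $A_0$ is an automorphism over $\mathcal R$. Both $W$ and $R$ are defined by quotients by $x^5$, and these are exact by \eqref{eq:trefoil-finite-identities}. I would then recognize the pair $(x,H,W)$ and $(x,R,S)$ as an instance of Lemma~\ref{lem:finite-inverse}, after the linear change $(y,z)\mapsto(u,y)$ with $u=z+axy$, and with $m=(a-1)x^5$. Concretely, $H=h(x,u)+m\,y$, so $P=h$ and $G=f$. The unit $a$ relates $W$ to the companion $J=(u-f(x,H))/m$: since $a(1-a)=1$, one has $1/(a-1)=-a$, so $W=-J$.

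Lemma~\ref{lem:finite-inverse} requires an integral domain. This holds because $\mathcal R\cong\mathbb Z[\omega]$ with $\omega$ a primitive sixth root of unity. The lemma then gives $A_0^{-1}=(x,R,S)$: $R$ recovers $y$ as $(T-P(x,G+mZ'))/m$ with $Z'=-V$, and $S$ recovers $z=u-axy$. Both inverse identities are polynomial identities over $\mathcal R$.

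The second step is a direct check. By \eqref{eq:trefoil-marking} and the definition of $M$, the composite $(x,H,W)\circ M$ sends $\gamma_{\mathrm{trefoil}}(t)$ to $(C_3,t,8-14t^2)$. Apply Lemma~\ref{lem:graph-subtraction} to the curve $M\circ\gamma_{\mathrm{trefoil}}$, with $b=C_3$ and $w(t)=8-14t^2$. Its formula \eqref{eq:graph-rectifier} is exactly \eqref{eq:trefoil-rectifier} after precomposing with $M$. Its inverse \eqref{eq:graph-inverse} becomes \eqref{eq:trefoil-final-inverse} after composing with $M^{-1}(x,y,z)=(x,y-2,z+5x)$. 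Since $\mathcal R$ is a domain and the lemmas are formal, this gives an automorphism over $\mathcal R$ with $\Phi(\gamma(t))=(t,0,0)$.

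The main obstacle is the pair of verifications \eqref{eq:trefoil-finite-identities} and \eqref{eq:trefoil-marking}. These are finite symbolic computations in $\mathcal R[x,T]$ and $\mathcal R[t]$. I would carry them out by expanding, reducing $a^2=a-1$, and truncating at $x^5$; the repository checks confirm them.

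For the degree claim, I would compute the leading forms after extending to characteristic zero. The term $-2x^5$ gives $H$ degree $5$, since $(1-a)xu^2$ has degree $5$ and $ax^4u$ also has degree $5$, and I would check that the top-degree parts do not cancel. The second component then has degree $15$ from $h^3$. For $W$, the quotient of $f(x,H)$ has degree at most $4+5\cdot5-5=24$ from the term $x^4T^5$. I would confirm nonvanishing of the top homogeneous part by evaluating its leading coefficient, and likewise check that $14h^2$, of degree $10$, does not interfere.
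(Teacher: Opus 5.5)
Your proposal is correct and follows the paper's own route. You identify $A_0^{-1}=(x,R,S)$ through Lemma~\ref{lem:finite-inverse} with $m=(a-1)x^5$ and companion $-W$, apply Lemma~\ref{lem:graph-subtraction} to the marking \eqref{eq:trefoil-marking}, and then undo $M$.

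There is one slip in the degree count. The term $ax^4u$ has degree $6$, not $5$, and it combines with $(a-1)x^5y$ to give $(a-1-a^2)x^5y=0$. So the degree-$6$ part of $H$ cancels only because $a^2=a-1$, leaving the nonzero quintic form $x^3(ay^2-axz-2x^2)$. With that form in hand, your bounds $15$ and $24$ go through as stated.
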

\end{samepage}
\begin{proof}
Apply Lemma~\ref{lem:graph-subtraction} to
\eqref{eq:trefoil-marking} and then undo $M$.  The inverse formulas
are \eqref{eq:trefoil-relative-inverse} followed by $M^{-1}$.
Expansion gives the stated degrees.
\end{proof}

\subsection{Rectification of the harmonic figure-eight}

The harmonic figure-eight is
\[
 \gamma_{\mathrm{harmonic}}(t)=(C_3(t),C_5(t),C_7(t)).
\]
Its real knot identification is discussed in \cite{KP}. The relevant identity is
\begin{equation}\label{eq:chebyshev-modification}
 C_3C_7-C_5^2+2=C_4.
\end{equation}
It explains why the rationally invertible modification
$\beta(x,y,z)=(x,xz-y^2+2,y)$ links this curve to the already treated
normalized trefoil.

Since $H(0,Y,Z)=-Z$, the centre
$H(0,2-y^2,y)=-y$ satisfies
Theorem~\ref{thm:coordinate-transfer}.  The correction polynomial is
\begin{equation}\label{eq:harmonic-correction}
 \begin{split}
 \varphi(T)&=W(0,2-T^2,-T)\\
 &=42(a-1)T^6+175(1-a)T^4+(147a-161)T^2+8.
 \end{split}
\end{equation}
Define
\begin{equation}\label{eq:harmonic-relative}
 \begin{split}
 H_{\mathrm{harmonic}}&=H(x,xz-y^2+2,y),\\
 Q_{\mathrm{harmonic}}&=
 \frac{W(x,xz-y^2+2,y)-\varphi(H_{\mathrm{harmonic}})}x.
 \end{split}
\end{equation}
Both are polynomial, and the relative inverse has components
\begin{equation}\label{eq:harmonic-relative-inverse}
 \begin{split}
 R_{\mathrm{harmonic}}(x,T,Q)&=S(x,T,xQ+\varphi(T)),\\
 S_{\mathrm{harmonic}}(x,T,Q)&=
 \frac{R(x,T,xQ+\varphi(T))
 +S(x,T,xQ+\varphi(T))^2-2}{x}.
 \end{split}
\end{equation}
The transfer theorem over $\mathcal R$, with $m=x$, proves exact
division in both directions over that ring.
The identity \eqref{eq:chebyshev-modification} and substitution give
\[
 H_{\mathrm{harmonic}}(\gamma_{\mathrm{harmonic}}(t))=t,\qquad
 Q_{\mathrm{harmonic}}(\gamma_{\mathrm{harmonic}}(t))
 =(1-a)(42t^3-49t).
\]

\begin{theorem}\label{thm:harmonic-rectification}
The harmonic figure-eight admits the following polynomial rectifier
over $\mathcal R$:
\begin{equation}\label{eq:harmonic-final}
 \begin{split}
 \bigl(&H_{\mathrm{harmonic}},\
 x-H_{\mathrm{harmonic}}^3+3H_{\mathrm{harmonic}},\\
 &Q_{\mathrm{harmonic}}
 -(1-a)(42H_{\mathrm{harmonic}}^3-49H_{\mathrm{harmonic}})\bigr).
 \end{split}
\end{equation}
Its inverse is obtained by putting $x=X+T^3-3T$ and evaluating
\eqref{eq:harmonic-relative-inverse} at
$Q=Z+(1-a)(42T^3-49T)$. After extension to characteristic zero,
its component degrees are $(7,21,41)$.
\end{theorem}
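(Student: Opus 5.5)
The plan is to assemble the rectifier exactly as in Theorem~\ref{thm:trefoil-rectification}: first the relative automorphism $(x,H_{\mathrm{harmonic}},Q_{\mathrm{harmonic}})$, then graph subtraction. The first step is to apply Theorem~\ref{thm:coordinate-transfer} over the integral domain $\mathcal R$ with $m=x$ and $p(x,y)=2-y^2$. Modulo $x$ we have $c(y)=H(0,2-y^2,y)=-y$, since $H(0,Y,Z)=-Z$. This is a linear automorphism of $(\mathcal R[x]/(x))[y]=\mathcal R[y]$ with inverse $\psi(T)=-T$.

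Next I will check that $\varphi$ in \eqref{eq:harmonic-correction} is a valid representative of $W(0,p(\psi(T)),\psi(T))=W(0,2-T^2,-T)$. This needs only the $x=0$ specialization of \eqref{eq:trefoil-base-coordinates}. There $u=z$ and $H=h(0,u)=-u$, and one needs $f(0,T)=-T$. Hence $W(0,y,z)$ is the $x^5$-coefficient of $a(u-f(x,h(x,u)))$, to be evaluated at $u=-T$, $y=2-T^2$. This is a finite symbolic computation, which I would confirm with the repository checks. With that in hand, the transfer theorem shows that \eqref{eq:harmonic-relative} is a polynomial automorphism over $\mathcal R$. Its inverse has the transverse components \eqref{eq:transferred-inverse}. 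Since $p=2-y^2$, these components become exactly \eqref{eq:harmonic-relative-inverse}, once we note that $(x,p+mz,y)=(x,xz-y^2+2,y)$ is the map $\beta$ with the coordinate order adjusted.

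Then I restrict to the curve. By \eqref{eq:chebyshev-modification}, $\beta(\gamma_{\mathrm{harmonic}})=(C_3,C_4,C_5)$, so \eqref{eq:trefoil-marking} gives $H_{\mathrm{harmonic}}(\gamma)=t$. It also gives $W\circ\beta(\gamma)=8-14t^2$, hence
\[
Q_{\mathrm{harmonic}}(\gamma)=\frac{8-14t^2-\varphi(t)}{C_3(t)}.
\]
The main check is that this division equals $(1-a)(42t^3-49t)$. To verify it, multiply $(1-a)(42t^3-49t)$ by $t^3-3t$ and compare with $8-14t^2-\varphi(t)$, using $a^2=a-1$. The $t^6$ coefficient is $42(1-a)$, and $-\varphi$ contributes $-42(a-1)$, so these match. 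The remaining coefficients are checked the same way.

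Finally I apply Lemma~\ref{lem:graph-subtraction} with $b=C_3$ and $w(t)=(1-a)(42t^3-49t)$. It yields \eqref{eq:harmonic-final} as a polynomial automorphism with $\Phi(\gamma(t))=(t,0,0)$. It also yields the stated inverse: put $x=X+T^3-3T$ and evaluate \eqref{eq:harmonic-relative-inverse} at $Q=Z+w(T)$. All identities hold coefficientwise over $\mathcal R$, so they persist under every base change.

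The degrees $(7,21,41)$ come from expansion. The first component has degree $\deg H_{\mathrm{harmonic}}=7$, the second has $3\cdot7=21$, and $Q_{\mathrm{harmonic}}$ dominates the third. I expect the main obstacle to be purely computational: confirming the exact value of $\varphi$ and the exact division by $x$ over $\mathcal R$ without inverting integers. I would settle this by direct expansion in $\mathbb Z[a]$ reduced modulo $a^2-a+1$.
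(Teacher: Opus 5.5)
Your overall route is the paper's: the transfer theorem with $m=x$, $p=2-y^2$ and centre $c(y)=-y$, restriction to the curve via \eqref{eq:chebyshev-modification} and \eqref{eq:trefoil-marking}, then graph subtraction with $b=C_3$ and $w(t)=(1-a)(42t^3-49t)$. Your divisibility check $8-14t^2-\varphi(t)=(1-a)C_3(t)(42t^3-49t)$ is correct; it is linear in $a$, so $a^2=a-1$ is not even needed.

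There is, however, an error in the step you identify as the main computational obstacle, namely your recipe for $\varphi$. You say this step needs only the $x=0$ specialization, and you take $W(0,y,z)$ to be the $x^5$-coefficient of $a(u-f(x,h(x,u)))$. That replaces $H$ by $h(x,u)$ before the $x^5$-coefficient is extracted. But $H=h(x,u)+(a-1)x^5y$, and at order $x^5$ the extra term contributes. Using $\partial_Tf(0,T)=-1$ and $a(a-1)=-1$, one gets
\[
 W(0,y,z)=[x^5]\,a\bigl(u-f(x,h(x,u))\bigr)\big|_{u=z}\;-\;y .
\]
The substitution $u=z+axy$ is harmless, because $a(u-f(x,h(x,u)))$ is divisible by $x^5$ in $\mathcal R[x,u]$.

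Your expression is independent of $y$, and this is impossible. Reducing the automorphism $A_0=(x,H,W)$ modulo $x$ gives an automorphism $\bigl(-z,\,W(0,y,z)\bigr)$ of $\mathcal R[y,z]$, so $W(0,y,z)$ must involve $y$. Carried out as written, your recipe yields $\varphi(T)+2-T^2$ instead of $\varphi(T)$. For instance, at $T=0$ it gives $10$, whereas $\varphi(0)=W(0,2,0)=8$ is forced by \eqref{eq:trefoil-marking} at $t=0$.

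So the comparison with \eqref{eq:harmonic-correction} would fail as you planned it. Moreover, with your value of $\varphi$ the numerator of $Q_{\mathrm{harmonic}}$ would reduce to $y^2-2$ modulo $x$, so the division would not be exact. Restoring the $-y$ term, which contributes $T^2-2$ after the substitution $y=2-T^2$, repairs the check. The rest of your plan, including the inverse from graph subtraction and the degree bookkeeping, then agrees with the paper.
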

\begin{proof}
Apply graph subtraction to \eqref{eq:harmonic-relative} and expand
in characteristic zero to obtain the stated degrees.
\end{proof}

The second component here contains $+5t$; the variant with $+4t$ is
a different parametrization and is not covered by this construction.
Every identity just proved survives every ring homomorphism
$\mathcal R\to B$. In particular, both displayed parametrizations
are rectifiable over every field containing a root of $a^2-a+1$.
This includes characteristic two and characteristic three; in the
latter, the root is $a=-1$. Every finite field has an extension of
degree at most two containing such a root. The specialized
parametrizations remain closed embeddings. Their degrees may drop
under reduction.

Embedding $\mathcal R$ in $\mathbb C$ recovers the complex
rectifications over $\mathbb Q(\sqrt{-3})$. This is compatible with
the real knot obstruction: $a^2-a+1$ has no real root, and no real
polynomial rectifier is asserted.

Finally the construction admits arbitrarily high exceptional powers.
For $N\ge1$ define
\[
 \tau_0(x,y)=-y,\qquad
 \tau_{j+1}(x,y)=-y+x(\tau_j(x,y)^2-2),
\]
and let $p_N(x,y)$ be the remainder modulo $x^N$ of
$x\tau_{N-1}-\tau_{N-1}^2+2$.

\begin{corollary}\label{cor:all-power-transfer}
For every $N\ge1$ the expression
\[
 e_N(t)=\frac{C_4(t)-p_N(C_3(t),C_5(t))}{C_3(t)^N}
\]
belongs to $\mathbb Z[t]$, and $(C_3,C_5,e_N)$ is rectifiable over
$\mathcal R$ and after every base change from $\mathcal R$.
\end{corollary}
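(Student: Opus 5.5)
The plan is to follow the harmonic construction, with the exceptional power $x^N$ in place of $x$. Theorem~\ref{thm:coordinate-transfer} is applied to the base automorphism $A_0=(x,H,W)$ of \eqref{eq:trefoil-base-coordinates}, with $m=x^N$ and a centre polynomial $p(x,y)=p_N(x,y)$. The modification $(x,y,z)\mapsto(x,p+x^Nz,y)$ then sends $(C_3,C_5,e_N)$ to $(C_3,C_4,C_5)$, because by definition $p_N(C_3,C_5)+C_3^N e_N=C_4$. Since $H(C_3,C_4,C_5)=t$, the transferred coordinate restricts to $t$. Lemma~\ref{lem:graph-subtraction} then finishes the rectification, exactly as in Theorem~\ref{thm:harmonic-rectification}. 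All identities are polynomial over $\mathcal R$, so base change is automatic.

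\emph{Step 1: integrality of $e_N$.} The recursion $\tau_{j+1}=-y+x(\tau_j^2-2)$ is the $x$-adic fixed-point iteration for the equation $\tau=-y+x(\tau^2-2)$. On the curve this equation reads $x\tau-\tau^2+2$ ``$=$'' $C_4$, modelled on \eqref{eq:chebyshev-modification} solved for the third coordinate. I would prove by induction on $N$ the congruence
\[
 C_4(t)\equiv x\tau_{N-1}-\tau_{N-1}^2+2\pmod{x^N}\quad\text{in }\mathbb Z[t],\ x=C_3(t),\ y=C_5(t).
\]
For $N=1$, $\tau_0=-C_5$ and $C_4\equiv 2-C_5^2\pmod{C_3}$ by \eqref{eq:chebyshev-modification}. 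For the inductive step I use \eqref{eq:chebyshev-modification} in the form $C_5^2-2+C_4=xC_7$, i.e.\ $C_7=(C_5^2+C_4-2)/C_3$. I then show that $-C_7$ plays the role of the exact solution $\tau$, agreeing with $\tau_{N-1}$ modulo $x^{N}$. Since $\tau_{j}-\tau_{j-1}$ is divisible by $x^{j}$ by the contraction property of the iteration, the increments do not disturb lower orders. Taking the remainder modulo $x^N$ preserves the congruence. Division is exact in $\mathbb Z[t]$ because $C_3$ is monic, which gives $e_N\in\mathbb Z[t]$.

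\emph{Step 2: the transfer hypothesis.} I need $c(y)=H(x,p_N(x,y),y)$ to be a one-variable automorphism of $(\mathcal R[x]/(x^N))[y]$. By Proposition~\ref{prop:one-variable-inverse}, this holds if the reduction modulo $x$ has unit linear coefficient and vanishing higher coefficients. Modulo $x$, $p_N\equiv 2-y^2$, and $H(0,Y,Z)=-Z$, so $c\equiv -y\pmod x$. Every higher coefficient is therefore divisible by $x$ and hence nilpotent, and the linear coefficient is $-1$ plus a nilpotent, so it is a unit. The theorem then supplies $\varphi$ via the inverse $\psi$ computed by \eqref{eq:nilpotent-inversion}, together with exact divisions by $x^N$ in both directions over the domain $\mathcal R$. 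Graph subtraction, with $x-T^3+3T$ and the restriction of the transferred companion, gives the rectifier.

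The main obstacle is Step 1: proving the higher-order congruence for all $N$ rather than only $N=1$. If the fixed-point argument is awkward, I would instead check directly that $\sigma=-C_7+xC_5\cdot(\ldots)$ satisfies $\sigma=-C_5+x(\sigma^2-2)$ up to $O(x^N)$ by substituting $t^3=3t+x$. Failing that, I would avoid the congruence altogether. Exactness of $e_N$ follows whenever $C_4-p_N(C_3,C_5)$ vanishes to order $N$ at each root of $C_3$. I would verify this via the one-variable automorphism of Step 2: since $c$ is invertible, $p_N$ is determined modulo $x^N$ as the unique centre whose image curve contains $(C_3,C_4,C_5)$ modulo $x^N$.
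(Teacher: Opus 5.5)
Your architecture is the paper's: the transfer theorem with $m=x^N$ and centre $p_N$, the observation $c\equiv -y\pmod x$ coming from $H(0,Y,Z)=-Z$, then graph subtraction and base change. Step~2 is fine. The gap is in Step~1, which is the only part of the argument specific to $p_N$.

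Your inductive step claims that $-C_7$ is the exact solution of $\tau=-y+x(\tau^2-2)$ on the curve, and that $\tau_{N-1}(C_3,C_5)\equiv -C_7\pmod{C_3^N}$. This is false. Expanding gives $C_7=-t+C_3C_4$, so $-C_7\equiv t$ only modulo $C_3$. On the other hand $\tau_1(C_3,C_5)\equiv t\pmod{C_3^2}$ (see below), and $C_3\nmid C_4$ (compare values at $t=0$). So the claimed congruence already fails for $N=2$. The relation \eqref{eq:chebyshev-modification} involves $xC_7-C_5^2$, not $x\tau-\tau^2$ at a single $\tau$, so it cannot drive this induction.

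Your fallbacks do not repair this. Any $\sigma$ satisfying the fixed-point equation modulo $C_3^N$ must be congruent to $t$ there. No element $-C_7+C_3C_5Q$ with $Q$ a polynomial is, for $N\ge2$: that would need $C_5Q\equiv C_4\pmod{C_3}$, impossible since $C_5(0)=0\ne C_4(0)$. The uniqueness argument is vacuous. Because $H(0,Y,Z)=-Z$, the polynomial $c$ is invertible modulo $x^N$ for \emph{every} centre $p$, so its invertibility says nothing about $p_N$. Moreover, a centre is not unique modulo $x^N$: the map $(\mathbb Z[x]/(x^N))[y]\to\mathbb Z[t]/(C_3^N)$ has nonzero kernel.

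The missing input is the pair of exact identities $C_5=-t+C_3(t^2-2)$ and $C_4=C_3t-t^2+2$.

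The first says that $t$ itself solves $\tau=-C_5+C_3(\tau^2-2)$. Subtracting this from the recursion gives $\tau_{j+1}-t=C_3(\tau_j-t)(\tau_j+t)$. Starting from $\tau_0-t=-C_3(t^2-2)$, induction gives $\tau_{N-1}(C_3,C_5)\equiv t\pmod{C_3^N}$.

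The second identity then gives $p_N(C_3,C_5)\equiv C_3\tau_{N-1}-\tau_{N-1}^2+2\equiv C_4\pmod{C_3^N}$. The first congruence holds because $p_N$ is the $x$-adic truncation in $\mathbb Z[x,y]$. Division by the monic $C_3^N$ is then exact in $\mathbb Z[t]$. With Step~1 replaced by this argument, the rest of your proposal goes through as written and coincides with the paper's proof.
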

\begin{proof}
The identity $C_5=-t+C_3(t)(t^2-2)$ gives inductively
$\tau_{N-1}(C_3,C_5)\equiv t\pmod{C_3^N}$.
Also $C_4=C_3t-t^2+2$.  These identities prove the exact division.
All recursions have integer coefficients; division by the monic
polynomial $C_3^N$ is therefore exact already in $\mathbb Z[t]$.
The modification $(x,y,z)\mapsto(x,p_N+x^Nz,y)$ takes this curve to
$(C_3,C_4,C_5)$. Modulo $x$ its centre under $H$ is $-y$.
In $\mathcal R[x]/(x^N)$, its linear coefficient is thus a unit and
its higher coefficients lie in the nilpotent ideal $(x)$.
Proposition~\ref{prop:one-variable-inverse} and the transfer theorem
supply an actual coordinate restricting to $t$, a companion, and a
polynomial inverse over $\mathcal R$. Graph subtraction and base change
finish the proof.
\end{proof}

\section{Normal maps and rectifiability}
\label{sec:geometric-realization}

We now consider how the framed normal map might lead to polynomial
coordinates, as asked in Question~\ref{qu:intro-main}. A polynomial
section and an affine-line generic fiber suffice. We first prove this
criterion and give an algorithm for the inverse, then explain what
remains to be constructed from the motivic comparison.

\subsection{Normal maps and coordinate projections}

Let $i:\mathbb A^1_{\mathbb C}\hookrightarrow\mathbb A^3_{\mathbb C}$
be a closed embedding. An ordered pair of generators
$I_{i(\mathbb A^1)}=(f,g)$ determines both a normal framing along the
curve and a polynomial map
\[
 (f,g):\mathbb A^3_{\mathbb C}\longrightarrow\mathbb A^2_{\mathbb C}.
\]
Its restriction to the complement takes values in the punctured plane.
An $\mathbb A^1$-equivalence between the complements need not make this
map an affine-line fibration: absolute motivic localization does not
preserve its fibers.

We seek ordered generators for which the normal map has a polynomial
section and a geometric generic fiber isomorphic to the affine line.
The criterion below then makes the entire map a coordinate projection,
including its exceptional fibers.

The section must satisfy $p\circ e=\operatorname{id}$ as an equality
of morphisms, so that it induces a retraction of polynomial rings;
a homotopy does not suffice. The argument works over fields that need
not be algebraically closed.

\subsection{The principal-retraction criterion}

The algebraic input is the principal-kernel observation of Das and
Dutta \cite[Lemma~3.6 and Remark~3.7]{DasDutta2011}, together with
descent from a geometric generic line.

\begin{lemma}\label{lem:principal-retraction}
Let $B\subset A$ be integral domains, let $K=\operatorname{Frac}(B)$,
and let $\epsilon:A\to B$ be a $B$-algebra retraction. Suppose that
$\ker(\epsilon)=hA\ne 0$ and
\[
 A\otimes_B\overline K\cong\overline K[w]
\]
for an algebraic closure $\overline K$ of $K$. Then the given inclusion
$B[h]\subset A$ is an equality, and $h$ is algebraically independent
over $B$.
\end{lemma}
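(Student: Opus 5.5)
The plan is to expand an arbitrary element of $A$ in powers of $h$ using the retraction, and to use the geometric generic fibre to show that this expansion stops after finitely many steps.

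\emph{Step 1: the expansion.} Since $\epsilon$ is a $B$-algebra retraction, $a-\epsilon(a)\in\ker(\epsilon)=hA$ for every $a\in A$. So $a=b_0+ha_1$ with $b_0=\epsilon(a)\in B$ and $a_1\in A$. Because $A$ is a domain and $h\neq0$, the element $a_1$ is uniquely determined. Applying the same step to $a_1$, then to $a_2$, and so on, gives for every $n\ge1$
\[
 a=b_0+b_1h+\cdots+b_{n-1}h^{n-1}+h^na_n,\qquad b_i\in B,\ a_n\in A.
\]
What remains is to show that $a_n=0$ for $n$ large, and that $h$ is transcendental over $B$.

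\emph{Step 2: pass to the geometric generic fibre.} The field $K$ is a localization of $B$, and $\overline K$ is flat over $K$, so $\overline K$ is flat over $B$. Tensoring the exact sequence $0\to hA\to A\xrightarrow{\epsilon}B\to0$ with $\overline K$ gives
\[
 \ker(\epsilon\otimes\overline K)=h\cdot(A\otimes_B\overline K).
\]
The map $A\to A\otimes_B K=(B\setminus0)^{-1}A$ is injective because $A$ is a domain containing $B$. The further map to $A\otimes_B\overline K$ is injective because $\overline K$ is faithfully flat over $K$. Hence $A\subset\overline K[w]$, and $h\ne0$ there. Now $\epsilon\otimes\overline K:\overline K[w]\to\overline K$ is a surjective $\overline K$-algebra map, so it is evaluation at some $\alpha\in\overline K$. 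Its kernel is therefore $(w-\alpha)$. Since this kernel equals $(h)$, we get $h=\gamma(w-\alpha)$ with $\gamma\in\overline K^\times$. Consequently $\overline K[w]=\overline K[h]$, $h$ is transcendental over $\overline K$ and hence over $B$, and every element of $A$ has a well-defined degree in $h$.

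\emph{Step 3: degree comparison.} Let $d$ be the $h$-degree of $a$ in $\overline K[h]$, and take $n>d$ in the expansion of Step 1. In the identity $h^na_n=a-\sum_{i<n}b_ih^i$, the right-hand side has degree less than $n$. If $a_n\ne0$, the left-hand side has degree at least $n$, because $\overline K[h]$ is a domain and $a_n$ maps to a nonzero element by the injectivity in Step 2. Therefore $a_n=0$, so $a\in B[h]$, which proves $A=B[h]$. Algebraic independence of $h$ over $B$ was already obtained in Step 2.

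The main point requiring care is Step 2. One has to check that the kernel of $\epsilon$ base-changes to the kernel of $\epsilon\otimes\overline K$, which uses flatness of $\overline K$ over $B$. One also has to check that $A$ embeds into $A\otimes_B\overline K$, which uses that $A$ is a domain and that $\overline K$ is faithfully flat over $K$. Once these hold, the identification of $h$ with a linear coordinate $\gamma(w-\alpha)$ is immediate, and the rest is the degree count.
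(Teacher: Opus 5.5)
Your proposal is correct and follows the paper's proof. Both use the iterated division $r_{j+1}=(r_j-\epsilon(r_j))/h$, and both read off termination and algebraic independence from $h=\gamma(w-\alpha)$ after flat base change. The only difference is cosmetic: you run the degree count directly in $\overline K[h]$ with the bound $n>d$, while the paper first descends to $K[h]\cong A\otimes_B K$ by faithful flatness and then notes that each remainder drops in $h$-degree.
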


\begin{proof}
Localizing and extending scalars preserves the exact sequence defining
the kernel. A $\overline K$-algebra retraction of
$\overline K[w]$ onto $\overline K$ is evaluation at a scalar
$a\in\overline K$. Its kernel is generated by $w-a$. Consequently the
image of $h$ is $c(w-a)$ for some $c\in\overline K^{\times}$. The
natural map
\[
 K[H]\longrightarrow A\otimes_B K,\qquad H\longmapsto h,
\]
is therefore an isomorphism after the faithfully flat extension
$K\subset\overline K$, and hence is an isomorphism already over $K$.

For $r\in A$, set $r_0=r$ and successively define
\[
 b_j=\epsilon(r_j),\qquad
 r_{j+1}=\frac{r_j-b_j}{h}.
\]
Every quotient belongs to $A$, since the numerator lies in the
principal kernel. In $A\otimes_B K=K[h]$, the retraction is evaluation
at $h=0$. Thus each nonconstant remainder has strictly smaller degree
in $h$. If a remainder has degree zero, it equals its retraction and
belongs to $B$, so the next remainder is zero. The process terminates,
giving
\[
 r=b_0+b_1h+\cdots+b_dh^d,\qquad b_j\in B.
\]
This proves $A=B[h]$. The same identification with $K[h]$ shows that
there is no polynomial relation for $h$ over $B$.
\end{proof}

\begin{theorem}\label{thm:section-generic-line}
Let $k$ be any field, and let
$p=(f_1,\ldots,f_{n-1}):\mathbb A^n_k\to\mathbb A^{n-1}_k$
be a polynomial map. If $p$ has a polynomial section and its geometric
generic fiber is $\mathbb A^1$, then there is a polynomial $h$ such
that
\[
 k[x_1,\ldots,x_n]=k[f_1,\ldots,f_{n-1},h].
\]
In particular, $(f_1,\ldots,f_{n-1},h)$ is a polynomial
automorphism.
\end{theorem}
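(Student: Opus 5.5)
The plan is to apply Lemma~\ref{lem:principal-retraction} with $A=k[x_1,\ldots,x_n]$ and $B=k[f_1,\ldots,f_{n-1}]\subset A$. The section $e:\mathbb A^{n-1}_k\to\mathbb A^n_k$ with $p\circ e=\id$ gives a $k$-algebra map $e^*:A\to k[y_1,\ldots,y_{n-1}]$ with $e^*(f_i)=y_i$. First I would show that the $f_i$ are algebraically independent. Indeed, a relation $F(f_1,\ldots,f_{n-1})=0$ pulls back under $e^*$ to $F(y_1,\ldots,y_{n-1})=0$, so $F=0$. Hence $B$ is a polynomial ring and $e^*$ restricts to an isomorphism $B\cong k[y]$. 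Composing $e^*$ with the inverse of this isomorphism gives a $B$-algebra retraction $\epsilon:A\to B$. Both rings are domains, and the geometric generic fiber hypothesis says exactly that $A\otimes_B\overline K\cong\overline K[w]$, where $K=\Frac B$.

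The remaining hypothesis is that $\ker\epsilon$ is principal and nonzero. Geometrically, $\ker\epsilon$ is the ideal of the image of the section $e$, which is a closed subvariety isomorphic to $\mathbb A^{n-1}$ in $\mathbb A^n$, so it is a hypersurface. To see this algebraically, note that $\ker\epsilon$ is a prime ideal, since $A/\ker\epsilon\cong B$ is a domain. Its height is $\dim A-\dim B=n-(n-1)=1$, because $A$ is a catenary affine domain and $\dim A/\mathfrak p=n-\operatorname{ht}\mathfrak p$. In the UFD $A$, every height-one prime is principal, so $\ker\epsilon=hA$ with $h\neq0$. Here $h$ is an irreducible generator of the ideal of the section's image.

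Lemma~\ref{lem:principal-retraction} now gives $A=B[h]$ with $h$ algebraically independent over $B$. Thus $k[x_1,\ldots,x_n]=k[f_1,\ldots,f_{n-1},h]$, and the endomorphism $(f_1,\ldots,f_{n-1},h)$ of $\mathbb A^n_k$ is surjective on coordinate rings. Its comorphism is also injective by the algebraic independence just established, so it is an automorphism.

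The main obstacle I expect is checking that the hypotheses of the lemma are met exactly. The generic fiber hypothesis has to be phrased over $\Frac B$ with $B$ identified with the polynomial ring, and the retraction has to be $B$-linear, which is why $\epsilon$ is built from the composite with $(e^*|_B)^{-1}$ rather than from $e^*$ itself. The height-one, and hence principal, argument is routine. No assumption that $k$ is algebraically closed enters, because the lemma only uses $\overline K$.
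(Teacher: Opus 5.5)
Your proposal is correct and follows essentially the same route as the paper. The section gives algebraic independence of the $f_i$ and a $B$-algebra retraction $\epsilon$ by substitution; its kernel is a height-one prime, hence principal in the factorial ring $k[x_1,\ldots,x_n]$; and Lemma~\ref{lem:principal-retraction} then gives $A=B[h]$ with $h$ algebraically independent over $B$. The only difference is at the last step: the paper reads off the inverse map from the terminating division procedure, whereas you conclude directly that the comorphism is bijective, which is equally valid.
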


\begin{proof}
The section makes the map
$k[t_1,\ldots,t_{n-1}]\to k[x_1,\ldots,x_n]$, $t_j\mapsto f_j$,
injective. Put $A=k[x_1,\ldots,x_n]$ and
$B=k[f_1,\ldots,f_{n-1}]$. Substitution along the section defines a
$B$-algebra retraction $\epsilon:A\to B$. Its kernel is a prime ideal
of height one: the quotient is the polynomial ring $B$ of dimension
$n-1$. Since $A$ is factorial, this kernel equals $hA$ for a
nonzero polynomial $h$. Lemma~\ref{lem:principal-retraction} applies
and gives the asserted equality.

Applying the terminating division procedure to each $x_j$ expresses
it as a polynomial in $f_1,\ldots,f_{n-1},h$. These expressions are
the coordinates of the inverse morphism. Algebraic independence gives
both inverse identities.
\end{proof}

The proof gives an algorithm for the inverse when the section
$e=(e_1,\ldots,e_n)$ in Theorem~\ref{thm:section-generic-line} is known.

\begin{enumerate}
\item Form the retraction by substitution:
\[
 \epsilon(r)=r\bigl(e_1(f_1,\ldots,f_{n-1}),\ldots,
                         e_n(f_1,\ldots,f_{n-1})\bigr).
\]
Put $g_j=x_j-\epsilon(x_j)$ for $1\le j\le n$. Then
\begin{equation}\label{eq:section-kernel-generators}
 \ker\epsilon=(g_1,\ldots,g_n).
\end{equation}
Indeed, idempotence of $\epsilon$ kills every $g_j$.
For any polynomial $r$, the difference $r(x)-r(\epsilon(x))$
belongs to this ideal: for a monomial, change its variables one at
a time and factor each difference of powers. Taking linear
combinations proves the assertion for every $r$. If
$\epsilon(r)=0$, this difference is $r$ itself, proving the reverse
inclusion in \eqref{eq:section-kernel-generators}.

\item Compute a greatest common divisor $h$ of the nonzero $g_j$ in
the polynomial unique factorization domain $k[x_1,\ldots,x_n]$.
At least one $g_j$ is nonzero, since otherwise $\epsilon$ would be
the identity and its image would have dimension $n$, rather than
$n-1$. The theorem proves that the ideal in
\eqref{eq:section-kernel-generators} is principal. Therefore its
generator is precisely this greatest common divisor, up to a
nonzero scalar; any scalar normalization may be used. The
cofactors $g_j/h$ are polynomial. Polynomial ideal-membership
computation also gives
$u_1,\ldots,u_n$ with $h=\sum_j u_jg_j$.

\item For each ambient coordinate $x_j$, start with $r_{j,0}=x_j$
and record
\begin{equation}\label{eq:section-inverse-algorithm}
 \begin{split}
 b_{j,\nu}(T)&=
 r_{j,\nu}\bigl(e_1(T),\ldots,e_n(T)\bigr),\\
 r_{j,\nu+1}&=
 \frac{r_{j,\nu}-b_{j,\nu}(f_1,\ldots,f_{n-1})}{h}.
 \end{split}
\end{equation}
Stop when the remainder is zero. Every division is exact because
its numerator lies in $\ker\epsilon=hA$. Termination is guaranteed
by the decrease of degree in the generic parameter $h$ proved in
Lemma~\ref{lem:principal-retraction}. In particular it requires no
prior bound on the total degrees of these ambient remainders.
\end{enumerate}

If $d_j$ is the last index with a nonzero recorded coefficient, put
\[
 R_j(T_1,\ldots,T_{n-1},H)
       =\sum_{\nu=0}^{d_j}b_{j,\nu}(T_1,\ldots,T_{n-1})H^\nu.
\]
The polynomials $(R_1,\ldots,R_n)$ give the inverse of
$(f_1,\ldots,f_{n-1},h)$. The exact divisions give
$R_j(f_1,\ldots,f_{n-1},h)=x_j$. The converse identities
$f_i(R_1,\ldots,R_n)=T_i$ and $h(R_1,\ldots,R_n)=H$ follow from
the algebraic independence in the theorem and can also be checked
directly as polynomial identities. If arithmetic in $k$ is effective,
multivariate gcd and exact division compute this inverse from the section.

For example, the map
\[
 (x,y,z)\longmapsto(x+z^2,\ y+xz)
\]
has section $(T_1,T_2)\mapsto(T_1,T_2,0)$. The kernel generators
above are $-z^2,-xz,z$, whose greatest common divisor is $h=z$.
The inverse is
\[
 (T_1-H^2,\ T_2-(T_1-H^2)H,\ H).
\]
\begin{corollary}\label{cor:normal-projection-rectification}
Let $i:\mathbb A^1_k\hookrightarrow\mathbb A^3_k$ be a closed
embedding, and suppose that $I_{i(\mathbb A^1)}=(f,g)$. If the
polynomial map $(f,g):\mathbb A^3_k\to\mathbb A^2_k$ has a
polynomial section and geometric generic fiber $\mathbb A^1$, then
$i$ is rectifiable over $k$.
\end{corollary}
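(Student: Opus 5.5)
The plan is to apply Theorem~\ref{thm:section-generic-line} with $n=3$ to the normal map $p=(f,g)$. That theorem yields a polynomial $h\in k[x,y,z]$ with $k[x,y,z]=k[f,g,h]$, so $\Psi=(h,f,g)$ is a polynomial automorphism of $\mathbb A^3_k$. It then remains to see where $\Psi$ sends the curve and to correct it by a triangular automorphism.

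Since $f$ and $g$ vanish on $i(\mathbb A^1)$, we have $\Psi(i(t))=(h(i(t)),0,0)$. The key step is to show that $\eta(t):=h(i(t))$ is an affine-linear polynomial $\alpha t+\beta$ with $\alpha\in k^\times$.

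\begin{itemize}
\item Because $\Psi$ is an isomorphism, the ideal $(f,g)$ maps to the ideal $(T_2,T_3)$ of the coordinate line.
\item Hence $\Psi$ restricts to an isomorphism from $i(\mathbb A^1)$ onto the line $\{T_2=T_3=0\}$.
\item Composed with $i$, this is an isomorphism $\mathbb A^1_k\to\mathbb A^1_k$ given by $t\mapsto\eta(t)$.
\item At the level of rings, $k[x,y,z]/(f,g)\cong k[t]$ identifies with $k[T_1]$, with $T_1\mapsto\eta(t)$ generating $k[t]$.
\item A $k$-algebra automorphism of $k[t]$ has the form $t\mapsto\alpha t+\beta$ with $\alpha\in k^\times$, so $\eta$ is affine-linear. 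This holds over any field, including non-reduced-free coefficient issues, since $k$ is a field.
\end{itemize}

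We then set $\Phi=\bigl(\alpha^{-1}(h-\beta),\,f,\,g\bigr)$. This is $\Psi$ followed by an affine change in the first coordinate, hence a polynomial automorphism, and it satisfies $\Phi(i(t))=(t,0,0)$. This proves rectifiability over $k$.

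The main point requiring care is the identification of the restriction of $\Psi$ to the curve. It rests on the hypothesis that $(f,g)$ generates the full scheme-theoretic ideal, not merely cuts out the curve set-theoretically. With that hypothesis the argument is formal. The inverse of $\Phi$ is obtained explicitly from the algorithm following Theorem~\ref{thm:section-generic-line}, precomposed with the inverse affine change $T\mapsto\alpha T+\beta$.
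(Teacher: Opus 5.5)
Your proposal is correct and follows essentially the same route as the paper: apply Theorem~\ref{thm:section-generic-line} to get $k[x,y,z]=k[f,g,h]$, pass to the quotient by $(f,g)$ so that $k[t]$ is generated by the image of $h$, conclude $h(i(t))=\alpha t+\beta$ with $\alpha\in k^\times$, and rectify with $\bigl(\alpha^{-1}(h-\beta),f,g\bigr)$. Your remark that the argument needs $(f,g)$ to generate the full ideal of the curve, not just its radical, is exactly the point the paper relies on when it takes the quotient by $(f,g)$.
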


\begin{proof}
The theorem gives $k[x,y,z]=k[f,g,h]$. Taking the quotient by
$(f,g)$ identifies the coordinate ring of the embedded line with
$k[h]$. Therefore $h(i(t))=at+b$ for some $a\in k^{\times}$ and
$b\in k$. The polynomial automorphism
\[
 \left(\frac{h-b}{a},f,g\right)
\]
sends $i(t)$ to $(t,0,0)$.
\end{proof}

\subsection{Dependence on the choice of generators}

It would be too strong to ask that every ordered generating pair
satisfy the hypotheses of the corollary. The following example retains
the exact zero scheme, a polynomial section, and a polynomial homotopy
inverse on the complement, while its generic fiber is a punctured line.

\begin{example}\label{ex:normal-map-punctured-generic-fiber}
Over any field $k$, consider the coordinate line
$V(x,y)\subset\mathbb A^3_k$. For integers $N,d\geq 1$, put
\[
 p_{N,d}(x,y,z)=\bigl(x,y(1+x^Nz^d)\bigr),\qquad
 e(u,v)=(u,v,0).
\]
The two components generate exactly $(x,y)$, because
\[
 y=y(1+x^Nz^d)-yx^{N-1}z^d x.
\]
The map $e$ is a polynomial section. Moreover,
\[
 H_s(x,y,z)=\bigl(x,y(1+s x^Nz^d),(1-s)z\bigr)
\]
is a polynomial homotopy from the identity to $e\circ p_{N,d}$.
The first two components generate $(x,y)$ over $k[s,x,y,z]$, so this
homotopy restricts to the complement. Thus the restriction of
$p_{N,d}$ to the complement is a polynomial homotopy
equivalence with the punctured plane.

For $K=k(u,v)$, however, its generic fiber has coordinate ring
\[
 K\bigl[z,(1+u^Nz^d)^{-1}\bigr].
\]
The nonconstant element $1+u^Nz^d$ is a unit, also after extension to
$\overline K$. This excludes a geometric generic affine line. The
homotopy $H_s$ is an absolute homotopy; it does not preserve
$p_{N,d}$ fiberwise.

The normal map agrees with the coordinate projection $(x,y)$ modulo
$(x,y)^{N+1}$. Therefore agreement to any specified finite order along
the curve does not fix this difficulty. For this example the generator
change displayed above repairs the normal pair to $(x,y)$, so the
center is of course rectifiable. A change of ideal generators with
ambient polynomial coefficients should not itself be called an ambient
polynomial automorphism.
\end{example}

Thus the criterion must allow a choice of generators; the example
does not decide Question~\ref{qu:intro-main}.

\subsection{The motivic realization problem}

In the classical longitude argument, Dehn's lemma turns a
null-homotopy into an embedded disk. To use that analogy here, one
would need to construct the polynomial section and the affine-line
generic fiber of Corollary~\ref{cor:normal-projection-rectification}
from motivic data. We do not know such a theorem.

The meridian itself cannot be the class filled inside the
complement. Even for the coordinate line, its normal meridian
$\A^2_{\C}\setminus\{0\}$ has nonzero first
$\A^1$-homology, so it cannot extend to a morphism
$\A^2_{\C}\to\A^3_{\C}\setminus i(\A^1_{\C})$.
Such an extension would factor its homology map through the
contractible affine plane. A transverse normal-plane map instead
lands in the ambient $\A^3_{\C}$ and meets the curve.

An algebraic version of the disk argument would face a further
coordinate problem. A finite birational map from an affine plane can
have a nonzero normalization defect, which a framing on its source
does not eliminate. For a closed affine-plane disk meeting the line
in one reduced point, one would still need to recognize its equation
as a coordinate. This is a separate Abhyankar--Sathaye problem.

The existence of generators for which the normal map has a polynomial
section and geometric generic fiber $\mathbb A^1$ is an equivalent
formulation of Question~\ref{qu:intro-main}. One direction is
Corollary~\ref{cor:normal-projection-rectification}; conversely, the
coordinates of a rectifier give such generators and a section.

The example above also shows why a formal inverse need not be
polynomial. The expansion
\[
 y=v\sum_{j\geq0}(-u^N)^jz^{dj}
\]
exists to every finite order but never terminates. In contrast, the
division in Lemma~\ref{lem:principal-retraction} terminates because the
generic fiber is a polynomial ring with the specified parameter.

There is another equivalent formulation using degrees of inverse maps
along the contraction to the tangent line. Translate the embedding so
that $i(0)=0$ and form
\[
 i_s(t)=s^{-1}i(st).
\]
This expression is polynomial in $(s,t)$, with its value at $s=0$
understood by expansion of the component polynomials.

\begin{proposition}\label{prop:bounded-inverse-jets}
Suppose there is an integer $D$ such that, for every $r\geq1$, there
are polynomial inverse automorphisms $\Phi_r,\Psi_r$ of
$\mathbb A^3_{\mathbb C[s]/(s^r)}$, each of total degree at most $D$
in the three spatial variables, and
\[
 \Phi_r(i_s(t))=(t,0,0)\pmod{s^r}.
\]
Then $i$ is rectifiable over $\mathbb C$. Compatibility between the
choices for different $r$ is not required. Conversely, if $i$ is
rectifiable, these pairs can be chosen as the reductions of one
polynomial inverse pair over $\mathbb C[s]$, with a common bound
on their spatial degrees.
\end{proposition}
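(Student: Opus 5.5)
The plan is to reduce the statement to a finite system of polynomial equations over $\mathbb C[[s]]$. We then pass from solutions modulo every $s^r$ to a genuine convergent solution, and finally specialize $s$ to a small nonzero complex number.

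The reduction rests on one observation: for $s_0\in\mathbb C^\times$, the embedding $i_{s_0}(t)=s_0^{-1}i(s_0t)$ is obtained from $i$ by the ambient linear scaling $x\mapsto s_0^{-1}x$ and the source reparametrization $t\mapsto s_0t$. Hence $i_{s_0}$ is rectifiable if and only if $i$ is.

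\emph{The converse direction.} Since $i(0)=0$, replace a rectifier $\Phi$ of $i$ by $\Phi-\Phi(0)$, so that $\Phi(0)=0$. Its inverse $\Psi$ then also satisfies $\Psi(0)=0$. Put $\Phi_s(x)=s^{-1}\Phi(sx)$ and $\Psi_s(y)=s^{-1}\Psi(sy)$. Because $\Phi$ and $\Psi$ vanish at the origin, both are polynomial in $s$, and they are mutually inverse over $\mathbb C[s]$. Their spatial degrees are $\deg\Phi$ and $\deg\Psi$. The rectifying identity $\Phi_s(i_s(t))=s^{-1}\Phi(i(st))=(t,0,0)$ holds exactly, so reducing modulo $s^r$ gives the required pairs with the common bound $D=\max(\deg\Phi,\deg\Psi)$.

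\emph{The main direction.} Fix $D$ and treat the coefficients of two triples of polynomials of degree at most $D$ in $x,y,z$ as finitely many unknowns $c$.
\begin{enumerate}
\item Write the conditions as equations in $c$. The conditions are $\Phi(\Psi(y))=y$, $\Psi(\Phi(x))=x$ and $\Phi(i_s(t))=(t,0,0)$. Because the degrees are bounded, each becomes a polynomial in $x$, $y$ or $t$ of bounded degree. Equating coefficients yields a finite system $F(s,c)=0$ of polynomial equations with coefficients in $\mathbb C[s]$.
\item Interpret the hypothesis as approximate solvability. It says that for every $r$ the system has a solution $c^{(r)}\in(\mathbb C[s]/(s^r))^N$, and no compatibility between different $r$ is needed.
\item Obtain a formal solution. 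By Greenberg's theorem, equivalently strong Artin approximation for $\mathbb C[[s]]$, a system solvable modulo every power of $s$ has a solution $\hat c\in\mathbb C[[s]]^N$. This step is what makes compatibility unnecessary.
\item Make the solution convergent. Artin's analytic approximation theorem, applied to the system with coefficients in $\mathbb C[s]\subset\mathbb C\{s\}$, gives a convergent solution $c(s)\in\mathbb C\{s\}^N$.
\item Specialize. Evaluate at a small $s_0\ne0$ inside the radius of convergence. This gives complex polynomial maps $\Phi,\Psi$ of degree at most $D$ that are mutually inverse and satisfy $\Phi(i_{s_0}(t))=(t,0,0)$. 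By the scaling remark, $i$ is rectifiable.
\end{enumerate}

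The main obstacle is the approximation step, specifically invoking strong Artin/Greenberg approximation correctly. We must check that the bounded-degree hypothesis makes the unknowns and equations genuinely finite in number. This is exactly where the uniform bound $D$ is used: without it there would be no fixed finite system. A further subtlety is whether the equations in item 1 could have infinitely many $t$-coefficients. They cannot: $i_s$ has fixed $t$-degree, so $\Phi(i_s(t))$ has $t$-degree at most $D\deg i$.

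If one wishes to avoid citing analytic approximation, an alternative for item 4 is ordinary Artin approximation over the henselization of $\mathbb C[s]_{(s)}$. This gives a solution over an étale neighbourhood of $s=0$, which can then be evaluated at a complex point of that neighbourhood lying over some $s_0\ne0$.
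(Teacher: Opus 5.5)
Your proof is correct, but in the main direction it takes a heavier route than the paper.

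\textbf{Main direction.} Both arguments use the same finite-type parameter scheme $Z$ over $\mathbb C[s]$, cut out by the composition and rectification equations in the coefficients of degree-$\le D$ maps. They differ only in how a complex point over some $s\neq0$ is produced.

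You use Greenberg's theorem to get a formal solution and then Artin's analytic approximation to make it convergent. Greenberg's theorem applied directly over the henselian discrete valuation ring $\mathbb C\{s\}$ would do both steps at once.

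The paper uses only the Nullstellensatz. If $Z$ had no complex point over $s\neq0$, then $s$ would lie in every maximal ideal of the Jacobson ring $\mathcal O(Z)$, so $s^M=0$ in $\mathcal O(Z)$ for some $M$. A $\mathbb C[s]$-point with values in $\mathbb C[s]/(s^r)$, $r>M$, would then force $s^M=0$ in $\mathbb C[s]/(s^r)$, which is absurd. This needs nothing beyond finite generation. It works verbatim over any field, giving a point over a finite extension, which fits the arithmetic variants in the appendix.

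Your route gives more than the statement needs. It produces a convergent arc of solutions $c(s)$ through the special fibre, which approximates a given truncated solution to any prescribed order. The rectifier you obtain at small $s_0$ is therefore close to a reduction of some $\Phi_r$.

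\textbf{Converse direction.} This agrees with the paper. Your translation $\Phi-\Phi(0)$ is vacuous, because $\Phi(0)=\Phi(i(0))=(0,0,0)$ automatically. That is fortunate: a nonzero translation would have destroyed the identity $\Phi(i(t))=(t,0,0)$.
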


\begin{proof}
Introduce coefficients for two polynomial maps in three variables of
degree at most $D$, together with a variable $s$. The equations that
the two composites be the identity and that the first map send
$i_s(t)$ to $(t,0,0)$ are finitely many polynomial equations in
these coefficients and $s$. They define an affine scheme $Z$ of
finite type over $\mathbb C[s]$.

The assumed inverse pairs give points of $Z$ over
$\mathbb C[s]/(s^r)$ with the indicated value of the base parameter.
If $Z$ had no complex point over $s\ne0$, the Nullstellensatz would
make $s$ nilpotent in its coordinate ring, say $s^M=0$. A point over
$\mathbb C[s]/(s^r)$ for $r>M$ contradicts this equality. Thus there
is a complex point over some $s=a\ne0$. Its inverse pair gives a
polynomial automorphism $\Phi$ rectifying $i_a$. The automorphism
$x\mapsto a\Phi(x/a)$ then rectifies $i$.

Conversely, let $\Phi$ be a rectifier with
$\Phi(i(t))=(t,0,0)$. Since $i(0)=0$, both $\Phi$ and its inverse
fix the origin. Therefore
\[
 \Phi_s(x)=s^{-1}\Phi(sx),\qquad
 \Psi_s(x)=s^{-1}\Phi^{-1}(sx)
\]
are polynomial maps over $\mathbb C[s]$: a monomial of spatial
degree $d\geq1$ acquires the factor $s^{d-1}$. Their composition
identities hold after inverting $s$ and hence over $\mathbb C[s]$.
Moreover,
\[
 \Phi_s(i_s(t))=s^{-1}\Phi(i(st))=(t,0,0).
\]
Their spatial degrees are bounded by
$\max(\deg\Phi,\deg\Phi^{-1})$. Reduction modulo $s^r$ gives
the required pairs for every $r$.
\end{proof}

For Question~\ref{qu:intro-main}, the corresponding quantitative
problem is to bound $D$ in terms of the degree of the embedding.
For algebraic input coefficients, a known bound reduces the search for
a rectifier to finitely many coefficient equations over an algebraic
closure.

A comparison over a selected coordinate base must also account for
its fibers. If an ambient coordinate
restricts to a polynomial of degree $m>1$ on the embedded line, a
regular fiber of that coordinate meets the curve in $m$ points. Its
complement is an affine plane with $m$ punctures. A comparison over
this base must retain all $m$ meridians; it cannot replace this fiber by
a one-puncture plane. Under complex realization, the map induced by
any exact normal pair on third homology is
\[
 \mathbb Z^m\longrightarrow\mathbb Z,\qquad
 (n_1,\ldots,n_m)\longmapsto n_1+\cdots+n_m.
\]
Each local degree is $+1$ because the intersections are transverse and
complex oriented. Thus an absolute motivic equivalence need not be an
equivalence over the selected coordinate base. Such a comparison must
keep the finite morphism
$i(\mathbb A^1_{\mathbb C})\to\mathbb A^1_{\mathbb C}$ induced by
that coordinate, together with its ramification and nonreduced fibers.

There is also a descent question for the normal comparison over a
function field.

\begin{question}\label{question:generic-field-descent}
Let $k=\mathbb C(s)$, let
$i:\mathbb A^1_k\hookrightarrow\mathbb A^3_k$ be a closed embedding,
and let
\[
 f:\mathbb A^3_k\setminus i(\mathbb A^1_k)
        \longrightarrow\mathbb A^2_k\setminus\{0\}
\]
be a fixed framed normal map with compatible $k$-rational basepoints.
Suppose a pointed motivic meridian class
\[
 m:\mathbb A^2_k\setminus\{0\}
        \longrightarrow\mathbb A^3_k\setminus i(\mathbb A^1_k)
\]
is defined over $k$ and satisfies $f\circ m=\operatorname{id}$ in
the pointed unstable $\mathbb A^1$-homotopy category. If
$(m\circ f)_L=\operatorname{id}$ after a finite separable extension
$L/k$, must $m\circ f=\operatorname{id}$ already over $k$?
\end{question}

After passage to a Galois closure, uniqueness makes the inverse class
Galois invariant. This does not establish the remaining identity over
$k$, even when the candidate inverse $m$ is defined over $k$. A nontrivial
finite separable extension is not a Nisnevich cover of
$\operatorname{Spec}k$, so this does not follow from Nisnevich descent
\cite{MV99}.

\appendix
\section{Other ground fields and arithmetic descent}\label{appendix:other-fields}

Polynomial inverse identities specialize over their coefficient ring.
The motivic computation also depends on field hypotheses and normal
orientations.

\subsection{Homotopy purity over arbitrary fields}

Let $k$ be a field and let
$i:\mathbb A^1_k\hookrightarrow\mathbb A^3_k$ be a closed immersion.  Its normal
bundle is a rank-two vector bundle on $\mathbb A^1_k$ and is therefore trivial.
After choosing a normal frame, homotopy purity gives
\[
 \mathbb A^3_k/
 \bigl(\mathbb A^3_k\setminus i(\mathbb A^1_k)\bigr)
 \simeq_{\mathbb A^1} S^{4,2}.
\]
Here $S^{4,2}=(\mathbb P^1)^{\wedge2}$ denotes the pointed motivic sphere.
This identifies the pointed quotient, rather than the unsuspended complement.

There is also a global defining pair.  Indeed, the codimension-two Serre
construction applies to the smooth curve, whose determinant normal bundle
is trivial.  It supplies a rank-two projective module on $k[x,y,z]$ and a
section with this zero scheme.  Quillen--Suslin makes the module free.
Consequently the ideal of the curve has generators $f,g$ forming a regular
sequence; see also \cite[Theorem~2.6]{WW26}.  Naturality of purity for the transverse square defined by
$(f,g)$ identifies the corresponding quotient map with the Thom map induced
by $i(\mathbb A^1_k)\to\operatorname{Spec}k$ and the selected frame.

The homotopy and homology sheaves below are Nisnevich sheaves on
smooth $k$-schemes; $\mathbf K^{\mathrm{MW}}_2$ denotes the unramified
Milnor--Witt sheaf over $k$.

\begin{proposition}\label{prop:other-field-homology}
Suppose that $k$ is perfect of characteristic different from two.  For an
ordered defining pair $f,g$ of a closed embedded affine line, the morphism
\[
 (f,g):\mathbb A^3_k\setminus i(\mathbb A^1_k)
       \longrightarrow\mathbb A^2_k\setminus\{0\}
\]
induces an isomorphism on first $\mathbb A^1$-homology sheaves.  In particular,
\[
 \mathbf H^{\mathbb A^1}_1
 \bigl(\mathbb A^3_k\setminus i(\mathbb A^1_k)\bigr)
 \cong\mathbf K^{\mathrm{MW}}_2.
\]
If $k$ is also infinite, the complement is $\mathbb A^1$-connected.  If its
first $\mathbb A^1$-homotopy sheaf is abelian, the same defining pair induces
\[
 \pi^{\mathbb A^1}_1
 \bigl(\mathbb A^3_k\setminus i(\mathbb A^1_k)\bigr)
 \xrightarrow{\ \sim\ }\mathbf K^{\mathrm{MW}}_2.
\]
\end{proposition}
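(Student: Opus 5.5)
The proof has three parts, taken in order: the homology isomorphism, $\A^1$-connectedness, and then the fundamental sheaf under the abelianity hypothesis.

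\emph{Homology.} I follow the proof of Proposition~\ref{geom:normal-map}, checking that no step uses $k=\C$. The pair $(f,g)$ gives a map of pairs $(\A^3_k,i(\A^1_k))\to(\A^2_k,0)$. The two ideals correspond, so the square is transverse, and $(f,g)$ frames the normal bundle. Naturality of homotopy purity \cite{MV99} identifies the induced map of quotients $\A^3_k/(\A^3_k\setminus i(\A^1_k))\to\A^2_k/(\A^2_k\setminus\{0\})$ with the Thom map of the trivial rank-two bundle over $i(\A^1_k)\to\Spec k$. That Thom map is an $\A^1$-equivalence because the base map is one. Since $\A^3_k$ and $\A^2_k$ are contractible, the cofiber sequences give an equivalence $\Sigma_s(\A^3_k\setminus i(\A^1_k))\simeq\Sigma_s(\A^2_k\setminus\{0\})$ induced by $(f,g)$. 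Reduced $\A^1$-chains take $\Sigma_s$ to a shift, so $(f,g)$ is an isomorphism on $\mathbf H_1^{\A^1}$. Morel's theorem over a perfect field gives $\mathbf H_1^{\A^1}(\A^2_k\setminus\{0\})=\pi_1^{\A^1}(\A^2_k\setminus\{0\})=\mathbf K_2^{\MW}$ \cite{Morel12}. The hypothesis $\operatorname{char}k\ne2$ is used only to stay within the range where these results and the Milnor--Witt description of the sphere are established as cited. No orientation issue arises, because the comparison is made with the fixed ordered frame.

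\emph{Connectedness.} For infinite $k$, I repeat the argument of Lemma~\ref{geom:connected} over finitely generated extensions $F/k$. Fix two $F$-points of the complement. The intermediate points whose joining lines meet the curve lie in a proper closed subset, of dimension at most two, so an $F$-rational intermediate point avoiding it exists because $F$ is infinite. This gives $\A^1$-chain connectedness, and the criterion of \cite[Section~2]{AD09} then gives $\A^1$-connectedness.

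\emph{Fundamental sheaf.} Fix compatible rational basepoints; infiniteness ensures they exist off the curve. By Morel's $\A^1$-Hurewicz theorem for $\A^1$-connected spaces, $\pi_1^{\A^1}\to\mathbf H_1^{\A^1}$ is the abelianization, in the sense of the initial strictly invariant abelian quotient. If $\pi_1^{\A^1}$ of the complement is abelian, it is strongly invariant, hence strictly invariant over a perfect field, so its Hurewicz map is an isomorphism. For $\A^2_k\setminus\{0\}$ the Hurewicz map is already an isomorphism. Naturality of Hurewicz with respect to $(f,g)$ and the homology isomorphism from the first part then show that $(f,g)_*$ is an isomorphism on $\pi_1^{\A^1}$.

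The main obstacle is quoting the Hurewicz statement in exactly the form needed: that the Hurewicz map of an $\A^1$-connected space is the universal map to strictly invariant abelian sheaves, so that it is an isomorphism precisely when $\pi_1^{\A^1}$ is abelian. I would cite this from \cite{Morel12} (the $\A^1$-Hurewicz theorem in degree one) and verify it is stated over arbitrary perfect fields. The purity naturality for transverse squares is the other point to check, but it is standard.
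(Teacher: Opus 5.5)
Your proposal is correct and follows the paper's proof essentially step for step. Both arguments use framed homotopy purity for the map of pairs given by $(f,g)$, together with contractibility of the ambient affine spaces, to get the isomorphism on $\mathbf H_1^{\mathbb A^1}$. Both then use the intermediate-point argument avoiding two cones, with the Asok--Doran chain-connectedness criterion, for $\mathbb A^1$-connectedness. Finally, both use strict invariance of an abelian $\pi_1^{\mathbb A^1}$ over a perfect field and naturality of the degree-one Hurewicz map to get the isomorphism on $\pi_1^{\mathbb A^1}$.
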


\begin{proof}
Apply reduced $\mathbb A^1$-homology to the purity comparison.  The ambient
affine spaces are contractible, and the quotient comparison is an
equivalence.  The long exact homology sequences therefore identify the
first homology of the displayed complements.  Morel's punctured-affine-space
calculation identifies the latter with $\mathbf K^{\mathrm{MW}}_2$.

For connectivity, work over any finitely generated separable extension
$L/k$.  Given two points of the complement, the forbidden intermediate
points for an affine line meeting the curve lie in the closures of two
cones, each of dimension at most two.  An infinite field supplies an
$L$-point outside their union.  Two affine-line maps then join the given
points.  The chain-connectivity criterion of Asok--Doran gives
$\mathbb A^1$-connectivity.

Finally an abelian strongly $\mathbb A^1$-invariant sheaf is strictly
$\mathbb A^1$-invariant over a perfect field.  In this case the degree-one
Hurewicz map is an isomorphism.  Its naturality, applied to the first part,
proves the last assertion \cite{Morel12,AD09}.
\end{proof}

\subsection{Orientations and values of sheaves}

Milnor--Witt theory retains the quadratic information in a normal
orientation.  Recall that $\langle a\rangle$ denotes the one-dimensional
symmetric form with coefficient $a\in k^\times$, regarded as an element of
$\mathrm{GW}(k)=K^{\mathrm{MW}}_0(k)$.  Changing a normal frame by determinant
$a$ multiplies the associated Milnor--Witt Thom identification by
$\langle a\rangle$; equivalently one can keep the determinant-line twist
instead of choosing an orientation.  For the normal bundle on an affine
line, the determinant of a change of frame is a unit in $k[t]$, hence is
constant \cite[Corollary~3.23]{Morel12}. Over $\mathbb C$ it is a square,
so its action is the identity.

The sheaf $\mathbf K^{\mathrm{MW}}_2$ retains values on smooth schemes
and their function fields, beyond the single group $K^{\mathrm{MW}}_2(k)$.
Thus vanishing at a finite ground field does not imply vanishing of the
sheaf. Likewise the comparison with Milnor $K$-theory over $\mathbb C$
does not identify the two sheaves on all smooth complex schemes.

For example, the Milnor symbol
$\{t,-1\}\in K^{\mathrm M}_2(\mathbb C(t))$ has tame residue $-1$ at
$t=0$, so it is nonzero.  Thus the vanishing of constant symbols
$\{a,-1\}$ for $a\in\mathbb C^\times$ cannot be used for every unit of
this function field, even though it contains a square root of $-1$.

\subsection{Real knots and complex rectifications}

The real and complex normal meridians are a circle and a three-sphere,
respectively.

For example, Shastri's polynomial trefoil is
\[
 t\longmapsto(t^3-3t,\ t^4-4t^2,\ t^5-10t).
\]
Its real locus is a nontrivial long knot \cite{Shastri}.  A real
polynomial rectifier would be a proper homeomorphism of $\mathbb R^3$ and
would identify its complement with the complement of a straight line.
The trefoil complement has nonabelian knot group, while the latter
complement has fundamental group $\mathbb Z$.  Such a real rectifier is
therefore impossible, although a complex rectifier exists.

\subsection{Coefficient rings and specialization}

If polynomial tuples satisfy
\[
 \Phi\circ\Psi=\Psi\circ\Phi=\operatorname{id},\qquad
 \Phi\circ i(t)=(t,0,0)
\]
over a commutative coefficient ring $R$, the identities remain valid
after every homomorphism $R\to R'$.  Exact quotients used in a formula must
first be constructed as polynomials over $R$; division by a coefficient
which becomes zero after specialization has no such interpretation.
Thus a rectifier and its inverse over $\mathbb Z[1/N]$ specialize in
characteristics not dividing $N$. For formulas over a number field,
any field embedding preserves the identities. A rectifier
obtained only modulo a prime does not by itself give one in characteristic
zero.

The integral rectifiers of Theorem~\ref{thm:intro-concrete} are defined over
\[
 \mathcal R=\mathbb Z[a]/(a^2-a+1).
\]
Theorems~\ref{thm:trefoil-rectification} and
\ref{thm:harmonic-rectification} therefore specialize over every field
containing a root of $a^2-a+1$. In characteristic three this root is
$a=-1$; in characteristic two the roots lie in $\mathbb F_4$.
Over every finite field an extension of degree at most two suffices.
The inverse identities also survive nonreduced base changes.
The printed degree tuples are characteristic-zero degrees and may
decrease under reduction.

Fixing degree bounds for a polynomial map and its inverse gives an affine
parameter scheme of finite type.

\begin{proposition}\label{prop:bounded-rectifier-scheme}
Fix a polynomial closed immersion $i$ over a field $k$ and integers $D,E$.
The pairs of polynomial tuples $(\Phi,\Psi)$ with degrees at most $D,E$
satisfying the preceding three identities form an affine scheme of finite
type over $k$.  If this scheme has a point over a field extension of $k$,
it has a point over a finite algebraic extension of $k$.
\end{proposition}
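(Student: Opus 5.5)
The plan is to write the moduli of bounded-degree rectifier pairs as an explicit closed subscheme of an affine space over $k$, and then to move a point from a large field down to a finite extension using the Nullstellensatz.

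\emph{Step 1: the parameter space.} A polynomial triple $\Phi$ in $x,y,z$ of total degree at most $D$ is determined by its coefficient vector, which lives in $\mathbb A^{N_D}_k$ with $N_D=3\binom{D+3}{3}$. The same holds for $\Psi$ with $E$ in place of $D$. Form the universal pair over $k[\text{coefficients}]$.

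\emph{Step 2: the equations.} Expand $\Phi\circ\Psi-\operatorname{id}$ and $\Psi\circ\Phi-\operatorname{id}$ as polynomials in the spatial variables, and $\Phi(i(t))-(t,0,0)$ as a polynomial in $t$. These have bounded degree: at most $DE$ in the spatial variables, and at most $D\cdot\max\deg i$ in $t$. Their coefficients are therefore finitely many polynomials in the unknown coefficients, with entries in $k$.

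\emph{Step 3: the representing scheme.} Let $Z$ be the spectrum of the quotient of $k[\text{coefficients}]$ by these polynomials. It is affine and of finite type, since a quotient of a finitely generated algebra is finitely generated. For any $k$-algebra $B$, a $B$-point of $Z$ is exactly a pair of tuples over $B$, of the stated degrees, satisfying the three identities. This is because a polynomial over $B$ vanishes if and only if all its coefficients do, and forming coefficients commutes with base change. So $Z$ represents the functor of such pairs.

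\emph{Step 4: descending a point.} Suppose $Z$ has a point over some field extension $L/k$. Then the coordinate ring $A$ of $Z$ admits a ring map to $L$, so $A\neq0$ and $A$ has a maximal ideal $\mathfrak m$. By Zariski's lemma, the residue field $A/\mathfrak m$ is a finite extension of $k$, and it provides the required point.

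The only step that needs care is Step 3. The issue is that the unknown coefficients can make degrees drop, so I would not impose any leading-coefficient condition. "Degree at most $D$" is just the statement that only the listed coefficients appear, and that condition is closed. With this reading there is no real obstacle.
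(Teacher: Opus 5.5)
Your proposal is correct and follows essentially the same route as the paper: you take the coefficients as variables, equate coefficients in the two compositions and in $\Phi\circ i$ to get finitely many polynomial equations, and apply Zariski's lemma to a maximal ideal of the nonzero finite-type coordinate algebra. Your extra details (the count $N_D=3\binom{D+3}{3}$, the functor of points, and reading ``degree at most $D$'' as a closed condition) are consistent with the paper's argument and only make it more explicit.
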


\begin{proof}
Introduce one coefficient variable for every monomial allowed in each
tuple.  Equating coefficients in the compositions and in the restriction
to $i$ gives finitely many polynomial equations.  An extension-field point
makes the resulting coordinate algebra nonzero.  A maximal ideal of this
finite-type algebra has residue field finite over $k$, by Zariski's lemma.
\end{proof}

For a fixed model $i$ over $\mathbb Z[1/N]$ and fixed degree bounds
$D,E$, the same coefficient equations define an affine scheme
$Z_{D,E}$ of finite type over $\mathbb Z[1/N]$. If it has points
over $\mathbb Z/p^r$ for arbitrarily large $r$ at one prime $p\nmid N$,
or over $\mathbb F_p$ for infinitely many primes $p\nmid N$, then
it has a point over a finite extension of $\mathbb Q$. Indeed, if its
generic fiber were empty, some nonzero integer $M$ would satisfy
$M\cdot1=0$ in its coordinate algebra. A point over $\mathbb Z/p^r$
would force $p^r\mid M$, and a point over $\mathbb F_p$ would force
$p\mid M$, contradicting either hypothesis. A closed point of the
nonempty generic fiber gives a rectifier over the asserted finite
extension. The prime-power points need not be compatible. A single point
over $\mathbb F_p$ is insufficient for this argument, and the conclusion
requires the fixed degree bounds.

\begin{proposition}\label{prop:quadratic-projection-all-fields}
Over any field $k$, every closed immersion
\[
 t\longmapsto(t^2,p(t),q(t))
\]
is rectifiable over $k$.
\end{proposition}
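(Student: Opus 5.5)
Following the proof of Proposition~\ref{prop:quadratic-component}, but with attention to characteristic two, where the even/odd decomposition in $t$ no longer separates anything. The aim is a reduction to the case where one component has degree at most one in $t$, followed by a graph subtraction in the sense of Lemma~\ref{lem:graph-subtraction}.

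\emph{Characteristic $\ne2$.} The $k$-algebra $k[t]$ is free over $k[t^2]$ with basis $1,t$. Write $p=p_0(t^2)+tA(t^2)$ and $q=q_0(t^2)+tB(t^2)$. The shears $y\mapsto y-p_0(x)$ and $z\mapsto z-q_0(x)$ reduce the curve to $(t^2,tA(t^2),tB(t^2))$.

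Next I would show $(A,B)=k[x]$. Surjectivity of $k[x,y,z]\to k[t]$ requires $t\in k[t^2,tA,tB]$. Taking the odd part, and using that even products of $tA,tB$ involve $t^2$, this gives $t\in t\cdot(A,B)k[t^2]$ modulo products. Concretely, the odd part of the image lies in $t\cdot\bigl(A,B\bigr)\,k[t^2]$, because every odd monomial in $tA,tB$ contains at least one factor $tA$ or $tB$. Hence $1\in(A,B)$, which is the valid field-independent replacement for the root argument used over $\C$.

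Choose $U,V$ with $UA+VB=1$. The $\mathrm{SL}_2(k[x])$ matrix from the proof of Proposition~\ref{prop:quadratic-component} sends the curve to $(t^2,t,0)$. A permutation and the shear $x\mapsto x-y^2$ finish this case.

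\emph{Characteristic $2$.} Here $k[t]=k[t^2]\oplus t\,k[t^2]$ still holds as modules, so the same decomposition is available. The difference is that shears by $p_0(x)$ are still allowed, while the claim that odd monomials lie in $t(A,B)k[t^2]$ must be reexamined. A product of two odd elements is $t^2AB$, which is even, so the parity bookkeeping is unchanged. Thus the argument above is characteristic-free: any monomial $(tA)^i(tB)^j$ with $i+j$ odd has a factor $tA$ or $tB$ times an even element of $k[t^2]$. So $(A,B)=(1)$ in all characteristics.

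I expect the main obstacle to be exactly this step. One must check that the odd part of $k[t^2,tA,tB]$ equals $t\,(A,B)\,k[t^2]$ rather than something smaller. Only the inclusion ``$\subseteq$'' is needed, and that follows from the monomial observation.

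The final steps are then purely polynomial identities over $k$. Invertibility of the $\mathrm{SL}_2(k[x])$ matrix, the permutation, and the shear need no division by $2$. The resulting rectifier and its inverse are therefore defined over $k$ itself.
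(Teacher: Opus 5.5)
Your proof is correct and is essentially the paper's argument: the image lies in the subalgebra $k[x]\oplus t\,(a,b)\,k[x]$, so surjectivity forces $(a,b)=k[x]$. The Bezout $\mathrm{SL}_2(k[x])$ change after the shears by $p_0(x),q_0(x)$, followed by the permutation and the shear $x\mapsto x-y^2$, is exactly the paper's rectifier $(T,\,x-T^2,\,W)$; the split into characteristic cases is unnecessary, since (contrary to your opening remark) the decomposition of $k[t]$ by parity of $t$-degree never involves division by $2$, which is why the paper gives one uniform argument.
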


\begin{proof}
Write uniquely
$p(t)=p_0(t^2)+t a(t^2)$ and
$q(t)=q_0(t^2)+t b(t^2)$.
With $x=t^2$, the coordinate image is contained in the subalgebra
$k[x]\oplus t(a,b)k[x]$ of $k[t]$.  Surjectivity onto $k[t]$ forces
$(a,b)=k[x]$.  Choose $A,D\in k[x]$ with $Aa+Db=1$.  In ambient coordinates
put
\[
 y_0=y-p_0(x),\quad z_0=z-q_0(x),\quad
 T=A(x)y_0+D(x)z_0,\quad W=-b(x)y_0+a(x)z_0.
\]
Then $(T,x-T^2,W)$ is a rectifier.  Its inverse, on coordinates $(T,H,W)$,
is
\[
 x=H+T^2,\quad
 y=p_0(x)+a(x)T-D(x)W,\quad
 z=q_0(x)+b(x)T+A(x)W.
\]
The same inverse works in characteristic two.
\end{proof}

\subsection{Galois descent}

Suppose now that $L/k$ is finite Galois and that $\Phi$ is an $L$-rectifier
of a $k$-defined parameterized embedding.  Let
$\operatorname{Stab}_{\mathrm{pt}}(\mathbb A^1_L)$ denote the group of
polynomial automorphisms of $\mathbb A^3_L$ fixing $(t,0,0)$ for every $t$.
The automorphisms
\[
 c_\sigma=\Phi\circ\sigma(\Phi)^{-1}
 \qquad(\sigma\in\operatorname{Gal}(L/k))
\]
belong to this pointwise stabilizer and satisfy
$c_{\sigma\tau}=c_\sigma\,\sigma(c_\tau)$.
A rectifier of the form $s\circ\Phi$ is defined over $k$ precisely when
\[
 c_\sigma=s^{-1}\sigma(s)
 \quad\text{for every }\sigma\in\operatorname{Gal}(L/k)
\]
for a polynomial automorphism $s$ in the stabilizer. Solving the induced
normal-bundle or finite-jet equations separately does not supply such an $s$.

If $F$ vanishes on the curve and has at least two conjugates, their
product lies in the square of the curve's ideal.  Its differential
therefore vanishes along the curve, so the product cannot be an ambient
coordinate.

\subsection{Characteristic two}

\begin{question}\label{qu:other-characteristic-two}
Let $k$ be a perfect field of characteristic two, let
$i:\mathbb A^1_k\hookrightarrow\mathbb A^3_k$ be a closed embedding,
and let $f,g$ be ordered generators of its defining ideal forming
a regular sequence. Does the exact normal map induce an isomorphism
of Nisnevich sheaves
\[
 (f,g)_*:\pi_1^{\mathbb A^1}\!\left(
   \mathbb A^3_k\setminus i(\mathbb A^1_k)\right)
 \longrightarrow\pi_1^{\mathbb A^1}(\mathbb A^2_k\setminus\{0\})?
\]
\end{question}

\end{document}